\documentclass[letterpaper, 11pt,  reqno]{amsart}

\usepackage[margin=1.2in,marginparwidth=1.5cm, marginparsep=0.5cm]{geometry}

\usepackage{amsmath, mathrsfs, amssymb,amscd,amsthm,amsxtra, esint}
\usepackage{tikz} 
\usepackage{color}
\usepackage{xcolor}

\usepackage[implicit=true]{hyperref}

\usepackage{cases}%%%

\usepackage{upgreek}

\allowdisplaybreaks[2]

\definecolor{gr}{rgb}   {0.,   0.69,   0.23 }
\definecolor{bl}{rgb}   {0.,   0.5,   1. }
\definecolor{mg}{rgb}   {0.85,  0.,    0.85}
\definecolor{yl}{rgb}   {0.8,  0.7,   0.}
\definecolor{or}{rgb}  {0.7,0.2,0.2}

\usetikzlibrary{shapes.misc}
\usetikzlibrary{shapes.symbols}
\usetikzlibrary{shapes.geometric}

\tikzset{
	ddot/.style={circle,fill=white,draw=black,inner sep=0pt,minimum size=0.8mm},
	>=stealth,
	}

\tikzset{
	ddot2/.style={circle,fill=black,draw=black,inner sep=0pt,minimum size=0.8mm},
	>=stealth,
	}

\newtheorem{theorem}{Theorem} [section]

\newtheorem{lemma}[theorem]{Lemma}
\newtheorem{proposition}[theorem]{Proposition}
\newtheorem{remark}[theorem]{Remark}

\newtheorem{definition}[theorem]{Definition}

\newtheorem{oldtheorem}{Theorem}

\DeclareMathOperator*{\intt}{\int}

\DeclareMathOperator*{\supp}{supp}
\DeclareMathOperator{\med}{med}

\DeclareMathOperator{\HS}{HS}

\DeclareMathOperator{\Id}{Id}
\DeclareMathOperator{\sgn}{sgn}

\DeclareMathOperator{\Ker}{Ker}

\newcommand{\I}{\mathcal{I}}

\newcommand{\noi}{\noindent}
\newcommand{\Z}{\mathbb{Z}}
\newcommand{\R}{\mathbb{R}}

\newcommand{\T}{\mathbb{T}}
\newcommand{\bul}{\bullet}

\let\Re=\undefined\DeclareMathOperator*{\Re}{Re}
\let\Im=\undefined\DeclareMathOperator*{\Im}{Im}

\let\P= \undefined
\newcommand{\P}{\mathbf{P}}

\newcommand{\PP}{\mathbb{P}}

\newcommand{\E}{\mathbb{E}}

\renewcommand{\H}{\mathcal{H}}

\newcommand{\CC}{\mathcal{C}}
\newcommand{\D}{\mathcal{D}}

\renewcommand{\L}{\mathcal{L}}

\newcommand{\RR}{\mathbf{R}}

\newcommand{\F}{\mathcal{F}}

\newcommand{\al}{\alpha}
\newcommand{\be}{\beta}
\newcommand{\dl}{\delta}
\newcommand{\updl}{\updelta}

\newcommand{\nb}{\nabla}

\newcommand{\Dl}{\Delta}
\newcommand{\eps}{\varepsilon}
\newcommand{\kk}{\kappa}
\newcommand{\g}{\gamma}
\newcommand{\G}{\Gamma}
\newcommand{\ld}{\lambda}
\newcommand{\Ld}{\Lambda}
\newcommand{\s}{\sigma}

\newcommand{\ft}{\widehat}
\newcommand{\Ft}{{\mathcal{F}}}
\newcommand{\wt}{\widetilde}
\newcommand{\cj}{\overline}
\newcommand{\dx}{\partial_x}
\newcommand{\dt}{\partial_t}
\newcommand{\dd}{\partial}

\newcommand{\ta}{\theta}

\renewcommand{\l}{\ell}
\renewcommand{\o}{\omega}

\newcommand{\Gdl}{\mathcal{G}_{\dl} }

\newcommand{\les}{\lesssim}
\newcommand{\ges}{\gtrsim}

\newcommand{\jb}[1]
{\langle #1 \rangle}

\newcommand{\ind}{\mathbf 1}

\renewcommand{\S}{\mathcal{S}}

\newcommand{\M}{\mathcal{M}}

\newcommand{\N}{\mathbb{N}}

\newcommand{\NN}{\mathcal{N}}
\newcommand{\cL}{\mathcal{L}}
\newcommand{\cC}{\mathcal{C}}

\newcommand{\cX}{\mathcal{X}}
\newcommand{\cY}{\mathcal{Y}}

\newcommand{\W}{\mathcal{W}}

\newcommand{\Lip}{\mathrm{Lip}}
\newcommand{\uw}{U^w}
\newcommand{\sw}{S^w}

\newcommand{\uu}{\mathbf{u}}
\newcommand{\vv}{\mathbf{v}}

\newcommand{\z}{\zeta}

\newcommand{\Ta}{\Theta}

\newcommand{\BO}{\text{\rm BO} }
\newcommand{\KDV}{\text{\rm KdV} }

\newcommand{\too}{\longrightarrow}

\usepackage[most]{tcolorbox}

\newcommand{\XXb}{\mathbb{X}}
\newcommand{\YYb}{\mathbb{Y}}

\newcommand{\Yb}{\YY^{\mathfrak{b}}}

\newcommand{\Yg}{\YY^{\mathfrak{g}}}

\newcommand{\PPsi}{\pmb{\Psi}}

\newcommand{\Hs}{\mathscr{H}}

\newcommand{\HH}{\mathcal{H}}

\newcommand{\Sym}{\textup{\texttt{Sym}}}
\newcommand{\Om}{\Omega}

\newcommand{\LOP}{\mathcal{L}}

\newcommand{\XX}{\mathbf{X}}
\newcommand{\YY}{\mathbf{Y}}

\newcommand{\hf}{\mathfrak{h}}
\newcommand{\ff}{\mathfrak{g}}

\def\doublestroke#1{\pdfliteral{1 Tr .35 w}#1\pdfliteral{0 Tr 0 w}}

\newcommand{\Taa}{\doublestroke{\Theta}}
\newtheorem*{ackno}{Acknowledgements}

\numberwithin{equation}{section}
\numberwithin{theorem}{section}

\makeatletter
\@namedef{subjclassname@2020}{\textup{2020} Mathematics Subject Classification}
\makeatother

\begin{document}
\baselineskip = 14pt

\title[Nonlinear PDEs with modulated dispersion III]
{Nonlinear PDEs with modulated dispersion III:\\
multiplicative noises}

\author[A.~Chapouto, M.~Gubinelli, G.~Li, J.~Li, and T.~Oh]
{Andreia Chapouto, Massimiliano Gubinelli, Guopeng Li, Jiawei Li,\\
 and Tadahiro Oh}

\address{
Andreia Chapouto\\
CNRS, Laboratoire de math\'ematiques de Versailles, UVSQ, Universit\'e Paris-Saclay, CNRS, 45 avenue des 
\'Etats-Unis, 78035 Versailles Cedex, France, 
and School of Mathematics, Monash University, VIC 3800, Australia}

\email{andreia.chapouto@monash.edu}

\address{Massimiliano Gubinelli\\
Mathematical Institute\\ University of Oxford\\ 
Woodstock Road,\\
 Oxford,  OX2 6GG, United Kingdom}

\email{gubinelli@maths.ox.ac.uk}

\address{Guopeng Li, 
School of Mathematics and Statistics, Beijing Institute of Technology, Beijing 100081, China}

\email{guopeng.li@bit.edu.cn}

\address{Jiawei Li, School of Mathematics\\
The University of Edinburgh\\
and The Maxwell Institute for the Mathematical Sciences\\
James Clerk Maxwell Building\\
The King's Buildings\\
Peter Guthrie Tait Road\\
Edinburgh\\ 
EH9 3FD\\
 United Kingdom}

\email{jiawei.li@ed.ac.uk}

\address{
Tadahiro Oh, 
School of Mathematics\\
The University of Edinburgh\\
and The Maxwell Institute for the Mathematical Sciences\\
James Clerk Maxwell Building\\
The King's Buildings\\
Peter Guthrie Tait Road\\
Edinburgh\\
EH9 3FD\\
 United Kingdom,
and School of Mathematics and Statistics, Beijing Institute of Technology, Beijing 100081, China}

%%
%\address{
%Tadahiro Oh, 
%School of Mathematics and Statistics, Beijing Institute of Technology, Beijing 100081, China\\
%and
%School of Mathematics\\
%The University of Edinburgh\\
%and The Maxwell Institute for the Mathematical Sciences\\
%James Clerk Maxwell Building\\
%The King's Buildings\\
%Peter Guthrie Tait Road\\
%Edinburgh\\
%EH9 3FD\\
% United Kingdom}
%

\email{hiro.oh@ed.ac.uk}

\subjclass[2020]{60H15, 35R60, 35Q53, 60H50,  35Q55, 
60L20, 60L50}
% 65M12}

\keywords{modulated dispersion;  Korteweg-de Vries equation; 
stochastic Korteweg-de Vries equation; 
Schr\"odinger equation; stochastic Schr\"odinger equation; multiplicative noise;
regularization by noise;
Young integral; rough path; controlled rough path}

\begin{abstract}

We investigate  pathwise  well-posedness of the stochastic modulated Korteweg-de~Vries equation (KdV) 
on the circle with a {\it multiplicative} noise, where a time non-homogeneous modulation acts on the linear dispersion term. (i)~In the Young case (= fractional-in-time case with Hurst parameter greater than~$\frac 12$), we establish a new regularization-by-noise phenomenon on the stochastic convolution in a pathwise manner, where a gain of spatial regularity  becomes (arbitrarily) larger for more irregular modulations. We then prove that, given {\it any} $s \in \mathbb R$ and any  multiplicative Young noise, {\it however rough it is in space}, the stochastic modulated KdV is pathwise locally well-posed in $H^s(\mathbb T)$, provided that the modulation is sufficiently irregular. (ii)~In the rough case (= white-in-time case), irregularity of the modulation does not induce any smoothing on the stochastic convolution, and in fact, there is a slight loss in the spatial regularity. In this case, by slightly regularizing the multiplicative noise term, we prove pathwise local well-posedness in $H^s(\mathbb T)$ for any given $s \in \mathbb R$, provided that  the noise is sufficiently smooth in space. We achieve these goals by combining (i) the sewing lemma approach to the nonlinear Young integration theory, introduced by Chouk and the second author (2014), and (ii) the pathwise construction of stochastic convolutions as Young or rough integrals via the random tensor estimate and the sewing lemma, introduced by the first, fourth, and fifth authors (2026). In the appendix, we also present an example of regularization by noise for a stochastic modulated Schr\"odinger equation with a multiplicative Young noise. 

\end{abstract}

%\date{\today}
%%
%
\maketitle

\tableofcontents

\section{Introduction}\label{SEC:1}
\subsection{Stochastic modulated dispersive equations with multiplicative noises}
In this paper, we study  a stochastic modulated dispersive 
partial differential equation (PDE)
with a multiplicative noise, 
posed on $  \T^d = \R/ (2\pi \Z)^d$:\footnote{By convention, we endow
$\T^d$ with the normalized Lebesgue measure $ dx_{\T^d} =  (2\pi)^{-d}dx$
such that we do not need to carry factors involving $2\pi$.}
\begin{align}
%\begin{cases}
\dt u +  L u \cdot \dt w=  \NN(u)  + u \phi\zeta, 
%u|_{t = 0} = u_0,
%\end{cases}
\qquad ( t, x) \in \R_+ \times \T^d,
\label{ME1}
\end{align}

\noi
where
 $\NN(u)$ denotes the nonlinearity
 and 
 $w:\R_+\to\R$ is a continuous function of time, 
called a {\it modulation}, 
 acting on the linear dispersion term $Lu$.
Here, 
$\phi$ is a Hilbert-Schmidt operator from $L^2(\T^d)$ to % = \P_{\ne 0}L^2(\T)$ to 
$H^\s(\T^d)$ 
for some $\s \in \R$  
(such that\footnote{More precisely, $\phi W^\be(t)$ has spatial regularity $\s$, 
where $W^\be$ is as in \eqref{W1}.}
 the noise $\phi \zeta$ has spatial regularity $\s$)
and $\zeta$ denotes a fractional-in-time\,/\,white-in-time and white-in-space noise.
Heuristically, one may think of $\zeta$ as 
\begin{align}
\text{``}\,\z = \jb{\dt}^{-\al}\xi\,\text{''}
\label{W0a}
\end{align}
for some $0 \le \al <  \frac 12$, where 
$\jb{\,\cdot\,} = (1+ |\cdot|^2)^\frac12$ and 
$\xi$ denotes 
a (Gaussian) space-time white noise on $\R_+ \times \T^d$
whose space-time covariance is (formally) given by 
\begin{align*}
 \E[ \xi(t_1, x_1)\xi(t_2, x_2) ] = \dl(t_1 - t_2) \dl (x_1 - x_2)
\end{align*} 

\noi
for $t_1, t_2 \in \R_+$  and $x_1, x_2 \in \T^d$
with $\dl$ denoting the Dirac delta function.
See \eqref{ze1}
for the precise meaning of $\z$.

We say that $u$ is a solution to \eqref{ME1}
with initial data $u|_{t = 0} = u_0$
if $u$ satisfies the following
 Duhamel formulation (= mild formulation):
\begin{align}
\begin{split}
u(t) 
& = U^w(t) u_0 +  U^w({t}) \int_0^t U^w({t'})^{-1}\NN( u(t') ) dt' + \Psi(u) (t), 
\end{split}
\label{mild1}
\end{align}

\noi
where 
\begin{align}
U^w(t) = e^{- w(t) L }
\label{lin1}
\end{align}

\noi
 denotes the modulated linear propagator
such that\footnote{By setting $w(0) = 0$, we can guarantee
  $U^w(0) = \Id$.
 Note that 
 this normalization 
 is not an additional restriction since only the time derivative $\dt w$
 appears in \eqref{ME1} and \eqref{ME2}.}
  $U^w(0) = \Id$
and $\Psi(u)$ denotes the stochastic convolution,\footnote{In the current modulated setting, 
due to the lack of the group property (i.e.~$\uw(t)\uw(t')^{-1} \ne \uw(t - t')$ in general), $\Psi(u)$
in \eqref{psi1} does not have a `convolution' structure.
Nonetheless, we refer to $\Psi(u)$ as a stochastic convolution 
in this paper (just as in the unmodulated setting).} 
representing the effect of the stochastic forcing in \eqref{ME1}, 
defined by 
\begin{align}
\Psi(u)(t) = 
U^w(t) \int_0^t U^w(t')^{-1}
\big[u (t') \phi dW^\be(t')\big].
\label{psi1}
\end{align}

\noi
Here, 
$W^\be$ denotes the cylindrical fractional Wiener process on $L^2(\T^d)$ given by 
\begin{align}
W^\be(t) 
 = \sum_{n\in \Z^d}  
B_n(t) e_n, 
\label{W1}
\end{align}

 \noi
where $e_n(x) = e^{in\cdot x}$
and $\{B_n \}_{n \in \Z^d}$ is a family of independent 
complex-valued 
fractional Brownian motions with Hurst parameter\footnote{Note that $\al$ in \eqref{W0a}
is given by $\al = \be -\frac 12$.} $\frac 12 \le \be < 1$, 
where,  in the real-valued setting, 
we additionally impose that\footnote{In particular, in the real-valued setting, 
the family $\{B_n\}_{(\Z^d)_{+,0}}$ is independent, where 
$(\Z^d)_{+,0}$ is as in~\eqref{fBM0}.
} 
\begin{align}
B_{-n} = \cj{B_n}, \quad n \in \Z^d.
\label{W2}
\end{align}

\noi
Namely, we have
\begin{align}
\z = \dt W^\be.
\label{ze1}
\end{align}

\noi
%Moreover, in the real-valued setting, 
%$B_n$'s are 
%conditioned that 
%\begin{align}
%B_{-n} = \cj{B_n}, \quad n \in \Z^d.
%\label{W2}
%\end{align}
When $\be = \frac 12$, 
the sequence $\{B_n \}_{n \in \Z^d}$ 
reduces to a family of independent 
standard
complex-valued 
 Brownian motions (satisfying \eqref{W2} in the real-valued case)
 and thus $W^{ \frac 12}$ corresponds 
 to an $L^2$-cylindrical Wiener process.
See Subsection \ref{SUBSEC:FBM}
for a review on fractional Brownian motions and 
stochastic integrals with respect to them.

Our main goal in this paper is two-fold:
(i)  
We build
{\it pathwise} local well-posedness theory for stochastic modulated
dispersive PDEs.
The main strategy is to combine

\smallskip
\begin{itemize}
\item
the sewing lemma approach to 
the nonlinear Young integration theory, 
introduced by Chouk and the second author \cite{CG1, CGLLO1}, and 

\smallskip
\item 
the pathwise construction of stochastic convolutions
as Young or rough integrals, 
introduced by the first, fourth, and fifth authors 
in the unmodulated setting
\cite{CLO2, CLO3, COZ, CLOO}, 
via 
the random tensor estimate
(Lemma \ref{LEM:RT})
and the sewing lemma
(Lemma~\ref{LEM:sew}).

\end{itemize}
\smallskip

\noi
(ii)~We
 present novel {\it regularization-by-noise} phenomena
on the stochastic convolution $\Psi (u)$ in \eqref{psi1}
by exploiting the nonlinear interaction between the unknown $u$ and the noise $\phi \zeta$.
In particular, 
we show that 
{\it irregularity} of the modulation function~$w$ (see Definition~\ref{DEF:ir}) induces
smoothing on the stochastic convolution $\Psi(u)$ 
in a {\it pathwise} manner, 
where 
a gain of spatial regularity  becomes (arbitrarily) larger for more irregular modulations.

See Subsections \ref{SUBSEC:main1}
and \ref{SUBSEC:main2}
for the statements of our main results.
We emphasize that the pathwise approach we develop
in this paper is easily applicable to a general
class of stochastic modulated dispersive PDEs.

\subsection{Background}

Let us first   consider  the case $\phi = 0$.
In this case,  the equation~\eqref{ME1} reduces
to the following
(deterministic) modulated dispersive equation:
\begin{align}
%\begin{cases}
\dt u +  L u \cdot \dt w=  \NN(u)
\label{ME2}
\end{align}

\noi
which naturally appears
in various physical contexts.
For example, 
the  modulated 
 Korteweg-de Vries equation (KdV):
\begin{equation}
\label{kdv1}
\dt u+  \dx^3 u \cdot \dt w =\dx u^2
\end{equation}

\noi
appears 
as a model 
for
weakly nonlinear long waves in an inhomogeneous waveguide;
see \cite{CMG, HZ}, where 
the modulation $w$ is taken to be periodic but not differentiable.
See also~\cite{DD, CG1}
for the references therein
on the physical background of the modulated nonlinear Schr\"odinger equation.

Over the last fifteen years, 
the modulated dispersive equation \eqref{ME2} has been studied extensively
from the viewpoints of both stochastic analysis and rough analysis.
For a specific modulation such as that given by a Brownian motion, 
stochastic calculus can be used to study 
the equation~\eqref{ME1};  see \cite{DD, DT, DR2, Ste}.
For a general modulation $w$, however, 
such an approach based on stochastic calculus
is not available.

In \cite{CG1},   %, CGLLO1},  
Chouk and the second author 
proposed 
  a novel  pathwise approach 
to study the modulated dispersive equation~\eqref{ME1}
from the viewpoint of rough analysis.
By writing~\eqref{ME2} in the  Duhamel formulation, we have
\begin{align}
u(t) 
& = \uw(t) u_0 +  U^w({t}) \int_0^t U^w({t'})^{-1}\NN( u(t') ) dt', 
\label{mild1a}
\end{align}

\noi
where 
$\uw(t)$ is as in \eqref{lin1}.
Let $\uu$ be 
 the 
 modulated interaction representation
 of the unknown~$u$, defined by 
\begin{align}
\uu(t)=\uw(t)^{-1}u(t).
\label{int1} 
\end{align}

\noi
Then, \eqref{mild1a} becomes
\begin{equation}
\uu(t) = u_0 + \int_0 ^t \uw(t')^{-1}\NN(\uw(t') \uu(t'))dt'.
\label{mild2}
\end{equation}

\noi
In \cite{CG1},   %, CGLLO1},  
Chouk and the second author 
first  made sense of the integral term in \eqref{mild2}
as a nonlinear Young integral
$\I^\XX(\uu)$ with  the driver $\XX$ associated with \eqref{ME2}
(see, for example,~\eqref{K1x}
for the modulated KdV)
and then studied the 
 resulting nonlinear Young differential equation (YDE):
\begin{equation}
\uu = u_0 + \I^\XX(\uu)
\label{YDE0a}
\end{equation}

\noi
by a contraction argument. 
This approach was further developed in \cite{CGLLO1};
see \cite[Section~3]{CGLLO1}
for a general review on this approach via the 
nonlinear Young integration. %sewing lemma. % due to the second author \cite{Gub04}.
See also Section~\ref{SEC:Young} below.
In the subsequent work~\cite{CGLLO1}, 
Chouk and the last four authors 
exhibited striking regularization-by-noise phenomena
for the modulated 
KdV \eqref{kdv1} and other modulated dispersive PDEs
 on both the circle and the real line.
For example, 
they  proved that, given {\it any} $s \in \mathbb R$, the modulated KdV~\eqref{kdv1} on $\T$ with a sufficiently irregular modulation is locally well-posed in $H^s(\T)$;
see Theorem~\ref{THM:B} below.
In the same paper, %~\cite{CGLLO1}, 
they also established 
other forms of regularization by noise
such as 
semilinearization and nonlinear smoothing
(of arbitrary order).
See~\cite{CGLLO1}
for a further discussion on various examples.
We also mention 
recent works
\cite{Tanaka, Robert1, Robert2, Robert3, Robert4}
on pathwise well-posedness of  various modulated dispersive equations;
see \cite[Remark~1.14]{CGLLO1}
for a discussion on these works.

\medskip

In the works \cite{CG1, CGLLO1} mentioned above, 
the following   notion of irregularity of the modulation function~$w$, 
introduced in~\cite{CatellierGubinelli, CG1}, 
played a fundamental role.

\begin{definition}
\label{DEF:ir}
\rm

Let $\rho>0$ and  $0 < \g < 1$.
Given $T > 0$, we say that a function $w\in C([0,T];\R)$ is $(\rho,\g)$-irregular 
on the time interval $[0, T]$  if we have
\begin{align}
\|\Phi^w\|_{  \W^{\rho,\g}_T} 
:= \sup_{a\in \R} \sup_{0\leq r < t\leq T} \langle a \rangle^\rho \frac{|\Phi^w_{t,r}(a)|}{|t-r|^\g} 
< \infty, 
\label{rho1}
\end{align}

\noi
where 
\begin{align}
\Phi^w_{t,r}(a)
=\int_r^t e^{i  a w(t') } d t'.
\label{rho2}
\end{align}

\noi
We say that $w$ is 
$(\rho,\g)$-irregular 
on $\R_+$  if it is 
$(\rho,\g)$-irregular 
on $[0, T]$ for each finite $T > 0$.

\end{definition}

For further studies on the notion of irregularity, 
we refer readers to 
 recent works \cite{GG, RT}.
The following results from \cite{CG1, GG}
provide important examples of $(\rho, \g)$-irregular paths.
The term   ``almost every''
in Theorem \ref{THM:A}\,(ii)
 is to be understood according to the notion of {\it prevalence}; see~\cite{GG} for details and the  references therein.

\begin{oldtheorem}
\label{THM:A}
\textup{(i)}
Let $\{W_t\}_{t\in \R_+}$ be a fractional Brownian motion 
of Hurst index $\be\in(0,1)$.
Then, 
for any $\rho < \frac{1}{2\be}$,  
there exists $\frac 12 < \g < 1$
such that,  with probability one,  the sample paths of 
$W$ are $(\rho,\g)$-irregular on $\R_+$.

\smallskip

\noi
\textup{(ii)}
Let  $d \ge 1$.
Given  any $0 < \delta< 1$, almost every $\dl$-H\"older continuous  function $w \in C^\delta([0,1];\R^d)$ 
 is $(\rho,\g)$-irregular for any $\rho < \frac1{2\delta}$ with  some $\gamma = \gamma(\rho) \in (\frac 12,1)$.

\end{oldtheorem}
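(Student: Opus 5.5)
Since Theorem~\ref{THM:A} is quoted from \cite{CG1, GG}, I would prove the two parts separately: a Gaussian moment computation with a Kolmogorov/GRR argument for (i), and a prevalence argument for (ii) that reuses the estimates from (i).

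For part~(i), fix $\rho<\frac1{2\be}$ and $T>0$. The plan is to prove that the two-parameter process $(a,r,t)\mapsto \Phi^W_{t,r}(a)$ has a version that is H\"older in $(r,t)$, uniformly with the weight $\jb{a}^\rho$.

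\textbf{Step 1 (moment bound).} For $r<t$ and $p\in\N$, I would expand
\[
\E|\Phi^W_{t,r}(a)|^{2p}=\int_{[r,t]^{2p}}\E\exp\Big(ia\sum_{j}\pm W_{s_j}\Big)\,ds .
\]
Each expectation is Gaussian, so it equals $\exp\big(-\tfrac{a^2}{2}\mathrm{Var}(\sum_j \pm W_{s_j})\big)$. After ordering the times, I would use the local nondeterminism of fBm, namely $\mathrm{Var}(\sum_k c_k(W_{s_{k+1}}-W_{s_k}))\gtrsim \sum_k c_k^2|s_{k+1}-s_k|^{2\be}$, where the coefficients satisfy $c_k\in\{\pm1,0,\dots\}$ and at least $p$ of them are nonzero. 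Integrating the gaps one at a time, each nonzero gap contributes
\[
\int_0^{t-r} \min\big(1, e^{-c a^2 h^{2\be}}\big)\,dh \les \min\big(|t-r|,|a|^{-1/\be}\big),
\]
and each remaining free variable contributes $|t-r|$. This gives
\[
\E|\Phi^W_{t,r}(a)|^{2p}\les_p |t-r|^{p}\min\big(|t-r|,|a|^{-1/\be}\big)^{p}\le |t-r|^{2p\g}\jb{a}^{-2p\rho'}
\]
for any $\rho'<\frac1{2\be}$, obtained by interpolating the minimum, with $\g=1-\rho'\be>\frac12$.

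\textbf{Step 2 (regularity in $a$).} I would also bound increments in $a$. Since $|e^{iaW}-e^{ia'W}|\le |a-a'||W|$, one gets $\E|\Phi_{t,r}(a)-\Phi_{t,r}(a')|^{2p}\les |a-a'|^{2p\theta}|t-r|^{2p\g'}$, interpolated with the bound of Step~1.

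\textbf{Step 3 (chaining).} I would apply Kolmogorov's continuity theorem, or the Garsia--Rodemich--Rumsey inequality, to the field on dyadic blocks $a\in[N,2N]$, $0\le r<t\le T$. With $p$ large this upgrades the moment bounds to almost sure bounds with a loss of $\eps$ in $\rho$ and $\g$. Summing over dyadic $N$ using the spare decay $N^{-(\rho'-\rho)}$ gives $\|\Phi^W\|_{\W^{\rho,\g}_T}<\infty$ almost surely. A countable union over $T\in\N$ then gives the statement on $\R_+$.

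\textbf{Main obstacle.} The hardest step is Step~1: extracting decay $|a|^{-1/\be}$ per pair from the combinatorics of the signs. This requires a careful ordering of the times and a choice of which increments carry nonzero coefficient, for which I would use the alternating-sign pairing trick. It also requires the local nondeterminism property of fBm, which I would cite.

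For part~(ii), I would use prevalence. It suffices to exhibit a probability measure $\mu$ on $C^\dl$ (a transverse measure) such that, for every $f\in C^\dl$, $f+W$ is $(\rho,\g)$-irregular for $\mu$-almost every $W$. I would take $\mu$ to be the law of an fBm with Hurst index slightly above $\dl$, so that $W\in C^\dl$ almost surely and $\rho<\frac1{2H}$ still holds. The argument of part~(i) then goes through for $f+W$: the deterministic shift $f$ only adds a phase $e^{iaf(s)}$, which has modulus one, and the variance lower bound is unchanged. This gives the claim in dimension one; in dimension $d$ I would apply the same argument componentwise, with $a$ ranging over $\R^d$.
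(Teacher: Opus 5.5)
The paper contains no proof of Theorem~\ref{THM:A}; it is simply quoted from \cite{CG1, GG}. Your outline is correct and follows the standard strategy of those references: a Gaussian $2p$-th moment bound from local nondeterminism, Kolmogorov/GRR chaining in $(a,t)$, and, for (ii), the law of an fBm with Hurst index $H\in(\dl, 1\wedge\frac1{2\rho})$ as transverse measure, with the deterministic drift contributing only a unimodular phase. The step you flag as the main obstacle is easy: since the $2p$ signs sum to zero, the first and last gap coefficients are $\pm1$ and consecutive coefficients differ by $\pm1$, so no two consecutive ones vanish and at least $p$ are nonzero.

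A few points are worth making explicit:
\begin{itemize}
\item You need a countable intersection over a sequence $\rho_n\uparrow\frac1{2\dl}$, because a fixed $H>\dl$ only covers $\rho<\frac1{2H}$.
\item This fBm law is a tight Borel measure on the non-separable space $C^\dl$, since its paths lie in the compactly embedded $C^{\dl'}$ for $\dl<\dl'<H$. You also need the set of irregular paths to be Borel; it is $F_\sigma$, because $\|\Phi^w\|_{\W^{\rho,\g}_1}$ is lower semicontinuous in $w$.
\item For $d\ge2$, the cleanest reduction is that $a\cdot W$ has the law of $|a|$ times a scalar fBm.
\end{itemize}
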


With Definition \ref{DEF:ir} in hand, 
we now recall the local well-posedness
and nonlinear smoothing results for  the modulated KdV \eqref{kdv1} on the circle;
see 
 \cite[Theorems 1.3 and 1.15]{CGLLO1}.

\begin{oldtheorem}\label{THM:B}
Given $\rho \ge\frac 12$,  $\frac12< \g < 1$, and $T> 0$, 
let  $w$ be $(\rho,\g)$-irregular on $[0, T]$ in the sense of Definition~\ref{DEF:ir}.

\smallskip

\noi
\textup{(i) (local well-posedness).}
Suppose that $\rho \ge \frac 12$ and $s\in \R$
satisfy one of the following conditions\textup{:}
\begin{align}
\begin{split}
\textup{(i.a)} &\ \  \tfrac 12 \le \rho \le \tfrac 34 \quad \text{and} \quad 
s > \tfrac 32 - 3 \rho, \\
\textup{(i.b)} &\ \  \rho > \tfrac 34\quad  \text{and} \quad  s \ge - \rho.
\end{split}
\label{reg1}
\end{align}

\noi
Then, the modulated KdV equation \eqref{kdv1}
on  $\T$
is locally well-posed in $H^s(\T)$.

\smallskip

\noi
\textup{(ii) (nonlinear smoothing).}
Suppose that $\rho \ge \frac 12$ and $s\in \R$
satisfy 
 \eqref{reg1}.
In addition, 
suppose that $s_0 > s$ satisfies 
 one of the following conditions\textup{:}
\begin{align}
\begin{split}
\textup{(ii.a)} &\ \  
0 \le s + \rho \le  \tfrac 12
 \quad \text{and} \quad 
s_0 < 2s +  3\rho - \tfrac 32, \\
\textup{(ii.b)} &\ \  
s + \rho >  \tfrac 12 
\quad  \text{and} \quad s_0 \le s + 2\rho - 1.
\end{split}
\label{reg3}
\end{align}

\noi
Given $u_0 \in H^s(\T)$, 
let $u
\in C([0, \tau]; H^s(\T))$ be the  solution 
to the modulated KdV equation~\eqref{kdv1}
on $\T$
with $u|_{t = 0} = u_0$, 
where 
$\tau \in (0,  T]$ denotes the local existence time.
Then, we have 
\begin{align*}
 u - e^{-w(t) \dx^3}u_0 \in C([0, \tau]; H^{s_0}(\T)).
\end{align*}

\end{oldtheorem}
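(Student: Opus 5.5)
Theorem~\ref{THM:B} is quoted from \cite{CGLLO1}, so we only outline how its proof goes. We pass to the modulated interaction representation $\uu(t)=\uw(t)^{-1}u(t)$ with $\uw(t)=e^{-w(t)\dx^3}$. On the Fourier side, \eqref{mild2} for \eqref{kdv1} becomes
\begin{align*}
\ft \uu_n(t) = \ft u_0(n) + \int_0^t \sum_{n_1+n_2=n} in\, e^{i w(t')\Phi(\bar n)}\, \ft\uu_{n_1}(t')\ft\uu_{n_2}(t')\,dt',
\end{align*}
where $\Phi(\bar n)=n^3-n_1^3-n_2^3=3nn_1n_2$ is the resonance function. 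We first reduce to mean-zero data using conservation of the mean and a Galilean-type shift; this makes $n,n_1,n_2\neq0$. We then define the driver
\begin{align*}
\XX_{t,r}(\uu)(n)=\sum_{n_1+n_2=n} in\,\Phi^w_{t,r}(3nn_1n_2)\,\ft\uu_{n_1}\ft\uu_{n_2}.
\end{align*}
Since $\Phi^w_{t,r}$ from \eqref{rho2} is exactly $\int_r^t e^{iaw}$, the $(\rho,\g)$-irregularity \eqref{rho1} gives
\begin{align*}
|\XX_{t,r}(\uu)(n)|\les |t-r|^\g \sum_{n_1+n_2=n} |n|\,\jb{nn_1n_2}^{-\rho}\,|\ft\uu_{n_1}||\ft\uu_{n_2}|.
\end{align*}
The proof then follows the nonlinear Young scheme described in Section~\ref{SEC:Young}. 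We need three bilinear bounds: $\|\XX_{t,r}(\uu)\|_{H^{s}}$ in terms of $|t-r|^\g\|\uu\|_{H^s}^2$, the analogous Lipschitz bound, and the corresponding bounds for the differentiated driver. With these in hand, the sewing lemma (Lemma~\ref{LEM:sew}) defines $\I^\XX(\uu)$ for $\uu\in C^{\g'}_T H^s$ with $1-\g<\g'$, which uses $\g>\frac12$. A contraction on a short interval then solves \eqref{YDE0a}, and undoing the interaction representation gives local well-posedness.

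The key step, and the main obstacle, is the deterministic bilinear estimate
\begin{align*}
\Big\| \sum_{n_1+n_2=n} \frac{|n|\,\jb{n}^s}{|nn_1n_2|^{\rho}\jb{n_1}^s\jb{n_2}^s} f_{n_1}g_{n_2}\Big\|_{\l^2_n}\les \|f\|_{\l^2}\|g\|_{\l^2}.
\end{align*}
We prove it by Schur's test or Cauchy--Schwarz, splitting by frequency sizes. By symmetry assume $|n_1|\ge|n_2|$.

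\textbf{High--low case} ($|n|\sim|n_1|\gg|n_2|$). The weight is $\sim |n|^{1-2\rho}|n_2|^{-\rho-s}$. For $s\ge-\rho$ and $\rho\ge\frac12$ this is summable after Cauchy--Schwarz in $n_2$, though it needs care when $s+\rho$ is small.

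\textbf{High--high to low case} ($|n_1|\sim|n_2|\gg|n|$). The weight is $\sim|n|^{1-\rho+s}|n_1|^{-2\rho-2s}$. Cauchy--Schwarz requires $\sum_{|n|\les N}|n|^{2(1-\rho+s)}\cdot N^{-4\rho-4s}\cdot N$ to be bounded for the dyadic block. This produces the threshold $s>\frac32-3\rho$ in the regime $\rho\le\frac34$, and $s\ge-\rho$ otherwise; together these reproduce the case split (i.a)/(i.b) in \eqref{reg1}.

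Getting the endpoint cases (equality in $s\ge-\rho$) requires a more careful $\l^2$-based argument rather than the crude $\l^\infty$ bound. We expect this to be the delicate part.

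For (ii), note that $u-\uw(t)u_0=\uw(t)\I^\XX(\uu)(t)$. It therefore suffices to rerun the same bilinear estimate with output weight $\jb{n}^{s_0}$ in place of $\jb n^{s}$. The high--low case then forces $s_0\le s+2\rho-1$. The high--high to low case gives $s_0<2s+3\rho-\frac32$, which becomes the binding constraint when $s+\rho\le\frac12$. This yields \eqref{reg3}, and continuity in time follows from the sewing bounds.
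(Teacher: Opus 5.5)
Your architecture matches what the paper describes for Theorem~\ref{THM:B}, whose proof it takes from the companion paper. The $(\rho,\g)$-irregularity of $w$ gives the driver bounds of Lemma~\ref{LEM:kdv1} for $\XX$ in \eqref{K1x}, and then the sewing lemma and a contraction solve \eqref{YDE0a}. The gap is in the one explicit computation you give for the key bilinear estimate: as written it does not produce \eqref{reg1}.

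In the high--high$\to$low block ($|n_1|\sim|n_2|\sim N\gg|n|$), your quantity $\sum_{|n|\les N}|n|^{2(1-\rho+s)}\cdot N^{-4\rho-4s}\cdot N$ is the full Hilbert--Schmidt sum $\sum_{n,n_1}m^2$ of the kernel. The extra factor $N$ counts the admissible $n_1$ and costs $N^{1/2}$ in the bound. Boundedness of that sum requires $s\ge 2-3\rho$ when $s>\rho-\frac32$, and $s\ge\frac14-\rho$ when $s<\rho-\frac32$. At $s=-\rho$ (with $\rho>\frac34$) it grows like $N$, so neither threshold in \eqref{reg1} comes out.

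The convolution sum should instead go in $\l^\infty_n$. By Cauchy--Schwarz in $n_1$, $\big|\sum_{n_1}f_{n_1}g_{n-n_1}\big|\le\|f\|_{\l^2}\|g\|_{\l^2}$ for every $n$, so the block is bounded by
\[
N^{-2\rho-2s}\Big(\sum_{0<|n|\les N}|n|^{2(1-\rho+s)}\Big)^{\frac12}\sim N^{-2\rho-2s}\max\big(1,N^{\frac32-\rho+s}\big),
\]
with a logarithm at $s=\rho-\frac32$. This is bounded uniformly in $N$ under \eqref{reg1}. It produces exactly the two conditions $s\ge-\rho$ and $s\ge\frac32-3\rho$, the latter binding for $\rho\le\frac34$.

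Several smaller points follow from this.
\begin{itemize}
\item With the corrected bound, the endpoint $s=-\rho$, $\rho>\frac34$, is not delicate. Each block is $O(1)$ because $\sum_{n\ne0}|n|^{2(1-2\rho)}<\infty$, and the sum over dyadic $N$ is handled by Cauchy--Schwarz over scales, since both inputs sit at the same scale $N$.
\item The same correction is needed in (ii). With your factor $N$ you would get $s_0\le 2s+3\rho-2$ instead of $2s+3\rho-\frac32$.
\item The high--low case is not automatic for $s\ge-\rho$, $\rho\ge\frac12$. Cauchy--Schwarz in $n_2$ gives $N^{1-2\rho}\max(1,N^{\frac12-\rho-s})$, so for $s+\rho<\frac12$ it also needs $s\ge\frac32-3\rho$; for example, $\rho=\frac12$, $s=-\frac12$ fails. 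It is \eqref{reg1}, not $s\ge-\rho$, that covers this case.
\item The identity $u-\uw(t)u_0=\uw(t)\I^\XX(\uu)(t)$ you use in (ii) presupposes mean-zero data. If $u_0$ has mean $\al_0\ne0$, undoing \eqref{gauge1} leaves the extra term $(e^{2\al_0t\dx}-\Id)e^{-w(t)\dx^3}(u_0-\al_0)$, which is in general no smoother than $u_0$. So your argument gives (ii) for mean-zero data, which is the setting of Lemma~\ref{LEM:kdv1}\,(iii).
\end{itemize}
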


The proof of Theorem \ref{THM:B} in \cite{CGLLO1} is based on 
making sense of the second term 
on the right-hand side of  \eqref{mild2}
as the nonlinear Young integral 
 $\I^\XX(\uu)$
associated  with 
the 
modulated KdV  driver $\XX = \XX^w$ defined by 
 \begin{align}
\XX_{t,r}(f_1,f_2)
=\int_r^t  \uw(t')^{-1}  \partial_x
\big( (\uw(t')  f_1)( \uw(t') f_2 )\big) dt'
\label{K1x}
\end{align}

\noi
for $0 \le r < t \le T$, where  $f_1$ and $f_2$ are functions on $\T$, 
and then studying local well-posedness
of the resulting 
 nonlinear YDE 
\eqref{YDE0a}.
See Subsection \ref{SUBSEC:Y2}
for a review of the nonlinear Young integration theory.
Then, the matter  reduces
to studying 
 regularity properties of the  driver~$\XX$
by making use of the $(\rho, \g)$-irregularity 
of the modulation function~$w$;
see 
Lemma~\ref{LEM:kdv1}.

\medskip

Theorem \ref{THM:B}
exhibits regularization by noise
in several respects.
First of all, 
given {\it any} $s \in \R$, 
the modulated KdV \eqref{kdv1} on $\T$
is locally well-posed
in $H^s(\T)$
by taking $\rho \gg1$ sufficiently large, 
whereas 
 the (unmodulated) KdV equation on $\T$:
\begin{equation}
\dt u+  \dx^3 u  =\dx u^2
\label{kdv2}
\end{equation}

\noi
is known to be ill-posed in $H^s(\T)$ for $s < -1$;
see
\cite{KPV96, CKSTT03, 
KT1, M12,  KV19}
for the known well-\,/\,ill-posedness
results for the (unmodulated) KdV \eqref{kdv2} on $\T$.
Furthermore, 
the gain of spatial regularity 
on the nonlinear part $ u - e^{-w(t) \dx^3}u_0 $
can be arbitrarily large by taking $\rho \gg1$
sufficiently large.
See \cite{CGLLO1}
for a further discussion on other aspects of regularization by noise.

In the context of stochastic differential equations (SDEs) 
and stochastic parabolic PDEs, 
there  have been intensive research activities
on regularization by noise 
since the 70's;
see 
 \cite{FGP, GG0, G22, RT, CGLLO1} and the  references therein.
See also   survey works \cite{Flan, Gess}.
Prior to the work~\cite{CGLLO1}, 
however, 
regularization by noise 
in the context of 
dispersive PDEs
was known only 
for probabilistic well-posedness with  random initial data and\,/\,or additive noises of super-critical regularity
(such as \cite{BO96, BT08, CO, OP, GKO2, DNY2, DNY3, OOT1, Bring, OOT2, BDNY});
see \cite[Remark~1.8]{CGLLO1}.
In this paper, we 
take 
 the  stochastic modulated
KdV (SmKdV) on $\T$ with a multiplicative noise:
\begin{align}
\dt u+  \dx^3 u \cdot \dt w = \dx u^2 + u \phi \zeta
\label{skdv1}
\end{align}

\noi
as our primary example, 
and establish its pathwise local well-posedness
in both the fractional-in-time case (= Young case: $\frac 12 < \be < 1$, 
where $\be$ denotes the Hurst parameter of 
 the noise $\phi \z$)
and the white-in-time case (= rough case: $\be = \frac 12$).
Furthermore, 
in  the Young case, 
we 
 present
a  novel regularization-by-noise phenomenon
by pathwise analysis.
In Appendix~\ref{SEC:NLS},
we also present an example of regularization by noise
by studying a stochastic modulated
Schr\"odinger equation
with a multiplicative Young noise.

\begin{remark}\label{REM:NF}\rm 
In \cite{CGLLO1}, 
 the (sufficient) temporal regularity of
the modulated interaction representation~$\uu$
was essential 
in applying the sewing lemma (Lemma \ref{LEM:sew})
to construct
a nonlinear Young integral
$\I^\XX(\uu)$ in \eqref{YDE0a}.
Our approach to study the stochastic modulated 
dispersive PDE \eqref{ME1} with a multiplicative noise
is also based on the sewing lemma, 
thus requiring  sufficient temporal regularity 
on the modulated interaction representation~$\uu$ of the unknown.
In a recent work
\cite{GLLO}, a sequel to the current paper, 
the last four authors 
developed a normal form approach 
to study the modulated dispersive PDE \eqref{ME2}
{\it without}
imposing any 
 positive temporal regularity
 on the modulated interaction representation $\uu$, 
 thus providing a significant improvement
 in terms of temporal regularity.
% as compared to the known results.
See~\cite{GLLO}
for further details.

\end{remark}

\begin{remark}\rm

Theorem \ref{THM:A} 
implies that Theorem \ref{THM:B}
applies 
to the case
where a modulation is given by a fractional Brownian motion
or a generic  $\dl$-H\"older continuous function.
See, for example, \cite[Corollary~1.5]{CGLLO1}.
A similar comment applies to our main results 
presented in Subsections \ref{SUBSEC:main1}
and  \ref{SUBSEC:main2}
and Appendix \ref{SEC:NLS}
for stochastic modulated dispersive PDEs.

\end{remark}

\subsection{Main results, Part 1: Young case} 
\label{SUBSEC:main1}

In this section and the next, 
we state our main results on 
pathwise local well-posedness
of SmKdV~\eqref{skdv1}.
Before proceeding further, let us introduce a simplification 
of the problem.
Recall from~\cite{CGLLO1}
that it is essential to impose 
the mean-zero assumption
on the unknown $u$
in order to have a (useful) nonlinear estimate
for the modulated KdV \eqref{kdv1} on $\T$.\footnote{The same comment also applies
to the unmodulated KdV \eqref{kdv2} on $\T$; see \cite{BO93, KPV96, CKSTT03}.}
For the (deterministic) modulated KdV on $\T$, 
if initial data has non-zero mean $\al_0$, then the following Galilean transformation:
\begin{align}
u(t, x)\mapsto u (t, x - 2\al_0t)-\al_0,
\label{gauge1} 
\end{align}
along with the conservation of the spatial mean (which can be verified
by arguing as in \cite[Section 6]{CGLLO1}), 
transforms
the equation with a non-zero mean into the mean-zero one.
See~\cite{Oh2} for such a reduction 
in the presence of an additive noise (in the unmodulated setting).

Let us turn our attention to \eqref{skdv1}.
Suppose that
initial data has non-zero mean $\al_0$.
Then, the Galilean transformation \eqref{gauge1}
converts \eqref{skdv1} into the following equation:
\begin{align}
\dt u+  \dx^3 u \cdot \dt w = \dx u^2 + 
\al_0  \phi \wt \zeta
+ u \phi \wt \zeta, 
\label{skdv1a}
\end{align}

\noi
where $\wt \z(t, x) = \z(t, x - 2\al_0 t)$,
such that the initial data now has mean zero.
Unfortunately,~\eqref{skdv1a} does not preserve the spatial mean of a solution and thus 
a further reduction is required to 
reduce the equation \eqref{skdv1a}
into a system of the (unmodulated) SDE for the spatial mean $\al(t)$
and the (mean-preserving) stochastic modulated KdV with an additive noise
and a multiplicative noise for the mean-zero part;
see \cite{CCO} for such a reduction in the unmodulated setting.\footnote{If we assume that 
the noise and a solution have sufficient spatial regularities, 
then we do not need to impose the mean-zero assumption; see \cite{Tsutsumi}.}
While it is possible to carry out our analysis on such a system, 
it would blur the focus of our main result.
Therefore, 
for simplicity of presentation, 
we choose to study the following slightly simplified model on $\T$:
\begin{align}
\begin{cases}
\dt u+  \dx^3 u \cdot \dt w = \dx u^2 + \P_{\ne 0}[u \phi \zeta]\\
u|_{t = 0} = u_0,
\end{cases}
\label{skdv2}
\end{align}

\noi
which preserves the spatial mean of a solution.
Here, $\P_{\ne 0}$ denotes
the projection onto non-zero (spatial) frequencies.
In the following, 
we assume that  the initial data $u_0$ (and hence a solution $u$) has  spatial mean zero.
As a further  simplification,  we only consider the case when 
$\phi$ is spatially homogeneous, namely 
$\phi$ is a Fourier multiplier operator
given by 
\begin{align}
\phi (e_n) = \phi_n e_n.
\label{phi1}
\end{align}

\noi
Under this assumption, we have 
\begin{align}
\| \phi \|_{\HS(L^2; H^\s)}= \|\jb{n}^\s\phi_n\|_{\l^2_n}.
\label{phi1a}
\end{align}

By writing \eqref{skdv2} 
in the Duhamel formulation
(at the level of the modulated interaction representation
$\uu$ defined in \eqref{int1}), 
we have 
\begin{equation}
\begin{split}
\uu(t) 
& = u_0 + \int_0 ^t \uw(t')^{-1}
\dx\big( (\uw(t') \uu(t'))^2\big)dt'
+ \PPsi(\uu) (t), 
\end{split}
\label{mild1b}
\end{equation}

\noi
where 
$\uw (t)=e^{-   w(t)\dx^3}  $
denotes the modulated linear propagator for \eqref{skdv2}
and $\PPsi(\uu)$ is given by 
\begin{align}
\begin{split}
 \PPsi(\uu) (t) 
 &  =  \P_{\ne 0} \int_0^t
\uw(t')^{-1} \big[ (\uw(t')\uu(t') )
\phi dW^\be(t') \big]\\
&  = \sum_{n\in \Z_*} e_n 
 \int_0^t \sum_{n_1,  n_2  \in \Z}  %_{\substack{n_1, n_2  \in \Z\\n_2 \ne 0}} 
 \ind_{n = n_1+n_2}
\cdot  e^{- i w(t') (n^3 - n_2^3)}\ft \uu(t', n_2) \phi_{n_1}    d B_{n_1}(t').
\end{split}
\label{psi2}
\end{align}

\noi
Here, $\Z_*= \Z\setminus \{0\}$ and $\{B_n\}_{n \in \Z}$ is as in \eqref{W1}, 
satisfying \eqref{W2}.
Our main goal in this paper is then to study pathwise local well-posedness of~\eqref{mild1b}
by giving a proper meaning to 
$\PPsi (\uu)$, depending on the value of the Hurst parameter $\be \in [\frac 12 , 1)$:

\smallskip

\begin{itemize}
\item[(i)] 
Young regime:
The noise in \eqref{skdv2} is 
fractional in time  with Hurst parameter $\frac 12 < \be < 1$.
In this case, we construct $\PPsi(\uu)$
as  a Young integral.

\medskip

\item[(ii)]
Rough regime:
The noise in \eqref{skdv2} is 
white in time (with Hurst parameter $\be = \frac 12$).
In this case, we construct $\PPsi(\uu)$
as  a rough integral.

\end{itemize}

\medskip

In this subsection, we restrict our attention
to the Young regime 
($\frac 12 < \be < 1$);
see the next subsection for the rough case.
%Given $\rho \ge\frac 12$, $\frac 12 < \g < 1$, and $T> 0$, 
%fix a $(\rho,\g)$-irregular function $w$ on $[0, T]$ in the sense of Definition~\ref{DEF:ir}.
Our first goal is to construct the stochastic term $\PPsi(\uu)$ in~\eqref{psi2}
as the (linear) Young integral $\I^{\YY}(\uu)$ associated 
with the following driver:
\begin{align}
\begin{split}
\YY_{t, r}(f)
&  = \P_{\ne 0} \int_r^t
\uw(t')^{-1} \big[ (\uw(t') f)
\phi dW^\be(t') \big]
\end{split}
\label{sto1x}
\end{align}

\noi
for $0 \le r < t \le T$, 
 where  $f$ is a function on $\T$.
The following theorem summarizes
the construction and 
a smoothing property of the stochastic term $\PPsi(\uu)$ in \eqref{psi2}.

\begin{theorem}\label{THM:1}

Given 
$\rho > 0$, 
$\frac 12 < \g, \be < 1$, $ \s \in \R$, and $T > 0$, 
let 
$w$ be 
 a $(\rho,\g)$-irregular function on $[0, T]$ in the sense of Definition~\ref{DEF:ir}
 and 
$\phi \in \HS(L^2(\T); H^\s(\T))$, satisfying
\eqref{phi1} and 
\begin{align}
\phi_0 = 0.
\label{phi2}
\end{align}

\noi
Suppose that 
$ \g_0 > 0 $ and $s, s_0 \in \R$ satisfy 
\begin{align}
\g_0 & <  \frac 12 + \frac 16 (2\be - 1)(2\g-1), 
\label{YT1}\\
s + \s & > -   \rho (2\be -1 ), 
\label{YT2}\\
\begin{split}
s_0 & < \min \bigg(s + \frac 23 \rho(2\be -1) + \Big(\s + \frac 13 \rho(2\be -1)\Big) \wedge 0, \\ 
& \hphantom{iiXXXX}s\wedge0 + \s +  \rho (2\be -1 )\bigg), 
\end{split}
\label{YT3}
\end{align}

\noi
where $a\wedge b = \min(a, b)$.
Then, given any $\uu \in \CC^\al([0, T]; H^s(\T))$, 
where
 $0 < \al < 1$ satisfies
$\g_0 + \al > 1$, 
the stochastic term $\PPsi (\uu)$ in \eqref{psi2} can be constructed as 
the Young integral
$\I^\YY(\uu)$ 
associated with the driver $\YY = \YY^{w, \phi}$ in \eqref{sto1x}
such that 
\[\PPsi(\uu) = \I^\YY(\uu)
 \in \CC^{\g_0}([0, T]; H^{s_0}(\T)),\] 

\noi
almost surely.
 As a  consequence, with $u(t) = \uw(t)\uu(t)$, 
 the stochastic convolution $\Psi(u)(t) = \uw(t)\PPsi(\uu)(t)$
 in~\eqref{psi1}
belongs to 
$C([0, T]; H^{s_0}(\T))$, almost surely.

\end{theorem}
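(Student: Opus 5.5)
Our plan is to verify that the driver $\YY = \YY^{w,\phi}$ in \eqref{sto1x} is, almost surely, a linear Young driver of temporal regularity $\g_0$ from $H^s(\T)$ into $H^{s_0}(\T)$. That is, we aim to show
\[
\|\YY_{t,r}\|_{\L(H^s;H^{s_0})} \les |t-r|^{\g_0}, \qquad 0 \le r<t\le T,
\]
together with additivity $\YY_{t,r} = \YY_{t,u}+\YY_{u,r}$, which is clear from the integral form. Once this holds, the sewing lemma (Lemma~\ref{LEM:sew}), applied to the germ $\YY_{t,r}(\uu(r))$ with $\uu\in\CC^\al([0,T];H^s)$ and $\g_0+\al>1$, produces $\I^\YY(\uu)\in \CC^{\g_0}([0,T];H^{s_0})$. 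We then identify $\I^\YY(\uu)$ with $\PPsi(\uu)$ in \eqref{psi2}, understood as a Young integral against the fBm, since both are limits of the same Riemann sums. Finally, $\Psi(u)=\uw\PPsi(\uu)$ is continuous in $H^{s_0}$ because $\uw(t)$ is a unitary, strongly continuous family on each $H^{s_0}$ ($w$ continuous).

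The core is the operator bound, which we would obtain by a random tensor argument. On Fourier modes, $\YY_{t,r}$ acts through the random kernel
\[
h_{n n_2}(t,r)=\phi_{n-n_2}\int_r^t e^{-iw(t')(n^3-n_2^3)}\,dB_{n-n_2}(t'), \qquad n\ne0.
\]
Since $n^3-n_2^3 = 3nn_1n_2 + n_1^3$ with $n_1=n-n_2\ne0$ (by \eqref{phi2}), we get $|n^3-n_2^3|\gtrsim |n_1|\,|n|\,|n_2|$ or $\gtrsim |n_1|^3$. For each fixed $n_1$ the integral is a Wiener integral against $B_{n_1}$. We would compute its variance using the fractional covariance kernel $|t'-t''|^{2\be-2}$ and interpolate two bounds: the trivial bound $|t-r|^{2\be}$, and a bound gaining from the irregularity. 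For the latter, write the double integral as an integral in $t''$ of $\Phi^w$-type increments and use \eqref{rho1} to obtain
\[
\lesssim \jb{n^3-n_2^3}^{-\rho(2\be-1)}|t-r|^{\g(2\be-1)+\cdots}.
\]
This is the source of both the factor $\rho(2\be-1)$ in \eqref{YT2}--\eqref{YT3} and the temporal exponent restriction \eqref{YT1}.

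We then bound $\|\jb{n}^{s_0}h_{nn_2}\jb{n_2}^{-s}\|_{\l^2_{n_2}\to\l^2_n}$ using Lemma~\ref{LEM:RT}, the random tensor estimate. Since the entries are Gaussian but share the same $B_{n_1}$ along diagonals $n-n_2=n_1$, we would use the Schur-type/diagonal structure: summing over $n_1$ of operator norms of the weighted shifts is bounded by $\sum_{n_1}|\phi_{n_1}|\sup_{n}(\cdots)$, with a $\log$-loss from the Gaussian supremum that is absorbed by the strict inequalities. We then split into cases according to which of $|n|,|n_1|,|n_2|$ dominates and which factor of $|n n_1 n_2|$ we spend. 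We expect this to reproduce the two branches of the minimum in \eqref{YT3}: one spending $\frac23$ of the gain on $n$ versus $n_2$ and $\frac13$ on $n_1$, and one spending all of it on $n_1$ when $|n|\sim|n_2|$ or $|n_1|$ large. The factor $\jb{n_1}^{\s}$ is paid via $\|\phi\|_{\HS}$ and Cauchy--Schwarz, and \eqref{YT2} ensures summability of the resulting series.

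To upgrade the pointwise-in-$(t,r)$ moment bounds to an almost sure Hölder bound in $(t,r)$, we would apply Kolmogorov's continuity theorem (or GRR) in the two-parameter variable. Gaussian hypercontractivity gives all moments, at the cost of an arbitrarily small loss in $\g_0$, again absorbed by the strict inequality in \eqref{YT1}.

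The main obstacle is the variance bound for the oscillatory Wiener integral against fBm using only $(\rho,\g)$-irregularity. It requires carefully interpolating temporal and frequency decay so that the exponent $\frac12+\frac16(2\be-1)(2\g-1)$ comes out. We would first try to write the covariance as $\iint e^{ia(w(t')-w(t''))}|t'-t''|^{2\be-2}\,dt'\,dt''$, integrate by parts in $t'$ against $\Phi^w$, and balance the singular kernel by splitting at scale $|t'-t''|\sim\eps$.
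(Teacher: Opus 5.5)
The genuine gap is in the operator-norm step. You bound $\jb{\nb}^{s_0}\YY_{t,r}\jb{\nb}^{-s}$ by the triangle inequality over the diagonals $n_1=n-n_2$, i.e.\ by $\sum_{n_1}|\phi_{n_1}|\sup_n|\cdots|$, and then apply Cauchy--Schwarz in $n_1$. This throws away the independence of the $B_{n_1}$ for different $n_1$, and the price is a power, not a logarithm. On a block $|n_1|\sim N_1$ you pay $\sum_{|n_1|\sim N_1}|\phi_{n_1}|\les N_1^{\frac12-\s}\|\phi\|_{\HS(L^2;H^\s)}$ instead of $\|\phi\,\ind_{|n_1|\sim N_1}\|_{\l^2}\les N_1^{-\s}\|\phi\|_{\HS(L^2;H^\s)}$. (Already in the unmodulated case the operator is multiplication by the random series $\sum_{n_1}\phi_{n_1}(B_{n_1}(t)-B_{n_1}(r))e_{n_1}$. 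Its sup norm is governed by $\|\phi\|_{\l^2}$ up to a logarithm, whereas your bound is of size $\|\phi\|_{\l^1}$.)

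This half derivative is not absorbed by the strict inequalities in \eqref{YT2}--\eqref{YT3}, which are asserted for every $\rho>0$. For instance, $\s=0$ and $s=s_0=0$ are admissible for every $\rho>0$. Suppose the entries have standard deviation $\les\jb{n^3-n_2^3}^{-\mu}\sim\jb{n_1}^{-\mu}\jb{n_{\max}}^{-2\mu}$. Then the best your scheme yields is $\sum_{n_1}|\phi_{n_1}|\jb{n_1}^{-3\mu}$, coming from the entries with $n_{\max}\sim|n_1|$. This sum is infinite for suitable $\phi\in\HS(L^2;L^2)$ unless $\mu>\frac16$. With your claimed variance bound, $\mu=\frac12\rho(2\be-1)$, so the argument already fails for $\rho(2\be-1)\le\frac13$. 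More generally, any decay extracted from \eqref{rho1} degenerates as $\rho\to0$.

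What is needed is to apply Lemma~\ref{LEM:RT} as stated, which is what the paper does. After localizing to $|n|\sim N$, $|n_j|\sim N_j$, one treats $I_1[\hf_{nn_2}(n_1)\ff_{nn_2}]$ as an order-one random tensor with $(b,c)=(n_2,n)$ and summed index $n_1$. The lemma bounds its $\l^2_{n_2}\to\l^2_n$ norm by $\|\ff_{nn_2}\|_{\l^\infty_{nn_2}\Hs^\be_{t'}}$ times $\max\big(\|\hf\|_{n_1n_2\to n},\|\hf\|_{n_2\to nn_1}\big)$, up to a loss of $N_{\max}^{\ta/2}$ with $\ta>0$ arbitrarily small. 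Both deterministic norms are $\les N^{s_0}N_1^{-\s}N_2^{-s}\|\phi\|_{\HS(L^2;H^\s)}$ by Cauchy--Schwarz, so only $\|\phi\,\ind_{|n_1|\sim N_1}\|_{\l^2}$ enters.

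For the time--frequency bound, your covariance splitting at $|t'-t''|\sim\eps$ is a genuinely different route from the paper's. The paper integrates by parts in $\ft{\ff}_{nn_2}(\tau)$, splits into $|\tau|\le1$ and $|\tau|>1$, and uses H\"older interpolation between Plancherel and a $|\tau|^b$-weighted norm with $b<-3$. That yields the temporal exponent in \eqref{YT1} and the decay $\jb{n_1}^{-\frac13\rho(2\be-1)}\jb{n_{\max}}^{-\frac23\rho(2\be-1)}$. Your route appears workable, but you have not derived the exponents you quote. What you must actually show is standard-deviation decay of at least $\jb{n^3-n_2^3}^{-\frac13\rho(2\be-1)}$ at temporal order $\g_0$.
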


There are several temporal regularities in the statement.
For readers' convenience, we summarize them here:
\begin{itemize}
\item 
$\g$ denotes the ``temporal regularity'' of the modulation function $w$
in the sense of Definition \ref{DEF:ir},\footnote{Indeed, 
$\g$ 
corresponds to the temporal regularity of the nonlinear Young integral 
$\I^\XX(\uu)$,  appearing in~\eqref{YDE0b},  
with the bilinear driver $\XX$ in~\eqref{K1x}.}

\smallskip
\item 
$\be$ denotes the Hurst parameter of the noise $\phi W^\be$ in \eqref{sto1x}
(which has temporal regularity $\be - \eps$ for any $\eps > 0$, almost surely),\footnote{In the remaining part
of the paper, we use $\eps > 0$ to denote an arbitrarily small constant.
Various constants depend on $\eps$ but,
for simplicity of notation,  we suppress such $\eps$-dependence.}

\smallskip
\item 
$\g_0$ denotes the temporal regularity of the stochastic term $\PPsi (\uu)$, 
while 
$\al$ denotes the temporal regularity of the input function $\uu$.

\end{itemize}

\smallskip

\noi
We will stick to these notations in the remaining part of the paper.

We first note that the condition \eqref{YT1}
allows us to take $\g_0 > \frac 12$ since 
$\frac 12 < \g, \be < 1$.
We also note that the right-hand side of \eqref{YT1}
is bounded by $\min(\g, \be)$.
The condition \eqref{YT2}
states that, by taking $\rho \gg 1$ sufficiently large, 
we can take 
 the  spatial regularity $s$ of the input function $\uu$
and the spatial regularity $\s$ of the noise $\phi \z$
to be arbitrarily low.
The condition~\eqref{YT3}
states that, by taking $\rho \gg 1$ sufficiently large,  
 the output spatial regularity $s_0$ of $\PPsi(\uu)$
 can be  arbitrarily high (recall $\be>  \frac 12$), 
 exhibiting a new type of {\it regularization by noise}
 for the stochastic convolution $\Psi(u)(t) = \uw(t)\PPsi(\uu)(t)$
 in~\eqref{psi1}.
This regularization by noise
on the stochastic term $\PPsi(\uu)$
comes from  the 
 interaction of a  modulated wave and the noise.

\begin{remark}\label{REM:n0}\rm 
(i)  We point out that 
 the regularization by noise 
 on $\PPsi(\uu)$ stated in Theorem~\ref{THM:1}
 fails
unless the noise $\phi \z$ has spatial mean zero (namely, 
the condition \eqref{phi2}, 
leading to $n_1 \ne 0$ in \eqref{psi2}).
See Remark \ref{REM:YG1}\,(i) below.

We also note that 
 the construction of the stochastic term $\PPsi(\uu)$ in Theorem~\ref{THM:1}
  also holds without the projector $\P_{\ne 0}$ 
(corresponding  to the condition $n \ne 0$ in \eqref{psi2})
which we inserted for the purpose of local well-posedness
of SmKdV \eqref{skdv2} (Theorem \ref{THM:2}).

\smallskip

\noi
(ii) This new regularization-by-noise phenomenon
occurs only in the Young case.
The same comment applies to Theorem \ref{THM:2} 
on pathwise local well-posedness of \eqref{skdv2} stated below.
In the rough case ($\be = \frac 12$), 
there is no such 
regularization by noise.
See Remark \ref{REM:rough1}.

\end{remark}

We prove 
 Theorem \ref{THM:1}
 by constructing
 $\PPsi(\uu)$
 as the (linear) Young integral
$\I^\YY(\uu)$:
\begin{align}
\PPsi (\uu) = \I^\YY(\uu)
\label{sto1y}
\end{align}

\noi
with 
the Young driver
$\YY$
defined in \eqref{sto1x}.
In recent works \cite{CLO2, CLO3, COZ, CLOO}, 
the first, fourth, and fifth authors
developed
an approach to proving pathwise local well-posedness
of stochastic (unmodulated) dispersive PDEs with multiplicative noises, 
in particular in the low regularity setting.
We adapt the approach in 
\cite{CLO2, CLO3, COZ, CLOO}
to the current modulated setting.
More precisely, 
we   establish almost sure mapping  properties
(in particular, the smoothing property)
of the Young driver~$\YY$ in~\eqref{sto1x}
by using
the random tensor estimate
for stochastic integrals
with respect to fractional Brownian motions 
(Lemma \ref{LEM:RT}), 
developed in~\cite{CLO2}.
Here,   the nonlinear interaction between the input function $\uu$
and the noise $\phi\z$ 
together with 
 the irregularity of the modulation function $w$
induces a strong smoothing property;
see Proposition \ref{PROP:drive2}.
Once this is achieved, 
Theorem~\ref{THM:1}
follows from the standard construction 
of a Young integral 
(see
Lemma~\ref{LEM:int1})
via the sewing lemma, 
and thus we omit details of a proof of 
Theorem~\ref{THM:1}.
Under the hypothesis of Theorem \ref{THM:1}, 
the stochastic term $\PPsi(\uu)$
in \eqref{psi2} and \eqref{sto1y}
is given 
as the unique limit of 
Riemann-Stieltjes type sums:
\begin{align}
\PPsi(\uu)(t)
& = \I^{\YY}(\uu)(t) = \lim_{|\Pi([0,t])|\to 0} 
\sum^{n-1}_{j=0} \YY_{t_j,t_{j+1}}(\uu_{t_{j+1}}), 
\label{sto1z}
\end{align} 

\noi
 where 
 the limit is taken over any partition
 $\Pi([0,t])$  
 of  $[0,t]$\textup{:} 
\[\Pi ([0,t]) = \{0 = t_{n} < \dots < t_1 <  t_0 = t\}\]
whose mesh size 
$|\Pi([0,t])| = \max_{j} |t_j-t_{j+1}|$ 
tends to $0$.

\medskip

We now state 
a  pathwise local well-posedness
result
for the (mean-zero) SmKdV \eqref{skdv2} on~$\T$ with a multiplicative 
fractional-in-time noise (with Hurst parameter $\frac 12 < \be < 1$), 
which essentially 
follows as a corollary to Theorem \ref{THM:B} 
(more precisely, Lemma \ref{LEM:kdv1}) and Theorem~\ref{THM:1}
(more precisely, Proposition \ref{PROP:drive2}).
Given $s \in \R$, we set 
\begin{align}
 H_0^s(\T)= \P_{\ne 0}H^s(\T), 
 \label{hs1}
\end{align}

\noi
where $\P_{\ne 0}$ denotes the projection onto non-zero frequencies.
Namely, 
 $H_0^s(\T)$
is a subspace of  $H^s(\T)$, 
consisting of mean-zero elements.

\begin{theorem}\label{THM:2}

Given 
$\rho \ge\frac 12$,
$\frac 12 < \g, \be < 1$, $ \s \in \R$, $T > 0$, 
let 
$w$ be 
 a $(\rho,\g)$-irregular function on $[0, T]$ in the sense of Definition~\ref{DEF:ir}
 and 
$\phi \in \HS(L^2(\T); H^\s(\T))$, satisfying 
\eqref{phi1} and~\eqref{phi2}.

\smallskip

\noi
\textup{(i) (local well-posedness).}
Suppose that
the parameters satisfy
 \eqref{reg1}, 
\eqref{YT2}, and
\begin{align}
s  < 
 \s +  \rho (2\be -1 ).
\label{YT4}
\end{align}

\noi
Then, the stochastic modulated KdV equation \eqref{skdv2}
on  $\T$
with a multiplicative fractional-in-time noise 
with the Hurst parameter $\be \in \big(\frac 12 , 1\big)$
is pathwise locally well-posed in $H^s_0(\T)$.
Moreover, the modulated interaction representation~$\uu$
of the solution almost surely belongs
to $\CC^{\g_0}([0, \tau]; H^s_0(\T))$
for any $ \g_0 \in \big(\frac 12 ,  \g\big] $ satisfying \eqref{YT1}, 
where 
 $\tau \in [0, T]$ denotes the almost surely positive  time of local existence.

\medskip

\noi
\textup{(ii) (nonlinear smoothing).}
In addition, 
suppose that $s_0 > s$ satisfies \eqref{reg3}
and \eqref{YT3}.
Given $u_0 \in H^s_0(\T)$, 
let $u
\in C([0, \tau]; H^s_0(\T))$ be the  solution 
to the stochastic  modulated KdV equation~\eqref{skdv2}
on $\T$
with $u|_{t = 0} = u_0$ constructed in Part (i).
Then, we have\footnote{While the stochastic term
$\PPsi(\uu)$ is linear in $u$, it is nonlinear in the noise
in view of the Picard iteration
for \eqref{mild1b}.
For this reason, we do not remove
$\Psi(u)(t) = \uw(t)\PPsi(\uu)(t)$ in \eqref{YT5}.} 
\begin{align}
 u - e^{-w(t) \dx^3}u_0 \in C([0, \tau]; H^{s_0}_0(\T)).
\label{YT5}
\end{align}

\end{theorem}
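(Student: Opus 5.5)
The plan is to recast \eqref{mild1b} as a mixed Young differential equation
\begin{align*}
\uu = u_0 + \I^\XX(\uu) + \I^\YY(\uu),
\end{align*}
with $\XX$ the bilinear modulated KdV driver in \eqref{K1x}, restricted to mean-zero functions, and $\YY$ the linear stochastic driver in \eqref{sto1x}. Both terms are then handled with the nonlinear Young integration theory of Subsection~\ref{SUBSEC:Y2}.

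The first step collects the driver regularity. Under \eqref{reg1}, Lemma~\ref{LEM:kdv1} gives $\XX \in \CC^{\g}([0,T]; \L^2(H^s_0 \times H^s_0; H^s_0))$. For the nonlinear smoothing part we use the improved target $H^{s_0}_0$, which is available under \eqref{reg3}.

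The second step treats $\YY$. Proposition~\ref{PROP:drive2} gives, almost surely, $\YY \in \CC^{\g_0}([0,T]; \L(H^s_0; H^{s_1}_0))$ for any $s_1$ obeying \eqref{YT3}, provided \eqref{YT2} holds. Assumption \eqref{YT4} is exactly what lets us take $s_1 = s$ as an admissible target. We check this against \eqref{YT3}:
\begin{itemize}
\item the term $s\wedge 0 + \s + \rho(2\be-1)$ exceeds $s$ when $s\le 0$ because $\s+\rho(2\be-1)>0$ (from \eqref{YT2}), and when $s>0$ because of \eqref{YT4};
\item the first term of the minimum exceeds $s$ because, by \eqref{YT2}, $\frac23\rho(2\be-1) + \s + \frac13\rho(2\be-1) > -s + s = 0$ up to the $\wedge 0$ bookkeeping. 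This needs a short check: if $\s + \frac13\rho(2\be -1)<0$, we need $\s+\rho(2\be-1)>0$, which follows from \eqref{YT2} combined with \eqref{YT4}.
\end{itemize}
Thus $\YY$ maps $H^s_0$ into $H^s_0$ with Hölder-in-time regularity $\g_0>\frac12$, almost surely.

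The third step is the fixed point argument, run pathwise on a fixed $\omega$ in the full-measure set from the second step. We work in $\CC^{\g_0}([0,\tau];H^s_0)$ with $\g_0\in(\frac12,\g]$ satisfying \eqref{YT1}, so that $2\g_0>1$ and $\g + \g_0 > 1$.
\begin{itemize}
\item By the nonlinear Young integral estimate (Lemma~\ref{LEM:int1} and its bilinear analogue), $\|\I^\XX(\uu)\|_{\CC^{\g_0}_\tau}$ is bounded by a multiple of $\tau^{\g-\g_0}$ times the driver norm times a polynomial in $\|\uu\|_{\CC^{\g_0}_\tau}$.
\item Similarly, $\|\I^\YY(\uu)\|_{\CC^{\g_0}_\tau}$ is bounded by $\|\YY\|$ times $\big(|u_0|+\tau^{\g_0}\|\uu\|_{\CC^{\g_0}_\tau}\big)$. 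To gain a small power of $\tau$ here, we use the driver at a slightly lower regularity $\g_0' \in (\g_0, \text{RHS of \eqref{YT1}})$.
\end{itemize}
Difference estimates go the same way, using linearity of $\YY$ and bilinearity of $\XX$. Hence we get a contraction on a ball for $\tau = \tau(\omega, \|u_0\|_{H^s})>0$. The same estimates also give continuous dependence on $u_0$ and uniqueness in this class. Since the solution $\uu$ is defined pathwise from $\YY(\omega)$, this is pathwise local well-posedness.

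For (ii), we take the fixed point $\uu$ and reapply the two driver bounds with the higher target regularity $s_0$:
\begin{itemize}
\item $\I^\XX(\uu)\in \CC^{\g_0}H^{s_0}_0$ by the smoothing in Theorem~\ref{THM:B}\,(ii), i.e.\ Lemma~\ref{LEM:kdv1} under \eqref{reg3};
\item $\I^\YY(\uu)\in\CC^{\g_0}H^{s_0}_0$ by Theorem~\ref{THM:1} under \eqref{YT3}.
\end{itemize}
So $\uu-u_0\in C([0,\tau];H^{s_0}_0)$. Applying the unitary map $\uw(t)$, which is continuous in $t$ on each $H^{s}$ because $w$ is continuous, yields \eqref{YT5}.

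The main obstacle is the second step, namely Proposition~\ref{PROP:drive2}, i.e.\ showing that the stochastic driver is $\g_0$-Hölder with values in operators $H^s\to H^{s}$. This rests on the random tensor estimate combined with the $(\rho,\g)$-irregularity, which produces the gain $\rho(2\be-1)$. Given that proposition, the remaining delicate point is the arithmetic of the exponents: verifying that $s_1=s$ is admissible in \eqref{YT3}, and leaving a positive power of $\tau$ in both Young estimates.
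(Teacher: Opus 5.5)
Your proposal is correct and follows the paper's route. The paper deduces Theorem~\ref{THM:2} from Proposition~\ref{PROP:main}, combined with Lemma~\ref{LEM:kdv1} and Proposition~\ref{PROP:drive2}; you reproduce that proposition's contraction argument directly, with input exponent $\al=\g_0$ and driver exponents $\g$ and $\g_0'$ (here $\g_0'>\g_0$ is a slightly \emph{higher}, not lower, H\"older exponent). The paper likewise notes that \eqref{YT4} is \eqref{YT3} with $s_0=s$, using \eqref{YT2} when $s<0$, and it obtains Part~(ii) exactly as you do, via the nonlinear smoothing statements (Lemma~\ref{LEM:int1}\,(iv), Proposition~\ref{PROP:main}\,(iii)) with target $H^{s_0}_0$.
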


Theorem \ref{THM:2} 
(and Theorem \ref{THM:5} below)
establishes
the first {\it pathwise} local well-posedness 
result for stochastic modulated dispersive PDEs
with multiplicative noises.
Moreover, 
Theorem \ref{THM:2} provides
the first regularization-by-noise
result for these equations;
noting that $\be > \frac 12$, 
given $\s \in \R$, 
it follows from 
 \eqref{reg1}, 
\eqref{YT2}, and
\eqref{YT4} that we can take
the (spatial) regularity~$s$
of initial data $u_0$ (and a solution $u$) 
to be arbitrarily low
by taking $\rho \gg1$ sufficiently large.
Furthermore, given the regularity
$s \in \R$ of initial data $u_0$, 
it follows
from 
 \eqref{reg3} and~\eqref{YT3} that 
the regularity $s_0$ of the nonlinear part
$ u - e^{-w(t) \dx^3}u_0$
can be  arbitrarily high
by taking $\rho \gg1$ sufficiently large.

We prove Theorem \ref{THM:2} by 
interpreting 
\eqref{mild1b}
as the following nonlinear YDE
with a Young perturbation:
\begin{equation}
\uu = u_0 + \I^\XX(\uu) + \I^\YY(\uu), 
\label{YDE0b}
\end{equation}

\noi
where 

\smallskip
\begin{itemize}
\item
$\I^\XX(\uu)$ is the nonlinear Young integral 
with the bilinear driver $\XX$ in \eqref{K1x}, 
constructed in \cite{CGLLO1}
(see Lemmas \ref{LEM:int1} and \ref{LEM:kdv1}), representing
the second term on the right-hand side of \eqref{mild1b}, and

\smallskip
\item 
$\I^\YY(\uu)$
denotes the stochastic term $\PPsi(\uu)$
constructed in Theorem \ref{THM:1}
as the (linear) Young integral
with the driver $\YY$  in 
\eqref{sto1x}. 

\end{itemize}

\smallskip

\noi
In  Proposition~\ref{PROP:main}
(see also Remark \ref{REM:mean0}), 
 we present a  general local well-posedness result
for the nonlinear YDE 
with a Young perturbation 
of the form  \eqref{YDE0b}.
Then, Theorem~\ref{THM:2} follows
as a consequence of 
Proposition~\ref{PROP:main}
with 
 Lemma \ref{LEM:kdv1} and 
 Proposition \ref{PROP:drive2}
 on the regularity properties
 of the drivers $\XX$ and $\YY$
 in \eqref{K1x} and \eqref{sto1x}, respectively, 
 and thus we omit details of a proof of Theorem \ref{THM:2}.
We only note that 
 the  condition \eqref{YT4}
 comes from 
setting $s_0 = s$ in~\eqref{YT3}
 (where we also use \eqref{YT2} when $s < 0$).

\begin{remark}\rm

In \cite{GT10}, 
Tindel and the second author
developed the convolution Young and rough integration theory, 
adapted to an analytic semigroup, 
and 
studied pathwise well-posedness
of the stochastic heat equation with a multiplicative noise 
in both the Young case ($\frac 12 < \be < 1$) and 
the rough case ($\be = \frac 12$).
See also 
\cite{GLT, GH} %for the case of a finite-dimensional noise
and \cite[Exercises 4.16 and~4.17]{FH20}.

Our pathwise approach in both the Young and rough cases, based
on the strategy introduced in \cite{CLO2, CLO3, COZ, CLOO}
for  stochastic 
(unmodulated)
dispersive PDEs with multiplicative noises, 
is strongly inspired by the argument in \cite{GT10}, but
with two notable differences.
For dispersive PDEs, 
we can work with interaction representations
(the modulated interaction representation $\uu$ in~\eqref{int1} in our case), 
which leads to a certain simplification.
Another key difference comes from 
the random tensor estimate (Lemma \ref{LEM:RT})
for multiple stochastic integrals
with respect to (fractional) Brownian motions,
developed in \cite{OWZ,  CLO2}, 
allowing us to estimate operator norms
of random operators in a more effective manner
as compared to~\cite{GT10}, 
where operator norms were crudely bounded by 
Hilbert-Schmidt norms
which are 
 more amenable to stochastic analysis
(in particular, the Wiener-Ito isometry).
In a recent work \cite{SLO}, 
the fourth and fifth authors with Y.\,Shao
combined
the convolution Young and rough integration theory developed in \cite{GT10} 
with  the random tensor estimate approach
used 
in this paper (and in \cite{CLO2, CLO3, COZ, CLOO})
and studied
pathwise well-posedness of the stochastic heat equation with a multiplicative noise, 
improving the results in \cite{GT10}. 
We note that the presentation in \cite{SLO} is restricted
to the one-dimensional case but it can be easily adapted to any dimension.
Moreover, 
 in the rough case ($\be = \frac 12$), 
the result in \cite{GT10} covers the case of almost space-time white noise
(on $\R_+ \times \T$), 
thus establishing an optimal result
within the framework of one-parameter rough paths.
See \cite{SLO} for a further discussion.

\end{remark}

\begin{remark}\rm

(i)
Given $N \in \N$, consider the following Galerkin approximation to \eqref{skdv2}:
\begin{align}
\begin{cases}
\dt u_N+  \dx^3 u_N \cdot \dt w = \dx \P_N\big((\P_N u_N)^2 \big)+ \P_{\ne 0}
\P_N [(\P_N u_N) \P_N \phi \zeta]\\
u_N|_{t = 0} = \P_N u_0,
\end{cases}
\label{skdv2x}
\end{align}

\noi
where $\P_N$
 denotes  the Dirichlet projector onto the (spatial) frequencies 
$\{|n| \leq N\}$.
Let $\YY^N$ denote the driver associated with the noise 
$\P_{\ne 0}
\P_N [(\P_N u_N) \P_N \phi \zeta]$ in \eqref{skdv2x},
obtained by appropriately
inserting $\P_N$ in \eqref{sto1x}.
Then, it follows from a slight modification of the proof of Proposition \ref{PROP:drive2}
that $\YY^N$ converges almost surely to $\YY$ in 
$\cX^{s, s_0, \g_0}_{1, 0} ([0, T]\times \T)$, 
where 
$\cX^{s, s_0, \g_0}_{1, 0} ([0, T]\times \T)$ is as in \eqref{X1y}.
Putting this observation together with Lemma~\ref{LEM:kdv1}\,(iv), 
we conclude that the solution $u_N$
of \eqref{skdv2x}
converges almost surely to the solution $u$ of \eqref{skdv2}
in $C([0, \tau]; H^s_0(\T))$
for some almost surely positive $\tau$,  
provided that  
\eqref{reg3a}, \eqref{YT2}, and \eqref{YT4}
hold.
We omit details.

Lemma~\ref{LEM:kdv1}\,(ii) yields
 persistence of regularity 
for the (deterministic) modulated KdV~\eqref{kdv1};
see \cite[Remark 1.4]{CGLLO1}.
In the stochastic setting, such 
persistence of regularity is false in general;
even if initial data is smooth, 
the spatial regularity of a solution to~\eqref{skdv2}
constructed in Theorem \ref{THM:2}
is 
bounded above by $s_0$ satisfying \eqref{YT3}
(which depends on the value of $\s$).

%Given the parameters $\rho, \g, \be$, and $\s$, 
%let $s \in \R$ be as in Theorem \ref{THM:2}
%
%
%the modulated interaction representation 
%$\uu$ of a solution 
%to \eqref{skdv2}
%constructed 
%
% and $s_0 > 0$.
%Then, we have
%$\CC^{ \g_0}([0, \tau]; H^{s_0}_0(\T))$
%if $s_0$ additionally satisfies \eqref{YT3}.

These 
comments also apply to  the rough case ($\be = \frac 12$).

\smallskip

\noi
(ii)
We also note that 
our pathwise approach via the sewing lemma
allows for a straightforward Euler approximation scheme
for time discretization; see, for example,  \cite[Subsection 2.3]{CG1}
and \cite[Subsection 3.3]{Gub12}.
See also 
 \cite[Remark 1.17]{GLLO}
and \cite{CFO}.

\end{remark}

\begin{remark}\label{REM:deriv}\rm

The stochastic KdV equation appears in various fields of mathematical physics
(\cite{BKKS, Herman, Gar}), sometimes with a damping term, 
and 
the noise of the form $u \phi \z$ in \eqref{skdv2} %(i.e.~$\kk_1 = \kk_2 = 0$)
 corresponds to dissipation,
while the noise of the form
$(\dx u) \phi \z$ 
corresponds to velocity fluctuations; see \cite[pp.\,84-85]{BKKS}.

Given 
$\rho \ge\frac 12$,
$\frac 12 < \g, \be < 1$, $ \s \in \R$, $T > 0$, 
let 
$w$ be 
 a $(\rho,\g)$-irregular function on $[0, T]$ in the sense of Definition~\ref{DEF:ir}
 and 
$\phi \in \HS(L^2(\T); H^\s(\T))$, satisfying 
\eqref{phi1} and~\eqref{phi2}.
Then, given any $s \in \R$, 
a straightforward modification of the proof of Theorem \ref{THM:2}
yields pathwise local well-posedness 
in $H^s_0(\T)$ of the following SmKdV
on $\T$
with a multiplicative Young noise ($\frac 12 < \be < 1$):
\begin{align*}
\dt u+  \dx^3 u \cdot \dt w = \dx u^2 + \P_{\ne 0}\dx^{\kk_1}[(\dx^{\kk_2}u) \phi \zeta]
\end{align*}

\noi
for any $\kk_1, \kk_2 \in \N$, 
provided that $\rho = \rho(\be,  s, \s, \kk_1, \kk_2)\gg1$
is sufficiently large.
In particular, 
when $\kk_1 =0$ and  $\kk_2 = 1$, 
the noise 
is of the form 
$(\dx u) \phi \z$, 
representing velocity fluctuations.

%The stochastic KdV equation appears in various fields of mathematical physics
%(\cite{BKKS, Herman, Gar}), sometimes with a damping term, 
%and 
%the noise of the form $u \phi \z$ (i.e.~$\kk_1 = \kk_2 = 0$)
% corresponds to dissipation,
%while the noise of the form
%$(\dx u) \phi \z$ (i.e.~$\kk_1 = \kk_2 = 0$)
%corresponds to ; see \cite[pp.\,84-85]{BKKS}.

%We also point out that, in view of
%\eqref{K3}, 
%analogous local well-posedness in $H^s_0(\T)$
%for any $s \in \R$
%(and thus regularization by noise)
%holds
%for the following
%SmKdV
%with a multiplicative Young noise
%(which is quadratic in $u$):
%\begin{align*}
%\dt u+  \dx^3 u \cdot \dt w = \dx u^2 + \P_{\ne 0}\dx^{\kk_1}[(\dx^{\kk_2}u)
%(\dx^{\kk_3}u)
% \phi \zeta]
%\end{align*}
%
%
%\noi
%for any $\kk_1, \kk_2, \kk_3 \in \N$, 
%provided that $\rho = \rho(\be,  s, \s, \kk_1, \kk_2)\gg1$
%is sufficiently large.
%Since a required modification of the proof of Proposition \ref{PROP:drive2}, 
%using 
%\eqref{K3} instead of \eqref{YG3e}, 
%is straightforward, we omit details.

\end{remark}

\begin{remark}\rm

In \cite{CGLLO1}, 
the authors also studied 
 the 
modulated Benjamin-Ono equation (BO) on $\T$:
\begin{equation}
\label{BO}
\dt u-    \H\dx^2 u \cdot \dt w=\dx u^2,
\end{equation}

\noi
where $\H$ denotes the
Hilbert transform
with the Fourier multiplier $- \ind_{n\ne 0}\cdot i \sgn (n)$, 
and 
the 
modulated 
intermediate long wave equation (ILW) on $\T$:
\begin{equation}
\label{ILW1}
\dt u-    \Gdl\dx^2 u \cdot \dt w =\dx u^2
\end{equation}

\noi
for  $0 < \dl < \infty$,
where the operator~$\Gdl$ is given 
as the following Fourier multiplier operator:
\begin{align}
\ft{\Gdl f}(n) =
\ft{\Gdl}(n) \ft f(n)
:=
- \ind_{n\ne 0}\cdot i \bigg(\coth(\dl n)  - \frac{1}{\dl n}\bigg) \ft{f}(n), \quad n\in\Z.
\label{GG1}
\end{align}

\noi
See \cite{CLO, CGLLO1, CHLO} for the references therein
on (unmodulated) BO and ILW and their relation to (unmodulated)
KdV.

By adapting the proofs of Theorems \ref{THM:1} and \ref{THM:2}, 
we can prove pathwise local well-posedness
in $H^s_0(\T)$ 
of the following stochastic modulated BO
with a multiplicative Young noise:
\begin{equation}
\label{sbo1}
\dt u-    \H\dx^2 u \cdot \dt w=\dx u^2 +  \P_{\ne 0}[u \phi \zeta]
\end{equation}

\noi
and
the stochastic modulated ILW
with a multiplicative Young noise:
\begin{equation}
\label{SILW1}
\dt u-    \Gdl\dx^2 u \cdot \dt w =\dx u^2
+  \P_{\ne 0}[u \phi \zeta]
\end{equation}

\noi
for any $s \in \R$, provided that $\rho$ is sufficiently large
such that 
\begin{align}
\begin{split}
\textup{(a)} \ \   \rho =  1 \quad \text{and} \quad s > - \tfrac 12
\qquad \text{or}\qquad 
\textup{(b)} \ \  \rho >  1 \quad  \text{and} \quad  s \ge - \tfrac 12 \rho
\end{split}
\label{regBO1}
\end{align}

\noi
and 
\begin{align}
s + \s > -  \frac 23 \rho (2\be -1 )
\qquad \text{and}
\qquad 
s <  \s +   \frac 23 \rho (2\be -1 ).
\label{BOZ}
\end{align}

The conditions in \eqref{regBO1}
allows us to construct the nonlinear Young integral
associated with the nonlinearity $\dx u^2$
for these equations; see
\cite[Propositions 5.1 and 5.2]{CGLLO1}.
The conditions in \eqref{BOZ} come
from the construction of the stochastic term
as a (linear) Young integral.
See
Remark \ref{REM:BO} for details.
In particular, 
the second condition
in \eqref{BOZ}
comes from setting 
 $s_0 = s$ in~\eqref{BOX4}
 (where we also use the first condition in \eqref{BOZ} when $s < 0$).
In view of \eqref{BOX1} and~\eqref{ILX3}, 
it is crucial to impose \eqref{phi2}
as in the KdV case.

\end{remark}

\begin{remark}\label{REM:NLS1}\rm 

In Appendix \ref{SEC:NLS}, 
we consider the following stochastic modulated 
Schr\"odinger equation
on $\T^d$
with a multiplicative Young noise ($\frac 12 < \be < 1$):
\begin{align}
i \dt u+  \Dl  u \cdot \dt w =  (\jb{\nb}^\kk \cj u) \phi \zeta
\label{NLS2}
\end{align}

\noi
for any $\kk \ge 0$, 
and 
prove its pathwise global well-posedness;
see Theorem \ref{THM:NLS}.
This result
holds for any $\kk \gg 1$, 
provided that the modulation function $w$
is sufficiently irregular, 
thus providing another example
of regularization by noise in the case of multiplicative noise.

\end{remark}

\subsection{Main results, Part 2: rough case}
\label{SUBSEC:main2}

In this subsection, we consider the white-in-time case ($\be = \frac 12$).
Namely, we study SmKdV \eqref{skdv2}, 
where the noise $\z$ is given by the space-time white noise~$\xi$.
In this case, 
due to the lack of temporal regularity, 
we need to give a meaning to 
the stochastic term $\PPsi(\uu)$ in \eqref{psi2}
(more precisely, 
$\wt \PPsi(\uu)$ in \eqref{psi3})
as a {\it rough} integral, 
using controlled rough paths introduced by the second author \cite{Gub04}.
See Subsection \ref{SUBSEC:Y4}
for a review on the rough integration theory.

Furthermore, as observed in the Schr\"odinger case in \cite{COZ}, 
in the current rough case, 
roughness in time induces a loss in spatial regularity.
More precisely, 
there is a (spatial) regularity loss in the mapping property of 
the driver $\YY$ in \eqref{sto1x};
see Remark \ref{REM:rough}.
For this reason, we consider the following SmKdV on $\T$
(with small $\eps_0 > 0$):
\begin{align}
\dt u+  \dx^3 u \cdot \dt w = \dx u^2 + \jb{\dx}^{-\eps_0} \P_{\ne 0}[u \phi \xi], 
\label{skdv3}
\end{align}

\noi
where we slightly regularize the noise term $u \phi \xi$.
By writing \eqref{skdv3} 
in the Duhamel formulation
(at the level of the modulated interaction representation
$\uu$ defined in \eqref{int1}), 
we have 
\begin{equation}
\begin{split}
\uu(t) 
& = u_0 + \int_0 ^t \uw(t')^{-1}
\dx\big( (\uw(t') \uu(t'))^2\big)dt'
+ \wt \PPsi(\uu) (t), 
\end{split}
\label{mild1c}
\end{equation}

\noi
where 
the stochastic term  $\wt \PPsi(\uu)$ is given by 
\begin{align}
\wt  \PPsi(\uu) (t) 
 &  = 
  \jb{\dx}^{- \eps_0}
  \P_{\ne 0}
  \int_0^t
\uw(t')^{-1} \big[ (\uw(t')\uu(t') )
\phi dW^{ \frac 12}(t') \big].
\label{psi3}
\end{align}

Given $\rho \ge\frac 12$, $\frac 12 < \g < 1$, and $T> 0$, 
fix a $(\rho,\g)$-irregular function $w$ on $[0, T]$ in the sense of Definition~\ref{DEF:ir}.
Our first goal is to construct the stochastic term $\wt \PPsi(\uu)$ in \eqref{psi3}
as the rough integral $\I^{\wt \YY, \wt \YYb}(\uu)$ associated 
with the following 
operator-valued rough path $(\wt \YY, \wt \YYb) = (\wt \YY^{w, \phi}, \wt \YYb^{w, \phi})$
adapted to SmKdV \eqref{skdv3}, 
where the driver~$\wt \YY$ is given by 
\begin{align}
\begin{split}
\wt \YY_{t, r}(f)
&  = 
\jb{\dx}^{- \eps_0} \YY_{t, r}(f)\\
& = 
\jb{\dx}^{- \eps_0} \P_{\ne 0} \int_r^t
\uw(t')^{-1} \big[ (\uw(t') f)
\phi dW^\frac 12(t') \big]
\end{split}
\label{sto2x}
\end{align}

\noi
with  $\YY$ as in \eqref{sto1x}, 
and 
the second order operator-valued driver 
$\wt \YYb$
is given by 
\begin{align}
\begin{split}
\wt \YYb_{t, r}(f)
& 
 = \wt \YY_{t,r}\circ \wt \YY_{\bullet, r}(f)\\
&  =\jb{\dx}^{- \eps_0} \P_{\ne 0} \int_r^t
\uw(t_1)^{-1} \bigg[ \Big( \uw(t_1) \\
& \hphantom{XXX}
\jb{\dx}^{- \eps_0} \P_{\ne 0} \int_r^{t_1}
\uw(t_2)^{-1} \big[ (\uw(t_2) f)
\phi dW^\frac 12(t_2) \big]\Big)
\phi dW^\frac 12(t_1) \bigg]
\end{split}
\label{sto3x}
\end{align}

\noi
for $0 \le r < t \le T$.
Here,  $\bul$ denotes the variable of integration.
See \eqref{stoconv3} for 
the precise meaning of the iterated stochastic integral.

In the following, we set $\uu_t = \uu(t)$.
With a slight abuse of notation, 
we extend the definition of the driver $\wt \YY$ in \eqref{sto2x}
such that it formally acts on space-time functions by 
\begin{align}
\wt \YY_{t, r}(\uu_\bul)
&  = \jb{\dx}^{- \eps_0}\P_{\ne 0} \int_r^t
\uw(t')^{-1} \big[ (\uw(t') \uu(t'))
\phi dW^\frac 12(t') \big]
\label{sto4x}
\end{align}

\noi
for $0 \le r < t \le T$, 
where  $\bul$ denotes the variable of integration
as in \eqref{sto3x}.
At this point, the definition~\eqref{sto4x}
makes sense
only for functions $\uu$
that are smooth (in both $t$ and $x$). 
With this notation, we can formally rewrite
\eqref{mild1c} as 
\begin{equation}
\uu = u_0 + \I^\XX(\uu) + \wt \YY(\uu_\bul), 
\label{PP1}
\end{equation}

\noi
where 
$\I^\XX(\uu)$ is the nonlinear Young integral 
with the bilinear driver $\XX =\XX^w$ in \eqref{K1x}.
Recall from \cite{CGLLO1} (see Lemmas \ref{LEM:int1} and \ref{LEM:kdv1})
that 
the nonlinear Young integral $\I^\XX(\uu)$ has temporal regularity $\g \in \big(\frac 12 , 1\big)$, 
inherited from the $(\rho, \g)$-irregular modulation function $w$.
In view of~\eqref{PP1}
and this observation, 
we postulate that the increment of $\uu$ is of the form:
\begin{align}
(\updl \uu)_{t, r} = \wt \YY_{t, r}( \uu_r) + R_{t, r}, 
\label{PP2}
\end{align}

\noi
where
$(\updl \uu)_{t, r} = \uu_t - \uu_r$ is as in \eqref{dl1}
and a remainder term $R= R^{\wt \YY, \uu}$ has temporal regularity $\g \in \big( \frac 12, 1\big)$.
Namely, we postulate that $\uu$ is controlled
by the driver $\wt \YY$
with the Gubinelli derivative $\uu' = \uu$;
see Definition \ref{DEF:control}.

\begin{theorem}\label{THM:4}

Given $\rho> 0$, $ \frac 12 <  \g < 1$,   $s, \s \in \R$, 
 $\eps_0>0$, and $T > 0$ 
such that 
\begin{align}
\min\Big(s + \s , 
s + 2\s + \eps_0, 
s\wedge \s + \s + \eps_0\Big) > 0, 
\label{RR1}
\end{align}

\noi
let $w$ be 
 a $(\rho,\g)$-irregular function on $[0, T]$ in the sense of Definition~\ref{DEF:ir}
 and 
 $\phi \in \HS(L^2(\T); H^\s(\T))$, 
 satisfying
 \eqref{phi1}. 
Suppose that  $s_0 \in \R$ satisfies
\begin{align}
\begin{split}
s_0 < \min \Big( 
& s\wedge 0 + \s + \s\wedge 0 + \eps_0, \\
& s+ \s \wedge 0 + \eps_0, \, 
 s\wedge \s +  \s \wedge 0 +2\eps_0
\Big).
\end{split}
\label{RR2}
\end{align}

\noi
Then, given 
 $0 < \al < \g < 1$, 
let 
 $\uu \in \CC^\al([0, T]; H^s(\T))$
 satisfy \eqref{PP2}
 for some remainder term 
 $R = R^{\wt \YY, \uu}
\in  C^{\g}_{2, T}H^s(\T)$, 
where $C^{\g}_{2, T}H^s(\T)$ is as in \eqref{Ho2}.
Then, 
the stochastic term $\wt \PPsi (\uu)$ in~\eqref{psi3} can be constructed as 
the  rough integral $\I^{\wt \YY, \wt \YYb}(\uu)$
associated with the rough driver $(\wt \YY, \wt \YYb)$
such that 
\[\wt  \PPsi(\uu)
= \I^{\wt \YY, \wt \YYb}(\uu)
 \in \CC^{\frac 12 - \eps}([0, T]; H^{s_0}(\T)),\] 

\noi
almost surely.
 As a  consequence, with $u(t) = \uw(t)\uu(t)$, 
 the stochastic convolution $\wt  \Psi(u)(t) = \uw(t)\wt \PPsi(\uu)(t)$, 
defined by 
\begin{align*}
\wt \Psi(u)(t) = 
\jb{\dx}^{- \eps_0}
U^w(t) \int_0^t U^w(t')^{-1}
\big[u (t') \phi dW^\frac 12 (t')\big], 
\end{align*}

\noi
belongs to 
$C([0, T]; H^{s_0}(\T))$, almost surely.

\end{theorem}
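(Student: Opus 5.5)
The plan is to reduce Theorem~\ref{THM:4} to the standard rough integration (sewing) lemma for controlled paths. Almost surely, we will show that the pair $(\wt\YY, \wt\YYb)$ is an operator-valued rough path with the following regularity. The driver $\wt\YY_{t,r}$ is $\frac12-\eps$ H\"older in time and maps $H^s \to H^{s_0}$, and also $H^{s}\to H^{s_1}$ for an intermediate regularity $s_1$ used in the remainder. The second-order term $\wt\YYb_{t,r}$ is $1-\eps$ H\"older and maps $H^s\to H^{s_0}$. Chen's relation
\[
\wt\YYb_{t,r}-\wt\YYb_{t,u}-\wt\YYb_{u,r}=\wt\YY_{t,u}\circ\wt\YY_{u,r}, \qquad r<u<t,
\]
holds by construction, since it follows from the iterated-integral definition \eqref{sto3x}. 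Given $\uu$ controlled as in \eqref{PP2} with Gubinelli derivative $\uu'=\uu$, we then define the local germ
\[
\Xi_{t,r}=\wt\YY_{t,r}(\uu_r)+\wt\YYb_{t,r}(\uu_r).
\]
Chen's relation gives
\[
(\updl\Xi)_{t,u,r}=-\wt\YY_{t,u}(R_{u,r})-\wt\YYb_{t,u}\big((\updl\uu)_{u,r}\big).
\]
This increment has H\"older order at least $(\frac12-\eps)+\g>1$ and $(1-\eps)+\al>1$, so the sewing lemma (Lemma~\ref{LEM:sew}) yields $\I^{\wt\YY,\wt\YYb}(\uu)\in\CC^{\frac12-\eps}([0,T];H^{s_0})$. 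It also identifies this object with $\wt\PPsi(\uu)$, as the limit of compensated Riemann sums, which agree with the It\^o integral for adapted smooth approximations. The statement about $\wt\Psi(u)=\uw\wt\PPsi(\uu)$ then follows since $\uw(t)$ is unitary on $H^{s_0}$ and strongly continuous.

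Everything thus hinges on almost sure operator-norm bounds for $\wt\YY$ and $\wt\YYb$. For $\wt\YY$ we write the kernel in Fourier variables as in \eqref{psi2}:
\[
\wt\YY_{t,r}(f)(n)=\jb{n}^{-\eps_0}\sum_{n_1+n_2=n}\phi_{n_1}\ft f(n_2)\int_r^t e^{-iw(t')(n^3-n_2^3)}\,dB_{n_1}(t').
\]
This is a first-order Wiener integral. We apply the random tensor estimate (Lemma~\ref{LEM:RT}) with $\be=\frac12$, together with a Kolmogorov/GRR-type argument in $(t,r)$ and dyadic summation, to bound $\E\|\wt\YY_{t,r}\|^p_{H^s\to H^{s_0}}\les|t-r|^{p/2}$ up to $\eps$ losses.

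In the white-in-time case the It\^o isometry gives $\int_r^t|e^{-iw(n^3-n_2^3)}|^2dt'=|t-r|$. The modulation therefore gives \emph{no} gain, and the irregularity of $w$ plays no role here; it only enters through $\I^\XX$. The operator norm is then controlled by $\max$ of $\sup_n\|\jb{n}^{s_0-\eps_0}\phi_{n-n_2}\jb{n_2}^{-s}\|_{\l^2_{n_2}}$ and its transpose counterpart. Case splits on $|n_1|\gg|n_2|$, $|n_1|\ll|n_2|$ and $|n_1|\sim|n_2|$, with $\phi\in\HS(L^2;H^\s)$, produce the conditions $s+\s>0$ and $s_0<s\wedge0+\s+\eps_0$ (roughly), consistent with \eqref{RR1}--\eqref{RR2}.

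For $\wt\YYb$ we expand it as a double stochastic integral with kernel indexed by $n=n_1+m$, $m=m_1+n_2$, carrying the multipliers $\jb{n}^{-\eps_0}\jb{m}^{-\eps_0}\phi_{n_1}\phi_{m_1}$. Its second Wiener chaos component is handled by the multilinear random tensor estimate, which gives $|t-r|^{1}$ in $L^p(\Omega)$ for the time integral over the simplex. The contraction (Itô correction) terms arise when $m_1=-n_1$ and $B_{-n}=\cj{B_n}$ is paired, or in the complex case from $\E[B_{n}\cj{B_n}]$. These give a deterministic diagonal term of the form $\sum_{n_1}|\phi_{n_1}|^2\jb{n_2+n_1}^{-\eps_0}\cdots$, which converges precisely under the extra smoothing $\eps_0$ and conditions like $s+2\s+\eps_0>0$.

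The main obstacle is this second-order estimate, particularly balancing the two $\jb{\cdot}^{-\eps_0}$ factors against the double convolution with $\phi$ so as to obtain exactly the thresholds $s\wedge\s+\s+\eps_0>0$ and $s_0<s\wedge\s+\s\wedge0+2\eps_0$. The plan there is first to bound the operator norm of the chaos-two tensor by the larger of its two natural matricizations, $(n)\times(n_2)$ and $(n,n_1)\times(n_2,m_1)$, as Lemma~\ref{LEM:RT} permits. We then treat the intermediate frequency $m$ by cases $|m|\lesssim|n_2|$ and $|m|\gg|n_2|$. Finally, we upgrade the moment bounds to almost sure H\"older bounds, uniform over $0\le r<t\le T$, via Kolmogorov's continuity criterion for two-parameter processes.
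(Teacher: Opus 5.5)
Your proposal follows the paper's route: Theorem~\ref{THM:4} comes from the rough integration Lemma~\ref{LEM:int2}, sewing the germ $\wt\YY_{t,r}\uu_r+\wt\YYb_{t,r}\uu_r$ in $H^{s_0}$ with $\uu'=\uu$, once Proposition~\ref{PROP:drive3} supplies almost sure bounds on $(\wt\YY,\wt\YYb)$ via dyadically localized random tensor estimates (Lemma~\ref{LEM:RT}), Kolmogorov's criterion (Lemma~\ref{LEM:kolm}) and Chen's relation, with the modulation playing no role. One misconception, harmless to the argument but worth correcting: $\wt\YYb$ in \eqref{stoconv3} is an iterated Wiener--It\^o integral, hence purely second chaos ($I_2$ in \eqref{RGG1}) with no contraction/diagonal term, so the thresholds $s+2\s+\eps_0>0$ and $s\wedge\s+\s+\eps_0>0$ actually come from the second-chaos tensor bound in the regime $|n|\lesssim|n_1|\sim|n_{23}|$ (your $m=n_{23}$), and there Lemma~\ref{LEM:RT} must be applied with the maximum over all four partitions of $A=\{1,2\}$, not just two matricizations.
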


In view of 
the  rough integration theory (see Subsection \ref{SUBSEC:Y4}), 
Theorem \ref{THM:4}
follows
once we establish
 the desired regularity 
properties of
the drivers $\wt \YY$ and $\wt \YYb$
(see  Proposition \ref{PROP:drive3}),
and thus we omit details of a proof of Theorem \ref{THM:4}.
 We note that, in the  rough case, 
 the modulation function $w$
 plays {\it no} role
in the construction of the stochastic term 
$\PPsi(\uu)
= \I^{\wt \YY, \wt \YYb}(\uu)$;
see the proof of 
 Proposition \ref{PROP:drive3}, 
 where we study the drivers $\wt \YY$ and $\wt \YYb$.
Under the hypothesis of Theorem \ref{THM:4}, 
the stochastic term $\wt \PPsi(\uu)$
is given 
as the unique limit of 
Riemann-Stieltjes type sums:
\begin{align*}
\begin{split}
\wt \PPsi(\uu)(t)
& = \I^{\wt \YY, \wt \YYb}(\uu)(t)\\
& = \lim_{|\Pi([0,t])|\to 0} 
\sum^{n-1}_{j=0} \Big(\wt \YY_{t_j,t_{j+1}}(\uu_{t_{j+1}})
+ \wt \YYb_{t_j,t_{j+1}}(\uu_{t_{j+1}})\Big), 
\end{split}
\end{align*} 

\noi
 where 
 the limit is 
 understood 
 in the sense of \eqref{sto1z}.

\medskip

With Theorem \ref{THM:4} in hand, 
we can view \eqref{mild1c} (see also \eqref{PP1})
%(at the level of the modulated interaction representation 
%$\uu(t) = \uw(t)^{-1}u(t)$)
as the following nonlinear YDE
with a rough perturbation:
\begin{equation}
\uu = u_0 + \I^\XX(\uu) + \I^{\wt \YY, \wt \YYb}(\uu).
\label{RDE0a}
\end{equation}

\noi
In Proposition \ref{PROP:main2}
(see also Remark \ref{REM:mean0}), 
we establish 
a local well-posedness result
for a nonlinear YDE
with a rough perturbation of the form \eqref{RDE0a}.
As a consequence, we obtain the following
 pathwise local well-posedness result
of SmKdV \eqref{skdv3}
with a multiplicative white-in-time noise.

\begin{theorem}\label{THM:5}

Let $\rho \ge\frac 12$, $ \frac 12 <  \g < 1$,   $s, \s \in \R$, 
 $\eps_0>0$, and $T > 0$
satisfy
 \eqref{reg1}, 
\eqref{RR1}, and
\begin{align}
s < \s + \s\wedge 0 + \eps_0\qquad \text{and}\qquad
\s  > -\eps_0.
\label{RR3}
\end{align}

\noi
Let $w$ be 
 a $(\rho,\g)$-irregular function on $[0, T]$ in the sense of Definition~\ref{DEF:ir}
 and 
 $\phi \in \HS(L^2(\T); H^\s(\T))$, 
 satisfying
 \eqref{phi1}. 
Then, the stochastic modulated KdV equation~\eqref{skdv3}
on  $\T$
with a multiplicative white-in-time noise 
is pathwise locally well-posed in $H^s_0(\T)$
in the sense that 
the modulated interaction representation~$\uu$ of the solution %= \uw(t)^{-1}u(t)$
 almost surely belongs
to $\CC^{\frac 12 - \eps}([0, \tau]; H^s_0(\T))$
and is a unique solution 
 the nonlinear YDE with a rough perturbation~\eqref{RDE0a}
on $[0, \tau]$, 
where 
 $\tau \in [0, T]$ denotes the almost surely positive  time of local existence.

\end{theorem}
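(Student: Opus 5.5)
Theorem \ref{THM:5} will follow from the abstract local well-posedness result for a nonlinear YDE with a rough perturbation (Proposition \ref{PROP:main2}, together with Remark \ref{REM:mean0} for the mean-zero setting). We view \eqref{mild1c} as \eqref{RDE0a} and look for a fixed point in the space of paths $\uu\in\CC^{\frac12-\eps}([0,\tau];H^s_0(\T))$ controlled by $\wt\YY$. Controlled means that $\uu$ has the increment structure \eqref{PP2} with Gubinelli derivative $\uu'=\uu$ and a remainder $R\in C^{\g'}_{2,\tau}H^s$ for some $\g'>\frac12$ with $\g'+\frac12-\eps>1$. The abstract proposition asks for three inputs. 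First, the bilinear driver $\XX$ in \eqref{K1x} must be bounded from $H^s_0\times H^s_0$ into $H^s_0$ with temporal regularity $\g$. Second, $\wt\YY$ must map $H^s_0$ into $H^s_0$ with temporal regularity $\frac12-\eps$. Third, $\wt\YYb$ must map $H^s_0$ into $H^s_0$ with temporal regularity $1-\eps$. The plan is to verify these three inputs, with all the analytic work concentrated in the second and third.

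For $\XX$ we invoke Lemma \ref{LEM:kdv1}, which is where the condition \eqref{reg1} enters. That lemma gives the bound on $\XX$ with regularity $\g$, and it gives the Lipschitz-type estimates needed to control $\I^\XX$ on controlled paths. It also makes $\I^\XX(\uu)$ a $\g$-H\"older term, so it can be absorbed into the remainder $R$.

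For the rough driver we use Proposition \ref{PROP:drive3}, the mapping property underlying Theorem \ref{THM:4}, applied with $s_0=s$. Setting $s_0=s$ in \eqref{RR2}, the three constraints become the following.
\begin{itemize}
\item The first, $s< s\wedge0+\s+\s\wedge 0+\eps_0$: for $s\ge0$ this is the first condition in \eqref{RR3}; for $s<0$ it reduces to $\s+\s\wedge0+\eps_0>0$, which follows from \eqref{RR1} and $\s>-\eps_0$.
\item The second, $s<s+\s\wedge0+\eps_0$: this is exactly $\s>-\eps_0$.
\item The third, $s<s\wedge\s+\s\wedge0+2\eps_0$: when $s\le\s$ it follows from the second constraint; when $s>\s$ it follows from the first condition in \eqref{RR3}.
\end{itemize}
Thus \eqref{RR1} and \eqref{RR3} guarantee that, almost surely, $(\wt\YY,\wt\YYb)$ is an operator-valued rough path on $H^s_0$ of regularity $(\frac12-\eps,1-\eps)$, and that $\wt\YYb$ satisfies the Chen relation $\updl\wt\YYb_{t,u,r}=\wt\YY_{t,u}\circ\wt\YY_{u,r}$ by its definition \eqref{sto3x}. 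The projector $\P_{\ne0}$ in $\wt\YY$ keeps us in the mean-zero space, as Lemma \ref{LEM:kdv1} requires.

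With these inputs, the fixed-point argument in Proposition \ref{PROP:main2} proceeds as follows. By the sewing lemma (Lemma \ref{LEM:sew}) we have the rough integral bound
\[
\|\I^{\wt\YY,\wt\YYb}(\uu)-\wt\YY(\uu)\|_{C^{1-\eps}_2}\les \|(\wt\YY,\wt\YYb)\|\,\|\uu\|_{\text{ctrl}},
\]
and we combine it with the nonlinear Young estimate for $\I^\XX$. Choosing $\tau$ small, depending on $\|u_0\|_{H^s}$ and on the random norms of $(\wt\YY,\wt\YYb)$ (which are almost surely finite), gives a contraction. This yields existence and uniqueness of the solution $\uu$, and hence an almost surely positive existence time $\tau$. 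Since $\wt\YY(\uu)$ is only $(\frac12-\eps)$-H\"older, $\uu$ lies in $\CC^{\frac12-\eps}$, as claimed.

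The main obstacle is the interplay of temporal exponents. The remainder must have regularity $\g'$ with $\g'+\frac12-\eps>1$, yet the nonlinear Young integral $\I^\XX$ with a controlled, and therefore only $(\frac12-\eps)$-H\"older, input requires $\g+(\frac12-\eps)>1$. Since $\g>\frac12$, this holds for small $\eps$. The cross term in the remainder, namely $\I^\XX(\uu)$ evaluated along the increments $\wt\YY_{t,r}(\uu_r)$, must be shown to be $\g'$-regular. If the abstract proposition does not already cover this, I would first try handling it by expanding $\XX_{t,r}(\uu_r,\uu_r)$ and using the $\g$-regularity of $\XX$ directly.
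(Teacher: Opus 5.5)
Your proposal is correct and takes the same route as the paper. Theorem~\ref{THM:5} is obtained from Proposition~\ref{PROP:main2} (with Remark~\ref{REM:mean0}), Lemma~\ref{LEM:kdv1} for $\XX$, and Proposition~\ref{PROP:drive3} at $s_0=s$, and your check that \eqref{RR1} and \eqref{RR3} give \eqref{RR2} at $s_0=s$ is right. The ``cross term'' you worry about at the end does not arise: $\updl \I^\XX(\uu)$ is already $\g$-H\"older by Lemma~\ref{LEM:int1}, since $\g+\al>1$, and it is simply absorbed into the remainder $R^z$ as in \eqref{RDE4}.
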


We note that the condition \eqref{RR3} comes from 
substituting 
 $s_0 = s$ in \eqref{RR2}.
As in the Young case,  Theorem~\ref{THM:5} follows
as a consequence of 
Proposition~\ref{PROP:main2}
with 
 Lemma \ref{LEM:kdv1} and 
 Proposition \ref{PROP:drive3}
 on the regularity properties
 of the drivers $\XX$ and $(\wt \YY, \wt \YYb)$
 in~\eqref{K1x}, \eqref{sto2x}, and \eqref{sto3x}, respectively,
 and thus we omit details of a proof of Theorem~\ref{THM:5}.

\begin{remark}\rm 

Arguing as in \cite{CLO2}, 
it is possible to show that the solution $u$ constructed in Theorem \ref{THM:5}
agrees with the corresponding Ito solution 
(whose construction has not been written down but follows
easily by combining the argument in \cite{CGLLO1}
and the truncation method as in \cite{DD1, DD2}).
See also \cite[Section 5.1]{FH20}.
\end{remark}

\begin{remark}
\rm

In the current rough case, 
there is no regularization by noise;
see Remark~\ref{REM:rough1}.
In particular, the condition \eqref{phi2} plays no role 
and is not needed
in Theorems~\ref{THM:4} and \ref{THM:5}.

\end{remark}

\begin{remark}\label{REM:rough}\rm

(i)
Consider the rough case ($\be = \frac 12$).
Write $\YY$ in \eqref{sto1x} as 
$\YY = \Yg + \Yb$,
where 
the ``good'' part 
$\Yg$ denotes the contribution to $\YY$
in \eqref{stoconv1} from the case $|n_1|\ges \max (|n|, |n_2|)$, 
and 
the ``bad'' part $\Yb$ is defined by 
$\Yb = \YY - \Yg$.

Let $0 \le r < t\le T$.
Then, a slight modification of the proof of Proposition \ref{PROP:drive3}
with $\eps_0 = 0$
shows that $\Yg_{t, r}$
maps $H^s(\T)$ to $H^s(\T)$ almost surely
(by appropriately modifying 
\eqref{RR1} and~\eqref{RR3}).
However, there is a small derivative loss 
 in the mapping property of the bad part $\Yb$.
This comes from the unavoidable (small)
loss in the random tensor estimate (Lemma~\ref{LEM:RT};
see the factor $\| h_{bcn_A} \|_{\l^2_{bcn_A}}^{\ta}$
in \eqref{BM5}) - such a loss is unavoidable
in applying a Kolmogorov continuity criterion type argument
- 
and the fact that, under $|n_1|\ll \max (|n|, |n_2|)$, 
$\phi_{n_1}$ in~\eqref{stoconv1} is of no help.
This is the reason we introduced the slight regularization $\jb{\dx}^{-\eps_0}$
in~\eqref{skdv3}.

\smallskip

\noi
(ii) By following the approach in \cite{COZ}
developed for the (modulated) Schr\"odinger case, 
it is in fact possible to prove pathwise local well-posedness
of \eqref{skdv3} with $\eps_0 =0$
even in the rough case.
However, this approach involves
an infinite iteration of (part of) the Duhamel formulation,
necessitating
the study  of random operators of arbitrarily high degrees,
which is beyond the scope of this paper
(especially since there is no regularization by noise when $\be = \frac 12$).
We will address this issue in a forthcoming work.

\end{remark}

\subsection{Organization}

In Section
\ref{SEC:2}, 
we introduce basic notations and function spaces, 
including those from \cite{CGLLO1}
which play a crucial role in the remaining part of this paper.
In Section \ref{SEC:RT}, 
 we first go over the basic definitions and properties of multiple stochastic integrals
 with respect to fractional Brownian motions.
In Subsection \ref{SUBSEC:RT},  
we then state the random tensor estimate (Lemma \ref{LEM:RT})
which plays a crucial role
in studying regularity properties of various drivers.
We also recall 
Kolmogorov's continuity criterion
for operator-valued rough paths
(Lemma~\ref{LEM:kolm})
in Subsection~\ref{SUBSEC:Kol}.
In Section~\ref{SEC:Young}, 
we go over the basic construction 
for nonlinear Young  integrals
in Subsections \ref{SUBSEC:Y2}
and present a local well-posedness
result for a nonlinear YDE with a Young perturbation in 
Subsection~\ref{SUBSEC:Y3}.
We then go over the construction
of rough integrals in 
Subsection~\ref{SUBSEC:Y4}
and present a local well-posedness
result for a nonlinear YDE with a rough perturbation in 
Subsection \ref{SUBSEC:Y5}.
The materials presented in Section \ref{SEC:Young}
may be  standard by now, 
but we decided to include them in an accessible manner
for 
readers in dispersive PDEs who may not be familiar
with Young and rough integration theory.
In 
Section \ref{SEC:driver}, 
we establish regularity properties
of various drivers for SmKdV, 
thus establishing Theorems \ref{THM:1}, 
\ref{THM:2}, \ref{THM:4}, and \ref{THM:5}.
In Appendix~\ref{SEC:NLS}, 
we discuss 
the stochastic modulated Schr\"odinger equation~\eqref{NLS2}
with a multiplicative Young noise.

\section{Notations and function spaces} %preliminary tools}
\label{SEC:2}

\subsection{Basic notations}
Let $A\les B$ denote an estimate of the form $A\leq CB$ for some constant $C>0$. We write $A\sim B$ if $A\les B$ and $B\les A$, while $A\ll B$ denotes $A\leq c B$ for some small constant $c> 0$. 
We may write  $\les_{\al}$ and $\sim_{\al}$ to 
emphasize the dependence on an external parameter $\al$.
We use $C>0$ to denote various constants, which may vary line by line, 
and we may write $C_{\al}$
to emphasize 
the dependence on an external parameter $\al$.

Given $a, b \in \R$, we set
$a\vee b = \max(a, b)$ and 
$a\wedge b = \min(a, b)$.

Let $V$ be a normed vector space.
Given $K > 0$, we use $B_K$ to denote
the closed ball in $V$ of radius $K > 0$ centered at the origin.

In expressing the dependence of a function $u$
on the time variable, we often use the short-hand notation
$u_t = u(t)$,  which is standard in probability theory and stochastic analysis.

Given $n_{j_1}, \dots, n_{j_k} \in \Z$, 
we set 
\[n_{j_1 \cdots j_k} = 
n_{j_1}+ \cdots+ n_{j_k}.\]

\noi
For example, 
$n_{123} = n_1 + n_2 + n_3$.

Given dyadic numbers $N, N_1, N_2\ge 1$, 
we use $N_{\max}$, $N_{\med}$, and $N_{\min}$
to denote their decreasing rearrangement:
\begin{align}
N_{\max} \ge N_{\med} \ge N_{\min}.
\label{ord1}
\end{align}

\subsection{Function spaces}
\label{SUBSEC:2.2}

Let $\M = \T^d$ or $\R^d$.
In considering an integral (in time) operator $\I$, 
a priori defined on functions on $\M$, 
we use the notation~$\I(u_\bul)$
to denote its (formal) action
on a space-time function $u$, 
where  $\bul$ denotes the variable of integration.

We  use $\ft f$ 
to denote
the  Fourier transform
of a function $f$ on $\M$, 
defined by 
\begin{align*}
\ft  f(\xi)
%=\int_{\M}f(x)e^{-i \xi \cdot x} d x
= \frac 1 {(2\pi)^d}\int_{\M}f(x)e^{-i \xi\cdot  x} d x, \qquad \xi\in   \M^*, 
\end{align*}

\noi
where $\M^*$
denotes the Pontryagin dual of $\M$:
$\ft \M = \Z^d$ when $\M = \T^d$
and 
$\ft \M = \R^d$ when $\M = \R^d$.
Given $s \in \R$, 
we define the  Sobolev space $H^{s}(\M)$ 
via the norm: 
\begin{align*}
\|  f  \|_{H^{s}(\M)}
& =
\bigg( \int_{\M^*} \jb{\xi}^{2s} |\ft f( \xi)|^2 d\xi\bigg)^\frac 12 ,
\end{align*}

\noi
where 
$d\xi$ denotes the counting measure when $\M = \T^d$.
We also define 
$ H_0^s(\T)$
as in \eqref{hs1}
via the norm:
\begin{align*}
\|  f  \|_{H^{s}_0(\T)}
& =
\bigg( \sum_{n\in \Z_*} \jb{n}^{2s} |\ft f( n)|^2 \bigg)^\frac 12 ,
\end{align*}

\noi
where 
$\Z_*= \Z\setminus \{0\}$.

In studying space-time functions, 
we often use short-hand notations such as
$C_T H^s_x  = C\big([0, T]; H^s(\M))$, etc.
 when there is no ambiguity.

\medskip

Next, we recall the basic definitions
of  function spaces relevant to rough analysis from \cite{CGLLO1}.
Given Banach spaces $V$ and $W$, we use $\L(V; W)$
to denote the Banach space of bounded linear operators from $V$ to $W$.
When $V = W$, we simply set $\L(V) = \L(V;V)$.

%Given $k \in \N$, 
%let $V, V_1, \dots, V_k$  be separable Hilbert spaces.
%We use 
%\begin{align*}
%\cL_k\Big(\bigotimes_{j = 1}^k V_j; V\Big)
%\end{align*}
%
%\noi
% to denote 
%the Banach space of bounded $k$-linear operators 
%on $\bigotimes_{j = 1}^k V_j$ 
%(equipped with the Hilbert tensor norm) %product) 
%with values in $V$.
%When $V_j =  V$ for $j =1, \dots,k$,
%we simply set  
% $\cL_k(V)=\cL_k(V^{\otimes k}; V)$. 
%Moreover, when $k = 1$, 
%we simply set $\cL = \cL_1$.

Let $V$ be a Banach space and $T>0$.
For $n\in\N$, we denote 
\begin{align*}
\Delta_{n, T} = 
\big\{ (t_1, \ldots, t_n) \in [0,T]^n: \ t_i > t_j 
\text{ for } i < j\big\}.
\end{align*}
We denote by $C_{n,T}V$ 
the space of continuous functions 
from $\Delta_{n,T}$ to $V$. When $n=1$,
we may write $C_T V$ for simplicity, 
and equip this space with the supremum norm: 
\begin{align*}
\|f\|_{C_T V} = \|f\|_{L^\infty_T V} = \sup_{0\leq t \leq T} \|f(t)\|_V.
\end{align*}

\noi
We define the coboundary operator 
$\updl: C_{n,T} V  \to C_{n+1,T} V$ 
as follows; 
given  $f\in C_{n,T} V$ 
and $(t_1,\ldots, t_{n+1}) \in \Dl_{{n+1},T}$, 
we set
\begin{align*}
(\updl f)_{t_1,\ldots , t_{n+1}} = 
\sum_{k=1}^{n+1} (-1)^{k} f_{t_1, \ldots, t_{k-1}, t_{k+1}, \ldots, t_{n+1}}.
\end{align*}

\noi
For example, for $f\in C_T V$
and 
$g\in C_{2,T} V$, 
we have 
\begin{align}
\begin{split}
(\updl f )_{t,r} &= f_t - f_r, \\
(\updl g)_{t_1,t_2,t_3} &= 
g_{t_1,t_3} - g_{t_1,t_2} - g_{t_2,t_3}
\end{split}
\label{dl1}
\end{align}

\noi
for $(t,r)\in\Dl_{2,T}$
and $(t_1,t_2,t_3) \in \Dl_{3,T}$.
As noted in \cite{GT10}, 
the sequence 
\begin{align*}
0 \too V \too C_{1, T}V \stackrel{\updl}{\too} C_{2, T}V
\stackrel{\updl}{\too} C_{3, T}V
\stackrel{\updl}{\too} \cdots
\end{align*}

\noi
is exact. 
In particular, we have 
 $\updl\circ\updl =0$ and 
if $f \in C_{n,T} V$ with $\updl f =0$, 
then there exists a $g\in C_{n-1,T}V$ 
such that $f= \updl g$; 
see, for example,  \cite[Lemma 2.1]{GT10}.

Given $0 < \g \le 1$, we denote by $C^\g_T V = C^\g([0, T]; V)$ the space of $\g$-H\"older continuous functions taking values in $V$, endowed  with the seminorm:
\begin{align*}
\|f\|_{C^\g_T V} = \sup_{(t,r)\in \Dl_{2,T}} 
\frac{\|(\updl f)_{t,r}\|_V}{|t-r|^\g}.
\end{align*}

\noi
We then define 
 $\CC^\g_T V = \CC^\g([0, T]; V)$ via the norm:
\begin{align*}
\| f \|_{\CC^\g_TV} = \| f \|_{L^\infty_TV} + \|f\|_{C^\g_T V}.
%\label{Ho2a}
\end{align*}

\noi 
We also introduce the spaces 
$C^\g_{n,T}V$, $n=2,3$, 
equipped with the following H\"older-type norms;
 for $g\in C_{2,T}V$ and $h\in C_{3,T}V$, we set
\begin{align}
\begin{split}
\| g\|_{C^\g_{2,T} V} & 
= \sup_{(t,r) \in \Dl_{2,T}} \frac{\|g_{t,r} \|_{V} }{|t-r|^{\g}}, \\
\| h\|_{C^\g_{3,T} V} & 
= \inf_{0<\al<\g} 
\sup_{(t_1,t_2,t_3) \in \Dl_{3,T}} 
\frac{ \|h_{t_1,t_2,t_3}\|_V}
{|t_1-t_2|^{\al} |t_2-t_3|^{\g-\al}}.
\end{split}
\label{Ho2}
\end{align}

\medskip

Let $V$ and $W$ be Banach spaces.
Given $k \in \N$, 
we  use $\Lip_k(V; W)$ 
to denote the Banach space of 
locally Lipschitz maps $f:V\to W$
with polynomial growth of order $k$
such that 
\begin{align}
\|f\|_{\Lip_k(V; W)} = 
\sup_{x,y\in V} 
\frac{\|f(x)-f(y)\|_W}{\|x-y\|_V \big(1+\|x\|_V+\|y\|_V\big)^{k-1}}
<\infty .
\label{Lip1}
\end{align}

\noi
When $V = W$, 
we simply set $\Lip_k(V)= \Lip_k(V;V)$.
Given an integer $k \ge 2$, 
we say that $f\in\Lip^2_k(V; W)$ 
if 
\smallskip

\begin{itemize}
\item[(i)] $f\in\Lip_k(V; W)$,

\smallskip
\item[(ii)]
 $f$ is Fr\'echet differentiable 
with 
$Df \in \Lip_{k-1}(V;\cL_1(V; W))$.

\end{itemize}

\smallskip

\noi
From \eqref{Ho2} and \eqref{Lip1}, we have 
\begin{align}
\|f \|_{C^\g_{2, T}\Lip_k(V;W)}
= \sup_{(t,r) \in \Dl_{2,T}}
\frac 1{|t-r|^{\g}}
\sup_{x,y\in V} 
\frac{\| f_{t, r}(x)- f_{t, r}(y)\|_W}{\|x-y\|_V \big(1+\|x\|_V+\|y\|_V\big)^{k-1}}.
\label{Ho3}
\end{align}

\noi
In addition, suppose that $V_0 \hookrightarrow V$
is a Banach subspace of $V$.
Given an integer $k \ge 2$, %$k \in \N$, 
we  use $\Lip_k(V, V_0; V_0)\subset  \Lip_k(V)$ 
to denote the Banach space of 
locally Lipschitz maps $f:V\to V_0$
such that 
\begin{align}
\|f\|_{\Lip_k(V, V_0; V_0)}
= 
\sup_{x,y\in V} 
\frac{\|f(x)-f(y)\|_{V_0}}
{G_{V, V_0}(x, y)}
<\infty , 
\label{Lip2}
\end{align}

\noi
where $G_{V, V_0}(x, y)$ is given by 
\begin{align*}
G_{V, V_0}(x, y)
& = \|x-y\|_{V_0} \big(1+\|x\|_{V}+\|y\|_{V}\big)^{k-1}\\
& \quad + \|x-y\|_{V} \big(1+\|x\|_{V}+\|y\|_{V}\big)^{k-2}
 \big(1+\|x\|_{V_0}+\|y\|_{V_0}\big).
\end{align*}

\noi
Then, 
from \eqref{Ho2} and \eqref{Lip2}, we have 
\begin{align*}
\|f \|_{C^\g_{2, T}\Lip_k(V, V_0; V_0)}
= \sup_{(t,r) \in \Dl_{2,T}}
\frac 1{|t-r|^{\g}}
\sup_{x,y\in V} 
\frac{\| f_{t, r}(x)- f_{t, r}(y)\|_{V_0}}{
G_{V, V_0}(x, y)}.
%\label{Ho4}
\end{align*}

\medskip

Given $s \in \R$, $0 < \g < 1$,  $k \in \N$, and $T > 0$, 
we define the space 
$\cX^{s, \g}_k([0, T]\times\M)$ of drivers 
as follows; for $k \ge 2$, we set
\begin{align}
\begin{split}
& \cX^{s, \g}_k ([0, T]\times \M) \\
& \quad = 
\big\{X \in C^\g_{2, T} \Lip_k^2(H^s(\M)):
X(0) = DX[0]= 0 \text{ and }\updl X = 0\big\}, 
\end{split}
\label{X1}
\end{align}

\noi
endowed with the norm
\begin{align}
\|X\|_{\cX^{s, \g}_k([0, T]\times \M)} 
=  \|X\|_{C^\g_{2, T}\Lip_k(H^s(\M))} + \|DX\|_{C^\g_{2, T}\Lip_{k-1}(H^s(\M);\cL_1(H^s(\M)))}.
\label{X2}
\end{align}

\noi
When $k = 1$, we set
\begin{align}
\begin{split}
 \cX^{s, \g}_1 ([0, T]\times \M) 
& =  \big\{ X \in 
C^\g_{2, T} \L(H^s(\M)):
\updl X = 0\big\}.
\end{split}
\label{X1z}
\end{align}

\noi
In \eqref{X1}, 
 $X(0) = 0$ (and $DX[0]= 0$) means $X_{t, r}(0) = 0$ 
(and $DX_{t, r}[0]= 0$, respectively)
for any $(t, r) \in \Dl_{2, T}$, 
where $DX_{t, r}[0]$
denotes the Fr\'echet derivative of $X_{t, r}$ at $u = 0 \in H^s(\M)$.
We may simply write $\cX^{s, \g}_k ([0, T])$ or $\cX^{s, \g}_k(T)$, 
where there is no confusion about 
an underlying spatial domain.

Given $s, s_0 \in \R$, $0 < \g < 1$,  $ k \in \N$, and $T > 0$
with $s_0 > s$, we define the space
$\cX^{s, s_0, \g}_{k}([0, T]\times\M)$ 
of drivers (for establishing nonlinear smoothing) by 
setting 
\begin{align}
& \cX^{s, s_0, \g}_k ([0, T]\times \M) 
= \cX^{s, \g}_k ([0, T]\times \M) \cap C^\g_{2, T} \Lip_k(H^s(\M); H^{s_0}(\M)).
\label{X1x}
\end{align}

\noi
In proving persistence of regularity, we need the following
class of drivers.
Given $s, s_0 \in \R$, $0 < \g < 1$,  
an integer $k \ge 2$,
 and $T > 0$
with $s_0 > s$, we define 
$\cY^{s, s_0, \g}_{k}([0, T]\times\M)$ by 
\begin{align}
\begin{split}
& \cY^{s, s_0, \g}_k ([0, T]\times \M) \\
& \quad = 
\big\{ X \in 
  C^\g_{2, T} \Lip_k(H^s(\M), H^{s_0}(\M);H^{s_0}(\M)): 
  \updl X = 0\big\}.
\end{split}
\label{X2c}
\end{align}

\noi
Note that, when $k = 1$, we have 
\begin{align}
\begin{split}
 \cX^{s, s_0, \g}_1 ([0, T]\times \M) 
&  = \cY^{s, s_0, \g}_1 ([0, T]\times \M) \\
& =  \big\{ X \in 
C^\g_{2, T} \L(H^s(\M); H^{s_0}(\M)):
\updl X = 0\big\}.
\end{split}
\label{X1z1}
\end{align}

\begin{remark}\label{REM:m0}\rm
In studying SmKdV \eqref{skdv2}, 
we restrict our attention to mean-zero functions
belonging to $H^s_0(\T)$ defined in \eqref{hs1}.
For this purpose, we define 
$\cX^{s, \g}_{k, 0} ([0, T]\times \T) $, 
$\cX^{s, s_0, \g}_{k, 0} ([0, T]\times \T) $, 
and 
$\cY^{s, s_0, \g}_{k, 0}([0, T]\times\T)$ 
by replacing $H^s(\M)$ and 
$H^{s_0}(\M)$ in \eqref{X1}, \eqref{X1x}, and \eqref{X2c}
by 
$H^s_0(\T)$ and 
$H^{s_0}_0(\T)$, respectively.
In particular, from \eqref{X1z} and \eqref{X1z1}, we have
\begin{align}
\begin{split}
 \cX^{s, \g}_{1, 0} ([0, T]\times \T) 
& =  \big\{ X \in 
C^\g_{2, T} \L(H^s_0(\T)):
\updl X = 0\big\}, \\
 \cX^{s, s_0, \g}_{1, 0} ([0, T]\times \T) 
 & =  \cY^{s, s_0, \g}_{1, 0} ([0, T]\times \T) \\
& =  \big\{ X \in 
C^\g_{2, T} \L(H_0^s(\T); H_0^{s_0}(\T)):
\updl X = 0\big\}.
\end{split}
\label{X1y}
\end{align}

\end{remark}

\section{Random tensor estimate}
\label{SEC:RT}

In this section, 
 we first go over the basic definitions and properties of multiple stochastic integrals
 with respect to fractional Brownian motions.
In Subsection \ref{SUBSEC:RT},  
we state the random tensor estimate (Lemma \ref{LEM:RT})
which plays a crucial role
in establishing almost sure mapping properties
of (random) drivers
associated with multiplicative noises.
We then recall 
Kolmogorov's continuity criterion
for operator-valued rough paths
(Lemma \ref{LEM:kolm})
in Subsection~\ref{SUBSEC:Kol}.

\subsection{Fractional Brownian motion and multiple stochastic integrals}
\label{SUBSEC:FBM}

In this subsection, we 
briefly go over 
 the basic definitions and properties  of fractional Brownian motions and 
Wiener integrals with  respect to fractional Brownian motions.
See \cite[Chapter 5]{Nualart06} for a further discussion.

\begin{definition}\rm
\label{DEF:fBM}
Let $0 < \be < 1$.
 A (real-valued) fractional Brownian motion $\{B(t)\}_{t\in \R_+}$ with Hurst parameter $\be$ is a centered Gaussian process with covariance given by
\begin{align*}
 \E[B(t_1) B(t_2)] =   \frac12 \Big( t_1^{2\be} + t_2^{2\be} - |t_1-t_2|^{2\be}\Big) .
\end{align*}

\noi
When $\be=\frac12$, this process reduces  to the standard Brownian motion. 
A complex-valued fractional Brownian motion $\{ B(t)\}_{t\in \R_+}$ with Hurst parameter $\be$ is a 
complex-valued centered Gaussian process  such that 
%its real and imaginary parts 
$\{ \sqrt 2 \Re B(t)\}_{t\in \R_+}$ and  $\{\sqrt 2\Im B(t)\}_{t\in \R_+}$
 are independent real-valued fractional Brownian motions with Hurst parameter $\be$
 such that 
 \[ \E\big[ |B(t_1) - B(t_2)|^2\big] = |t_1 - t_2|^{2\be}.\]

\noi
See also 
\cite[Section 5]{OST}
for a discussion on fractional Brownian motions.

\end{definition}

In the following, we restrict our attention to the real-valued setting
but our discussion can be easily adapted to the complex-valued setting.\footnote{Namely, 
by dropping the conditions \eqref{fBM1}, \eqref{fBM2}, 
\eqref{BM3}, and \eqref{BM4}.
See also \cite{OTh1}
for a related discussion in the complex-valued setting.}
Let us first  introduce the following partition of  $\Z^d$:
\begin{align*}
\Z^d = (\Z^d)_+  \cup (\Z^d)_- \cup \{0\}^d ,
\end{align*}

\noi
where 
\begin{equation*}
(\Z^d)_+=\bigcup_{k=0}^{d-1} \Z^k\times \Z_{+}\times \{0\}^{d-k-1}
\qquad \text{and}\qquad 
(\Z^d)_- = -(\Z^d)_+.
\end{equation*}

\noi
We also  set 
\begin{align}
(\Z^d)_{+,0} := (\Z^d)_{+} \cup \{0\}^d.
\label{fBM0}
\end{align}

\noi
Given $0 < \be < 1$, 
let $\{B_n\}_{n\in \Z^d}$ be a family of 
mutually independent complex-valued fractional Brownian motions with Hurst parameter $\be$, conditioned 
that 
\begin{align}
B_{-n} = \cj{B_n}, \qquad n \in \Z^d, 
\label{fBM1}
\end{align}

\noi
which in particular implies that $B_0$ is real-valued.
Then,  $\{B_0\}\cup \{\sqrt 2\Re B_n, \sqrt 2 \Im B_n \}_{n\in (\Z^d)_{+}}$ 
forms a family of  mutually independent real-valued fractional Brownian motions
with the same Hurst parameter $\be$.

In order to define stochastic integrals with respect to these fractional Brownian motions, 
we need the following {\it real} Hilbert space.
For $\frac 12 < \be< 1$,  let 
$\Hs^\be(\R_+)$ be the completion of 
linear combinations of
(real-valued) step functions on $\R_+$ under 
the following norm:\footnote{On the class of functions $f$ on $\R$ with 
$\supp (f) \subset \R_+$, \eqref{BM0} defines a norm, not a semi-norm.}
\begin{align}
\label{BM0}
\begin{split}
\|f\|_{\Hs^\be(\R_+)}^2
&   = \be(2\be-1)
\int_0 ^\infty \int_0^\infty f(t) f(t') |t-t'|^{2\be-2} dtdt' \\
& = C_\be \int_0^\infty f(t) \mathfrak{I}_{2\be - 1}(f) (t) dt \\
&  = C_\be  \int_\R  |\tau|^{1-2\be} |\ft{f}(\tau)|^2 d\tau 
= C_\be \|f\|_{\dot H^{\frac 12 - \be}(\R)}^2
\end{split}
\end{align}

\noi
for a function $f$ supported on $\R_+$, 
where
$\mathfrak{I}_{2\be-1} = |\dt|^{1-2\be}$ denotes the Riesz potential of order $2\be - 1>0$.
Similarly, given $k \in \N$, 
we 
define $\Hs^\be(\R_+^k)$ 
(with the understanding 
that $\R_+^k := (\R_+)^k$)
to be
the completion of 
linear combinations of
products of step functions in  $t_j \in \R_+$, $j = 1, \dots, k$,  under 
\begin{align}
\begin{split}
\|f\|_{\Hs^\be(\R_+^k)}^2
&   = \be^k(2\be-1)^k
\int_{\R_+^{2k}}
f(t_1, \dots, t_k) 
f(t_1', \dots, t_k')
\prod_{j = 1}^k |t_j-t_j'|^{2\be-2} dt_jdt_j' \\
&
= C_\be^k
\bigg\| \prod_{j = 1}^k |\dd_{t_j}|^{\frac 12 - \be}f\bigg\|_{L^2(\R^k_+)}^2
\end{split}
\label{BM0a}
\end{align}

\noi
for a function $f(t_1, \dots, t_k)$ supported on $\R_+^k$. 
When  $\be = \frac12$, we set  
\begin{align}
\Hs^\frac12(\R^k_+) = L^2(\R_+^k).
\label{BM1}
\end{align}

%
%
%Let 
%$\{B_n\}_{n\in \Z^d}$ 
%be the family of independent  complex-valued fractional Brownian motion
%conditioned that 
%
%
%\noi

We say that 
a sequence $f = \{ f_n \}_{n \in \Z^d}$ of complex-valued functions
$f_n$ on $\R_+$
 belongs to $ \l^2(\Z^d;\Hs^\be(\R_+))$, 
if we have
 \begin{align}
 f_{-n} = \cj{f_n}, \quad n \in \Z^d, 
\label{fBM2} 
 \end{align}

\noi
and 
$\Re f_n, \Im f_n \in \Hs^\be(\R_+)$
(note from \eqref{fBM2}  that $\Im f_0 = 0$)
such that 
\begin{align*}
\|f\|_{\l^2(\Z^d;\Hs^\be(\R_+))}
:\! & = \bigg(\sum_{n \in \Z^d} \| f_n\|_{\Hs^\be(\R_+)}^2\bigg)^\frac 12\\
& = \bigg(\sum_{n \in \Z^d} \|\Re  f_n\|_{\Hs^\be(\R_+)}^2
+ \sum_{n \in \Z^d} \|\Im  f_n\|_{\Hs^\be(\R_+)}^2\bigg)^\frac 12 < \infty.
\end{align*}

%\noi
%Given $f\in \l^2(\Z^d;\Hs^\be(\R_+))$
% satisfying 
% \begin{align}
% f_{-n} = \cj{f_n}, \quad n \in \Z^d, 
%\label{fBM2} 
% \end{align}
 
\noi
We then define the Wiener integral  of $f$
 with respect to $\{B_n\}_{n\in\Z^d}$
 by setting
\begin{align}
\begin{split}
I_1[f]  
& =  \sum_{n\in \Z^d}J_n^{(r)}(f_n)
+ \sum_{n\in \Z^d}J_n^{(i)}(f_n)\\
& = \sum_{n\in \Z^d} \int _0^\infty f_n(t) d \Re B_n(t)
+ i 
 \sum_{n\in \Z^d}
\int _0^\infty f_n(t) d \Im B_n(t).
\end{split}
\label{I1}
\end{align}

\noi
Note that $I_1[f]$ is real-valued
in view of the conditions \eqref{fBM1} and \eqref{fBM2}.
In \eqref{I1},  each summand $J_n^{(r)}(f_n)$ 
or $J_n^{(i)}(f_n)$ is  understood as a Wiener integral;
namely, $\{J_0^{(r)}(f_0)\}\cup\{\sqrt 2 J_n^{(r)}(f_n), \sqrt 2 J_n^{(i)}(f_n)\}_{n \in (\Z^d)_{+}}$ is a family of independent mean-zero Gaussian random variables
with variance $\|f_n\|_{\Hs^\be}^2$.
In particular,  
the map $I_1$ is an isometry from 
$\l^2(\Z^d;\Hs^\be(\R_+))$ into $L^2(\Omega, \s(I_1) , \PP)$, where $\s(I_1)$ is the $\s$-algebra generated by the process\footnote{Here, we view $I_1$ as a process
indexed by  $f \in \l^2(\Z^d;\Hs^\be(\R_+))$.}
$I_1= \{I_1[f]: f \in \l^2(\Z^d;\Hs^\be(\R_+))\}$. 
The process $I_1$ is known as an isonormal Gaussian process associated with the Hilbert space 
$\l^2(\Z^d;\Hs^\be(\R_+))$,  satisfying
\begin{align*}
%\label{BM2}
\E\big[I_1[f] I_1[g]\big] = \jb{f,g}_{\l^2_n \Hs_t^\be }
\end{align*}

\noi
for any $f,g\in 
\l^2(\Z^d;\Hs^\be(\R_+))$;
see \cite[Definition 1.1.1]{Nualart06}.

Given $k \in \N \cup\{0\}$, 
we define the $k$th homogeneous Wiener chaos $\HH_k$ 
to be  the closed linear subspace of $L^2(\Omega)$ generated by
\begin{align*}
\big\{ H_k(I_1[f]) : \, f\in \l^2(\Z^d;\Hs^\be(\R_+)),  \, \|f\|_{\l^2_n\Hs^\be_t} = 1   \big\},
\end{align*}

\noi
where $H_k$ denotes the  Hermite polynomial of degree $k$, 
defined via the following generating function:
\begin{equation*}
 e^{tx - \frac{1}{2} t^2} = \sum_{k = 0}^\infty \frac{t^k}{k!} H_k(x).
 \end{equation*}
	
\noi
For readers' convenience, we write out the first few Hermite polynomials:
\begin{align*}
\begin{split}
& H_0(x) = 1, 
\quad 
H_1(x) = x, 
\quad
H_2(x) = x^2 - 1,   
\quad
 H_3(x) = x^3 - 3 x.
\end{split}
\end{align*}

\noi
The spaces $\HH_k$ and $\HH_j$ are orthogonal when $k\neq j$, 
and the real Hilbert space $L^2(\Omega, \s(I_1), \PP)$ admits the following 
Wiener-Ito decomposition: % as an orthogonal sum of the subspaces $\HH_k$:
\begin{align*}
L^2(\Omega, \s(I_1), \PP) = \bigoplus_{k=0}^\infty \HH_k.
\end{align*}

\noi
Note that given any $f\in \l^2(\Z^d;\Hs^\be(\R_+))$, the stochastic integral $I_1[f]$ is an element of the first 
homogeneous Wiener chaos~$\HH_1$.
We now state the Wiener chaos estimate, which follows from the hypercontractivity of the Ornstein-Uhlenbeck semigroup
 due to Nelson \cite{Nelson2};
 see 
 \cite[Theorem~I.22]{Simon}.

\begin{lemma}[Wiener chaos estimate]
\label{LEM:WCE}
Let $k\in\N$.
Then, given any finite $p \ge 1$ and $F \in 
\bigoplus_{j=0}^k \HH_j$, 
%\H_{\le k}$, 
 we have
\begin{align*}
\| F \|_{L^p(\Omega)} \le p^{\frac k 2 } \| F \|_{L^2(\Omega)}.
\end{align*}

\end{lemma}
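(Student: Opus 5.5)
The plan is to reduce the estimate to a statement about a single homogeneous chaos via hypercontractivity. Since $F$ lies in $\bigoplus_{j=0}^k \HH_j$, I will write $F = \sum_{j=0}^k F_j$ with $F_j \in \HH_j$ and use the Ornstein--Uhlenbeck semigroup $P_t$ associated with the isonormal Gaussian process $I_1$, which acts diagonally on the chaos decomposition: $P_t F_j = e^{-jt}F_j$. Nelson's hypercontractivity theorem \cite{Nelson2} (see \cite[Theorem~I.22]{Simon}) states that $\|P_t G\|_{L^p(\Omega)} \le \|G\|_{L^2(\Omega)}$ whenever $p - 1 \le e^{2t}$ and $p \ge 2$. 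For $1 \le p \le 2$ the claim is immediate from H\"older's inequality, $\|F\|_{L^p} \le \|F\|_{L^2} \le p^{k/2}\|F\|_{L^2}$, so I only need to treat $p > 2$.

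For $p>2$, I choose $t$ with $e^{2t} = p-1$ and define $G = \sum_{j=0}^k e^{jt} F_j$, so that $P_t G = F$ and $G$ still has finite chaos expansion, hence lies in $L^2(\Omega)$. Hypercontractivity gives
\begin{align*}
\|F\|_{L^p(\Omega)} = \|P_t G\|_{L^p(\Omega)} \le \|G\|_{L^2(\Omega)}.
\end{align*}
By orthogonality of the chaoses,
\begin{align*}
\|G\|_{L^2(\Omega)}^2 = \sum_{j=0}^k e^{2jt}\|F_j\|_{L^2(\Omega)}^2 \le (p-1)^k \sum_{j=0}^k \|F_j\|_{L^2(\Omega)}^2 = (p-1)^k \|F\|_{L^2(\Omega)}^2,
\end{align*}
using $e^{2t} = p-1 \ge 1$ so that $e^{2jt}\le (p-1)^k$ for $j \le k$. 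Hence $\|F\|_{L^p} \le (p-1)^{k/2}\|F\|_{L^2} \le p^{k/2}\|F\|_{L^2}$.

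The only step requiring care is justifying that the hypercontractivity theorem applies in the present setting: the underlying Gaussian space is generated by the isonormal process $I_1$ on the real Hilbert space $\l^2(\Z^d;\Hs^\be(\R_+))$, which is the standard abstract framework of \cite[Chapter 1]{Nualart06}, so the Ornstein--Uhlenbeck semigroup is defined via the Mehler formula and Nelson's theorem holds verbatim (independently of $\be$ or of the complex-valued bookkeeping, since everything is expressed through real Gaussian variables). Everything else is the elementary computation above; no density or approximation argument is needed because $F$ has a finite chaos expansion.
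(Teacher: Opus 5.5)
Your proof is correct and takes the route the paper indicates. The paper gives no argument beyond citing Nelson's hypercontractivity of the Ornstein--Uhlenbeck semigroup \cite{Nelson2} (see \cite[Theorem~I.22]{Simon}), and your computation is the standard derivation from it: write $F = P_t G$ with $G = \sum_{j\le k} e^{jt}F_j$, take $e^{2t}=p-1$, and handle $p\le 2$ separately by H\"older. It even gives the slightly sharper constant $(p-1)^{k/2}$.
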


Lastly, we introduce multiple stochastic integrals with respect to the fractional Brownian motions 
$\{B_n\}_{n\in\Z^d}$ with Hurst parameter $\frac 12  \le \be < 1$, satisfying \eqref{fBM1}.
Fix  an integer $k \ge 2$.
Given  $f\in \l^2( (\Z^d)^{k} ; \Hs^\be(\R_+^{k})  )$, where 
$\Hs^\be(\R_+^{k})$ 
is as in  \eqref{BM0a}
and \eqref{BM1},  
 we define its symmetrization by
\begin{align*}
\Sym(f) (z_1,  \ldots, z_k)  
= \frac{1}{k!} \sum_{\s\in \S_k} f(z_{\s(1)},  \ldots, z_{\s(k)}),
\end{align*}

\noi
where 
$z_j = (t_j, n_j)$
and $\S_k$ denotes the symmetric group on $\{1, \dots, k\}$.
We denote by  
$\big(\l^2( (\Z^d)^{k} ; \Hs^\be(\R_+^{k})  )\big)^{ \Sym}$  the subspace of symmetric functions in 
$\l^2( (\Z^d)^{k} ; \Hs^\be(\R_+^{k})  )$.

%We then denote by 
%$\l^2_{n}( (\Z^d)^{k} ; \Hs^\be(\R_+^{k})  )^{ \Sym}$ the subspace of symmetric functions in 
%$\l^2_{n}( (\Z^d)^{k} ; \Hs^\be(\R_+^{k})  )$.

We now introduce 
 the notion of a multiple Wiener integral $I_k$.

\begin{definition} \rm
Let $k\in\N$.
The $k$th multiple Wiener integral $I_k$ is an isometry
(up to a constant factor; see \eqref{ISO1}) from 
$\big(\l^2( (\Z^d)^{k} ; \Hs^\be(\R_+^{k}))  \big)^\Sym$
into the Wiener chaos $\HH_k$, uniquely determined by
\begin{align*}
I_k\big[ \Sym(h_1^{\otimes k_1} \otimes \cdots \otimes h_n^{\otimes k_n})
\big  ] = \prod_{j=1}^n H_{k_j} (I_1[h_j])
\end{align*}

\noi
for any orthonormal elements $h_1, \ldots, h_n \in 
\l^2(\Z^d;\Hs^\be(\R_+))$ and  $k_1, \dots, k_n \in \N\cup\{0\}$
such that  $k = k_1 + \ldots + k_n$, 
where $H_{k}$ denotes the  Hermite polynomial of degree $k$ and $I_1$ is as in~\eqref{I1}.

\smallskip

\begin{itemize}
\item
We have 
$I_k[f] = I_k [ \Sym(f)]$
for any $f\in
\l^2( (\Z^d)^{k} ; \Hs^\be(\R_+^{k})  )$.

\smallskip
\item
Given $f\in
\l^2( (\Z^d)^{k} ; \Hs^\be(\R_+^{k})  )$
 and $g\in \l^2( (\Z^d)^\l ; \Hs^\be(\R_+^{\l})  )$
for some $k, \l \in \N$, we have
\begin{align}
\E \big[ I_k[f] I_\l[g] \big] = k! \cdot \ind_{k=\l} \cdot \jb{\Sym(f), \Sym(g)}_{\l^2_{n_1, \dots, n_k} \Hs^\be_{t_1, \dots, t_k}}, 
\label{ISO1}
\end{align}

\noi
where
$\l^2_{n_1, \dots, n_k} \Hs^\be_{t_1, \dots, t_k}$
is a short-hand notation for 
$\l^2_{n}( (\Z^d)^{k} ; \Hs^\be(\R_+^{k})  )$.

\smallskip
\item
When $\be=\frac{1}{2}$, the multiple Wiener integrals agree with the iterated Wiener-Ito integrals with respect to 
a family  $\{B_n \}_{n \in \Z^d}$ of mutually independent standard Brownian motions, 
satisfying \eqref{fBM1}.
Furthermore, suppose that $f$ is symmetric.
Then, we have 
\begin{align*}
& I_k[f] 
 = k! \sum_{n_1, \cdots, n_k \in \Z^d} 
%\int_{0}^\infty \int_{0}^{t_1} \cdots \int_{0}^{t_{k-1}} 
\intt_{\Dl_k}
f(t_1, n_1, \dots, t_k, n_k)
 dB_{n_k}(t_k) \cdots dB_{n_1}(t_1),
\end{align*}

\noi
where 
$\Dl_k
= \big\{ (t_1, \ldots, t_k) \in \R_+^k: \ t_i > t_j 
\text{ for } i < j\big\}$.
Here,  the iterated integral on the right-hand side is understood as an iterated 
Ito integral; see \cite[p.\,23]{Nualart06}.

\end{itemize}

\end{definition}

\subsection{Random tensor estimate}
\label{SUBSEC:RT}

In this subsection, we provide the basic definition of tensors and 
state the random tensor estimate (Lemma \ref{LEM:RT}).
See~\cite[Sections 2 and 4]{DNY3}, \cite[Section~4]{Bring}, 
\cite[Appendix C]{OWZ}, 
and \cite[Section 2]{OW3}
for further discussions.

\begin{definition} \label{DEF:tensor} \rm
Let $A$ be a finite index set. We denote by $n_A$ the tuple $ \{n_j \}_{j \in A}$. 
 A tensor $h = h_{n_A}$ is a function: $(\Z^d)^{A} \to \mathbb{C} $ with the input variables $n_A$. Note that the tensor $h$ may also depend on $\o \in \Om$. 
 The support of a tensor $h$ is the set of $n_A$ such that $h_{n_A} \neq 0$. 

Given a finite index set  $A$, 
let $(B, C)$ be a partition of $A$. We define the norms 
 $\| \cdot \|_{n_A}$ and 
$\| \cdot \|_{n_{B} \to n_{C}}$ by 
\[ \| h \|_{n_A}  = \|h\|_{\l^2_{n_A}} = \bigg(\sum_{n_A} |h_{n_A}|^2\bigg)^\frac{1}{2}\]
and
\begin{align*}
  \| h \|^2_{n_{B} \to n_{C}} = \sup \bigg\{ 
\sum_{n_{C}} \Big| \sum_{n_{B}} h_{n_A} f_{n_{B}} \Big|^2 :  \| f \|_{\l^2_{n_{B}}} =1  \bigg\},  
%\label{Z0a}
\end{align*}

\noi
where  we used the short-hand notation $\sum_{n_Z} = \sum_{n_Z \in (\Z^d)^Z}$ for a finite index set $Z$.
By duality, we have  $\| h \|_{n_{B} \to n_{C}} = \| h \|_{n_{C} \to n_{B}} 
= \| \cj h \|_{n_{B} \to n_{C}}$ for any tensor $h = h_{n_A}$. 
If $B = \varnothing$ or $C = \varnothing$,  then we have
$  \| h \|_{n_{B} \to n_{C}} = \| h \|_{n_A}$.
\end{definition}

We  now state the random tensor estimate
for multiple stochastic integrals
from \cite{CLO2, COZ}.
The random tensor estimate 
was first introduced 
in a breakthrough work \cite{DNY3}
by Deng, Nahmod, and Yue 
in the context of random variables;
see also 
\cite{Bring, OW3}.
See also  \cite{BO96} for a precursor of
the random tensor estimate.
In \cite{OWZ}, 
the fifth author with Wang and Zine
extended it 
to treat the case of iterated Wiener-Ito integrals, 
which was further extended
in~\cite{CLO2, COZ}
to the case of 
multiple stochastic integrals
with respect to fractional Brownian motions of Hurst parameter $\be\in(\frac12, 1)$.
In the following, we state a slightly simplified
version of the random tensor estimate from \cite{CLO2},
which is sufficient for our purpose.
See \cite{CLO2}
for a more general statement and its proof
(which follows closely the presentation in \cite{OWZ}, 
corresponding to the $\be = \frac 12$ case).

\begin{lemma}
\label{LEM:RT}

Fix $\frac 12 \le \be < 1$
and let $A$ be a finite index set with $k = |A| \ge 1$. 
Given a  tensor  $h=h_{bc n_A} \in \l^2_{bcn_A}$ 
with  $n_A \in (\Z^d)^{A}$ and $(b,c) \in (\Z^d)^{m}$ for some integer $m\ge2$, 
satisfying
\begin{align}
h_{-b, -c, -n_A} = \cj{h_{bc n_A}}, 
\quad 
(b,c) \in (\Z^d)^{m}, \ 
n_A \in (\Z^d)^{A}, 
\label{BM3}
\end{align}

\noi
 define the random tensor $H = H_{bc}$  by
\begin{align*}
H_{bc} =   \sum_{n_A} I_k \big[ h_{bcn_A} f_{bc} (t_A, n_A) \big]
\end{align*}

\noi
for $f\in \l^\infty_{bc n_A}( (\Z^d)^{k+m} ; \Hs^\be(\R_+^{k})  )$, satisfying
\begin{align}
f_{-b, -c, -n_A} = \cj{f_{bc n_A}}, 
\quad 
(b,c) \in (\Z^d)^{m}, \ 
n_A \in (\Z^d)^{A}, 
\label{BM4}
\end{align}

\noi
where 
$\Hs^\be(\R_+^{k})$ 
is as in  \eqref{BM0a} and \eqref{BM1}
\textup{(}see also \eqref{BM0}\textup{)}
and 
$I_k$ denotes the multiple stochastic integral 
%associated with an isonormal process over $\l^2(\Z^d; \Hs^\g(\R_+))$ 
defined in Subsection~\ref{SUBSEC:FBM}.
Then, given any $\ta > 0$
and finite $p \ge1$, we have
\noi
\begin{align}
\begin{split}
\big\| \| H_{bc} \|_{b \to c} \big\|_{L^p(\Om)}
&
\les p^\frac k2
\|f_{bc}(t_A,n_A)\|_{\l^\infty_{bc n_A}\Hs^\be_{t_A}}
\| h_{bcn_A} \|_{\l^2_{bcn_A}}^{\ta}\\
 &
 \quad \times \Big( \max_{(B,C)} \|    h_{bcn_A} \|_{b n_B \to c n_C} \Big)^{1 - \ta} , 
\end{split}
\label{BM5}
\end{align}

\noi
where the maximum is taken over  all partitions $(B,C)$ of $A$.

\end{lemma}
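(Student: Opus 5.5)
The plan is to follow the moment-method approach of \cite{OWZ, CLO2}, reducing the estimate \eqref{BM5} on the operator norm of a random matrix to a deterministic bound on traces of powers. Write $\mathcal{H}_{bc} := H_{bc}$ and view it as an operator from $\l^2_b$ to $\l^2_c$. For an even integer $2q$ we bound $\|H\|_{b\to c}^{2q} \le \operatorname{tr}\big((HH^*)^q\big)$. Then $\E\,\operatorname{tr}((HH^*)^q)$ is an expectation of a product of $2q$ multiple stochastic integrals of order $k$.

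\emph{Step 1 (product formula and pairings).} Expand each product $\prod_{j=1}^{2q} I_k[\,\cdot\,]$ using the product formula for multiple Wiener integrals (with respect to fractional Brownian motion). Its expectation is the sum over all complete pairings of the $2qk$ stochastic legs. Each pairing of legs $(t_j,n_j)$ with $(t_{j'},n_{j'})$ contributes a contraction in $\Hs^\be$, which forces $n_j = -n_{j'}$ by \eqref{fBM1} and \eqref{BM3}--\eqref{BM4}. It also produces a factor bounded via Cauchy--Schwarz in $\Hs^\be$ by $\|f\|_{\l^\infty\Hs^\be}^{2q}$; this step is where the $\Hs^\be$ norm in \eqref{BM5} enters. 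Since the number of pairings is at most $(2qk)!$-ish, one gets $p^{k/2}$-type growth after choosing $q\sim p$, combined with Lemma \ref{LEM:WCE} to interpolate.

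\emph{Step 2 (deterministic tensor contraction).} For each pairing, the remaining sum is a closed tensor network built from $2q$ copies of $h$ and $\cj h$, contracted along $b$, $c$ (the trace structure) and along paired $n_A$-legs. This is the key estimate, following \cite[Section 4]{DNY3} and \cite[Appendix C]{OWZ}. Each such network is bounded by a product of norms $\|h\|_{bn_B\to cn_C}$, with at most a bounded number (independent of $q$) of factors $\|h\|_{\l^2_{bcn_A}}$. The procedure is to cut the network sequentially: legs paired within the same copy, or across adjacent copies, are absorbed as in a partition $(B,C)$ of $A$, giving operator norms. The boundary effects of the trace give the $\l^2$ (Hilbert--Schmidt) factors. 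Thus the network is bounded by
\[
\|h\|_{\l^2_{bcn_A}}^{2}\Big(\max_{(B,C)}\|h\|_{bn_B\to cn_C}\Big)^{2q-2}.
\]

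\emph{Step 3 (conclusion).} Taking the $2q$-th root gives $\|h\|_{\l^2}^{1/q}(\max\|h\|_{bn_B\to cn_C})^{1-1/q}$. Choosing $q\ge 1/\ta$ and using that the operator norm is at most the $\l^2$ norm yields exponent $\ta$. Hölder in $\Om$ then passes from $L^{2q}$ to $L^p$ with the $p^{k/2}$ constant. An additional approximation, truncating $h$ to finite support and applying monotone convergence, justifies the trace manipulations.

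The main obstacle is Step 2. The combinatorial bound on arbitrary pairing networks by the claimed product of $b n_B\to cn_C$ norms requires a careful inductive cutting argument. A secondary subtlety is that the $\Hs^\be$ contraction does not factorize pointwise in time for $\be>\frac12$, so $f$ must be separated from $h$ before the tensor bound is applied.
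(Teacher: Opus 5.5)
There is a genuine gap, and it sits in the step you call a ``secondary subtlety'': the passage from Step~1 to Step~2. The paper itself gives no proof of this lemma; it defers to \cite{CLO2, OWZ}, whose argument is of the moment-method type you outline. For a fixed pairing $\mathcal P$, the contribution is
\[
\sum_{(b_j,c_j,n_{A,j})_{j}}\ \prod_{j=1}^{2q} h^{(j)}_{b_jc_jn_{A,j}}\cdot F_{\mathcal P}\big((b_j,c_j,n_{A,j})_{j}\big),
\]
where $F_{\mathcal P}$ is the $\Hs^\be$-contraction of the functions $f_{b_jc_j}(\cdot,n_{A,j})$.

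Because $f$ depends on $(b,c,n_A)$, $F_{\mathcal P}$ depends on every summation index. You can only use $|F_{\mathcal P}|\le\|f\|^{2q}_{\l^\infty\Hs^\be}$ after putting absolute values inside the sum. Step~2 then produces $\max_{(B,C)}\|\,|h|\,\|_{bn_B\to cn_C}$, where $|h|$ is the entrywise modulus, and not $\|h\|_{bn_B\to cn_C}$.

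This has nothing to do with $\be>\frac12$, and no cleverer separation exists, because \eqref{BM5} as written is false. Take $\be=\frac12$, $d=k=1$, $m=2$, and for $1\le b,c\le N$ set
\[
h_{bc1}=N^{-\frac12}e^{2\pi i bc/N},\qquad f_{bc}(t,1)=e^{-2\pi i bc/N}\,\ind_{[0,1]}(t),
\]
extended to $(-b,-c,-1)$ via \eqref{BM3}--\eqref{BM4} and set to zero elsewhere.
\begin{itemize}
\item Then $H_{bc}=N^{-\frac12}B_1(1)$ for all $1\le b,c\le N$, so $\|H\|_{b\to c}\ge N^{\frac12}|B_1(1)|$.
\item On the other side, $\|f\|_{\l^\infty L^2}=1$ and $\|h\|_{\l^2}=(2N)^{\frac12}$.
\item Also $\|h\|_{b\to cn_1}=\|h\|_{bn_1\to c}=1$, since $h$ is block diagonal with unitary DFT blocks.
\end{itemize}
So the right-hand side of \eqref{BM5} is $O(p^{\frac12}N^{\ta/2})$, and the estimate fails for every $\ta<1$ as $N\to\infty$.

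What your argument does prove, with absolute values taken from the start, is \eqref{BM5} with $\max_{(B,C)}\|\,|h|\,\|_{bn_B\to cn_C}$ on the right. For that version, Step~2 is not the obstacle you expect.
\begin{itemize}
\item Multiple integrals admit no self-contractions in the diagram formula, so ``legs paired within the same copy'' never occur.
\item Ordering the $2q$ copies along the trace cycle and contracting sequentially gives $\|h\|_{\l^2}$ for the first and last copies. Each middle copy contributes a norm of the form $\|\,|h|\,\|_{bn_B\to cn_C}$, possibly after duality.
\item The $(2qk-1)!!$ pairings give $q^{k/2}$ after the $2q$-th root; $(2qk)!$ would be too crude.
\item Small $p$ follow by H\"older. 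Lemma~\ref{LEM:WCE} is neither needed nor directly applicable to $\|H\|_{b\to c}$.
\end{itemize}

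This corrected statement is all the paper actually uses. The bounds \eqref{YG3x}, \eqref{YG3y}, \eqref{RGG6}, \eqref{SG3y} come from Cauchy--Schwarz with absolute values, so they hold for $|\hf|$ as well. That matters there, because $\ff_{nn_2}(t')=\ind_{[r,t]}(t')e^{-iw(t')(n^3-n_2^3)}$ genuinely depends on $(b,c)=(n_2,n)$.
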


\subsection{Kolmogorov's continuity criterion}
\label{SUBSEC:Kol}

We conclude this section by stating a version of 
Kolmogorov's continuity criterion
for an operator-valued  rough path, 
namely, a pair $(\XX, \XXb)$ of two-parameter operator-valued stochastic processes, 
satisfying the relation  \eqref{KL1}
(see Definition \ref{DEF:control}\,(ii)).

\begin{lemma}\label{LEM:kolm}
Let $V$ be a Banach space and  $T > 0$.
Let $(\XX, \XXb)
\in  C_{2,T}\L(V)\times C_{2,T}\L(V)$ be a pair of two-parameter stochastic processes, 
satisfying
\begin{align}
\begin{split}
(\updl \XX)_{t_1,t_2,t_3}& =0, \\
(\updl \XXb)_{t_1,t_2,t_3} &  = \XX_{t_1,t_2} \circ \XX_{t_2,t_3}
\end{split}
\label{KL1}
\end{align}

\noi
 for any $(t_1,t_2,t_3)\in\Dl_{3,T}$.
Given a Banach subspace $V_0$ of $V$, 
suppose that there exist $M_1, M_2>0$, $p \ge 1$, and $\frac 1p < \be \le 1$
such that
\begin{align}
\begin{split}
\big\| \|  \XX_{t,r} \|_{\L(V; V_0)} \big\|_{L^p(\Om)} & \le M_1  |t-r|^\be,\\
\big\| \|  \XXb_{t,r} \|_{\L(V; V_0)} \big\|_{L^\frac{p}{2}(\Om)}&  \le M_2  |t-r|^{2\be}
\end{split}
\label{KL2}
\end{align}

\noi
for any $(t, r) \in \Dl_{2, T}$.
Then, for any $0<\al<\be-\frac{1}{p}$, there exists 
a constant $C_{\be,\al, T} >0$, 
independent of $p$,  such that
\begin{align}
\begin{split}
\big\| \|\XX\|_{C^\al_{2,T} \L(V; V_0)} \big\|_{L^p(\Om)} & \le C_{\be,\al, T} M_1,\\
\big\| \|\XXb\|_{C^{2\al}_{2,T} \L(V; V_0)} \big\|_{L^\frac p2(\Om)} & \le C_{\be,\al, T} 
(M_1^2 +M_2).
\end{split}
\label{KL3}
\end{align}

In particular, 
if, given $0 < \be\le 1$,  there exist $M, \ta_1, \ta_2 > 0$ such that 
\begin{align}
\begin{split}
\big\| \|  \XX_{t,r} \|_{\L(V; V_0)} \big\|_{L^p(\Om)} \le M p^{\ta_1} |t-r|^\be,\\
\big\| \|  \XXb_{t,r} \|_{\L(V; V_0)} \big\|_{L^\frac p2(\Om)} \le M p^{\ta_2} |t-r|^{2\be}
\end{split}
\label{KL4}
\end{align}

\noi
for any finite $p > \frac 1\be$
and for any $(t, r) \in \Dl_{2, T}$, 
 then 
 for any $0<\al<\be-\frac{1}{p}$, there exists 
a constant $C_{\be,\al, T} >0$ such that
\begin{align}
\begin{split}
\big\| \|\XX\|_{C^\al_{2,T} \L(V; V_0)} \big\|_{L^p(\Om)} \le C_{\be,\al, T} Mp^{\ta_1}, \\
\big\| \|\XXb\|_{C^{2\al}_{2,T} \L(V; V_0)} \big\|_{L^\frac p2(\Om)} \le C_{\be,\al, T} M
(p^{2\ta_1} + p^{\ta_2})
\end{split}
\label{KL5}
\end{align}

\noi
for any finite $p > \frac 1\be$.
As a consequence, we have
\[ (\XX, \XXb) 
\in C^\al_{2,T} \L(V; V_0) \times 
C^{2\al}_{2,T} \L(V; V_0), 
\]

\noi
almost surely.
\end{lemma}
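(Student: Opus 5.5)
The plan is to follow the standard Garsia–Rodemich–Rumsey / dyadic chaining argument for two-parameter processes, using the algebraic relations \eqref{KL1} in place of the path structure. First I will treat $\XX$. Since $(\updl \XX)=0$, $\XX$ is additive: $\XX_{t,r}=\XX_{t,u}+\XX_{u,r}$ for $r<u<t$. I take dyadic points $D_n=\{kT2^{-n}\}$. Given $(t,r)\in\Dl_{2,T}$, I choose $n_0$ with $2^{-n_0-1}T<|t-r|\le 2^{-n_0}T$ and write $\XX_{t,r}$ as a telescoping sum of increments over adjacent dyadic points of levels $n\ge n_0$, with at most two increments per level. This gives
\[
\|\XX_{t,r}\|_{\L(V;V_0)}\le 2\sum_{n\ge n_0} K_n,\qquad K_n=\max_{k}\|\XX_{(k+1)T2^{-n},kT2^{-n}}\|_{\L(V;V_0)}.
\]
I then bound the maximum by an $\l^p$ sum and apply \eqref{KL2}:
\[
\E[K_n^p]\le \sum_k \E\|\XX_{\cdot}\|^p\le 2^n M_1^p (T2^{-n})^{p\be}.
\]
Hence $\|\sup_n 2^{n\al}K_n\|_{L^p}\les M_1\big(\sum_n 2^{np(\al-\be)+n}\big)^{1/p}$, which is finite precisely when $\al<\be-\frac1p$. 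Dividing by $|t-r|^\al\sim 2^{-n_0\al}$ and summing the geometric series yields the first bound in \eqref{KL3}, with a constant that depends only on $\be$, $\al$, $T$. Continuity on the dyadics extends to all of $\Dl_{2,T}$ by the assumed continuity of $\XX$ in $C_{2,T}$.

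Next I treat $\XXb$. By the Chen relation in \eqref{KL1}, $\XXb_{t,r}=\XXb_{t,u}+\XXb_{u,r}+\XX_{t,u}\circ\XX_{u,r}$. Running the same dyadic decomposition, $\XXb_{t,r}$ equals a sum of dyadic increments $\XXb$ at levels $n\ge n_0$ plus cross terms $\XX_{\cdot}\circ\XX_{\cdot}$. I have to be careful with composition, since $\XX$ maps $V\to V_0$ and the norm is $\L(V;V_0)$. I will use the $\L(V_0;V_0)$ or $\L(V;V)$ composition bound implicit in the intended application; the key point is that the cross terms are controlled by $\|\XX\|_{C^\al}^2|t-r|^{2\al}$, and their $L^{p/2}$ norm is bounded via Cauchy–Schwarz by $M_1^2$. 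The dyadic $\XXb$ pieces are handled exactly as in the first step, now at moment $p/2$. Their summability needs $2\al<2\be-\frac2p$, which is the same condition. This gives the second bound with $M_1^2+M_2$. I expect the main bookkeeping obstacle to be organizing the cross terms across levels so that each is bounded by a product of two dyadic increments at comparable scales. To handle this I will use the standard sewing-type estimate, i.e. the Kolmogorov criterion for rough paths as in Friz–Hairer, Theorem~3.1, adapted verbatim to operator norms.

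Finally, for \eqref{KL5} I substitute $M_1=Mp^{\ta_1}$ and $M_2=Mp^{\ta_2}$. The constants produced above do not depend on $p$ once $p$ is bounded away from $\frac1{\be-\al}$; for fixed $\al$ I can restrict to large $p$ and use monotonicity of $L^p$ norms for smaller $p$. The almost sure statement then follows from finiteness of the moments.
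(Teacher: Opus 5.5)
Your proposal matches the paper's proof: the paper omits the details and refers to a straightforward modification of the Kolmogorov criterion for rough paths in Friz--Hairer (Theorem~3.1), which is exactly your dyadic chaining for $\XX$ combined with Chen's relation and the cross-term bound for $\XXb$. Two small remarks: first, no extra composition bound is needed, since $V_0 \hookrightarrow V$ already gives $\|\XX_{t,u}\circ\XX_{u,r}\|_{\L(V;V_0)} \les \|\XX_{t,u}\|_{\L(V;V_0)}\|\XX_{u,r}\|_{\L(V;V_0)}$; second, your own factor $\big(\sum_n 2^{-n(p(\be-\al)-1)}\big)^{1/p}$ shows the constant in \eqref{KL3} is uniform in $p$ only when $p(\be-\al)$ is bounded away from $1$, which is precisely what your reduction to large $p$ for \eqref{KL5} uses.
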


Since Lemma \ref{LEM:kolm}
follows
from 
a straightforward modification of the proof of 
\cite[Theorem~3.1]{FH20}, 
we omit details.
Furthermore, 
as pointed out on \cite[p.\,41]{FH20}, 
the usual Kolmogorov continuity criterion 
for $\XX$ is contained in the proof of Lemma \ref{LEM:kolm}
by ignoring all considerations related to the second-order
process $\XXb$.
Namely, if the first conditions in \eqref{KL1}
and \eqref{KL2} (and in 
\eqref{KL1}
and \eqref{KL4})
hold, 
then the first bound in \eqref{KL3}
(and in \eqref{KL5}, respectively)
holds.

\section{Nonlinear Young differential equation with a 
perturbation}
\label{SEC:Young}

In this section, we 
go over pathwise study of stochastic perturbations
of the following nonlinear YDE:
\begin{equation}
\label{YDE0}
\uu = u_0 + \I^\XX(\uu)
\end{equation}

\noi
in a general setting, 
where $\XX$ is a given nonlinear driver of suitable regularity
studied in~\cite{CGLLO1};
see \cite[Section~3]{CGLLO1}
for a local well-posedness result of \eqref{YDE0} in a general setting.
See also~\cite{G23} for a recent review on
nonlinear YDEs.

In \cite{CGLLO1}, 
the last four authors with Chouk  constructed 
 a nonlinear Young integral  $\I^\XX(\uu)$ 
with a suitable nonlinear Young driver $\XX$, using
 the sewing lemma (Lemma~\ref{LEM:sew}), 
 which is reviewed 
in Subsection \ref{SUBSEC:Y2}.
In Subsection \ref{SUBSEC:Y3}, 
we then present a  local well-posedness result 
(Proposition \ref{PROP:main})
of the following 
nonlinear YDE with a Young perturbation:
\begin{equation}
\label{YDE1}
\uu = u_0 + \I^\XX(\uu) + \I^\YY(\uu), 
\end{equation}

\noi
where
$\I^\XX(\uu)$
and $\I^\YY(\uu)$
are the (nonlinear and linear, respectively)
Young integrals of~$\uu$
with respect to 
Young drivers $\XX$ and $\YY$, respectively;
see Lemma \ref{LEM:int1}
for the construction of Young integrals.
For our application, 
the Young integral
$\I^\YY(\uu)$ in \eqref{YDE1} corresponds
to 
the stochastic term $\PPsi(\uu)$
in \eqref{psi2} (in the case of SmKdV \eqref{skdv2}), 
representing the effect of the multiplicative noise
$u \phi \z$ (at the level of the modulated interaction representation)
in the fractional-in-time case (with Hurst
parameter $\frac 12 < \be < 1$).

In Subsection \ref{SUBSEC:Y4}, 
we present a brief review
of rough integrals of controlled paths
given a rough driver $(\YY, \YYb)$.
While the material presented 
in Subsection~\ref{SUBSEC:Y4}
is standard by now
(see, for example, \cite{Gub04, FH20}), 
we decided to include the discussion 
for non-experts, in particular those working in dispersive PDEs.
Finally, in Subsection \ref{SUBSEC:Y5}, 
we present a local well-posedness result
(Proposition \ref{PROP:main2})
of the 
following nonlinear YDE with a rough perturbation:
\begin{equation}
\label{YDE1a}
\uu = u_0 + \I^\XX(\uu) + \I^{\YY, \mathbb{Y}}(\uu), 
\end{equation}

\noi
where $\I^\XX(\uu)$ is the nonlinear Young integral of $\uu$
with a nonlinear Young driver $\XX$  and 
$\I^{\YY, \YYb}(\uu)$ is the rough integral of $\uu$
with a rough driver $(\YY, \YYb)$.
For our application, 
the rough integral
%$\I^{\YY, \YYb}(\uu)$ 
corresponds
to 
the stochastic term $\wt \PPsi(\uu)$
in \eqref{psi3} (in the case of SmKdV~\eqref{skdv3}
with the associated drivers $\wt \YY$ and $\wt \YYb$ in \eqref{sto2x}
and \eqref{sto3x}), 
representing the effect of the multiplicative noise
$\jb{\dx}^{-\eps_0} u \phi \xi$
(at the level of the modulated interaction representation)
in the white-in-time case (with Hurst
parameter $\be = \frac 12$).

In Sections~\ref{SEC:driver}
and Appendix~\ref{SEC:NLS}, we
will establish almost sure regularity properties
of relevant drivers
for 
  the stochastic modulated KdV equations, \eqref{skdv2}
and \eqref{skdv3},
and the stochastic modulated Schr\"odinger  equation
 \eqref{NLS2}. 
Then, pathwise local well-posedness
and regularization by noise 
for these equations
(Theorems \ref{THM:2}, \ref{THM:4}, and~\ref{THM:NLS})
follow from 
 Propositions~\ref{PROP:main}
and~\ref{PROP:main2}.

\begin{remark}\label{REM:mean0}
\rm

In Subsections \ref{SUBSEC:Y3} and 
\ref{SUBSEC:Y5}, 
we present local well-posedness
results for \eqref{YDE1}
and~\eqref{YDE1a}, respectively,
for $H^s(\M)$-valued functions, 
$\M = \T^d$ or $\R^d$.
With obvious modifications, 
the corresponding results hold
for  $H^s_0(\T)$-valued functions.
We omit details.

\end{remark}

\subsection{Sewing lemma and nonlinear Young integration}
\label{SUBSEC:Y2}

In this subsection, we briefly go over the construction of 
 a nonlinear Young integral 
 from \cite{CGLLO1}, 
based on the sewing lemma due to the second author \cite{Gub04}.
See also 
 \cite{HK, G23} 
 for the construction of  nonlinear Young integrals
 in a more general setting.
 The following version of the sewing lemma 
 is taken from~\cite{GT10} 
with slight modifications; 
see  
\cite[Proposition~2.3, Corollaries  2.4 and  2.5]{GT10}.
See also \cite[Lemma~4.2]{FH20}.

We set 
$C^{1+}_{n,T}V = \bigcup_{\g>1}C^\g_{n,T}V$.

\begin{lemma}[sewing lemma]
\label{LEM:sew}

Let $V$ be a Banach space and 
fix $T>0$. 
Then,  there exists a unique linear map \textup{(}called the sewing map\textup{)}
$\Lambda:C^{1+}_{3,T} V \cap 
\Ker \updl|_{C_{3,T}V}
\to C^{1+}_{2,T}V$ such that 

\smallskip
\begin{enumerate}
\item[(i)] 
We have 
$\updl \Lambda h = h$
for each  $h\in C_{3,T}V\cap \Ker  \updl|_{C_{3,T} V}$.

\smallskip

\item[(ii)]
 For each $\kk >1$,
the sewing map $\Lambda$ is continuous
from $C^\kk _{3,T}V\cap 
\Ker  \updl|_{C_{3,T} V}$ to 
$C^\kk _{2,T}V$ such that 
\begin{align}
\|\Lambda h \|_{C^\kk _{2,T}V} 
\le \frac{1}{2^\kk - 2}  \| h \|_{C^\kk _{3,T}V} 
\label{sew1}
\end{align}

\noi
for any $h\in C^\kk _{3,T}V$.

\smallskip
\item[(iii)] 
Given any  $g\in C_{2,T}V$ 
with 
$\updl g\in C^{1+} _{3,T}V$, 
 there exists  unique
$f\in C([0, T];V)$  \textup{(}modulo an additive  constant\textup{)} 
such that 
$\updl f = (\Id - \Lambda \updl)g$. 
In addition, 
we have 
\begin{align*}
(\updl f)_{t,r} = \lim_{|\Pi([r,t])|\to 0} 
\sum_{j=0}^{n-1} g_{t_j,t_{j+1}}
%\label{sew2}
\end{align*}

\noi
 for any $(t,r)\in \Dl_{2,T}$,
 where 
 the limit is over any partition
 $\Pi([r,t])$  
 of  $[r,t]$\textup{:} 
\[\Pi ([r,t]) = \{r = t_n < \dots < t_1 <  t_0 = t\}\]
whose mesh size 
$|\Pi([r,t])| = \max_{j} |t_j-t_{j+1}|$ 
tends to $0$.

\end{enumerate}

\end{lemma}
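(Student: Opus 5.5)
The plan is the classical dyadic construction of the sewing map, following \cite[Proposition~2.3]{GT10} and \cite[Lemma~4.2]{FH20}. Fix $\kk > 1$ and $h \in C^\kk_{3,T}V$ with $\updl h = 0$. By exactness of the complex $C_{2,T}V \stackrel{\updl}{\to} C_{3,T}V \stackrel{\updl}{\to} C_{4,T}V$, there is some $B \in C_{2,T}V$ with $\updl B = h$. The difficulty is that $B$ is only determined up to $\updl$ of a one-parameter path, and we must select the unique representative of regularity $\kk>1$.

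\textbf{Existence.} For $(t,r) \in \Dl_{2,T}$, let $\Pi_n$ be the dyadic partition of $[r,t]$ into $2^n$ equal pieces and set $B^n_{t,r} = \sum_{[u,v]\in \Pi_n} B_{v,u}$. Passing from $\Pi_n$ to $\Pi_{n+1}$ inserts a midpoint $m$ in each interval, so
\[
B^{n+1}_{t,r} - B^n_{t,r} = -\sum_{[u,v]\in\Pi_n} h_{v,m,u},
\]
with sign fixed by the convention \eqref{dl1}. Using the definition \eqref{Ho2}, each summand is bounded by $\|h\|_{C^\kk_{3,T}V}\,(2^{-n-1}|t-r|)^\kk$. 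Summing over the $2^n$ intervals gives a bound of order $2^{n(1-\kk)}2^{-\kk}|t-r|^\kk\|h\|$, which is geometric in $n$. Hence $B^n_{t,r}$ converges, and we define
\[
\Lambda h := B - \lim_{n\to\infty} B^n .
\]
Then
\[
\|(\Lambda h)_{t,r}\| \le \sum_{n\ge 0} 2^{-\kk} 2^{n(1-\kk)} |t-r|^\kk \|h\| = \frac{1}{2^\kk-2}|t-r|^\kk\|h\|,
\]
which is \eqref{sew1}.

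\textbf{Identity (i) and uniqueness.} Showing $\updl \Lambda h = h$ is the delicate step, since the limit of $B^n$ is taken along dyadic partitions adapted to each interval. I would first prove that for any $g \in C_{2,T}V$ with $\updl g\in C^{\kk}_{3,T}V$, the Riemann sums $\sum_{\Pi} g$ converge along arbitrary partitions as the mesh tends to $0$. The standard argument removes points one at a time: choose a point whose neighbouring gap is at most $2|t-r|/(\#\Pi-1)$, which costs at most $C\,(\#\Pi)^{-\kk}|t-r|^\kk$, and sum over $\#\Pi$. This gives the maximal inequality
\[
\Big\|\sum_\Pi g - g_{t,r}\Big\| \le C_\kk \|\updl g\|\,|t-r|^\kk
\]
uniformly over partitions $\Pi$. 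Comparing two fine partitions through their common refinement then gives a Cauchy property. The limit $\I_{t,r}$ is additive, since concatenating partitions of $[s,r]$ and $[r,t]$ gives a partition of $[s,t]$. Hence $\updl \I = 0$, so $\I = \updl f$ for some one-parameter path $f$. Setting $\Lambda\updl g = g - \I$ yields $\updl \Lambda h = \updl g = h$. I expect this partition-independence step to be the main technical obstacle.

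Uniqueness is quick. If $\Lambda_1 h, \Lambda_2 h \in C^{1+}_{2,T}V$ both satisfy (i), their difference $D$ satisfies $\updl D = 0$, so $D = \updl f$ with $f$ of Hölder exponent greater than $1$. Such an $f$ is constant, so $D = 0$. The same fact gives that $\Lambda$ is well defined independently of the choice of $B$, and that it is linear.

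\textbf{Part (iii).} Put $\updl f = (\Id - \Lambda\updl) g$. The right-hand side has vanishing coboundary, because $\updl\Lambda\updl g = \updl g$. Hence it is $\updl$ of some $f$, which is unique modulo constants by exactness. The Riemann-sum formula follows from the maximal inequality above applied on each subinterval: writing $(\updl f)_{t,r} = \sum_{j}(\updl f)_{t_j,t_{j+1}}$, we get
\[
\Big\|(\updl f)_{t,r}-\sum_j g_{t_j,t_{j+1}}\Big\| \le \sum_j \|(\Lambda\updl g)_{t_j,t_{j+1}}\| \les \sum_j |t_j-t_{j+1}|^\kk \le |\Pi|^{\kk-1}|t-r| \to 0 .
\]
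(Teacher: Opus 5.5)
Your proposal is correct. It is essentially the standard proof that the paper relies on; the paper gives no proof of its own and cites \cite[Proposition~2.3]{GT10} and \cite[Lemma~4.2]{FH20}. In that standard proof, dyadic refinement constructs $\Lambda h$ with the sharp constant $\frac{1}{2^\kappa-2}$ in \eqref{sew1}. Young's point-removal maximal inequality then gives partition-independence of the Riemann sums, so their limit is additive and $\updl \Lambda h = h$. Uniqueness follows because $\updl$-closed increments of H\"older order greater than $1$ vanish, and part (iii) follows from exactness together with \eqref{sew1}.
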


We now review the essential idea
 on the construction of a nonlinear Young integral 
 via the sewing lemma.
Fix a driver  $\XX\in C^\g_{2,T}\Lip_k(V)$, where $C^\g_{2,T}\Lip_k(V)$
is as in \eqref{Ho3},  such that 
\begin{align}
\XX_{t,r}(0)=0
\label{Ja0}
\end{align} 

\noi
for any $(t,r) \in  \Dl_{2,T}$, 
and 
\begin{align}
\label{Ja1}
(\updl \XX)_{t_1,t_2,t_3} = 0
\end{align}

\noi
for any $(t_1,t_2,t_3)\in \Dl_{3,T}$.
In the following, we assume  that the driver $\XX_{t, r}$
is given by an integral operator
over the time interval $[r, t]$;
see, for example, \eqref{K1x}
for the bilinear driver associated with the modulated KdV \eqref{kdv1}.
Given 
$f\in \CC^{\al}([0,T];V)$ for some $0 < \al < 1$, 
%such that $f(0) \in V$, 
%our goal is to construct
the nonlinear Young integral $\I^\XX(f)$ of $f$
is a unique function 
(in a suitable class of functions defined on $[0, T]$)
with $\I^\XX(f)(0) = 0$
whose increment is given by 
\begin{align}
\updl 
\big(\I^\XX(f)\big)_{t, r} 
= \XX_{t, r}(f_\bul)
\label{YQ1}
\end{align}

\noi
for any $(t, r) \in\Dl_{2, T}$.
Here, $f_\bul$ denotes 
the function $f$ evaluated at the variable of integration~$\bul$, satisfying $r \le \bul \le  t$.
As it is, the expression on the right-hand side of \eqref{YQ1}
is not well defined.
By replacing $\bul$ by the left endpoint $r$, 
we have 
\begin{align}
\XX_{t, r}(f_\bul)
= \XX_{t, r}(f_r) + R_{t, r}
\label{YQ2}
\end{align}

\noi
for some two-parameter process $R = R^{\XX, f}$.
Our goal is to find one such error term $R$ with sufficient regularity, 
which will allow us to define the nonlinear Young integral
 $\I^\XX(f)$
 as the unique limit of 
Riemann-Stieltjes type sums; see \eqref{YQ6} and \eqref{YQ7}.
Define  $\Theta$ on $\Dl_{2,T}$
by setting
\begin{align}
\Theta_{t,r} = \XX_{t,r}(f_r), \quad (t,r)\in \Dl_{2,T}.
\label{Ja1a}
\end{align}

\noi
By applying the coboundary operator $\updl$
to both sides of \eqref{YQ2}
together with \eqref{Ja1} and \eqref{Ja1a}, 
any error term $R$ in~\eqref{YQ2} (if it exists) satisfies
\begin{align}
(\updl R)_{t_1, t_2, t_3}
= - (\updl  \Theta)_{t_1, t_2, t_3}
=  \XX_{t_1, t_2}( f_{t_2}) - 
  \XX_{t_1, t_2}( f_{t_3})
\label{YQ3}
\end{align}

\noi
for any $(t_1, t_2, t_3) \in\Dl_{3, T}$.
Then, from  the regularity assumptions on $\XX$
and $f$
with~\eqref{Ho2} and~\eqref{Ho3}, 
we see that 
$\updl R 
 \in C^{\g + \al}_{3,T}V$;  see \cite[(3.27)]{CGLLO1}.
Hence, if $\g + \al > 1$, 
then we can apply the sewing lemma (Lemma~\ref{LEM:sew})
to define an error term $R$ by the relation:
\begin{align}
R = - \Ld \updl  \Theta
\in C^{\g + \al}_{2,T}V,
\label{YQ4} 
\end{align}
where $\Ld$ denotes the sewing map.
Then, 
putting
 \eqref{YQ1},  
 \eqref{YQ2},  
and \eqref{YQ4} together,   
we define the {\it nonlinear Young integral} $\I^\XX(f)$ of $f$
(with respect to the nonlinear Young driver $\XX$)
to be  a unique function 
in $ C^\g([0,T]; V)$
with $\I^\XX(f)(0) = 0$
whose increment is given by 
\begin{align}
\updl (\I^\XX(f))
= (\Id - \Ld \updl )\Ta.
\label{YQ5}
\end{align}

\noi
In this case, $\I^\XX(f)$ 
is given by 
the unique limit of 
Riemann-Stieltjes type sums:
\begin{align}
\I^\XX(f)(t) 
= \lim_{|\Pi([0,t])|\to 0} 
\sum^{n-1}_{j=0} \Theta_{t_j,t_{j+1}}
= \lim_{|\Pi([0,t])|\to 0} 
\sum^{n-1}_{j=0} \XX_{t_j,t_{j+1}}(f_{t_{j+1}}), 
\label{YQ6}
\end{align}

\noi
 where 
the limit is in the sense of Lemma \ref{LEM:sew}\,(iii).
Note that the regularity $\g + \al > 1$
of $R$ defined in \eqref{YQ4}
guarantees that the contribution from the error term $R$ in \eqref{YQ6}
vanishes:
\begin{align}
 \lim_{|\Pi([0,t])|\to 0} 
\sum^{n-1}_{j=0} R_{t_j,t_{j+1}} = 0.
\label{YQ7}
\end{align}

\begin{remark}\label{REM:uniq1}
\rm
We point out that 
an error term $R$ in \eqref{YQ2} is not unique, since
if we find one error term $R$ satisfying \eqref{YQ2}, 
any perturbation of $R$ by the addition of a smooth two-parameter process
satisfies \eqref{YQ2}.
The main point is that, while an error term $R$ in  \eqref{YQ2}
is not unique, the nonlinear Young integral $\I^\XX(f)$ defined in \eqref{YQ6}
is unique thanks to \eqref{YQ7}.
A similar comment applies to 
the construction of a rough integral discussed in Subsection~\ref{SUBSEC:Y4}.
\end{remark}

The following lemma summarizes
the relevant statements
on the construction of nonlinear Young integrals;
see \cite[Lemma~3.4 and Remark~3.6]{CGLLO1}.

\begin{lemma}[nonlinear Young integral]
\label{LEM:int1}
Let $V$ be a Banach space.
Given $0 < \g < 1$, 
 $T>0$, and $k\in \N$,  
let $\XX\in C^\g_{2,T}\Lip_k(V)$, 
satisfying \eqref{Ja0} and \eqref{Ja1}, 
where $C^\g_{2,T}\Lip_k(V)$ is as in~\eqref{Ho3}.
Given 
  $f\in \CC^{\al}([0,T];V)$ for some $0 < \al < 1$, 
%  such that $f(0) \in V$, 
let   $\Theta$  be as in \eqref{Ja1a}.
Suppose that 
 $\kk : =  \g +\al > 1$.
Then,  the following statements hold\textup{:}

\smallskip

\begin{enumerate}

\item[(i)] 
We have $\Theta\in C^\g_{2,T}V$
and
$\updl \Theta \in C^\kk_{3,T}V$.

\smallskip

\item[(ii)]
There exists 
a unique function 
$\I^\XX(f) \in C^\g([0,T]; V)$
with  $\I^\XX(f)(0) =0$, satisfying~\eqref{YQ5}.
Moreover,
we have 
\begin{align}
\|\updl \I^\XX(f) - \Theta\|_{C^\kk_{2,T}  V}
& \leq 
C_1(f)
\|\XX\|_{C^\g_{2,T}\Lip_k(V)},
\notag 
%\label{Ja2}
\\
\|\I^\XX(f)\|_{C^\g_T  V} 
& \leq
C_2(f)
\|\XX\|_{C^\g_{2,T}\Lip_k(V)}, 
\label{Ja3}
\end{align}

\noi
and \eqref{YQ6} holds, 
where $C_1(f)$ and $C_2(f)$ are given by 
\begin{align}
\begin{split}
C_1(f)
&= 
\frac{1}{2^\kk-2}
\big(1+ 2\|f\|_{ L^\infty_T V}\big)^{k-1} \|f\|_{C^\al_T  V}, \\
C_2 (f)
&=  T^\al C_1(f) + 
\big(1 + \|f\|_{ L^\infty_T V} \big)^{k-1}
\|f\|_{L^\infty_T V}.
\end{split}
\label{Ja3x}
\end{align}

\smallskip

\item[(iii)]
\textup{(persistence of regularity).}
Let  $k \ge 2$ and $V_0 \hookrightarrow V$
be a Banach subspace of $V$.
In addition, suppose that 
 $\XX\in C^\g_{2,T}\Lip_k(V, V_0; V_0)$, 
 where $\Lip_k(V, V_0; V_0)$ is as in \eqref{Lip2}, 
 and that 
   $f\in \CC^{\al}([0,T];V_0)$.
Then, 
we have 
$\I^\XX(f)\in C^\g([0,T]; V_0)$
with the bound\textup{:}
\begin{align*}
\|\I^\XX(f)\|_{C^\g_T  V_0} 
& \leq
C_3(f)
\|X\|_{C^\g_{2,T}\Lip_k(V, V_0; V_0)}, 
\end{align*}

\noi
where $C_3(f)$ is  given by 
\begin{align*}
C_3 (f)
&=  
2\big(1 + \|f\|_{ L^\infty_TV}\big)^{k-1} 
\big(1 + \|f\|_{ L^\infty_T V_0} \big)\\
& \quad + 
\frac{T^\al}{2^\kk-2}\Big(
\big(1+ 2\|f\|_{L^\infty_T V} \big)^{k-1} \|f\|_{C^\al_T  V_0}\\
& \hphantom{XXXXXX} + \big(1+ 2\|f\|_{L^\infty_T V} \big)^{k-2} \|f\|_{C^\al_T  V}
\big(1+ 2\|f\|_{L^\infty_T V_0} \big)\Big).
\end{align*}

\smallskip

\item[(iv)]
\textup{(nonlinear smoothing).}
%Let $X$ be as in Lemma \ref{LEM:int1}.
Suppose in addition that  $\XX\in C^\g_{2,T}\Lip_k(V; V_0)$
for some Banach subspace $V_0$ of $V$.
Then, we have 
$\I^\XX(f)\in C^\g([0,T]; V_0)$
with the bound\textup{:}
\begin{align*}
\|\I^\XX(f)\|_{C^\g_T  V_0} 
& \leq
C_2(f)
\|\XX\|_{C^\g_{2,T}\Lip_k(V; V_0)}, 
%\label{Ja300}
\end{align*}

\noi
where $C_2(f)$ is as in \eqref{Ja3x}.

\smallskip

\item[(v)]
\textup{(stability).}
Given $\XX^j \in  C^\g_{2,T}\Lip_k(V)$
with $\XX^j(0) = 0$ and $\updl \XX^j = 0$, $j = 1, 2$, 
we have 
\begin{align}
\|\I^{\XX^1}(f) - \I^{\XX^2}(f)\|_{\CC^\g_T V}  
\le 
C_2(f) 
\|\XX^1 - \XX^2\|_{C^\g_{2,T}\Lip_k(V)}
\label{Jaa4x}
\end{align}

\noi
for 
any  $f \in \CC^{\al}([0,T];V)$, 
where 
$\I^{\XX^j}(f)$ denotes
 the nonlinear Young integral 
 of $f$
 with the driver $\XX^j$, $j = 1, 2$.
In particular, 
if a sequence  
   $\{\XX^N\}_{N \in \N}
\subset C^\g_{2,T}\Lip_k(V)$
with $\XX^N(0) = 0$ and $\updl \XX^N = 0$
 converges to $\XX$ in 
$C^\g_{2,T}\Lip_k(V)$
  as $N \to \infty$,
then 
the corresponding nonlinear Young integral 
 $\I^{\XX^N}(f)$ of $f$
converges to 
$\I^\XX(f)$
 in $\CC^\g([0, T]; V)$
as $N \to \infty$, 
uniformly 
in  
$f \in B_K$,  
where
$B_K$ denotes the  ball 
in $\CC^{\al}([0,T];V)$
of radius $K > 0$ centered at the origin.

\end{enumerate}
\end{lemma}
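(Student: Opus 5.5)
The plan is to build the integral directly from the sewing lemma (Lemma~\ref{LEM:sew}), applied to the germ $\Theta_{t,r} = \XX_{t,r}(f_r)$ from \eqref{Ja1a}. Everything reduces to estimating $\Theta$ and $\updl\Theta$.

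\emph{Step 1: part (i).} Since $\XX_{t,r}(0)=0$ by \eqref{Ja0}, the Lipschitz bound \eqref{Ho3} with $y=0$ gives
\[
\|\Theta_{t,r}\|_V \le \|\XX\|_{C^\g_{2,T}\Lip_k(V)}\,|t-r|^\g\,(1+\|f\|_{L^\infty_T V})^{k-1}\|f\|_{L^\infty_T V},
\]
so $\Theta\in C^\g_{2,T}V$. Next, \eqref{Ja1} yields \eqref{YQ3}:
\[
(\updl \Theta)_{t_1,t_2,t_3} = \XX_{t_1,t_2}(f_{t_3})-\XX_{t_1,t_2}(f_{t_2}).
\]
Applying \eqref{Ho3} again gives the bound
\[
\|\XX\|\,|t_1-t_2|^\g\,(1+2\|f\|_{L^\infty_T V})^{k-1}\|f\|_{C^\al_TV}\,|t_2-t_3|^\al .
\]
With the convention in \eqref{Ho2}, this shows $\updl\Theta\in C^\kk_{3,T}V$ with $\kk=\g+\al$. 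Clearly $\updl\updl\Theta=0$.

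\emph{Step 2: part (ii).} Because $\kk>1$, Lemma~\ref{LEM:sew}(iii) produces a unique $\I^\XX(f)$ with $\I^\XX(f)(0)=0$ and $\updl\I^\XX(f)=(\Id-\Ld\updl)\Theta$. This also gives the Riemann-sum representation \eqref{YQ6}. The estimate \eqref{sew1} yields the first bound with constant $C_1(f)$. For \eqref{Ja3}, write $\updl\I^\XX(f)=\Theta-\Ld\updl\Theta$ and use $|t-r|^\kk\le T^\al|t-r|^\g$; this gives $C_2(f)$.

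\emph{Step 3: parts (iii) and (iv).} For (iv), repeat Steps 1--2 with the output norm $V_0$, using $\XX\in C^\g_{2,T}\Lip_k(V;V_0)$. The sewing map is applied on $V_0$, and the sewn object coincides with the one in $V$ by uniqueness (Riemann sums converge in both spaces).

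For (iii), bound $\|\Theta_{t,r}\|_{V_0}$ by the $G_{V,V_0}(f_r,0)$ factor, which gives the first term of $C_3$. Then bound $\|\updl\Theta\|_{V_0}$ by $G_{V,V_0}(f_{t_2},f_{t_3})$: its first piece involves $\|f\|_{C^\al_TV_0}$ and its second piece involves $\|f\|_{C^\al_TV}(1+2\|f\|_{L^\infty_TV_0})$. Again sew in $V_0$.

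\emph{Step 4: part (v).} The map $\XX\mapsto\Theta^\XX$ is linear, and so is the sewing map. Hence $\I^{\XX^1}(f)-\I^{\XX^2}(f)=\I^{\XX^1-\XX^2}(f)$. Applying \eqref{Ja3} to the difference driver gives \eqref{Jaa4x}. The value at $0$ vanishes, so the $\CC^\g$ norm reduces to the H\"older seminorm, up to a harmless $T^\g$ factor. Uniformity over $f\in B_K$ holds because $C_2(f)$ is bounded on $B_K$.

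The main obstacle is minor: tracking the exact constants, in particular the factor $2$ in $(1+2\|f\|)$. It is also necessary to check the $C^\kk_{3,T}$ norm convention: the infimum over splittings $\al'$ must be attained at $\al'=\g$ on the $|t_1-t_2|$ factor, so that \eqref{sew1} applies.
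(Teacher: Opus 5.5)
Your proposal is correct and follows the same route the paper relies on. The paper gives no separate proof: it sketches the sewing construction in Subsection~\ref{SUBSEC:Y2} and defers the details to the earlier work. That construction is exactly yours: sew the germ $\Theta_{t,r}=\XX_{t,r}(f_r)$, bound $\updl\Theta$ via \eqref{YQ3}, and read off $C_1,C_2,C_3$ from \eqref{sew1}. Part (v) then follows from linearity of $\XX\mapsto\Theta$ and of $\Ld$. One small correction to your phrasing: in \eqref{Ho2} the infimum need not be attained. It suffices that the splitting with exponent $\g$ on $|t_1-t_2|$ and $\al$ on $|t_2-t_3|$ is admissible, since the value there bounds the infimum from above.

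Your caveat in (v) is right. Because the difference vanishes at $t=0$, its $L^\infty_TV$ part is only bounded by $T^\g C_2(f)\|\XX^1-\XX^2\|_{C^\g_{2,T}\Lip_k(V)}$. So \eqref{Jaa4x} as written should carry a factor $(1+T^\g)$, or be read for the seminorm $C^\g_TV$. This is harmless in the later applications, where the time interval has length at most $1$.
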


While the bound \eqref{Jaa4x}
is not explicitly stated in 
 \cite[Lemma~3.4]{CGLLO1}, 
 it easily follows from 
 the proof of 
 \cite[Lemma~3.4\,(iv)]{CGLLO1}.

\subsection{Nonlinear Young differential equation
with a Young perturbation}
\label{SUBSEC:Y3}

In this subsection, we establish local well-posedness
of the nonlinear YDE  with a Young 
perturbation~\eqref{YDE1}:
\begin{equation}
\label{YDE3x}
\uu = u_0 + \I^\XX(\uu) + \I^\YY(\uu), 
\end{equation}

\noi
where $\I^\XX(\uu)$ 
(and $\I^\YY(\uu)$) is the (nonlinear) Young integral of $\uu$
with a nonlinear Young driver $\XX$ 
(and a linear Young driver $\YY$, respectively) 
constructed in Lemma \ref{LEM:int1}.

\begin{proposition}
\label{PROP:main}
Let $\M = \T^d$ or $\R^d$.

\smallskip

\noi
\textup{(i) (local well-posedness).}
Given   $s \in \R$, $\frac 12 < \g_1, \g_2  < 1$,  
an integer $k \ge 2$, and $T > 0$, 
let     $\XX \in \cX^{s, \g_1}_k([0, T]\times \M)$
and 
$\YY \in \cX^{s, \g_2}_1([0, T]\times \M)$, 
where $ \cX^{s, \g}_k([0, T]\times \M)$ is  as  in~\eqref{X1}
and~\eqref{X1z}.
Then, 
 the nonlinear Young differential equation \eqref{YDE3x} with 
a Young perturbation
is locally well-posed in $H^s(\M)$.
More precisely, given  $u_0\in H^s(\M)$, 
there exists
 a unique solution $\uu \in \cC^{\g_1\wedge \g_2} ([0, \tau]; H^s(\M))$
to~\eqref{YDE3x}  
with $\uu|_{t= 0} = u_0$, where the local existence time 
\[\tau = 
\tau\Big(\|u_0\|_{H^s(\M)}, \|\XX\|_{\cX^{s, \g_1}_k(T)},
\|\YY\|_{\cX^{s, \g_2}_1(T)}\Big) \in (0, T\wedge 1]\]

\noi
\noi
is non-increasing in each of the arguments.
In particular, 
the following blowup alternative holds
for \eqref{YDE3x}\textup{;}
let $T_* \in (0, \infty]$ be the maximal time of existence
of the solution~$\uu$ to~\eqref{YDE3x}.
Then, we have either 
\begin{align}
T_* = T \qquad \text{or} \qquad \lim_{t \nearrow T_*} \|\uu(t)\|_{H^s} = \infty, 
\label{BA1}
\end{align}

\noi
where we allow $T$ to take the value $\infty$, 
meaning that 
    $\XX \in \cX^{s, \g_1}_k([0, T_0]\times \M)$
and 
$\YY \in \cX^{s, \g_2}_1([0, T_0]\times \M)$
for any $T_0 > 0$ 
\textup{(}but $\sup_{T_0 > 0 } \| \XX\|_{\cX^{s, \g_1}_k(T)}$
and $\sup_{T_0 > 0 } \| \YY\|_{\cX^{s, \g_2}_k(T)}$
 may be infinite\textup{)}.

\smallskip

\noi
\textup{(ii) (persistence of regularity).}
In addition, suppose that $u_0 \in H^{s_0}(\M)$ for some $s_0 > s$
and that  $\XX \in \cY^{s, s_0, \g_1}_k([0, T]\times \M)$
and $\YY \in \cX^{s, s_0, \g_2}_1([0, T]\times \M)$, 
where $\cY^{s, s_0, \g}_k([0, T]\times \M)$  
and 
 $\cX^{s, s_0, \g}_1([0, T]\times \M)$  
 are as in \eqref{X2c}
and 
\eqref{X1z1}.
Then, 
by possibly making $\tau > 0$ smaller by a multiplicative constant 
\textup{(}in particular $\tau > 0$
depends on the $H^s$-norm of the initial data $u_0$
but not on its $H^{s_0}$-norm\textup{)}, 
we have $\uu \in \cC^{\g_1\wedge \g_2} ([0, \tau]; H^{s_0}(\M))$, 
 where $\uu$ is the solution to~\eqref{YDE3x}
 constructed in Part (i).

\smallskip

\noi
\textup{(iii) (nonlinear smoothing).}
In addition to the hypotheses in Part (i), 
suppose  that  $\XX \in \cX^{s, s_0, \g_1}_k([0, T]\times \M)$
and $\YY \in \cX^{s, s_0, \g_2}_1([0, T]\times \M)$
for some $s_0 > s$.
%where $\cX^{s, s_0, \g}_k([0, T]\times \M)$  is  as in \eqref{X1x}.
Then, we have
 $\uu - u_0 \in \cC^{\g_1\wedge \g_2}([0, \tau]; H^{s_0}(\M))$, 
 where $\uu$ is the solution to \eqref{YDE3x}
 constructed in Part (i).

\smallskip

\noi
\textup{(iv) (convergence).}
In addition to the hypotheses in Part (i), 
suppose  that a sequence $\{\XX^N\}_{N \in \N}\subset 
 \cX^{s, \g_1}_k([0, T]\times \M)$
\textup{(}and $\{\YY^N\}_{N \in \N}\subset 
 \cX^{s, \g_2}_1([0, T]\times \M)$\textup{)}
 converges to $\XX$ in 
 $ \cX^{s, \g_1}_k([0, T]\times \M)$
\textup{(}and 
to  $\YY$ in 
 $ \cX^{s, \g_2}_1([0, T]\times \M)$, respectively\textup{)}
  as $N \to \infty$.
Then, 
by possibly making $\tau > 0$ smaller by a multiplicative constant,  the solution 
$\uu^N$  %\in \CC^\g([0, \tau]; H^s(\M))$
to the following nonlinear Young differential equation with a Young 
perturbation\textup{:}
\begin{equation*}
%\label{YDE2}
\uu^N = u_0 + \I^{\XX^N}(\uu^N)
+ \I^{\YY^N}(\uu^N)
\end{equation*}

\noi
converges to the solution $\uu$ to \eqref{YDE3x}
constructed in Part (i)  
 in $\CC^{\g_1\wedge \g_2}([0, \tau]; H^s(\M))$
as $N \to \infty$.
Moreover, 
given any $K > 0$, 
the rate of convergence of $\uu^N$ to $\uu$ is
uniform in $u_0 \in B_K$,  
where
$B_K$ denotes the  ball 
in $H^s(\M)$
of radius $K > 0$ centered at the origin.

\end{proposition}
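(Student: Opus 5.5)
The plan is a contraction argument for the map
\[
\Gamma(\uu) = u_0 + \I^\XX(\uu) + \I^\YY(\uu)
\]
on a closed ball in $\CC^{\al}([0,\tau]; H^s(\M))$. Here $\al$ is chosen with $1-\g_1\wedge\g_2 < \al < \g_1\wedge\g_2$, which is possible since $\g_1,\g_2 > \frac12$. Then $\g_1+\al>1$ and $\g_2+\al>1$, so Lemma~\ref{LEM:int1} applies to both drivers. For $\YY$ we use it with $k=1$: a linear driver in $C^{\g_2}_{2,T}\L(H^s)$ belongs to $C^{\g_2}_{2,T}\Lip_1(H^s)$, satisfies $\YY(0)=0$, and $\updl\YY=0$.

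The ball will be $\{\uu: \uu(0)=u_0,\ \|\uu\|_{\CC^\al_\tau H^s}\le 2(1+\|u_0\|_{H^s})\}$, with $\tau\le 1$ small. The self-mapping bound comes from \eqref{Ja3} and \eqref{Ja3x}. One key observation: for a function in $C^{\g}$ vanishing at $0$, the $C^\al$ seminorm on $[0,\tau]$ is at most $\tau^{\g-\al}$ times the $C^\g$ seminorm, and its $L^\infty$ norm is at most $\tau^\g$ times that seminorm. Hence
\[
\|\Gamma(\uu)-u_0\|_{\CC^\al_\tau H^s}\les \tau^{\g_1\wedge\g_2-\al}\big(C_2(\uu)\|\XX\|+C_2(\uu)\|\YY\|\big),
\]
and $C_2(\uu)$ is polynomial in the ball radius. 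So $\Gamma$ maps the ball into itself for $\tau$ small.

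For the difference estimate we need a Lipschitz bound for $f\mapsto\I^\XX(f)$, which Lemma~\ref{LEM:int1} does not state. Redoing the sewing argument for $f,g$, we apply the sewing lemma to $\Theta^f-\Theta^g$, where $\Theta^f_{t,r}=\XX_{t,r}(f_r)$. Then
\[
-\updl(\Theta^f-\Theta^g)_{t_1t_2t_3}=\big[\XX_{t_1t_2}(f_{t_2})-\XX_{t_1t_2}(f_{t_3})\big]-\big[\XX_{t_1t_2}(g_{t_2})-\XX_{t_1t_2}(g_{t_3})\big].
\]
This is precisely where $\XX\in\Lip_k^2$ and the $DX$ part of the norm \eqref{X2} enter. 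We write each bracket via the mean value formula $\int_0^1 D\XX_{t_1t_2}[g_{t_3}+\ta(\updl g)_{t_2t_3}](\updl g)_{t_2t_3}\,d\ta$ and take differences. That bounds the expression by
\[
|t_1-t_2|^{\g_1}|t_2-t_3|^{\al}\|\XX\|_{\cX^{s,\g_1}_k}(1+R)^{k-1}\|f-g\|_{\CC^\al}.
\]
Then \eqref{sew1} gives the $C^{\g_1+\al}$ bound on the remainder difference, and combined with the first-order term $\XX_{t,r}(f_r)-\XX_{t,r}(g_r)$ we get
\[
\|\I^\XX(f)-\I^\XX(g)\|_{C^{\g_1}_\tau H^s}\les(1+R)^{k-1}\|\XX\|\,\|f-g\|_{\CC^\al}.
\]
For the linear driver $\YY$ this is immediate by linearity. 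After the $\tau^{\g-\al}$ gain, $\Gamma$ is a contraction. Its fixed point lies in $\CC^{\g_1\wedge\g_2}$ because each integral is in $C^{\g_i}$. Uniqueness in the larger class follows by the same difference bound on short intervals. The solution can then be iterated, and since $\tau$ depends only on $\|u_0\|_{H^s}$ and the driver norms, this yields the blowup alternative \eqref{BA1} by the standard argument. I expect this Lipschitz estimate for the nonlinear Young integral to be the main technical step; the rest is bookkeeping.

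For (ii), we run the same contraction in the intersection space $\CC^\al_\tau H^s\cap\CC^\al_\tau H^{s_0}$. Lemma~\ref{LEM:int1}\,(iii) handles $\XX$, and (iv) applied with $V_0=H^{s_0}$ handles $\YY$. The $\Lip_k(V,V_0;V_0)$ bound is linear in the $H^{s_0}$ norm, so the time of existence is governed by the $H^s$ norm alone; this requires shrinking $\tau$ by a constant factor. Alternatively, one can first obtain the $H^s$ solution and then show a priori that its $H^{s_0}$ norm stays finite by a linear Gronwall-type argument. For (iii), we apply Lemma~\ref{LEM:int1}\,(iv) to the fixed point: $\uu-u_0=\I^\XX(\uu)+\I^\YY(\uu)\in C^{\g_1\wedge\g_2}H^{s_0}$. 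For (iv), we combine the stability bound \eqref{Jaa4x} with the Lipschitz bound above. This gives
\[
\|\uu^N-\uu\|\le \tfrac12\|\uu^N-\uu\|+C\big(\|\XX^N-\XX\|+\|\YY^N-\YY\|\big),
\]
uniformly for $u_0\in B_K$, with $\tau$ chosen uniformly using $\sup_N$ of the driver norms.
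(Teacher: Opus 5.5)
Your argument for Parts (i), (iii) and (iv) is essentially the paper's. It uses the same contraction for $\G(\uu)=u_0+\I^\XX(\uu)+\I^\YY(\uu)$ in $\CC^\al([0,\tau];H^s)$ with $1-\g_1\wedge\g_2<\al<\g_1\wedge\g_2$, and the same $\tau^{\g-\al}$ gain from \eqref{Ja3}. Smoothing comes from Lemma~\ref{LEM:int1}\,(iv), and convergence from \eqref{Jaa4x} plus the Lipschitz bound, with $\tau$ chosen from $\sup_N$ of the driver norms. The only difference is that you re-derive the Lipschitz estimate for $f\mapsto\I^\XX(f)$ behind \eqref{YD14} via the mean value formula for $D\XX$; this is exactly why $\Lip^2_k$ and $D\XX[0]=0$ appear in \eqref{X1}. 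The paper instead imports that estimate from its predecessor.

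Part (ii), however, has a gap. A contraction in the norm of $\CC^\al_\tau H^s\cap\CC^\al_\tau H^{s_0}$ is not available from the hypotheses. The class $\cY^{s,s_0,\g_1}_k$ controls only first differences $\XX_{t,r}(x)-\XX_{t,r}(y)$ in $H^{s_0}$. The sewing part of $\I^\XX(\uu^1)-\I^\XX(\uu^2)$, by contrast, is driven by the second-order difference $[\XX_{t_1,t_2}(\uu^1_{t_2})-\XX_{t_1,t_2}(\uu^1_{t_3})]-[\XX_{t_1,t_2}(\uu^2_{t_2})-\XX_{t_1,t_2}(\uu^2_{t_3})]$. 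In $H^{s_0}$ you can bound this by $|t_2-t_3|^{\al}$ or by $\|\uu^1-\uu^2\|$, but not by their product, because nothing controls $D\XX$ on $H^{s_0}$. Lemma~\ref{LEM:int1}\,(iii) is a bound, not a Lipschitz estimate.

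Your Gronwall alternative is circular as stated. The inequality $\|\uu\|_{\CC^\al_\tau H^{s_0}}\le\|u_0\|_{H^{s_0}}+C(K)\tau^{\g_1\wedge\g_2-\al}(1+\|\uu\|_{\CC^\al_\tau H^{s_0}})$ can be absorbed only once you already know the left-hand side is finite.

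The missing step is to keep the $\CC^\al_\tau H^s$ metric and run the contraction on the set $B_K\cap\{\|\uu\|_{\CC^\al_\tau H^{s_0}}\le K_0\}$, where $B_K\subset\CC^\al_\tau H^s$ is the ball from Part (i) and $K_0\sim 1+\|u_0\|_{H^{s_0}}$.
\begin{itemize}
\item $\G$ maps this set into itself for $\tau=\tau(K)$, because the bounds in Lemma~\ref{LEM:int1}\,(iii)--(iv) are linear in the $H^{s_0}$-norms.
\item The contraction estimate \eqref{YD14} is unchanged.
\item The set is closed in $\CC^\al_\tau H^s$: $H^{s_0}$-norms and their H\"older quotients are lower semicontinuous under $H^s$-convergence, by Fatou on the Fourier side.
\end{itemize}
The fixed point is then the Part (i) solution, it lies in $\CC^\al_\tau H^{s_0}$, and Lemma~\ref{LEM:int1} upgrades it to $\CC^{\g_1\wedge\g_2}_\tau H^{s_0}$.
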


\begin{remark} \label{REMLmain2}
\rm

A slight modification of the proof of Proposition \ref{PROP:main}
yields
local well-posedness
of the following general 
nonlinear   YDE  of  the  form:
\begin{equation*}
\label{YDE4}
\uu = u_0 + \sum_{j = 1}^J \I^{\XX_j}(\uu), 
\end{equation*}

\noi
where $\I^{\XX_j}(\uu)$ is the nonlinear Young integral of 
$\uu$ with a $k_j$-linear driver 
$\XX_j\in \cX^{s, \g_j}_{k_j}([0, T] \times \M)$
with $k_1 \ge k_2 \ge \cdots \ge k_J \ge 1$
and $\frac 12 < \g_j < 1$, $j = 1, \dots, J$.
Since the required modification is straightforward, we omit details.

\end{remark}

\begin{proof}[Proof of Proposition \ref{PROP:main}]

Since this proposition follows from a slight modification
of the proof of Proposition~3.8 in \cite{CGLLO1}, 
we only provide  a sketch.

Let $0 < \al < \g_1\wedge \g_2$ such that 
 $ (\g_1\wedge \g_2) + \al>1$. 
Given $0 < \tau \le T\wedge 1$ (to be chosen later), 
 define 
 the map
$\G = \G_{\XX, \YY, u_0}$ on $\cC^\al([0, \tau]; H^s(\M))$
by setting
\begin{align}
\G(\uu) = u_0 + \I^\XX(\uu)+ \I^\YY(\uu), 
\label{YD3}
\end{align}

\noi
where 
$\I^\XX(\uu)$ and $\I^\YY(\uu)$
are the (nonlinear and linear, respectively)
Young integrals with the drivers $\XX$ and $\YY$, respectively, 
constructed via
Lemma \ref{LEM:int1}.

Proceeding as in \cite[(3.53)]{CGLLO1}, we have
\begin{align}
\label{YD5a}
\begin{split}
\|\G(\uu)\|_{\CC^\al_\tau H^s_x}
& \le \|u_0\|_{H^s} +  C\tau^{\g_1 - \al}
\|\XX\|_{\cX^{s, \g_1}_k(T)}
\big(1+ 
\|\uu\|_{\CC^\al_{\tau } H^s_x}\big)^{k-1}
\|\uu\|_{\CC^\al_{\tau } H^s_x}\\
& \quad
+  C\tau^{\g_2 - \al}
\|\YY\|_{\cX^{s, \g_2}_1(T)}
\|\uu\|_{\CC^\al_{\tau } H^s_x}
\end{split}
\end{align}

\noi
for any $0 < \tau \le T\wedge 1$.
Then, by setting $K = 1 \vee 2 \|u_0\|_{H^s} $
and choosing 
\begin{align}
\begin{split}
\tau 
& = 
\tau\Big(K, \|\XX\|_{\cX^{s, \g_1}_k(T)}, \|\YY\|_{\cX^{s, \g_2}_1(T)}\Big) \\
& = 
\tau\Big(\|u_0\|_{H^s}, \|\XX\|_{\cX^{s, \g_1}_k(T)}, \|\YY\|_{\cX^{s, \g_2}_1(T)}\Big) >0
\end{split}
\label{YD4}
\end{align}
sufficiently small, 
we see that 
$\G$ maps the ball $B_K\subset 
\cC^\al([0, \tau]; H^s(\M))$
of radius $K > 0$ 
centered at the origin into itself.

Next, we show that $\G$ is a contraction on $B_K$. 
Proceeding as in \cite[(3.64)]{CGLLO1}, 
we have 
\begin{align}
\begin{split}
& \|\G(\uu^1)- \G(\uu^2)\|_{\cC^\al _{\tau} H^s_x}\\
& \quad \le  
\|\I^\XX(\uu^1)- \I^\XX(\uu^2)\|_{\CC^\al_{\tau}H^s_x}
+ \|\I^\YY(\uu^1)- \I^\YY(\uu^2)\|_{\CC^\al_{\tau}H^s_x}\\
&  
\quad \les 
\tau^{\g_1\wedge \g_2 - \al}
\Big(\|\XX\|_{\cX^{s, \g_1}_k(T)}  (1+K)^{k-1}
+ \|\YY\|_{\cX^{s, \g_2}_1(T)}\Big) \|\uu^1- \uu^2 \|_{\CC^\al _\tau H^s_x}
\end{split}
\label{YD14}
\end{align}

\noi
for $\uu^1, \uu^2 \in B_K\subset 
\cC^\al([0, \tau]; H^s(\M))$.
Hence, 
 by 
 choosing 
$\tau > 0$ of the form \eqref{YD4}
sufficiently small, 
we see that $\G$ is a contraction on $B_K$.
Therefore,  
from the Banach fixed point theorem, 
we conclude that there exists a unique solution 
$\uu \in B_K
\subset \cC^\al([0, \tau]; H^s(\M))$
to \eqref{YDE3x}, 
where 
 the uniqueness a priori holds only in the ball $B_K$
but, by a standard continuity argument, 
the uniqueness  can be extended to hold in the entire space $\CC^\al([0, \tau]; H^s(\M))$
(by possibly making $\tau$ smaller by a multiplicative constant).
Lastly,  by noting from Lemma \ref{LEM:int1}
that the right-hand side of~\eqref{YD3}
belongs to $\CC^{\g_1\wedge \g_2}([0, \tau]; H^s(\M))$, 
we conclude that $\uu \in \CC^{\g_1\wedge \g_2}([0, \tau]; H^s(\M))$.
This proves Part (i).

The claims in Parts (ii), (iii), and (iv) also follow
from straightforward 
modifications
of  the proof of Proposition 3.8 in \cite{CGLLO1}, 
and thus we omit details.
\end{proof}

\subsection{Rough integration}
\label{SUBSEC:Y4}

In Subsection \ref{SUBSEC:Y2}, 
we briefly went over the nonlinear Young integration theory, 
where the sum $\g + \al$ of the regularities of the 
$k$-linear driver $\XX$
and the input function~$f$ is greater than $1$.
When $\g + \al \le 1$,
the Young integration theory breaks down
and we need to resort to rough path theory, 
introduced in a seminal work by Lyons
\cite{Lyons1}, 
and controlled paths, introduced by the second author \cite{Gub04};
 see also 
\cite{Lyons2, FV10, FH20}
for textbook accounts on rough path theory.

In this subsection, 
we restrict our attention to  the {\it linear} case.
See, for example, 
\cite{Gub12, NX, OShao}
for a discussion in the nonlinear case.
In particular, see~\cite{OShao}
for a pedagogical treatment
of the construction of nonlinear rough integrals
and general well-posedness theory
of a nonlinear rough differential equation
with a rough perturbation, which is not treated in the current paper.

As in Subsection \ref{SUBSEC:Y2}, 
fix 
a driver  $\XX\in C^\g_{2,T}\L(V)$, 
satisfying \eqref{Ja0} and \eqref{Ja1}, 
and 
$f\in \CC^{\al}([0,T];V)$ for some $0 < \al < 1$.
As in the Young case, 
 we assume  that the driver $\XX_{t, r}$
is given by an integral operator
over the time interval $[r, t]$.
Our goal here is to define an 
integral $\I(f)$ of $f$
whose increment on $[r, t]$ is given by $\XX_{t, r}(f_\bul)$
under the weaker assumption $\g + \al \le 1$.
The main issue in this case is that 
$\updl \Ta$ in \eqref{YQ3}
does not have sufficient regularity to apply the sewing lemma (Lemma \ref{LEM:sew}).

\begin{definition}\label{DEF:control}\rm
Let $V$ be a Banach space
and fix $T > 0$.

\smallskip

\noi
(i) Given a driver  $\XX\in C^\g_{2,T}\L(V)$, 
satisfying \eqref{Ja0} and \eqref{Ja1}, 
we say that 
$f\in \CC^{\al}([0,T];V)$ for some $0 < \al < 1$
is controlled 
by $\XX$  if there exists $f'\in \CC^{\al}([0,T];V)$
and a remainder term\footnote{In some literature (see, for example,  \cite{FH20}),
the regularity $\be$ of the remainder term $R$ is taken to be $2\al$, 
which is too restrictive for our application.
See also the original paper \cite{Gub04}, 
where only $\be > \al$ is assumed.}
 $R = R^{\XX, f} \in  C^{\be}_{2, T} V$
for some $\al < \be < 1$
such that 
\begin{align}
(\updl f)_{t, r} = \XX_{t, r} (f_r') + R_{t, r}.
\label{RQ1}
\end{align}

\noi
The process $f'$, not uniquely determined in general,  is often called the Gubinelli derivative of $f$
(with respect to the driver $\XX$).
We denote 
the space of such controlled  paths $(f, f')$
by 
$\D^{\al, \be}_{\XX, T}(V) = \D^{\al, \be}_\XX([0, T]; V)$ 
and endow it with the following norm:
\begin{align}
\| (f, f') \|_{\D^{\al, \be}_{\XX, T}(V)} 
= \|f(0) \|_{V} + \|f'(0)\|_V
+ \| f' \|_{C^\al_TV} + \| R \|_{C^\be_{2, T}V},
\label{RQ1a}
\end{align}

\noi
where $R $ is defined by  the relation \eqref{RQ1}.

\medskip

\noi
(ii)
Given $0 < \g \le \frac12 $, 
let $\XX$ be as in Part (i).
We say that a pair $(\XX, \XXb) 
\in C^\g_{2,T}\L(V)\times  C^{2\g}_{2,T}\L(V)$
is a $\g$-H\"older rough path  
if it satisfies  Chen's relation:
\begin{align}
(\updl \XXb)_{t_1,t_2,t_3}  = \XX_{t_1,t_2} \circ \XX_{t_2,t_3}
\label{RQ2}
\end{align}

\noi
for any $(t_1, t_2, t_3) \in\Dl_{3, T}$.

\end{definition}

In the following, 
 we assume that $f$ is controlled by the driver $\XX$
with the Gubinelli derivative 
$f'\in \CC^{\al}([0,T];V)$.
 Recalling  that $\XX$ is linear, 
 it follows
from \eqref{RQ1} that 
\begin{align}
\begin{split}
\XX_{t, r}(f_\bul)
& = \XX_{t, r}(f_r) + 
 \XX_{t, r}\big((\updl f)_{\bul, r}\big)\\
& = \XX_{t, r}(f_r) +  \XXb_{t, r} (f_r') 
+ Q_{t, r} \\
& =:\Taa_{t, r} + Q_{t, r}
\end{split}
\label{RQ3}
\end{align}

\noi
for any $(t, r) \in\Dl_{2, T}$, 
where\footnote{In general,  
$\XX_{t, r}\circ \XX_{\bul, r}$ does not have a canonical pathwise
meaning and thus
the relation $\XXb_{t, r} = \XX_{t, r}\circ \XX_{\bul, r}$
should be read as 
$\XXb_{t, r}$ defining the value of  $\XX_{t, r}\circ \XX_{\bul, r}$.}
$\XXb_{t, r} = \XX_{t, r}\circ \XX_{\bul, r}$
and 
$Q_{t, r} = \XX_{t, r}(R_{\bul, r})$.
As in the Young case, 
our goal is to find an error term $Q$ with sufficient regularity, 
which will allow us to define the rough integral
 $\I^{\XX,  \XXb}(f)$ of $f$
  as the unique limit of 
Riemann-Stieltjes type sums; see \eqref{RQ7} and \eqref{RQ8}.

Suppose that $(\XX, \XXb)$ is 
a $\g$-H\"older rough path
as in Definition \ref{DEF:control}\,(ii).
By applying the coboundary operator $\updl$
to~\eqref{RQ3}
together with \eqref{Ja1} and~\eqref{RQ2}, 
any error term $Q$ in~\eqref{RQ3} (if it exists) satisfies
\begin{align}
(\updl Q)_{t_1, t_2, t_3}
= - (\updl  \Taa)_{t_1, t_2, t_3}
  = \XX_{t_1,t_2} (R_{t_2,t_3}) + \XXb_{t_1,t_2}\big( (\updl f')_{t_2,t_3}\big)
\label{RQ4}
\end{align}

\noi
for any $(t_1, t_2, t_3) \in\Dl_{3, T}$.
Then, from \eqref{Ho2} and the regularity assumptions on $(\XX, \XXb)$, 
$f'$, and $R$, 
we see that 
$\updl Q
 \in C^{(\g + \be)\wedge (2\g + \al)}_{3,T}V$.
Hence, if $(\g + \be)\wedge (2\g + \al) > 1$, 
then we can apply the sewing lemma (Lemma \ref{LEM:sew})
to define an error term $Q$ by the relation:
\begin{align}
Q = - \Ld \updl  \Taa
\in C^{(\g + \be)\wedge (2\g + \al)}_{2,T}V,
\label{RQ5} 
\end{align}
where $\Ld$ denotes the sewing map.
Then, 
from 
 \eqref{RQ3},  
and \eqref{RQ5}, 
we  define 
the {\it rough integral} $\I^{\XX, \XXb}(f)$ of $f$
(with respect to the rough driver $(\XX, \XXb)$)
to be  a unique function 
in $ C^\g([0,T]; V)$
with $\I^{\XX, \XXb}(f)(0) = 0$
whose increment is given by 
\begin{align}
\updl (\I^{\XX, \XXb}(f))
= (\Id - \Ld \updl )\Taa.
\label{RQ6}
\end{align}

\noi
As in the Young case, 
the rough integral $\I^{\XX, \XXb}(f)$ 
is given by 
the unique limit of 
Riemann-Stieltjes type sums:
\begin{align}
\begin{split}
\I^{\XX, \XXb}(f)(t) 
& = \lim_{|\Pi([0,t])|\to 0} 
\sum^{n-1}_{j=0} \Taa_{t_j,t_{j+1}}\\
& = \lim_{|\Pi([0,t])|\to 0} 
\sum^{n-1}_{j=0} \Big(\XX_{t_j,t_{j+1}}(f_{t_{j+1}})
+ \XXb_{t_j,t_{j+1}}(f_{t_{j+1}}')\Big), 
\end{split}
\label{RQ7}
\end{align}

\noi
 where 
the limit is understood in the sense of Lemma \ref{LEM:sew}\,(iii).
Note that the regularity 
$(\g + \be)\wedge (2\g + \al) > 1$
of $Q$ defined in \eqref{RQ5}
guarantees that the contribution from $Q$ in~\eqref{RQ7}
vanishes:
\begin{align}
 \lim_{|\Pi([0,t])|\to 0} 
\sum^{n-1}_{j=0} Q_{t_j,t_{j+1}} = 0.
\label{RQ8}
\end{align} 
 
\noi
As pointed out in the Young case
(Remark \ref{REM:uniq1}), 
the error term $Q$ is not unique, 
but it defines
the rough integral 
$\I^{\XX, \XXb}(f)$  in a unique manner.

\begin{lemma}
[rough integral]
\label{LEM:int2}

Let $V$ be a Banach space.
Given $0 < \g \le \frac 12 $
and  $T>0$, 
let $(\XX, \XXb) 
\in C^\g_{2,T}\L(V)\times  C^{2\g}_{2,T}\L(V)$
be a $\g$-H\"older rough path,   
%\textup{(}see Definition \ref{DEF:control}\textup{)}, 
satisfying \eqref{Ja0}, 
\eqref{Ja1}, and \eqref{RQ2}.
Given $0 < \al < \be < 1$, 
let $(f, f') 
\in \CC^{\al}([0,T];V)
\times 
\CC^{\al}([0,T];V)$
be a controlled  path
in 
$\D^{\al, \be}_{\XX, T}(V)$, 
satisfying
\eqref{RQ1}
for some remainder term
 $R = R^{\XX, f}
\in  C^{\be}_{2, T}V$.
Suppose that 
\begin{align}
\kk : = (\g + \be)\wedge (2\g + \al) > 1.
\label{RQQ2}
\end{align}

\noi
Then,  the following statements hold\textup{:}

\smallskip

\begin{itemize}

\item[(i)] 
Let $\Taa$ be as in \eqref{RQ3}\textup{:}
\begin{align}
\Taa_{t,r}  = \XX_{t,r}(f_r) + \XXb_{t,r} (f'_r)
\label{RQQ1}
\end{align}

\noi
for $(t,r) \in \Dl_{2,T}$.
Then, 
we have $\Taa\in C^\g_{2,T}V$
and
$\updl \Taa \in C^\kk_{3,T}V$.

\smallskip

\item[(ii)]
There exists 
a unique function 
$\I^{\XX, \XXb}(f) \in C^\g([0,T]; V)$
with  $\I^{\XX, \XXb}(f)(0) =0$, satisfying~\eqref{RQ6}, 
such that
\begin{align}
\begin{split}
 \|  \updl \I^{\XX, \XXb}(f) - \Taa\|_{C^\kk_{2,T}  V}
& \les
 T^{\g+ \be -\kk}
\|\XX\|_{C^\g_{2,T}\L(V)}
\| R\|_{C^\be_{2,T}V}\\
& \quad + 
 T^{2\g+ \al-\kk}
\|\XXb\|_{C^{2\g}_{2,T}\L(V)}
\| f'\|_{C^\al_{T}V} 
\end{split}
\label{RQQ3}
\end{align}

\noi
and
\begin{align}
\begin{split}
\|\I^{\XX, \XXb}(f)\|_{C^\g_T  V} 
& \les
\|\XX\|_{C^\g_{2,T}\L(V)}
\|f \|_{L^\infty_TV}\\
& \quad 
+  T^{\g}
\|\XXb\|_{C^{2\g}_{2,T}\L(V)}
\|f' \|_{L^\infty_TV}\\
& \quad 
+  T^{\be}
\|\XX\|_{C^\g_{2,T}\L(V)}
\| R\|_{C^\be_{2,T}V}\\
& \quad + 
 T^{\g+ \al}
\|\XXb\|_{C^{2\g}_{2,T}\L(V)}
\| f'\|_{C^\al_{T}V}.
\end{split}
\label{RQQ4}
\end{align}

\noi
Moreover,  \eqref{RQ7} holds.

\smallskip

\item[(iii)]
\textup{(nonlinear smoothing).}\footnote{While 
the  rough integration considered here is linear in 
the input function $f$, 
it is bilinear in terms of a noise in our application, 
and hence we use the terminology ``nonlinear smoothing''.}
Suppose in addition that 
 $(\XX, \XXb) 
\in C^\g_{2,T}\L(V; V_0)\times  C^{2\g}_{2,T}\L(V;V_0)$
for some Banach subspace $V_0$ of $V$.
Then, we have 
\begin{align}
\begin{split}
\|\I^{\XX, \XXb}(f)\|_{C^\g_T  V_0} 
& \les
\|\XX\|_{C^\g_{2,T}\L(V; V_0)}
\|f \|_{L^\infty_TV}\\
& \quad 
+  T^{\g}
\|\XXb\|_{C^{2\g}_{2,T}\L(V; V_0)}
\|f' \|_{L^\infty_TV}\\
& \quad 
+  T^{\be}
\|\XX\|_{C^\g_{2,T}\L(V; V_0)}
\| R\|_{C^\be_{2,T}V}\\
& \quad + 
 T^{\g+ \al}
\|\XXb\|_{C^{2\g}_{2,T}\L(V; V_0)}
\| f'\|_{C^\al_{T}V}.
\end{split}
\label{RQQ5}
\end{align}

\smallskip

\item[(iv)]
\textup{(stability).}
For $j = 1, 2$, 
let $(\XX^j, \XXb^j) 
\in C^\g_{2,T}\L(V)\times  C^{2\g}_{2,T}\L(V)$
be a $\g$-H\"older rough path  
satisfying \eqref{Ja0}, 
\eqref{Ja1}, and \eqref{RQ2}, 
and 
let $(f^j, (f^j)')\in  
\CC^{\al}([0,T];V)
\times 
\CC^{\al}([0,T];V)$
be a controlled  path
in 
$\D^{\al, \be}_{\XX_j, T}(V)$, 
satisfying
\begin{align*}
(\updl f^j)_{t, r} = \XX^j_{t, r} \big(  (f^j)_r'\big) + R^{\XX^j, f^j}_{t, r}
%\label{RQQ6}
\end{align*}

\noi
for some remainder term
 $R^{\XX^j, f^j}
\in C^{\be}_{2, T}V$.
Then, under the condition \eqref{RQQ2}, we have 
\begin{align}
 \| & \I^{\XX^1, \XXb^1}(f^1)
- \I^{\XX^2, \XXb^2}(f^2)\|_{C^\g_T  V} 
\notag\\
& \les
\|\XX^1- \XX^2\|_{C^\g_{2,T}\L(V)}
\|f^1 \|_{L^\infty_TV}
+ \|\XX^2\|_{C^\g_{2,T}\L(V)}
\|f^1 -f^2\|_{L^\infty_TV}
\notag\\
& \quad 
+  T^{\g}
\Big(
\|\XXb^1- \XXb^2\|_{C^{2\g}_{2,T}\L(V)}
\|(f^1)' \|_{L^\infty_TV}
\notag\\
& \hphantom{XXXXX}
+ \|\XXb^2\|_{C^{2\g}_{2,T}\L(V)}
\|(f^1)'- (f^2)' \|_{L^\infty_TV}\Big)
\notag\\
& \quad 
+  T^{\be}
\Big(
\|\XX^1- \XX^2\|_{C^\g_{2,T}\L(V)}
\| R^{\XX^1, f^1}\|_{C^\be_{2,T}V}
\label{RQQ7}
\\
& \hphantom{XXXXX}
+ 
\|\XX^2\|_{C^\g_{2,T}\L(V)}
\| R^{\XX^1, f^1} - R^{\XX^2, f^2}\|_{C^\be_{2,T}V}\Big)
\notag\\
& \quad + 
 T^{\g+ \al}
\Big(
\|\XXb^1 - \XXb^2\|_{C^{2\g}_{2,T}\L(V)}
\| (f^1)'\|_{C^\al_{T}V}
\notag\\
& \hphantom{XXXXX}
+ \|\XXb^2\|_{C^{2\g}_{2,T}\L(V)}
\| (f^1)' - (f^2)'\|_{C^\al_{T}V}
\Big), 
\notag
\end{align}

\noi
where 
$\I^{\XX^j, \XXb^j}(f^j)$ denotes
 the rough integral 
 of $f^j$
 with the rough driver $(\XX^j, \XXb^j)$, 
 $j = 1, 2$.

\end{itemize}

\end{lemma}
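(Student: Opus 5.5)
The statement to prove is Lemma~\ref{LEM:int2}. I would prove it by applying the sewing lemma (Lemma~\ref{LEM:sew}) to the germ $\Taa$ in \eqref{RQQ1}, in parallel with the Young case (Lemma~\ref{LEM:int1}).

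\textbf{Part (i).} The $C^\g_{2,T}V$ bound on $\Taa$ is immediate:
\[
\|\Taa_{t,r}\|_V \le \|\XX\|_{C^\g}|t-r|^\g\|f\|_{L^\infty} + \|\XXb\|_{C^{2\g}}|t-r|^{2\g}\|f'\|_{L^\infty}.
\]
For $\updl\Taa$, linearity of $\XX$, $\updl\XX=0$, and Chen's relation \eqref{RQ2} give the identity \eqref{RQ4}:
\[
-(\updl\Taa)_{t_1,t_2,t_3} = \XX_{t_1,t_2}\big((\updl f)_{t_2,t_3}\big) - \XX_{t_1,t_2}\circ\XX_{t_2,t_3}(f'_{t_3}) + \XXb_{t_1,t_2}\big((\updl f')_{t_2,t_3}\big).
\]
Substituting \eqref{RQ1} for $(\updl f)_{t_2,t_3}$, the two $\XX\circ\XX$ terms cancel, leaving $\XX_{t_1,t_2}(R_{t_2,t_3}) + \XXb_{t_1,t_2}((\updl f')_{t_2,t_3})$. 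Each term is bounded by a product $|t_1-t_2|^{a}|t_2-t_3|^{b}$ with $a+b=\g+\be$ or $2\g+\al$. The $C^\kk_{3,T}$ norm in \eqref{Ho2} is an infimum over splittings, and $|t_1-t_2|, |t_2-t_3|\le T$. Hence each product can be converted into one with total exponent $\kk$, at the cost of factors $T^{\g+\be-\kk}$ and $T^{2\g+\al-\kk}$. This shows $\updl\Taa\in C^\kk_{3,T}V$ with the bound matching \eqref{RQQ3}.

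\textbf{Part (ii).} Since $\kk>1$, $\updl\Taa$ lies in $C^{\kk}_{3,T}V\cap\Ker\updl$ (because $\updl\circ\updl=0$). Lemma~\ref{LEM:sew}(i)--(iii) then produces $\Ld\updl\Taa\in C^\kk_{2,T}V$ and a unique $\I^{\XX,\XXb}(f)$ with value $0$ at time $0$ satisfying \eqref{RQ6}. Estimate \eqref{sew1} gives \eqref{RQQ3}, and Lemma~\ref{LEM:sew}(iii) gives the Riemann sums \eqref{RQ7}.

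For \eqref{RQQ4}, write
\[
\updl\I^{\XX,\XXb}(f) = \Taa - \Ld\updl\Taa.
\]
The first term is bounded by the Part (i) bound. For the second, use
\[
|t-r|^{\kk}\le |t-r|^\g\, T^{\kk-\g}.
\]
Combining the powers of $T$ (with $T^{\g+\be-\kk}T^{\kk-\g}=T^\be$ and $T^{2\g+\al-\kk}T^{\kk-\g}=T^{\g+\al}$) yields exactly the four terms of \eqref{RQQ4}.

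\textbf{Part (iii).} Run the same argument in $V_0$. The key observation is that every term in $\updl\Taa$ and in $\Taa$ has $\XX$ or $\XXb$ applied last, to an element of $V$ (namely $f_r$, $f'_r$, $R_{t_2,t_3}$ or $(\updl f')_{t_2,t_3}$). Therefore the operator norms $\L(V;V_0)$ control everything, and we sew in $V_0$. Uniqueness of the sewing in $V$ and the embedding $V_0\hookrightarrow V$ identify the resulting integral with the previous one.

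\textbf{Part (iv).} Stability follows from linearity of the sewing map. Take the difference of the germs $\Taa^1-\Taa^2$ and telescope each product, for example
\[
\XX^1(f^1)-\XX^2(f^2) = (\XX^1-\XX^2)(f^1) + \XX^2(f^1-f^2),
\]
and similarly for $\XXb(f')$, $\XX(R)$, and $\XXb(\updl f')$. The coboundary $\updl(\Taa^1-\Taa^2)$ equals the difference of the two reduced expressions, because each $\Taa^j$ reduces via its own Chen relation and its own controlled-path relation. Applying \eqref{sew1} and repeating the bookkeeping of Part (ii) gives \eqref{RQQ7}.

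The only genuinely delicate point is the algebraic cancellation in Part (i). It requires using Chen's relation and \eqref{RQ1} with matching conventions, namely increments taken at the left endpoint $r$ and the composition order $\XX_{t_1,t_2}\circ\XX_{t_2,t_3}$. In the stability estimate, each difference must be paired with its own rough path. The remaining steps are routine bookkeeping of powers of $T$.
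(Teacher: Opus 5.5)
Your proposal is correct and follows the paper's proof essentially line by line: the identity \eqref{RQ4} from $\updl\XX=0$, Chen's relation and \eqref{RQ1}; the sewing bound \eqref{sew1}, which gives \eqref{RQQ3} and then \eqref{RQQ4} via $|t-r|^{\kk}\le T^{\kk-\g}|t-r|^{\g}$; the same computation with $\L(V;V_0)$ norms for (iii); and the telescoped differences \eqref{RZ3}--\eqref{RZ4} for (iv). One small point, which the paper also passes over, is that the two terms of $\updl\Taa$ carry different exponent splittings, so no single splitting in \eqref{Ho2} serves both; you should read the $C^\kk_{3,T}$ norm as an infimum over decompositions into sums (as in Gubinelli--Tindel), or bound each term by a constant times $|t_1-t_3|^\kk$.
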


Since we only consider linear rough integrals, 
there is no notion of persistence of regularity in this case.

\begin{proof}[Proof of Lemma \ref{LEM:int2}]
%Since the claims made in this lemma are standard, 
%we will be brief here.
%
%
%
%\noi
(i) 
From \eqref{RQQ1}, we have 
\begin{align}
\|\Taa\|_{C^\g_{2, T} V}
\le 
\|\XX\|_{C^\g_{2,T}\L(V)}
\|f \|_{L^\infty_TV}
+  T^{\g}
\|\XXb\|_{C^{2\g}_{2,T}\L(V)}
\|f' \|_{L^\infty_TV}, 
\label{RZ1}
\end{align}

\noi
while, from 
\eqref{RQ4}, 
we have 
\begin{align}
\begin{split}
\|\updl \Taa\|_{C^\kk_{3, T} V}
& \le
 T^{\g + \be - \kk }
\|\XX\|_{C^\g_{2,T}\L(V)}
\| R\|_{C^\be_{2,T}V}\\
& \quad  + 
 T^{2\g+ \al - \kk}
\|\XXb\|_{C^{2\g}_{2,T}\L(V)}
\| f'\|_{C^\al_{T}V}.
\end{split}
\label{RZ2}
\end{align}

\medskip

\noi
(ii)
The bound \eqref{RQQ3} follows
from \eqref{RQ6}, \eqref{sew1},  %in the sewing lemma (Lemma \ref{LEM:sew}), 
and \eqref{RZ2}, 
while the bound \eqref{RQQ4} follows
from 
\eqref{RQQ3}
and \eqref{RZ1}.
Noting that \eqref{RQ8} follows 
from \eqref{RQ5} with~\eqref{RQQ2}, 
we see that~\eqref{RQ7}
follows from summing
$\updl \big(\I^{\XX, \XXb}(f)\big)
_{t_j, t_{j+1}}
= \Taa_{t_j, t_{j+1}} + 
Q_{t_j, t_{j+1}}$
over $j = 0, \dots, n$
and applying~\eqref{RQ8}.

\medskip

\noi
(iii)
The bound \eqref{RQQ5} follows
from a slight modification of the proof of \eqref{RQQ4}
with the definition \eqref{Ho3}
of $C^\g_{2,T}\L(V; V_0)$.

\medskip

\noi
(iv) 
Set $\Taa^j_{t,r}  = \XX_{t,r}^j(f_r^j) + \XXb_{t,r}^j \big((f^j)'_r\big)$, $j = 1, 2$.
Then, 
from \eqref{RQ6}, we have
\begin{align}
\begin{split}
 \updl  \big( & \I^{\XX^1, \XXb^1}(f^1)\big)_{t, r}
- \updl\big(\I^{\XX^2, \XXb^2}(f^2)\big)_{t, r}\\
& = (\XX_{t,r}^1- \XX_{t,r}^2)(f_r^1)
+ 
\XX_{t,r}^2(f_r^1- f_r^2)\\
& \quad
+ 
(\XXb_{t,r}^1 - \XXb_{t,r}^2) \big((f^1)'_r\big)
+ \XXb_{t,r}^2 \big((f^1)'_r- (f^2)'_r\big)\\
& \quad
- \big(\Ld (\updl  \Taa^1 - \updl  \Taa^2)\big)_{t, r}.
\end{split}
\label{RZ3}
\end{align}

\noi
From \eqref{RQ4}, we have 
\begin{align}
\begin{split}
( & \updl  \Taa^1)_{t_1, t_2, t_3}
- (\updl  \Taa^2)_{t_1, t_2, t_3}\\
& = -
(\XX_{t_1,t_2}^1-\XX_{t_1,t_2}^2) (R^{\XX^1, f^1}_{t_2,t_3})
-
\XX_{t_1,t_2}^2 (R^{\XX^1, f^1}_{t_2,t_3}- R^{\XX^2, f^2}_{t_2,t_3})\\
& \quad 
- (\XXb^1_{t_1,t_2}-\XXb^2_{t_1,t_2})\big( (\updl (f^1)')_{t_2,t_3}\big)
- \XXb^2_{t_1,t_2}\big( (\updl (f^1)')_{t_2,t_3}- (\updl (f^2)')_{t_2,t_3}\big).
\end{split}
\label{RZ4}
\end{align}

\noi
Then, the bound \eqref{RQQ7}
follows from a slight modification of the proof of \eqref{RQQ4}, 
using \eqref{RZ3} and~\eqref{RZ4}.
\end{proof}

\subsection{Nonlinear Young differential equation
with a rough perturbation}
\label{SUBSEC:Y5}

In this subsection, we establish  local well-posedness
of the nonlinear YDE with a rough 
perturbation~\eqref{YDE1a}:
\begin{equation}
\uu = u_0 + \I^\XX(\uu) + \I^{\YY, \YYb}(\uu), 
\label{RDE1}
\end{equation}

\noi
where $\I^\XX(\uu)$ is the nonlinear Young integral of $\uu$
with a $k$-linear Young driver $\XX$ constructed in Lemma \ref{LEM:int1} and 
$\I^{\YY, \YYb}(\uu)$ is the rough integral of $\uu$
with a rough driver $(\YY, \YYb)$
constructed in 
Lemma \ref{LEM:int2}.

\begin{proposition}
\label{PROP:main2}
Let $\M = \T^d$ or $\R^d$.

\smallskip

\noi
\textup{(i) (local well-posedness).}
Given   $s \in \R$, 
\begin{align*}
\frac 13  < \g_2 \le \frac 12 < \g_1  < 1
\quad \text{such that}  \quad \g_1 + \g_2 > 1, 
%\label{REG0}  
\end{align*}

\noi
an integer $k \ge 2$, and $T > 0$, 
let     $\XX \in \cX^{s, \g_1}_k([0, T]\times \M)$
and 
$(\YY, \YYb) \in \cX^{s, \g_2}_1([0, T]\times \M)
\times \cX^{s, 2\g_2}_1([0, T]\times \M)$
be a $\g_2$-H\"older rough path
as in Definition \ref{DEF:control}.
Then, 
 the nonlinear Young differential equation~\eqref{RDE1} with 
 a rough perturbation
is locally well-posed in $H^s(\M)$.
More precisely, given $u_0 \in H^s(\M)$, 
there exists a unique solution $(\uu, \uu)
\in 
\D^{\g_2, \g_1 \wedge (2\g_2)}_{\YY, \tau}(H^s(\M))$
to~\eqref{RDE1}
with $\uu|_{t = 0} = u_0$, 
where 
the local existence time 
\[\tau = 
\tau\Big(\|u_0\|_{H^s(\M)}, \|\XX\|_{\cX^{s, \g_1}_k(T)},
\|\YY\|_{\cX^{s, \g_2}_1(T)},  \|\YYb\|_{\cX^{s, 2\g_2}_1(T)}\Big) \in (0, T\wedge 1]\]

\noi
is non-increasing in each of the arguments.
Here, 
$\D^{\g_2, \g_1 \wedge (2\g_2)}_{\YY, \tau}(H^s(\M))$
is as in Definition~\ref{DEF:control}.
In particular, we have
 $\uu \in \cC^{\g_2} ([0, \tau]; H^{s}(\M))$, 
 and 
the blowup alternative~\eqref{BA1} holds
for~\eqref{RDE1}.

\smallskip

\noi
\textup{(ii) (persistence of regularity).}
In addition, suppose that $u_0 \in H^{s_0}(\M)$ for some $s_0 > s$
and that  $\XX \in \cY^{s, s_0, \g_1}_k([0, T]\times \M)$
and $(\YY, \YYb)  \in \cX^{s, s_0, \g_2}_1([0, T]\times \M)
\times \cX^{s, s_0, 2\g_2}_1([0, T]\times \M)$, 
where $\cY^{s, s_0, \g}_k([0, T]\times \M)$  
and 
 $\cX^{s, s_0, \g}_1([0, T]\times \M)$  
 are as in \eqref{X2c}
and 
\eqref{X1z1}.
Then, 
by possibly making $\tau > 0$ smaller by a multiplicative constant 
\textup{(}in particular $\tau > 0$
depends on the $H^s$-norm of the initial data $u_0$
but not on its $H^{s_0}$-norm\textup{)}, 
we have $\uu \in \cC^{\g_2} ([0, \tau]; H^{s_0}(\M))$, 
 where $\uu$ is the solution to \eqref{RDE1}
 constructed in Part (i).

\smallskip

\noi
\textup{(iii) (nonlinear smoothing)}
In addition to the hypotheses in Part (i), 
suppose  that  $\XX \in \cX^{s, s_0, \g_1}_k([0, T]\times \M)$
and $(\YY, \YYb)  \in \cX^{s, s_0, \g_2}_1([0, T]\times \M)
\times \cX^{s, s_0, 2\g_2}_1([0, T]\times \M)$
for some $s_0 > s$, 
where $\cX^{s, s_0, \g}_k([0, T]\times \M)$  is  as in \eqref{X1x}.
Then, we have
 $\uu - u_0 \in \cC^{\g_2}([0, \tau]; H^{s_0}(\M))$, 
 where $\uu$ is the solution to \eqref{RDE1}
 constructed in Part (i).

\smallskip

\noi
\textup{(iv) (convergence)}
In addition to the hypotheses in Part (i), 
suppose  that a sequence $\{\XX^N\}_{N \in \N}\subset 
 \cX^{s, \g_1}_k([0, T]\times \M)$
\textup{(}and 
a sequence $\{(\YY^N, \YYb^N)\}_{N \in \N}\subset 
 \cX^{s, \g_2}_1([0, T]\times \M)
 \times  \cX^{s,2 \g_2}_1([0, T]\times \M) $
 of $\g_2$-H\"older rough paths\textup{)}
 converges to $\XX$ in 
 $ \cX^{s, \g_1}_k([0, T]\times \M)$
\textup{(}and 
to  $(\YY, \YYb)$ in 
 $ \cX^{s, \g_2}_1([0, T]\times \M)\times \cX^{s, 2\g_2}_1([0, T]\times \M)$, respectively\textup{)}
  as $N \to \infty$.
Given $N \in \N$, let 
 $(\uu^N, \uu^N)
\in 
\D^{\g_2, \g_1 \wedge (2\g_2)}_{\YY^N, \tau}(H^s(\M))$
be a unique solution
to the following nonlinear Young differential equation
with a rough 
perturbation\textup{:}
\begin{equation}
\label{RDE1x}
\uu^N = u_0 + \I^{\XX^N}(\uu^N)
+ \I^{\YY^N, \YYb^N}(\uu^N)
\end{equation}

\noi
constructed in Part (i).  
Then, 
by possibly making $\tau > 0$ smaller by a multiplicative constant,  
$\uu^N$  
converges to the solution $\uu$ to \eqref{RDE1}
constructed in Part (i)  
 in $\CC^{\g_2}([0, \tau]; H^s(\M))$
as $N \to \infty$.
Moreover, 
given any $K > 0$, 
the rate of convergence of $\uu^N$ to $\uu$ is
uniform in $u_0 \in B_K$,
where
$B_K$ denotes the ball 
in $H^s(\M)$
of radius $K > 0$ centered at the origin.

\end{proposition}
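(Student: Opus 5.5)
We run a contraction argument for the map $\G(\uu) = u_0 + \I^\XX(\uu) + \I^{\YY, \YYb}(\uu)$, posed on the space of controlled paths rather than on a H\"older space alone. Fix $\al < \g_2$ close to $\g_2$ and $\be = \g_1\wedge(2\g_2)$ (or slightly less), so that $\al > \frac13$, $\g_1 + \al > 1$, and $\kk = (\g_2+\be)\wedge(2\g_2+\al) > 1$; this is possible because $\g_2 > \frac13$ and $\g_1+\g_2>1$. The working space is the set of pairs $(\uu,\uu) \in \D^{\al,\be}_{\YY,\tau}(H^s)$ with $\uu(0)=u_0$, Gubinelli derivative equal to $\uu$ itself, and norm \eqref{RQ1a}. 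The first step is to check that $\G$ maps this set into itself with Gubinelli derivative $\G(\uu)' = \uu$. Writing $\updl \G(\uu)_{t,r} = \YY_{t,r}(\uu_r) + R^{\G}_{t,r}$, the remainder is
\[
R^{\G}_{t,r} = \updl\I^\XX(\uu)_{t,r} + \YYb_{t,r}(\uu_r) + \big(\updl\I^{\YY,\YYb}(\uu) - \Taa\big)_{t,r}.
\]
The three pieces are controlled as follows. The first is in $C^{\g_1}_2$ by Lemma \ref{LEM:int1}\,(ii), since $\uu\in\CC^\al$ and $\g_1+\al>1$. The second is in $C^{2\g_2}_2$ with norm at most $\|\YYb\|\,\|\uu\|_{L^\infty}$. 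The third is in $C^\kk_2$ by \eqref{RQQ3}. So $R^{\G}\in C^\be_2$. The mismatch between the target exponent $\be$ and the actual exponents $\g_1$, $2\g_2$, $\kk$ gives small powers of $\tau$, except for the two ``zeroth order'' contributions: $\|\XX\|\,(1+\|\uu\|_{L^\infty})^{k-1}\|\uu\|_{L^\infty}$ inside $\I^\XX$, and $\|\YYb\|\,\|\uu\|_{L^\infty}$.

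The second step handles those zeroth order terms. Since the Gubinelli derivative is $\uu$ itself, $\|\uu'\|_{C^\al}$ is part of the norm, and by \eqref{RQ1} we have $\|\uu\|_{C^\al_\tau}\le \tau^{\g_2-\al}\|\YY\|\,\|\uu\|_{L^\infty} + \tau^{\be-\al}\|R\|_{C^\be}$, which is small in $\tau$. The sup norm is bounded by $\|u_0\| + \tau^\al\|\uu\|_{C^\al}$. For the problematic terms I would exploit the slack $\be < \g_1\wedge 2\g_2$ strictly, taking $\be$ slightly smaller than $\g_1\wedge(2\g_2)$ while keeping $\kk>1$. Then every term in $\|R^{\G}\|_{C^\be}$ carries a factor $\tau^{\epsilon}$, and the constant term is bounded by $C(\|u_0\|)\tau^\epsilon$. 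With $K\sim 1+\|u_0\|_{H^s}$ and $\tau$ small, $\G$ maps the ball of radius $K$ into itself.

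The third step, which I expect to be the main obstacle, is the contraction estimate. The difference $\I^{\YY,\YYb}(\uu^1)-\I^{\YY,\YYb}(\uu^2)$ is handled by linearity and \eqref{RQQ7} applied with a common driver. The nonlinear term requires the difference estimate for nonlinear Young integrals, $\|\I^\XX(\uu^1)-\I^\XX(\uu^2)\|_{C^{\g_1}}\les (1+K)^{k-1}\|\XX\|_{\cX^{s,\g_1}_k}\,\|\uu^1-\uu^2\|_{\CC^\al}$. This needs the $DX$ part of the $\cX_k$ norm \eqref{X2}, exactly as in \cite[(3.64)]{CGLLO1}. It must then be converted into the controlled norm of the difference, using again that $\|\uu^1-\uu^2\|_{\CC^\al}$ is bounded by $\tau^{\epsilon}$ times the controlled-path distance, since the two paths share the initial datum. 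Together these give a Lipschitz constant $C(K)\tau^\epsilon$. The Banach fixed point theorem then gives existence and uniqueness in the ball, and uniqueness extends to the whole space by the usual continuity argument. The blowup alternative follows because $\tau$ depends only on $\|u_0\|_{H^s}$ and the driver norms. Upgrading $R^{\G}$ to $C^{\g_1\wedge2\g_2}$ and $\uu$ to $\CC^{\g_2}$ is a posteriori, via Lemma \ref{LEM:int1} and \eqref{RQQ4}.

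For the remaining parts the approach is the same. Part (iii) follows from the same decomposition, using Lemma \ref{LEM:int1}\,(iv) together with \eqref{RQQ5} for $\uu-u_0$. Part (ii) comes from a second fixed point argument in the $H^{s_0}$-valued controlled space. There the equation is linear in the $H^{s_0}$ norm: Lemma \ref{LEM:int1}\,(iii) gives the $\Lip_k(V,V_0;V_0)$ structure, so $\tau$ depends only on the $H^s$ data. Part (iv) follows from the stability bounds \eqref{Jaa4x} and \eqref{RQQ7}, combined with the contraction, giving a Gronwall-free estimate $\|\uu^N-\uu\|\les \|\XX^N-\XX\| + \|\YY^N-\YY\| + \|\YYb^N-\YYb\|$ that is uniform on $B_K$.
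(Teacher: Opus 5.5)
\paragraph{The working space in your fixed-point argument is not preserved by the map.}
You take as working space the pairs $(\uu,\uu)$ whose Gubinelli derivative is the path itself. But your own formula shows that $z=\G(\uu)$ satisfies $(\updl z)_{t,r}=\YY_{t,r}(\uu_r)+R^{\G}_{t,r}$, so its Gubinelli derivative is the \emph{input} $\uu$, not $z$.

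For $(z,z)$ to belong to your space, $(\updl z)_{t,r}-\YY_{t,r}(z_r)=R^{\G}_{t,r}+\YY_{t,r}(\uu_r-z_r)$ would have to lie in $C^\be_{2,\tau}H^s$. This fails:
\begin{itemize}
\item $\uu_r-z_r\neq 0$ for $r>0$ unless $\uu$ is already a fixed point;
\item $\YY_{t,r}$ is in general only of size $|t-r|^{\g_2}$ with $\g_2<\be$. For example, take $\YY_{t,r}=(B_t-B_r)\Id$ with $B$ a Brownian path.
\end{itemize}
So $\G$ is not a self-map of the diagonal set, and the Banach fixed point theorem cannot be applied there. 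Indeed, to feed $z$ back into $\G$ you need a Gubinelli derivative for $z$, and the only one you have is $\uu$.

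\paragraph{The fix, as in the paper: treat the derivative as an independent variable.}
Take all $(\uu,\uu')\in\D^{\al,\be}_{\YY,\tau}(H^s)$ with $(\uu,\uu')|_{t=0}=(u_0,u_0')$. Build the rough integral from $\YY_{t,r}(\uu_r)+\YYb_{t,r}(\uu'_r)$, and set $\G(\uu,\uu')=(z,\uu)$. This is a self-map exactly when the compatibility condition $u_0'=u_0$ holds. The identity $\uu'=\uu$ then becomes a property of the fixed point rather than a constraint on the space.

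Your estimates go through essentially unchanged, with two adjustments in the contraction step:
\begin{itemize}
\item The new derivative component $\uu^1-\uu^2$ must be bounded in $C^\al$ through the controlled structure, by $\tau^{\g_2}\|\YY\|\,\|(\uu^1)'-(\uu^2)'\|_{C^\al}+\tau^{\be-\al}\|R^{\uu^1}-R^{\uu^2}\|_{C^\be}$. This uses $(\uu^1)'(0)=(\uu^2)'(0)$.
\item The $\YYb$-terms in $R^{z^1}-R^{z^2}$ involve $(\uu^1)'-(\uu^2)'$, not $\uu^1-\uu^2$.
\end{itemize}

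\paragraph{Your handling of the order-one terms is a valid alternative once this is repaired.}
You take $\be$ strictly below $\g_1\wedge(2\g_2)$. Then $\|\XX\|(1+\|\uu\|_{L^\infty})^{k-1}\|\uu\|_{L^\infty}$ and $\|\YYb\|\,\|\uu'\|_{L^\infty}$ carry a factor $\tau^\epsilon$, and a radius $K\sim 1+\|u_0\|_{H^s}$ suffices. The cost is an a posteriori upgrade of the remainder to $C^{\g_1\wedge 2\g_2}_2$, which you note. The paper instead keeps $\be=\g_1\wedge(2\g_2)$ and absorbs these contributions, together with $\|\YY\|\,\|u_0'\|_{H^s}$, into the radius $K$ (see \eqref{RDE12}--\eqref{RDE13}).
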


\begin{proof}

(i)
Let $0 < \al < \be : = \g_1\wedge (2\g_2) < 1$ such that 
\begin{align}
\al < \g_2, \quad 
 \g_1+\al > 1, 
\quad \text{and}\quad 
 (\g_2 + \be)\wedge (2\g_2 + \al) > 1, 
\label{REG1}
\end{align}

\noi
where the last two conditions are from Lemmas \ref{LEM:int1}
and \ref{LEM:int2}, respectively.
In particular, 
we need $\g_2 > \frac 13$ and $\g_1 + \g_2 > 1$.
	
Fix small $0 < \tau \le T\wedge 1$ (to be chosen later)
and $(u_0, u_0') \in H^s(\M) \times H^s(\M)$
(which needs to  satisfy a compatibility condition as we see later; see \eqref{REG2}).
Denote by $\D^{\al, \be, (u_0, u_0')}_{\YY, \tau}(H^s(\M))$
the subclass of 
$\D^{\al, \be}_{\YY, \tau}(H^s(\M))$
such that 
$(\uu, \uu')|_{t = 0} = (u_0, u_0')$.
Given $(\uu, \uu') \in \D^{\al, \be, (u_0, u_0')}_{\YY, \tau}(H^s(\M))$
satisfying 
\begin{align}
(\updl \uu)_{t, r} = \YY_{t, r}(\uu'_r) + R^\uu_{t, r}
\label{RDE1a}
\end{align}

\noi
for some remainder term 
$R^\uu = R^{\YY, \uu}\in   C^{\be}_{2, \tau}H^s(\M)$, 
define the  process 
 $z = z(\uu, \uu')$ by 
\begin{align}
z
  = \I^{\YY, \YYb}(\uu)
+ \big(u_0 + \I^\XX(\uu)\big), 
\label{RDE2}
\end{align}

\noi
where
$\I^{\YY, \YYb}(\uu)$ is the rough integral of $\uu$
with the rough driver $(\YY, \YYb)$ constructed in Lemma~\ref{LEM:int2}
and   $\I^\XX(\uu)$ is the nonlinear Young integral of $\uu$
with the $k$-linear Young driver $\XX$ constructed in Lemma \ref{LEM:int1}.
Then, 
noting that the Gubinelli derivative of $\I^{\YY, \YYb}(\uu)$
(with respect to $\YY$)
is given by $\uu$, 
it follows from 
 the current regularity assumption
 that $z$ is controlled by $\YY$ with the Gubinelli derivative 
 \begin{align}
 z' = \uu.
 \label{RDE2a}
 \end{align}
 More precisely, we have
\begin{align}
(\updl z)_{t, r}
= \YY_{t, r} (\uu_r) + R^z_{t, r}, \quad (t, r) \in \Dl_{2, \tau}
\label{RDE3} 
\end{align}

\noi
for some remainder term $R^z = R^{\YY, z}$.
Here, 
from \eqref{RDE2}, 
\eqref{RQ6},  \eqref{RQQ1}
(for $(\YY, \YYb)$ instead of $(\XX, \XXb)$), 
\eqref{YQ5}, and \eqref{Ja1a}, 
we see that the remainder term
 $R^z$ is given by 
\begin{align}
\begin{split}
R^z_{t, r} 
& = (\updl \I^{\YY, \YYb}(\uu))_{t, r}
+ (\updl\I^\XX(\uu))_{t, r} - \YY_{t, r} (\uu_r)\\
& = 
\YYb_{t, r} (\uu_r')
- \big(\Ld \updl
\Taa^{\YY, \YYb}(\uu, \uu')\big)_{t, r}
+ \big((\Id - \Ld \updl) \Ta^\XX(\uu)\big)_{t, r},
\end{split}
\label{RDE4}
\end{align}

\noi
where
$\Taa^{\YY, \YYb}(\uu, \uu')$ and 
$\Ta^\XX(\uu)$ are defined by 
\begin{align}
\begin{split}
\Taa^{\YY, \YYb}(\uu, \uu')_{t, r}
& = \YY_{t,r}(\uu_r) + \YYb_{t,r} (\uu'_r), \\
\Ta^\XX(\uu)_{t, r} & = \XX_{t,r}(\uu_r). 
\end{split}
\label{RDE5}
\end{align}

\noi
We now 
define the map $\G = \G_{\XX, \YY, \YYb, u_0}$
on $ \D^{\al,\be, (u_0, u_0')}_{\YY, \tau}(H^s(\M))$
by setting
%Given $(\uu, \uu') \in \D^{\al,\be}_{\XX, T}(V)$, 
\begin{align}
\G(\uu, \uu')
& = (z, z')
= (z, \uu), 
\label{RDE6}
\end{align}

\noi
where $z$ is as in \eqref{RDE2}
(and $R^z$ is as in \eqref{RDE4}).
From \eqref{RDE2} and \eqref{RDE2a}, we have 
\begin{align}
(z, z')|_{t = 0} = (u_0, u_0).
\label{RDE6a}
\end{align}

\noi
In the following, we show that, 
 by choosing $\tau > 0$ sufficiently small, 
$\G$ is a contraction on 
the ball $B_K
\subset \D^{\al,\be, (u_0, u_0')}_{\YY, \tau}(H^s(\M))$
 of radius $K > 0$ centered at the origin.
In particular, 
$\G$ maps 
$ \D^{\al,\be, (u_0, u_0')}_{\YY, \tau}(H^s(\M))$
into itself, which imposes
the following 
 compatibility condition:
\begin{align}
u_0' = u_0
\label{REG2}
\end{align}

\noi
in view of \eqref{RDE6} and \eqref{RDE6a}.

\medskip

\noi
$\bul$ {\bf Step 1:}
We first show boundedness
of $\G$. 
In view of \eqref{RQ1a}, 
we need to estimate the $C^\al_\tau H^s_x$-norm of $z' = \uu$
and the $C^\be_{2, \tau} H^s_x$-norm of $R^z$.
We have 
\begin{align}
\| \uu'\|_{L^\infty_\tau H^s_x} 
& \le \|u_0'\|_{H^s} 
+ \tau^\al \|\uu '\|_{C^\al_\tau H^s_x}.
\label{RDE7}
\end{align}

\noi
Then, from  the controlled structure \eqref{RDE1a} of $\uu$ with \eqref{RDE7}, 
we obtain
\begin{align}
\begin{split}
\| & z'\|_{C^\al_\tau H^s_x}
 = \| \uu\|_{C^\al_\tau H^s_x}\\
&  \le \tau^{\g_2 - \al}\| \YY\|_{\cX^{s, \g_2}_1(T)} \|\uu'\|_{L^\infty_\tau H^s_x}
+ \tau^{\be - \al}\| R^\uu\|_{C^\be_{2, \tau } H^s_x}\\
& \le  \| \YY\|_{\cX^{s, \g_2}_1(T)}\|u_0'\|_{H^s} 
+ \tau^{\g_2} \| \YY\|_{\cX^{s, \g_2}_1(T)}\|\uu '\|_{C^\al_\tau H^s_x}
+ \tau^{\be - \al}\| R^\uu\|_{C^\be_{2, \tau } H^s_x}, 
\end{split}
\label{RDE8}
\end{align}

\noi
since  $\g_2 - \al > 0$
(recall that $0 < \tau \le 1$).

Next, we estimate the $C^\be_{2, \tau}$-norm of $R^z$
in \eqref{RDE4}.
Proceeding as in 
\eqref{RQ4} and \eqref{YQ3}, we have 
\begin{align}
\begin{split}
(\updl \Taa^{\YY, \YYb}(\uu, \uu')\big)_{t_1, t_2, t_3}
& = - \YY_{t_1,t_2} R^\uu_{t_2,t_3} - \YYb_{t_1,t_2} (\updl \uu')_{t_2,t_3}, \\
 \big(\updl  \Ta^{\XX}(\uu) )_{t_1, t_2, t_3}
& = 
-  \XX_{t_1, t_2}(\uu_{t_2})
+   \XX_{t_1, t_2}( \uu_{t_3})
\end{split}
\label{RDE9}
\end{align}

\noi
for any $(t_1, t_2, t_3) \in \Dl_{3, \tau}$.
From \eqref{RDE5},    \eqref{Ja0},    \eqref{X2}, and \eqref{Ho3}, 
we have
\begin{align}
\begin{split}
\| \Ta^\XX(\uu)_{t, r} \|_{H^s}
& = \| \XX_{t, r}(\uu_r)  - \XX_{t, r}(0) \|_{H^s}\\
& \le \|\XX\|_{\cX^{s, \g_1}_k(T)}|t- r|^{\g_1}
\big( 1+ \|\uu_r\|_{H^s}\big)^{k-1} \|\uu_r\|_{H^s}
\end{split}
\label{RDE9aa}
\end{align}

\noi
for any $(t, r) \in \Dl_{2, \tau}$, 
from which we obtain
\begin{align}
\|\Ta^\XX(\uu) \|_{C^{\g_1}_{2, \tau}H^s_x}
& \le \|\XX\|_{\cX^{s, \g_1}_k(T)}
\big(1 + \|\uu\|_{L^\infty_\tau H^s_x}\big)^{k-1} \|\uu\|_{L^\infty_\tau H^s_x}.
\label{RDE9a}
\end{align}

\noi
Similarly, from \eqref{RDE9} with \eqref{X2}, we have 
\begin{align*}
\| (\updl \Ta^\XX(\uu))_{t_1,t_2,t_3}\|_{H^s}
& \leq \|\XX\|_{\cX^{s, \g_1}_k(T)}|t_1 - t_2|^{\g_1}
\\
&\quad \times
\big(1+\|\uu_{t_2}\|_{H^s} + \|\uu_{t_3}\|_{H^s}\big)^{k-1}
\|\uu_{t_2}-\uu_{t_3}\|_{H^s} 
\end{align*}

\noi
for any $(t_1, t_2, t_3) \in \Dl_{3, \tau}$, 
from which we obtain
\begin{align}
\|\updl \Ta^\XX(\uu) \|_{C^{\g_1+\al}_{3, \tau}H^s_x}
& \les \|\XX\|_{\cX^{s, \g_1}_k(T)}
\big(1 + \|\uu\|_{L^\infty_\tau H^s_x}\big)^{k-1} \|\uu\|_{C^\al _\tau H^s_x}.
\label{RDE9b}
\end{align}

\noi
Then, from \eqref{RDE4},  
\eqref{sew1},
\eqref{RDE9}, 
\eqref{RDE9a}, 
and \eqref{RDE9b}
with 
\eqref{Ho2}, 
we have
\begin{align}
\begin{split}
\| R^z\|_{C^\be_{2, \tau}H^s_x}
& \le \tau^{2\g_2 - \be} \|\YYb\|_{\cX^{s, 2\g_2}_1(T)}\|\uu'\|_{L^\infty_TH^s_x}\\
& \quad + C_0\tau^{\g_2}
\|\YY\|_{\cX^{s, \g_2}_1(T)}
\| R^\uu\|_{C^\be_{2, \tau } H^s_x}\\
& \quad  + C_0\tau^{2\g_2 + \al - \be}
\|\YYb\|_{\cX^{s, 2\g_2}_1(T)}\|\uu'\|_{C^\al_\tau H^s_x}\\
& \quad   + 
\tau^{\g_1 - \be}\|\XX\|_{\cX^{s, \g_1}_k(T)}
\big(1 + \|\uu\|_{L^\infty_\tau H^s_x}\big)^{k-1}
\|\uu\|_{L^\infty_\tau H^s_x}\\
& \quad + C_0
 \tau^{\g_1 + \al - \be} \|\XX\|_{\cX^{s, \g_1}_k(T)}
 \big(1 + \|\uu\|_{L^\infty_\tau H^s_x}\big)^{k-1}
 \|\uu\|_{C^\al_\tau H^s_x}, 
\end{split}
\label{RDE10}
\end{align}

\noi
where we used \eqref{REG1} in applying \eqref{sew1}.
Hence, 
by writing
\begin{align}
\| \uu \|_{L^\infty_\tau H^s_x} 
& \le \|u_0 \|_{H^s} + \tau^\al \| \uu\|_{C^\al_\tau H^s_x}, 
\label{RDE8a}
\end{align}

\noi
it follows 
from \eqref{RDE10} with \eqref{RDE7}, 
\eqref{RDE8}, and \eqref{RDE8a} that 
\begin{align}
\begin{split}
  \| R^z\|_{C^\be_{2, \tau}H^s_x}
&    \le 
%\tau^{\g_1 - \be}
C_1  \Big\{ A_{\XX, \YY, \YYb}(u_0, u_0')\\
& 
\hphantom{XXX}
+ \tau^{2\g_2 + \al - \be} 
\|\YYb\|_{\cX^{s, 2\g_2}_1(T)}
 \|\uu '\|_{C^\al_\tau H^s_x}\\
& 
\hphantom{XXX}
 +
 \tau^{\g_2}
\|\YY\|_{\cX^{s, \g_2}_1(T)}
\| R^\uu\|_{C^\be_{2, \tau } H^s_x}\\
& 
\hphantom{XXX}
 + 
 \tau^{\g_1 +k \g_2 + k \al - \be}\|\XX\|_{\cX^{s, \g_1}_k(T)}
 \| \YY\|_{\cX^{s, \g_2}_1(T)}^k 
  \|\uu '\|_{C^\al_\tau H^s_x}^k\\
& 
\hphantom{XXX}
 + 
 \tau^{\g_1  +(k-1) \be}\|\XX\|_{\cX^{s, \g_1}_k(T)}
\| R^\uu\|_{C^\be_{2, \tau } H^s_x}^k\Big\}, 
%\tau^{2\al - \be} \|\YYb\|_{\cX^{s, 2\g_2}_1(T)}\|\uu'\|_{L^\infty_T}
\end{split}
\label{RDE11}
\end{align}

\noi
where $A_{\XX, \YY, \YYb}(u_0, u_0')$ is defined by 
\begin{align}
\begin{split}
A_{\XX, \YY, \YYb}(u_0, u_0')
& = \|\XX\|_{\cX^{s, \g_1}_k(T)}\big(1+ \|u_0\|_{H^s}\big)^{k}
+ \|\YYb\|_{\cX^{s, 2\g_2}_1(T)}\|u_0'\|_{H^s} \\
& \quad 
+ \|\XX\|_{\cX^{s, \g_1}_k(T)}
\|\YY\|_{\cX^{s, \g_2}_1(T)}^k
\big(1 + \|u_0'\|_{H^s}\big)^k.
\end{split}
\label{RDE12}
\end{align}

\noi
Here, we used the fact that 
$\be = \g_1 \wedge (2\g_2)$
and $0 < \tau \le 1$ in treating the terms involving
initial data.
Then, by setting
\begin{align}
K = 1\vee 
2\Big( 2 \|u_0\|_{H^s} + \| \YY\|_{\cX^{s, \g_2}_1(T)}\|u_0'\|_{H^s}  + C_1  A_{\XX, \YY, \YYb}(u_0, u_0')\Big), 
\label{RDE13}
\end{align}

\noi
it follows from \eqref{RDE6}, 
\eqref{RQ1a}, 
\eqref{RDE1a}, 
\eqref{RDE3}, 
 \eqref{RDE6a},
\eqref{RDE8}, 
\eqref{RDE11},
and \eqref{RDE12}
with 
 $0 < \al < \be = \g_1\wedge (2\g_2)$
that by 
 choosing 
\begin{align}
\begin{split}
\tau 
& = 
\tau\Big(K, \|\XX\|_{\cX^{s, \g_1}_k(T)}, 
 \| \YY\|_{\cX^{s, \g_2}_1(T)},
  \| \YYb\|_{\cX^{s, 2\g_2}_1(T)}\Big) \\
& = 
\tau\Big(\|u_0\|_{H^s}, 
\|u_0'\|_{H^s}, 
\|\XX\|_{\cX^{s, \g_1}_k(T)}, 
 \| \YY\|_{\cX^{s, \g_2}_1(T)},
  \| \YYb\|_{\cX^{s, 2\g_2}_1(T)}\Big) > 0
\end{split}
\label{RDE14}
\end{align}
sufficiently small, 
the map $\G$ defined in \eqref{RDE6} maps the ball $B_K\subset 
\D^{\al, \be, (u_0, u_0')}_{\YY, \tau}(H^s(\M))$
of radius $K > 0$ 
centered at the origin into itself.

\medskip

\noi
$\bul$ {\bf Step 2:}
Next, we show that $\G$ is a contraction
on the ball $B_K$.
Fix $\big(\uu^1, (\uu^1)'\big),
\, \big(\uu^2, (\uu^2)'\big)
\in B_K\subset 
\D^{\al, \be, (u_0, u_0')}_{\YY, \tau}(H^s(\M))$, 
satisfying
\begin{align}
 (\updl \uu^j)_{t, r} & = \YY_{t, r}((\uu^j)'_r) + R^{\uu^j}_{t, r}
\label{RDD1} 
\end{align}

\noi
for some remainder term
$R^{\uu^j} = R^{\YY,  \uu^j} \in   C^{\be}_{2, T} H^s(\M)$, $j = 1, 2$.
For simplicity of notation, 
we set $\vv^j = (\uu^j)'$, $j = 1, 2$, in the following.
%First, note from \eqref{RDE8a} that 
%\begin{align}
%\| \uu^j\|_{L^\infty_\tau H^s_x} \les K.
%\label{RDD1a}
%\end{align}

As in \eqref{RDE2}, \eqref{RDE3}, and \eqref{RDE4}, 
we define 
 $z^j = z^j(\uu^j, \vv^j)$ by 
\begin{align*}
z^j
  = \I^{\YY, \YYb}(\uu^j)
+ \big(u_0 + \I^\XX(\uu^j)\big)
%\label{RDD2}
\end{align*}

\noi
 with the Gubinelli derivative $(z^j)' = \uu^j$, 
 satisfying
\begin{align}
(\updl z^j)_{t, r}
= \YY_{t, r} (\uu^j_r) + R^{z^j}_{t, r}
\label{RDD3} 
\end{align}

\noi
for $0 \le r < t \le \tau$.
Here, 
 the remainder term
 $R^{z^j} = R^{\YY, z^j}$ is given by 
\begin{align}
\begin{split}
R^{z^j}_{t, r} 
& = 
\YYb_{t, r} (\vv^j_r)
- \big(\Ld \updl
\Taa^{\YY, \YYb}(\uu^j, \vv^j)\big)_{t, r}
+ \big((\Id - \Ld \updl) \Ta^\XX(\uu^j)\big)_{t, r},
\end{split}
\label{RDD4}
\end{align}

\noi
where
$\Taa^{\YY, \YYb}(\uu^j, \vv^j)$ and 
$\Ta^\XX(\uu^j)$ are as in \eqref{RDE5}.
Then, from \eqref{RDE6}, we have
\begin{align}
\G(\uu^j, \vv^j) = \big(z^j, (z^j)'\big)
= (z^j, \uu^j).
\label{RDD4a}
\end{align}

\noi
In Step 1, we showed that $\G$ maps $B_K$ into itself.
Hence, from 
\eqref{RDE8a}, 
\eqref{RQ1a}, and 
 \eqref{RDD4a}, 
we have 
\begin{align}
\| \uu^j\|_{L^\infty_\tau H^s_x} \les K.
\label{RDD1a}
\end{align}

Proceeding as in \eqref{RDE8} with \eqref{RDD1}  and \eqref{RDE7}
(but for $\vv^1 - \vv^2= (\uu^1)' - (\uu^2)' $ instead of $\uu'$), we have 
\begin{align}
\begin{split}
\| & (z^1)' - (z^2)'\|_{C^\al_\tau H^s_x}
 = \| \uu^1 - \uu^2\|_{C^\al_\tau H^s_x}\\
&  \le \tau^{\g_2 - \al}\| \YY\|_{\cX^{s, \g_2}_1(T)} \|\vv^1 - \vv^2\|_{L^\infty_\tau H^s_x}
+ \tau^{\be - \al}\|  R^{\uu^1} - R^{\uu^2}\|_{C^\be_{2, \tau } H^s_x}\\
& \le  %\| \YY\|_{\cX^{s, \g_2}_1(T)}\|u_0'\|_{H^s} 
 \tau^{\g_2} \| \YY\|_{\cX^{s, \g_2}_1(T)}\|\vv^1 - \vv^2\|_{C^\al_\tau H^s_x}
+ \tau^{\be - \al}\| R^{\uu^1} - R^{\uu^2}\|_{C^\be_{2, \tau } H^s_x}, 
\end{split}
\label{RDD8}
\end{align}

\noi
where we used the fact that $\vv^1(0) = \vv^2(0) = u_0'$
at the last step.

Next, we estimate $R^{z^1 -z^2} = R^{z^1}  - R^{z^2}$, 
where the equality follows form \eqref{RDD3}.
Proceeding as in \eqref{RDE9aa}, 
we have
\begin{align*}
& \| \Ta^\XX(\uu^1)_{t, r}
- \Ta^\XX(\uu^2)_{t, r}
 \|_{H^s}
 = \| \XX_{t, r}(\uu^1_r)  - \XX_{t, r}(\uu^2_r) \|_{H^s}\\
& \quad \le \|\XX\|_{\cX^{s, \g_1}_k(T)}|t- r|^{\g_1}
\big( 1+ \|\uu^1_r\|_{H^s} + \|\uu^2_r\|_{H^s}\big)^{k-1} \|\uu^1_r - \uu^2_r\|_{H^s}
\end{align*}

\noi
for any $(t, r) \in \Dl_{2, \tau}$.
With \eqref{RDD1a}, this  yields
\begin{align}
\|\Ta^\XX(\uu^1) - \Ta^\XX(\uu^2) \|_{C^{\g_1}_{2, \tau}H^s_x}
& \les \|\XX\|_{\cX^{s, \g_1}_k(T)}
K^{k-1} \|\uu^1 - \uu^2\|_{L^\infty_\tau H^s_x}.
\label{RDD9a}
\end{align}

\noi
We also recall from \cite[(3.63)]{CGLLO1}
that 
\begin{align}
\begin{split}
\|\updl \Ta^\XX(\uu^1)
- \updl \Ta^\XX(\uu^2) \|_{C^{\g_1+\al}_{3, \tau}H^s_x}
& \les \|\XX\|_{\cX^{s, \g_1}_k(T)}
K^{k-1} \|\uu^1 - \uu^2\|_{\CC^\al _\tau H^s_x}\\
& \les \|\XX\|_{\cX^{s, \g_1}_k(T)}
K^{k-1} \|\uu^1 - \uu^2\|_{C^\al _\tau H^s_x}, 
\end{split}
\label{RDD9b}
\end{align}

\noi
where we used the fact that $\uu^1(0)  = \uu^2(0)  = u_0$ 
and $0 < \tau \le 1$
 at the second inequality.
Hence, from \eqref{RDD4}, 
\eqref{sew1},
\eqref{RDE9}, 
\eqref{RDD9a}, 
and \eqref{RDD9b}, 
we have
\begin{align}
\begin{split}
\| R^{z^1} - R^{z^2}\|_{C^\be_{2, \tau}H^s_x}
& \les \tau^{2\g_2 - \be} \|\YYb\|_{\cX^{s, 2\g_2}_1(T)}\|\vv^1 - \vv^2\|_{L^\infty_\tau H^s_x}\\
& \quad + \tau^{\g_2}
\|\YY\|_{\cX^{s, \g_2}_1(T)}
\| R^{\uu^1} - R^{\uu^2}\|_{C^\be_{2, \tau } H^s_x}\\
& \quad  +\tau^{2\g_2 + \al - \be}
\|\YYb\|_{\cX^{s, 2\g_2}_1(T)}\|\vv^1 - \vv^2\|_{C^\al_\tau H^s_x}\\
& \quad   + 
\tau^{\g_1 - \be}\|\XX\|_{\cX^{s, \g_1}_k(T)}
K^{k-1}
\|\uu^1 - \uu^2\|_{L^\infty_\tau H^s_x}\\
& \quad + 
 \tau^{\g_1 + \al - \be} \|\XX\|_{\cX^{s, \g_1}_k(T)}
K^{k-1}
 \|\uu^1 - \uu^2\|_{C^\al_\tau H^s_x}
\end{split}
\label{RDD10}
\end{align}

\noi
for any $(\uu^1, \vv^1), (\uu^2, \vv^2) \in B_K$, 
satisfying \eqref{RDD1}.
Finally, 
from \eqref{RDD10} with \eqref{RDE7}, 
 \eqref{RDE8a}, and \eqref{RDD8}, we obtain
\begin{align}
\begin{split}
\| R^{z^1} - R^{z^2}\|_{C^\be_{2, \tau}H^s_x}
&    \les 
\tau^{2\g_2 + \al - \be} \|\YYb\|_{\cX^{s, 2\g_2}_1(T)}\|\vv^1 - \vv^2\|_{C^\al_\tau H^s_x}\\
& \quad + \tau^{\g_2}
\|\YY\|_{\cX^{s, \g_2}_1(T)}
\| R^{\uu^1} - R^{\uu^2}\|_{C^\be_{2, \tau } H^s_x}\\
& \quad 
+ 
 \tau^{\g_1 + \g_2 + \al - \be} \|\XX\|_{\cX^{s, \g_1}_k(T)}
\|\YY\|_{\cX^{s, \g_2}_1(T)}
K^{k-1}
 \|\vv^1 - \vv^2\|_{C^\al_\tau H^s_x}\\
& \quad 
+ 
 \tau^{\g_1} \|\XX\|_{\cX^{s, \g_1}_k(T)}
K^{k-1}
\| R^{\uu^1} - R^{\uu^2}\|_{C^\be_{2, \tau } H^s_x}.
\end{split}
\label{RDD11}
\end{align}

\noi
Therefore, by 
 choosing 
$\tau > 0$ of the form \eqref{RDE14}
sufficiently small, 
we conclude from~\eqref{RDD8} and \eqref{RDD11} with
\eqref{RDD4a} and 
 $0 < \al < \be = \g_1\wedge (2\g_2)$
that $\G$ is a contraction on $B_K
\subset \D^{\al, \be, (u_0, u_0')}_{\YY, \tau}(H^s(\M))$.

By the Banach fixed point theorem with  \eqref{RDE6}, 
there exists a unique fixed point $(\uu, \uu') \in B_K
\subset \D^{\al, \be, (u_0, u_0')}_{\YY, \tau}(H^s(\M))$
such that 
$(\uu, \uu')  = \G(\uu, \uu') = (z, \uu)$, 
where $z$ is as in 
\eqref{RDE2}.
In particular, $\uu$ satisfies
\eqref{RDE1}.
Furthermore,  we have $\uu' = \uu$, 
which is consistent with the compatibility condition
\eqref{REG2}.
Here, the uniqueness a priori holds only in the ball $B_K$
but, by a standard continuity argument, 
we can extend uniqueness to the entire space 
$\D^{\al, \be, (u_0, u_0)}_{\YY, \tau}(H^s(\M))$
(by possibly making $\tau$ smaller by a multiplicative constant).

Lastly, we note that from Lemmas \ref{LEM:int1}
and \ref{LEM:int2}, 
 the right-hand side of~\eqref{RDE1}
belongs to $\CC^{\g_2}([0, \tau]; H^s(\M))$, 
from which we conclude that 
the solution $\uu$ to \eqref{RDE1} 
constructed above belongs to $ \CC^{\g_2}([0, \tau]; H^s(\M))$.

\medskip

\noi
(ii) 
The claim follows from 
Lemma \ref{LEM:int1}\,(iii)
and Lemma \ref{LEM:int2}\,(iii).

\medskip

\noi
(iii) 
The claim follows from 
Lemma \ref{LEM:int1}\,(iv)
and Lemma \ref{LEM:int2}\,(iii).
%\eqref{RDE8} and \eqref{RDE11}

\medskip

\noi
(iv) 
Without loss of generality, assume that 
\begin{align}
\begin{split}
\sup_{N \in \N} \|\XX^N\|_{\cX^{s, \g_1}_k(T)} & \le \|\XX\|_{\cX^{s, \g_1}_k(T)} + \eta
\le L, \\
\sup_{N \in \N}  \| \YY^N\|_{\cX^{s, \g_2}_1(T)}
&  \le  \| \YY\|_{\cX^{s, \g_2}_1(T)} + \eta
\le L,\\
\sup_{N \in \N}   \| \YYb^N\|_{\cX^{s, 2\g_2}_1(T)} & \le   \| \YYb\|_{\cX^{s, 2\g_2}_1(T)}+ \eta
\le L
\end{split}
\label{RDD12x}
\end{align}

\noi
for some $L\ge 0$ and small $\eta > 0$  such that 
we have
$(\uu^N, \uu^N)
\in B_{K+1} \subset \D^{\al, \be, (u_0, u_0)}_{\YY, \tau}(H^s(\M))$
for any $N \in \N$, 
where $K$ is as in \eqref{RDE13}.
Defining $R^{\YY^N, \uu^N}$ by 
\begin{align*}
(\updl \uu^N)_{t, r} = \YY^N_{t, r}((\uu^N)'_r) + R^{\YY^N, \uu^N}_{t, r}
%\label{RDD12y}
\end{align*}

\noi
for $(t, r) \in \Dl_{2, \tau}$, 
it follows from 
\eqref{RQ1a} and the fact that $(\uu^N)' = \uu^N$
that 
\begin{align}
\| u_0 \|_{H^s} + \big\|\uu^N\big\|_{C^\al_\tau H^s_x}
+  \| R^{\YY^N, \uu^N}\|_{C^\be_{2,\tau}H^s_x}\les K , 
\label{RDD12yy}
\end{align}

\noi
uniformly in $N \in \N \cup \{\infty\}$
with the understanding that $\uu^\infty
= \uu$ and $\YY^\infty = \YY$, 
where 
  $R^{\YY, \uu}$ is as in \eqref{RDE1a}.
Then, from \eqref{RDE8a} and   \eqref{RDD12yy} with 
the fact that $0 < \tau \le 1$, we have 
\begin{align}
\|\uu^N\|_{L^\infty_\tau H^s_x} \les 
 K, 
\label{RDD12z}
\end{align}

\noi
uniformly in $N \in \N \cup \{\infty\}$.

From \eqref{RDE1} and \eqref{RDE1x}, we have 
\begin{equation}
\uu - \uu^N = \big(\I^\XX(\uu)- \I^{\XX^N}(\uu^N) \big)+ 
\big(\I^{\YY, \YYb}(\uu) -\I^{\YY^N, \YYb^N}(\uu^N)\big).
\label{RDD12}
\end{equation}

\noi
From 
 \cite[(3.64)]{CGLLO1}
and \eqref{Jaa4x} in 
Lemma \ref{LEM:int1} with \eqref{RDE8a}, 
  $\uu(0) = \uu^N(0) = u_0$, 
the fact that $0 < \tau \le 1$, 
\eqref{RDD12x}, 
and \eqref{RDD12z}, 
we have
\begin{align}
\begin{split}
& \| \I^\XX(\uu)- \I^{\XX^N}(\uu^N)\|_{\CC^\al_\tau H^s_x}
%\les \| \I^\XX(\uu)- \I^{\XX^N}(\uu^N)\|_{C^\al_\tau H^s_x}
\\
&  \quad \le 
\| \I^\XX(\uu) - \I^\XX(\uu^N)\|_{\CC^\al_\tau H^s_x}
+ \| \I^\XX(\uu^N)- \I^{\XX^N}(\uu^N)\|_{\CC^\al_\tau H^s_x}\\
& \quad \le C_{K, L} \,
\tau^{\g_1- \al} 
\Big(
 \| \uu - \uu^N \|_{\CC^\al_\tau H^s_x}
+ 
 \|\XX - \XX^N\|_{\cX^{s, \g_1}_k(T)} \Big).
\end{split}
\label{RDD12a}
\end{align}

%\noi
%where we used the fact that $0 < \tau \le 1$ at  the first inequality.

Recalling that
 $\uu' = \uu$ and 
 $(\uu^N)' = \uu^N$, 
 it follows from 
 \eqref{RQQ7} in Lemma \ref{LEM:int2},  
 \eqref{RDE8a}, 
 \eqref{RDD12x}, 
\eqref{RDD12yy}, and 
\eqref{RDD12z}
 with $\uu(0) = \uu^N(0) = u_0$
 and the fact that $0 < \tau \le 1$
 that 
\begin{align}
\begin{split}
 \| & \I^{\YY, \YYb}(\uu)
- \I^{\YY^N, \YYb^N}(\uu^N)\|_{\CC^\al_\tau H^s_x}\\
& \le
C_{K, L}\Big(  \tau^{\g_2 - \al}
\|\YY- \YY^N\|_{\cX^{s, \g_2}_1(T)}
+ 
 \tau^{\g_2 - \al}
\|\uu -\uu^N\|_{\CC^\al_\tau H^s_x}\\
& \quad 
\hphantom{lXXX}
+ \tau^{2\g_2 - \al}
\|\YYb- \YYb^N\|_{\cX^{s, 2\g_2}_1(T)}
+ \tau^{2\g_2 - \al}
\|\uu -\uu^N\|_{\CC^\al_\tau H^s_x}\\
& \quad 
\hphantom{lXXX}
+   \tau^{\g_2 + \be- \al }
\|\YY - \YY^N\|_{\cX^{s, \g_2}_1(T)}
\\
& \quad 
\hphantom{lXXX}
+ 
\tau^{\g_2 + \be- \al }
\| R^{\YY, \uu} - R^{\YY^N, \uu^N}\|_{C^\be_{2,\tau}H^s_x}\Big).
\end{split}
\label{RDD12b}
\end{align}

\noi
Since $z = \uu = \uu'$, 
we have $\RR^{\YY, \uu} = \RR^{\YY, z}$.
Thus, using
\eqref{RDE4}
for $\RR^{\YY, \uu}$ (and for $\RR^{\YY^N, \uu^N}$ with appropriate modifications),
a slight modification of the computations leading to \eqref{RDE10} 
with~\eqref{RDE8a} 
yields
\begin{align}
\begin{split}
 \| & R^{\YY, \uu} - R^{\YY^N, \uu^N}\|_{C^\be_{2,\tau}H^s_x}\\
& 
\les 
C_{K, L} \Big( \tau^{2\g_2  - \be} \|\YYb- \YYb^N\|_{\cX^{s, 2\g_2}_1(T)}
+ 
\tau^{2\g_2 + \al - \be} 
\|\uu -\uu^N\|_{\CC^\al_\tau H^s_x}\\
& \quad 
\hphantom{lXXX}+ 
\tau^{\g_2}
\|\YY - \YY^N\|_{\cX^{s, \g_2}_1(T)}
+ \tau^{\g_2}
\| R^{\YY, \uu} - R^{\YY^N, \uu^N}\|_{C^\be_{2, \tau } H^s_x}\\
& \quad  \hphantom{lXXX} + 
\tau^{\g_1  - \be}\|\XX - \XX^N\|_{\cX^{s, \g_1}_k(T)}
+  \tau^{\g_1 +\al - \be}
\|\uu- \uu^N\|_{\CC^\al_\tau H^s_x}\Big).
\end{split}
\label{RDD12c}
\end{align}

Hence, putting \eqref{RDD12}, \eqref{RDD12a}, \eqref{RDD12b}, 
and \eqref{RDD12c}
together with \eqref{RDD12x}
and the assumed convergence of $\XX^N$ and $(\YY^N, \YYb^N)$, 
we conclude that, by possibly making $\tau$ smaller by a multiplicative constant, 
$\uu^N$ converges to $\uu$
 in $\CC^{\al}([0, \tau]; H^s(\M))$
 as $N \to \infty$.
Finally, 
by applying
Lemmas \ref{LEM:int1} and \ref{LEM:int2}, 
we see that 
$ \I^{\XX^N}(\uu^N)$
(and $\I^{\YY^N, \YYb^N}(\uu^N)$)
converges
to 
$ \I^{\XX}(\uu)$
(and to $\I^{\YY, \YYb}(\uu)$, respectively)
 in $\CC^{\g_2}([0, \tau]; H^s(\M))$
as $N \to \infty$, 
which yields the claimed convergence.
See the proof of Proposition 3.8\,(iv) in \cite{CGLLO1}
for details of an analogous argument.
\end{proof}

\section{Regularities of the drivers
for the stochastic modulated KdV}
\label{SEC:driver}

In this section, we 
establish Theorems \ref{THM:1} and \ref{THM:2}
for  SmKdV \eqref{skdv2}
with a multiplicative Young noise, 
and 
Theorems  \ref{THM:4} and  \ref{THM:5}
for SmKdV  \eqref{skdv3} with a multiplicative rough noise.
As mentioned in Section \ref{SEC:1}, 
these results follow
from 
Lemma \ref{LEM:int1}, 
Proposition \ref{PROP:main}, 
Lemma \ref{LEM:int2}, 
and Proposition \ref{PROP:main2}
(see also Remark \ref{REM:mean0})
once we establish 
the corresponding 
regularity properties
of the relevant drivers (Lemma \ref{LEM:kdv1}
and Propositions \ref{PROP:drive2} and \ref{PROP:drive3}).

In Subsection \ref{SUBSEC:drive1}, 
we recall the regularity properties
of the bilinear driver $\XX$ defined in~\eqref{K1x}.
In Subsection \ref{SUBSEC:drive2}, 
we consider the Young case ($\frac 12 < \be < 1$) 
and study regularity properties
of the Young driver $\YY$ in~\eqref{sto1x};
see
Proposition \ref{PROP:drive2}.
Finally, in Subsection \ref{SUBSEC:drive3}, 
we consider the rough case ($\be = \frac 12$)
and study the regularity properties
of the drivers $\wt \YY$ and $\wt \YYb$ in \eqref{sto2x}
and \eqref{sto3x}, respectively;
see Proposition \ref{PROP:drive3}.

\subsection{Bilinear  driver
for the modulated KdV}
\label{SUBSEC:drive1}

In this subsection, we recall 
from 
the regularity properties
of the bilinear driver $\XX$ in \eqref{K1x} for the modulated KdV \eqref{kdv1};
see \cite[Subsection~4.1]{CGLLO1}.
By taking the Fourier transform, we have
\begin{align*}
\F\big(\XX_{t,r} (f_1,f_2)\big) (n)
= in \sum_{ \substack{n_1, n_2 \in \Z_*\\n = n_1+n_2}}
 \Phi^w_{t,r}(\Xi_\KDV (\bar n))
\ft f_1(n_1)  \ft f_2(n_2),
\end{align*}

\noi
where $\Phi^w_{t,r}$ is as in \eqref{rho2}
and $\Xi_\KDV (\bar n)$ denotes the resonance function\footnote{Here, we follow
the terminology in \cite{Tao2}.  
We point out that $\Xi_\KDV (\bar n)$ is also called the modulation function (see,
for example,  \cite{KOY})
but we do not use this latter terminology to avoid
confusion with a modulation function $w(t)$.
}
 for KdV given by 
\begin{align}
\begin{split}
\Xi_\KDV (\bar n) &  = \Xi_\KDV (n,n_1,n_2) 
= - n^3+ n_1^3 +n_2^3\\
& = - 3n n_1 n_2, 
\end{split}
\label{K3}
\end{align}

\noi
where the last equality holds
 under $n = n_1+ n_2$.

The following lemma summarizes the 
basic mapping properties
of the bilinear driver $\XX$.
See 
\cite[Proposition 4.1]{CGLLO1}.
See Subsection \ref{SUBSEC:2.2}
for the definition of the function spaces below.
In particular, see Remark \ref{REM:m0}.

\begin{lemma}
\label{LEM:kdv1}

Given $\rho \ge\frac 12$, $\frac 12 < \g < 1$, and $T> 0$, 
let  $w$ be $(\rho,\g)$-irregular on $[0, T]$ in the sense of Definition~\ref{DEF:ir}.

\smallskip

\begin{itemize}
\item[(i)]
Suppose that $\rho \ge \frac 12$ and $s \in \R$
satisfy 
\eqref{reg1}.
Then, 
the driver $\XX$ defined in \eqref{K1x}
belongs to $ \cX^{s, \g}_{2, 0}([0, T]\times \T)$.

\smallskip
\item[(ii)] \textup{(persistence of regularity).}
Suppose that $\rho \ge \frac 12$ and $s \in \R$
satisfy 
\eqref{reg1}.
Then, 
for any $s_0 > s$, 
the driver $\XX$ 
belongs to 
$\cY^{s, s_0, \g}_{2, 0}([0, T]\times \T)$.

\smallskip
\item[(iii)]
\textup{(nonlinear smoothing).}
In addition, suppose that $s_0 > s$ satisfies
\eqref{reg3}.
Then, 
the driver $\XX$ 
belongs to 
$\cX^{s, s_0, \g}_{2, 0}([0, T]\times \T)$.

\smallskip
\item[(iv)]
\textup{(Galerkin approximation).}
Suppose that $\rho \ge \frac 12$ and $s \in \R$
satisfy
\begin{align}
\begin{split}
\textup{(iv.a)} &\ \  \tfrac 12 < \rho \le  \tfrac 34 \quad \text{and} \quad 
s > \tfrac 32 - 3 \rho, \\
\textup{(iv.b)} &\ \  \rho > \tfrac 34\quad  \text{and} \quad  s > - \rho.
\end{split}
\label{reg3a}
\end{align}

\noi
Then, 
the truncated driver $\XX^N$, defined by 
\begin{align*}
\XX_{t,r}^N(f_1,f_2)=\int_r^t \uw(t')^{-1} \P_N \partial_x\big(
( \P_N \uw(t')  f_1 )(\P_N \uw(t') f_2 )\big) dt', 
%\label{XN}
\end{align*}

\noi
 converges
to  $\XX$ 
in  $ \cX^{s, \g}_{2, 0}([0, T]\times \T)$ as $N \to \infty$.
Here, 
 $\P_N$
 denotes  the Dirichlet projector onto the (spatial) frequencies 
$\{|n| \leq N\}$.

\end{itemize}

 \end{lemma}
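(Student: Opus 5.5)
The plan is to reduce everything to a frequency-localized trilinear estimate for the bilinear driver, in which the irregularity bound \eqref{rho1} is applied to the resonance function \eqref{K3}.

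On the Fourier side, write $\XX_{t,r}(f_1,f_2)$ as
\[
\ft{\XX_{t,r}(f_1,f_2)}(n)= in\sum_{n=n_1+n_2,\ n_1n_2\ne 0}\Phi^w_{t,r}(-3nn_1n_2)\,\ft f_1(n_1)\ft f_2(n_2).
\]
The mean-zero restriction gives $n,n_1,n_2\ne0$, so $|\Xi_\KDV|\sim |n n_1 n_2|\ge 1$. Definition~\ref{DEF:ir} then yields
\[
|\Phi^w_{t,r}(-3nn_1n_2)|\les \|\Phi^w\|_{\W^{\rho,\g}_T}\,|t-r|^\g\,|nn_1n_2|^{-\rho}.
\]
The algebraic properties are immediate. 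The relation $\updl\XX=0$ holds by additivity of the time integral, $\XX(0)=0$ holds by bilinearity, and $D\XX_{t,r}[f](g)=\XX_{t,r}(f,g)+\XX_{t,r}(g,f)$. Hence membership in $\cX^{s,\g}_{2,0}$ reduces to a single bilinear bound
\[
\|\XX_{t,r}(f_1,f_2)\|_{H^{s_0}}\les |t-r|^\g\|f_1\|_{H^s}\|f_2\|_{H^s},
\]
with $s_0=s$ for (i) and $s_0$ as in \eqref{reg3} for (iii). The Lipschitz norms in \eqref{X2} follow from bilinearity, since $\XX(f)-\XX(g)=\XX(f-g,f)+\XX(g,f-g)$.

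For the bilinear bound I would argue by duality against $h\in H^{-s_0}$, decompose dyadically with $|n|\sim N$ and $|n_j|\sim N_j$, and reduce to bounding
\[
\sum \frac{|n|\,\jb{n}^{s_0}}{|nn_1n_2|^{\rho}\jb{n_1}^{s}\jb{n_2}^{s}}\,|a_n b_{n_1}c_{n_2}|
\]
with $\l^2$ sequences $a,b,c$. On $\T$ with the convolution constraint, Cauchy--Schwarz or Schur gives a factor $N_{\min}^{1/2}$. Two cases remain.
\begin{itemize}
\item High$\times$high$\to$low, $N\ll N_1\sim N_2$: the weight is $N^{1-\rho+s_0+\frac12}\,N_1^{-2\rho-2s}$.
\item High$\times$low$\to$high, $N\sim N_1\gg N_2$: the weight is $N^{1-2\rho+s_0-s}\,N_2^{-\rho-s+\frac12}$.
\end{itemize}
Summability in these cases is exactly what produces the thresholds in \eqref{reg1} and \eqref{reg3}. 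In the regime $\rho\le\frac34$ the constraint comes from the $N_{\min}^{1/2}$ gain against $N^{1-\rho}$, giving $s>\frac32-3\rho$. In the regime $\rho>\frac34$ the constraint is $s\ge-\rho$. At the endpoint the dyadic sum must be closed by using the constraint $n=n_1+n_2$ more carefully, via Cauchy--Schwarz in one variable rather than dyadic summation. I would treat this endpoint last, since the strict inequalities in (iv) suggest the endpoint is delicate.

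For (ii), persistence, I would put the $\jb{n}^{s_0}$ weight on the higher-frequency input using $\jb{n}^{s_0}\les \jb{n_1}^{s_0}+\jb{n_2}^{s_0}$. The same estimate, with one factor measured in $H^{s_0}$ and the other in $H^s$, then gives the $\Lip_2(H^s,H^{s_0};H^{s_0})$ bound.

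For (iv), the difference $\XX-\XX^N$ is supported on frequencies where $\max(|n|,|n_1|,|n_2|)>N$. Under the strict inequalities in \eqref{reg3a} the dyadic sums carry a spare negative power of $N_{\max}$, which gives convergence with rate $N^{-\eps}$.

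The main obstacle I expect is the low-frequency edge of the high$\times$low interaction. There $|nn_1n_2|^{-\rho}$ gains only $N^{-2\rho}$, while the derivative costs $N$, so the balance is tight exactly at the stated thresholds.
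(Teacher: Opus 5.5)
Your proposal is correct. The paper does not reprove this lemma; it quotes it from \cite[Proposition~4.1]{CGLLO1}, and your outline is the standard argument behind that result: apply the irregularity bound to the resonance $-3nn_1n_2$, reduce by bilinearity to a single $H^s\times H^s\to H^{s_0}$ estimate, and close by duality, dyadic decomposition and the $N_{\min}^{1/2}$ Cauchy--Schwarz bound, with a spare negative power of $N_{\max}$ under \eqref{reg3a} for (iv); your two case weights do reproduce \eqref{reg1} and \eqref{reg3}.

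One small fix is needed in (ii). The inequality $\jb{n}^{s_0}\les \jb{n_1}^{s_0}+\jb{n_2}^{s_0}$ is false when $s_0<0$ (take $|n|\sim 1\ll |n_1|\sim|n_2|$). Instead write $\jb{n}^{s_0}=\jb{n}^{s}\jb{n}^{s_0-s}$ and use $\jb{n}^{s_0-s}\les\jb{n_1}^{s_0-s}+\jb{n_2}^{s_0-s}$, which holds since $s_0>s$. The absolute-value estimate from (i) then gives the $\Lip_2(H^s,H^{s_0};H^{s_0})$ bound directly.
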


 \subsection{Young driver
 for the noise}
\label{SUBSEC:drive2}

In this subsection, 
we study regularity properties
of the Young 
driver $\YY$ in~\eqref{sto1x}.
From 
\eqref{W1}, \eqref{phi1}, and \eqref{phi2}, we have 
\begin{align}
\YY_{t, r}(f)
&
 = \sum_{n\in \Z_*} e_n 
 \int_r^t \sum_{n_1, n_2  \in \Z} \ind_{\substack{n = n_{12}\\n_1\ne 0}}
\cdot  e^{- i w(t') (n^3 - n_2^3)}\ft f(n_2) \phi_{n_1}    d B_{n_1}(t'), 
\label{stoconv1}
\end{align}

\noi
where
 $n_{12} = n_1 + n_2$.

\begin{proposition}
\label{PROP:drive2}

Given 
$\rho \ge\frac 12$,
$\frac 12 < \g, \be < 1$, $ \s \in \R$, and $T > 0$, 
let 
$w$ be 
 a $(\rho,\g)$-irregular function on $[0, T]$ in the sense of Definition~\ref{DEF:ir}
 and 
$\phi \in \HS(L^2(\T); H^\s(\T))$, satisfying \eqref{phi1} and \eqref{phi2},
and 
let  $\YY = \YY^{w, \phi}$ be 
the random operator 
 defined in \eqref{stoconv1}.
Suppose that 
$  \g_0 >0$ and $s, s_0 \in \R$ satisfy 
\eqref{YT1}, \eqref{YT2}, and \eqref{YT3}. 
 Then, given any finite $p \ge 1$, we have 
\begin{align}
\big\| \|  \YY_{t,r}  \|_{\LOP(H^s; H^{s_0})} \big\|_{L^p(\Om)} 
 & \les_T  p^\frac 12 
\jb{\|\Phi^w\|_{  \W^{\rho,\g}_T} }
\| \phi\|_{\HS(L^2;H^\s)} 
  |t-r|^{\g_0}
\label{YG1}
\end{align}

\noi
for any   $(t,r) \in \Dl_{2,T}$.
Consequently, we have 
\begin{align}
\big\| \| \YY\|_{C^{\g_0}_{2,T} \LOP(H^s;H^{s_0})} \big\|_{L^p(\Om)} 
\les_T 
p^\frac 12 
\jb{\|\Phi^w\|_{  \W^{\rho,\g}_T} }
 \| \phi\|_{\HS(L^2;H^\s)} .
\label{YG1a}
\end{align}

\noi
In particular, 
there exists a version of $\YY$
such that 
$\YY \in C^{\g_0}_{2,T} \LOP(H^s(\T);H^{s_0}_0(\T))$
and
\begin{align}
\YY \in  \cX^{s, s_0, \g_0}_{1, 0} ([0, T]\times \T) ,
\label{YG1b}
\end{align}

\noi
almost surely.
See Subsection \ref{SUBSEC:2.2}
for the definitions of these function spaces.

\end{proposition}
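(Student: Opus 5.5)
We plan to prove \eqref{YG1} by applying the random tensor estimate, Lemma \ref{LEM:RT}, with $k = 1$, and then pass to \eqref{YG1a} and \eqref{YG1b} via the Kolmogorov criterion, Lemma \ref{LEM:kolm}, in its first-order form.

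\textbf{Setting up the tensor.} From \eqref{stoconv1}, the matrix of $\jb{\nabla}^{s_0}\YY_{t,r}\jb{\nabla}^{-s}$ is
\[
H_{n n_2} = \sum_{n_1} I_1\big[h_{n n_2 n_1} f_{n n_2 n_1}(t')\big],
\]
where
\[
h_{n n_2 n_1} = \ind_{n = n_{12},\, n\ne 0,\, n_1\ne0}\,\jb{n}^{s_0}\jb{n_2}^{-s}\phi_{n_1}, \qquad f_{nn_2n_1}(t') = \ind_{[r,t]}(t')\, e^{-iw(t')(n^3-n_2^3)}.
\]
We take $b = n_2$ and $c = n$, and write $\Xi = n^3 - n_2^3 = 3 n n_1 n_2 + n_1^3$.

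\textbf{The $\Hs^\be$ norm of $f$.} The key is to bound $\|f\|_{\Hs^\be_{t'}}$ uniformly, gaining decay in $\Xi$. Using \eqref{BM0},
\[
\|f\|_{\Hs^\be}^2 \sim \iint_{[r,t]^2} e^{-i\Xi(w(t_1)-w(t_2))}|t_1-t_2|^{2\be-2}\,dt_1\,dt_2 .
\]
We will dyadically decompose the kernel $|t_1-t_2|^{2\be-2}$ into pieces with $|t_1-t_2|\sim 2^{-j}$. For each piece we either bound trivially or apply $(\rho,\g)$-irregularity in each variable. Irregularity gives $|\Phi^w_{b,a}(\Xi)|\le \|\Phi^w\|_{\W^{\rho,\g}_T}\jb{\Xi}^{-\rho}|b-a|^\g$. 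Interpolating between the trivial bound $|t-r|^{2\be}$ and the irregular bound $\jb{\Xi}^{-2\rho}$ times a power of $|t-r|$ should give
\[
\|f\|_{\Hs^\be}^2 \les \jb{\|\Phi^w\|}^{2}\jb{\Xi}^{-2\rho\ta'}|t-r|^{2\g_0}
\]
with $\ta' \approx \frac{2\be-1}{?}$. We expect a gain of exactly $\jb{\Xi}^{-\rho(2\be-1)}$ in $\|f\|_{\Hs^\be}$ at temporal exponent $\frac12$, with the trade-off producing the threshold \eqref{YT1}. Since Lemma \ref{LEM:RT} takes $\|f\|_{\l^\infty\Hs^\be}$, we move the factor $\jb{\Xi}^{-\rho(2\be-1)}$ into $h$. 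This step is the main obstacle: it needs a careful bilinear estimate exploiting oscillation in both time variables against the singular kernel, which yields the explicit power $(2\be-1)(2\g-1)/6$. The factor $\frac16$ presumably comes from the cubic $\Xi$ via $|\Xi|\ge|n n_1 n_2|$ distributing over three frequencies.

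\textbf{Deterministic tensor bounds.} With $\jb{n n_1 n_2}^{-\rho(2\be-1)}$ (valid since $n,n_1\ne0$, and with $n_2=0$ handled separately) absorbed into $h$, we need
\[
\|h\|_{n_2\to n n_1}, \quad \|h\|_{n_2 n_1\to n} \les \|\phi\|_{\HS(L^2;H^\s)} .
\]
Because $n$ is determined by $(n_1,n_2)$, these reduce to Schur-type bounds. For the first we take $\sup_{n_2}\big(\sum_{n_1}|h|^2\big)^{1/2}$; for the second, $\sup_{n}\big(\sum_{n_1}|h|^2\big)^{1/2}$, with $\phi_{n_1}\jb{n_1}^{\s}$ in $\l^2$. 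We split into cases $|n_1|\ll|n|\sim|n_2|$, $|n_2|\ll|n_1|\sim|n|$, and $|n|\ll|n_1|\sim|n_2|$, distributing the weight $\jb{n n_1 n_2}^{-\rho(2\be-1)}$. This produces exactly the conditions \eqref{YT2} and the two branches of \eqref{YT3}; the $\frac23$ and $\frac13$ splits arise from distributing among the comparable frequencies. Small $\ta$ in \eqref{BM5} is absorbed since the inequalities are strict, and $\|h\|_{\l^2}$ is finite after a mild frequency truncation and limiting argument. Lemma \ref{LEM:RT} with $k=1$ then gives the $p^{1/2}$ factor and \eqref{YG1}.

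\textbf{Conclusion.} Since $\updl\YY=0$ by additivity of the stochastic integral, the Kolmogorov criterion (Lemma \ref{LEM:kolm}, first-order part) upgrades \eqref{YG1} to \eqref{YG1a}, losing an arbitrarily small amount in $\g_0$. This loss is harmless because \eqref{YT1} is strict. The range of $\YY$ is mean-zero, which gives \eqref{YG1b}.
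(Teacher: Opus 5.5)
Your architecture is the same as the paper's: Lemma~\ref{LEM:RT} with $k=1$, Cauchy--Schwarz bounds on the deterministic tensor, and Kolmogorov via Lemma~\ref{LEM:kolm}. However, the one step that carries the content of the proposition is missing. Everything hinges on a bound of the form
\[
\|\ff_{nn_2}\|_{\Hs^\be_{t'}} \les_T \jb{\|\Phi^w\|_{\W^{\rho,\g}_T}}\,
\frac{|t-r|^{\frac12+\ld(\g-\frac12)}}{\jb{n_1}^{\rho\ld}\jb{n_{\max}}^{2\rho\ld}},
\qquad \ld = \tfrac13(2\be-1)-\eps,
\]
for $\ff_{nn_2}(t') = \ind_{[r,t]}(t')e^{-iw(t')(n^3-n_2^3)}$. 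You leave the exponent undetermined, and the heuristics you give are inconsistent with the hypotheses you are trying to reach.

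Conditions \eqref{YT2}--\eqref{YT3} correspond exactly to a gain $\jb{\Xi}^{-\frac13\rho(2\be-1)}$, not $\jb{\Xi}^{-\rho(2\be-1)}$. For example, \eqref{YT2} comes from the block $N_1\sim N_2\gtrsim N\sim 1$, which needs $N_{\max}^{-\rho(2\be-1)}=(N_1N_{\max}^2)^{-\frac13\rho(2\be-1)}$. So your claim that the scheme ``produces exactly'' \eqref{YT2}--\eqref{YT3} is unsubstantiated.

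The $\frac13/\frac23$ split comes from $|n^3-n_2^3|\sim\jb{n_1}\jb{n_{\max}}^2$ for $n_1\ne0$ (see \eqref{YG3e}). Your substitute $\jb{nn_1n_2}$ is strictly weaker in the blocks $N\sim N_1\gg N_2$ and $N_1\sim N_2\gg N$. With the correct exponent it would require $s+\s>-\frac23\rho(2\be-1)$, which is stronger than \eqref{YT2}, and it would give a smaller range in the second branch of \eqref{YT3} when $s<0$. The overall factor $\frac13$, hence the $\frac16$ in \eqref{YT1}, is not a frequency-distribution effect at all.

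The paper closes this step without any two-variable oscillation argument. It integrates by parts in the Fourier transform:
\[
\ft{\ff}_{nn_2}(\tau) = \tfrac{1}{\sqrt{2\pi}}\Phi^w_{t,r}(-\Xi)e^{-it\tau} + \tfrac{i\tau}{\sqrt{2\pi}}\int_r^t\Phi^w_{t',r}(-\Xi)e^{-it'\tau}\,dt',
\]
so that $|\ft{\ff}_{nn_2}(\tau)|\les\|\Phi^w\|_{\W^{\rho,\g}_T}\jb{\Xi}^{-\rho}|t-r|^\g(1+|\tau||t-r|)$. In $\|\ff_{nn_2}\|_{\Hs^\be}\sim\big\| |\tau|^{\frac12-\be}\ft{\ff}_{nn_2}\big\|_{L^2_\tau}$, the region $|\tau|\le1$ is bounded directly, using $\be<1$. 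For $|\tau|>1$ one applies H\"older between $\|\ft{\ff}_{nn_2}\|_{L^2}\sim|t-r|^{\frac12}$ and $\int_{|\tau|>1}|\tau|^b|\ft{\ff}_{nn_2}|^2d\tau$, with $|\tau|^{1-2\be}=|\tau|^{b\ld}$. The factor $|\tau|$ produced by the integration by parts forces $b<-3$, i.e.\ $\ld<\frac13(2\be-1)$. The resulting time exponent $\frac{1-\ld}2+\ld\g$ is exactly the threshold in \eqref{YT1}.

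A secondary gap: the factor $\|h\|_{\l^2}^\ta$ in \eqref{BM5} is in general infinite for the full tensor (in particular in the smoothing regime), and a truncation-plus-limit argument does not control it uniformly. You must localize to dyadic blocks $(N,N_1,N_2)$, where $\|h^{\bf N}\|_{\l^2}\les N_{\max}^{\frac12}N^{s_0}N_1^{-\s}N_2^{-s}\|\phi\|_{\HS(L^2;H^\s)}$. You then absorb $N_{\max}^{\frac\ta2}$ using the strict inequalities when summing over blocks.
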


As mentioned in Subsection \ref{SUBSEC:main1}, 
Theorem \ref{THM:1}
on the construction of the stochastic term~$\PPsi(\uu)$
 in \eqref{psi2}
 as the (linear) Young integral $\I^\YY(\uu)$
follows from 
Lemma \ref{LEM:int1}
and 
Proposition \ref{PROP:drive2}
while 
Theorems \ref{THM:2} 
on local well-posedness of 
 SmKdV \eqref{skdv2}
with a multiplicative Young noise
 follows from 
Proposition \ref{PROP:main}
with 
Lemma \ref{LEM:kdv1} and 
Proposition \ref{PROP:drive2}.
We omit details.

Before proceeding to a proof of 
Proposition \ref{PROP:drive2}, 
we state a remark.
See also Remark~\ref{REM:BO} below
for a discussion
on the Young drivers associated with 
the stochastic modulated BO~\eqref{sbo1}
and 
the stochastic modulated ILW \eqref{SILW1}.

\begin{remark}\label{REM:YG1}
\rm
(i)
We point out that it is crucial to have $\phi_0 = 0$, 
namely $n_1 \ne 0$
in \eqref{stoconv1}.
See~\eqref{YG3d} and \eqref{YG3e}.
Indeed, the contribution to~\eqref{stoconv1}
from $n_1 = 0$ (i.e.~$n = n_2$) is given by 
\begin{align*}
\phi_0
(B_{0}(t) - B_{0}(r))
  \sum_{n\in \Z} 
 \ft f(n)
 e_n 
\end{align*}

\noi
which has the same (spatial) regularity as the input function $f$
and thus there is no regularity gain 
for this term.

%in this case.

\smallskip

\noi
(ii)
As pointed out in Remark \ref{REM:n0}, 
the condition $n \ne 0$ in \eqref{stoconv1}
does not play any role
and Proposition \ref{PROP:drive2}
holds even if the summation over $n \in \Z_*$
in \eqref{stoconv1}
is replaced by $n \in \Z$.

\end{remark}

We now present a proof of
Proposition \ref{PROP:drive2}.

\begin{proof}[Proof of Proposition \ref{PROP:drive2}]
We first note that the bound \eqref{YG1a} follows from \eqref{YG1} and
Kolmogorov's continuity criterion (Lemma \ref{LEM:kolm}).
The claim \eqref{YG1b}
then follows from \eqref{X1y}
with $\updl \YY = 0$
(which follows from \eqref{stoconv1}).
Hence, we focus on proving 
\eqref{YG1} in the following.

Fix  $0 \le r <  t \le T$.
From \eqref{stoconv1} with \eqref{I1}, we have 
\begin{align}
\Ft_x\big(\jb{\nb}^{s_0}\YY_{t,r} \jb{\nb}^{-s}f \big)(n) 
=  \sum_{n_2\in \Z} \ft f (n_2) I_1[ \hf_{nn_2}(n_1) \ff_{nn_2}(t') ] , 
\label{YG2}
\end{align}

\noi
where $\hf_{nn_2}(n_1) $ and $\ff_{nn_2}$ are defined by 
\begin{align}
\begin{split}
\hf_{n n_2}(n_1) & = 
 \ind_{n=n_{12}}
\cdot
%\ind_{n n_1\ne 0}
%\cdot 
\frac{\jb{n}^{s_0}}{\jb{n_2}^{s}} \phi_{n_1} , \\
\ff_{nn_2} (t')  &= 
\ff_{nn_2}^{t, r} (t')= 
 \ind_{[r, t]}(t')\cdot 
%\ind_{n n_1\ne 0}
%\cdot 
 e^{- i w(t') (n^3 - n_2^3)}.
\end{split}
\label{YG3}
\end{align}

\noi
Given dyadic $N, N_1, N_2 \ge1$, we set 
\begin{align}
\begin{split}
\hf^{\bf N}_{n n_2}(n_1) & = 
\hf^{N, N_1, N_2}_{n n_2}(n_1)
= \ind_{E_{\bf N}}
\cdot 
 \hf_{n n_2}(n_1), \\
\ff^{\bf N}_{nn_2} (t')  &= 
\ff^{N, N_1, N_2}_{nn_2} (t')  = 
\ind_{E_{\bf N}}
\cdot 
\ff_{nn_2} (t'),  
\end{split}
\label{YG3a}
\end{align}

\noi
where $E_{\bf N}$ is defined by 
\begin{align}
\begin{split}
E_{\bf N} = \big\{(n, n_1, n_2) \in \Z^3:
\ &  n = n_1 + n_2, 
n n_1 \ne 0,\\
& 
 |n|\sim N, \, |n_j|\sim N_j, \, j = 1, 2\big\}.
\end{split}
\label{EN1}
\end{align}

\noi
Then, given any $\ta > 0$, 
it follows from \eqref{YG2}, \eqref{YG3}, \eqref{YG3a}, 
and the random tensor estimate (Lemma~\ref{LEM:RT})
with \eqref{phi1a}
that 
\begin{align}
\begin{split}
 \big\| &  \| \YY_{t,r} \|_{\LOP(H^s;H^{s_0})} \big\|_{L^p(\Om)}
 = \big\| \| \jb{\nb}^{s_0}\YY_{t,r}\jb{\nb}^{-s} \|_{\LOP(L^2;L^2)} \big\|_{L^p(\Om)}\\
& = \big\| \| I_1[ \hf_{nn_2}(n_1) \ff_{nn_2}(t')  ] \|_{\l^2_{n_2} \to \l^2_n} \big\|_{L^p(\Om)}\\
%&  \big\|  \| I_1[ \hf_{nn_2}(n_1) \ff_{nn_2} (t')  ] \|_{\l^2_{n_2} \to \l^2_n} \big\|_{L^p(\Om)}  \\
& \le \sum_{\substack{N, N_1, N_2 \ge1 \\\text{dyadic}}}  
\big\| \| 
I_1[ \hf^{\bf N}_{nn_2}(n_1) \ff^{\bf N}_{nn_2}(t')] \|_{\l^2_{n_2} \to \l^2_n} \big\|_{L^p(\Om)}\\
& \les 
p^\frac 12 
\sum_{\substack{N, N_1, N_2 \ge1 \\\text{dyadic}}}  
\frac{N_{\max}^{\frac 12 \ta} N^{\ta s_0}}
{N_1^{ \ta\s} N_2^{\ta s}}
\|\phi\|_{\HS(L^2; H^\s)}^\ta
 \|  \ff^{\bf N}_{nn_2} (t')\|_{\l^\infty_{nn_2} \Hs_{t'}^\be}\\
& \hphantom{XXXXX}\times 
 \max\Big( \|  \hf^{\bf N}_{nn_2}(n_1)\|_{n_1n_2 \to n} , 
 \|   \hf^{\bf N}_{nn_2}(n_1) \|_{n_2 \to n n_1} \Big)^{1-\ta}
\end{split}
\label{YG3b}
\end{align}

\noi
for any finite $p \ge 1$, 
where 
 $N_{\max} = \max(N, N_1, N_2)$ and 
 $\Hs^\be(\R_+)$ is as in \eqref{BM0}.

From \eqref{YG3a}, \eqref{YG3}, and 
Cauchy-Schwarz's inequality with \eqref{phi1a}, we have
\begin{align}
\begin{split}
\|  \hf^{\bf N}_{nn_2}(n_1)\|_{n_1n_2 \to n} 
&\sim 
\frac{N^{s_0}}{N_2^s}
\sup_{\|f\|_{\l^2_{n_1 n_2}}
= \|g\|_{\l^2_{n}}  = 1}
\bigg|
\sum_{n, n_1, n_2 \in \Z}
\ind_{E_{\bf N}}
\cdot 
\phi_{n_1} f_{n_1n_2} g_{n}\bigg|\\
&\les
\frac{N^{s_0}}{N_1^\s N_2^s}
\|\phi\|_{\HS(L^2; H^\s)}.
\end{split}
\label{YG3x}
\end{align}

\noi
A similar computation yields
\begin{align}
\begin{split}
\|  \hf^{\bf N}_{nn_2}(n_1)\|_{n_2 \to n n_1} 
&\les
\frac{N^{s_0}}{N_1^\s N_2^s}
\|\phi\|_{\HS(L^2; H^\s)}.
\end{split}
\label{YG3y}
\end{align}

Next,  we  estimate the 
$\Hs^\be_{t'}$-norm of 
$ \ff_{nn_2}^{\bf N}(t')$
appearing in \eqref{YG3b}.
For notational simplicity, 
we drop the superscript $\bf N$
in the following
but it is understood that 
$n = n_1 + n_2$ and $n n_1 \ne 0$ hold; see \eqref{EN1}.
By integration by parts with~\eqref{rho2}, we have 
\begin{align}
\begin{split}
\ft \ff_{nn_2}(\tau) 
& = \frac 1{\sqrt {2\pi}}\int_r^t e^{- i t'\tau }  e^{- i w(t') (n^3 - n_2^3)}dt'\\
&  =  \frac 1{\sqrt {2\pi}}\Phi^w_{t, r}(-n^3 + n_2^3) e^{-i t \tau}
+  \frac {i \tau}{\sqrt {2\pi}} \int_r^t 
\Phi^w_{t', r}(-n^3 + n_2^3)e^{-i t'\tau} dt'.
\end{split}
\label{YG3c}
\end{align}

\noi
Under $n = n_1 + n_2$, we have
\begin{align}
n^3 - n_2^3 = n_1 \big((n_1 + \tfrac 32 n_2)^2 + \tfrac 34 n_2^2\big).
\label{YG3d}
\end{align}

\noi
In particular, for $n_1 \ne 0$, we have 
\begin{align}
|n^3 - n_2^3| \sim \jb{n_1} \jb{n_{\max}}^2 , 
\label{YG3e}
\end{align}

\noi
where $n_{\max} = \max(|n|, |n_1|, |n_2|)$.
Then,  from \eqref{YG3c} and \eqref{YG3e}
with  
\eqref{rho1}, we have
\begin{align}
\begin{split}
|\ft \ff_{nn_2}(\tau) |
& \les
\|\Phi^w\|_{  \W^{\rho,\g}_T} 
\frac{|t - r|^\g}{\jb{n^3 - n_2^3}^\rho}
+ 
\|\Phi^w\|_{  \W^{\rho,\g}_T} |\tau |
\frac{|t - r|^{1+\g}}{\jb{n^3 - n_2^3}^\rho}\\
& \les
\|\Phi^w\|_{  \W^{\rho,\g}_T} 
\frac{|t - r|^\g}{\jb{n_1}^\rho \jb{n_{\max}}^{2\rho}}
+ \|\Phi^w\|_{  \W^{\rho,\g}_T} 
|\tau |
\frac{|t - r|^{1+\g}}{\jb{n_1}^\rho \jb{n_{\max}}^{2\rho}}.
%& =: A_1 + A_2
\end{split}
\label{YG4}
\end{align}

Let $\Hs^\be(\R_+)$ be as in 
\eqref{BM0}.
Then, we have 
\begin{align}
\begin{split}
\|\ff_{nn_2} \|_{\Hs^\be_{t'}}
& \sim \bigg(\int_{|\tau|\le 1} |\tau|^{1-2\be} |\ft \ff_{nn_2}(\tau) |^2d\tau \bigg)^\frac 12 \\
& \quad + \bigg(\int_{|\tau|>  1} |\tau|^{1-2\be} |\ft \ff_{nn_2}(\tau) |^2d\tau \bigg)^\frac 12  \\
& =: G_1 + G_2.
\end{split}
\label{YG5}
\end{align}

\noi
From \eqref{YG4}, we have 
\begin{align}
G_1
\les 
\jb{T}
\|\Phi^w\|_{  \W^{\rho,\g}_T}
\frac{|t - r|^{\g}}{\jb{n_1}^{\rho} \jb{n_{\max}}^{2\rho}},
\label{YG6}
\end{align}

\noi
since $\be < 1$.

Let 
$\ld = \frac 13 (2\be -1) -\eps \in (0, 1)$ 
for some small $\eps > 0$
and 
$b = \ld^{-1}(1- 2\be)$
such that $b < -3$.
Then, 
by H\"older's inequality with 
 \eqref{YG3} and \eqref{YG4}, we have 
\begin{align}
\begin{split}
G_2
& \le \bigg(\int_\R  |\ft \ff_{nn_2}(\tau) |^2d\tau\bigg)^{\frac {1-\ld} 2}
\bigg(\int_{|\tau|>  1} |\tau|^b |\ft \ff_{nn_2}(\tau) |^2d\tau \bigg)^\frac \ld 2\\
& \les 
\jb{T}^\ld
\|\Phi^w\|_{  \W^{\rho,\g}_T}^\ld 
|t- r|^{\frac{1-\ld}2}
\bigg(\frac{|t - r|^{\g}}{\jb{n_1}^{\rho} \jb{n_{\max}}^{2\rho}}\bigg)^{\ld}\\
& \le 
\jb{T}
\big(1\vee \|\Phi^w\|_{  \W^{\rho,\g}_T}\big)
\frac {|t- r|^{\frac 12 + \frac 16 (2\be - 1)(2\g-1) -  \frac 12(2\g-1)\eps}}
{\jb{n_1}^{\frac 13\rho (2 \be - 1-3\eps)} \jb{n_{\max}}^{\frac 23\rho (2 \be - 1 -3\eps)}}.
\end{split}
\label{YG7}
\end{align}

\noi
Hence, from \eqref{YG3a}, \eqref{YG5}, \eqref{YG6}, and \eqref{YG7}
with $\be < 1$,
we obtain
\begin{align}
\begin{split}
\|\ff^{\bf N}_{nn_2}\|_{\Hs^\be_{t'}}
& \le \ind_{E_N} \cdot \|\ff_{nn_2}\|_{\Hs^\be_{t'}}\\
& \les \ind_{E_N} \cdot  
\jb{T} 
\jb{\|\Phi^w\|_{  \W^{\rho,\g}_T} }
\frac {|t- r|^{ \frac 12 + \frac 16 (2\be - 1)(2\g-1) -  \frac 12(2\g-1)\eps}}
{\jb{n_1}^{\frac 13\rho (2 \be - 1-3\eps)} \jb{n_{\max}}^{\frac 23\rho (2 \be - 1 -3\eps)}}, 
\end{split}
\label{YG8}
\end{align}

\noi
uniformly in $n, n_2 \in \Z_*$ and dyadic $N, N_1, N_2 \ge 1$.
Here, we used the fact that 
\[  \frac 12 + \frac 16 (2\be - 1)(2\g-1) < \g.\]

Therefore, putting 
\eqref{YG3b}, \eqref{YG3x}, \eqref{YG3y}, 
and \eqref{YG8} together with \eqref{ord1}, we obtain
\begin{align}
\begin{split}
  \big\|  &  \| \YY_{t,r} \|_{\LOP(H^s;H^{s_0})} \big\|_{L^p(\Om)}\\
& \les 
p^\frac 12 
\jb{T} 
\jb{\|\Phi^w\|_{  \W^{\rho,\g}_T} }
\|\phi\|_{\HS(L^2; H^\s)}
 |t- r|^{\frac 12 + \frac 16 (2\be - 1)(2\g-1) - \frac 12  (2\g-1)\eps}\\
%\sum_{\substack{N, N_1, N_2 \ge1 \\\text{dyadic}}}  
& \quad \times \sum_{\substack{N, N_1, N_2 \ge1 \\\text{dyadic}\\
N_{\max}\sim N_{\med}}}  
N_{\max}^{\frac 12 \ta
- \frac 23\rho (2 \be - 1 -3\eps)}
\frac{N^{s_0}}{N_1^{\s+ \frac 13\rho (2 \be - 1-3\eps)} N_2^{s}}\\
& \les_T 
p^\frac 12 
\jb{\|\Phi^w\|_{  \W^{\rho,\g}_T} }
\|\phi\|_{\HS(L^2; H^\s)}
 |t- r|^{\g_0}, 
\end{split}
\label{YG9}
\end{align}

\noi
provided that 
$\g_0 \le \g$, 
\eqref{YT1}, 
\eqref{YT2},  and \eqref{YT3} 
hold
and that $\ta, \eps > 0$ are sufficiently small.
Here, the second inequality in \eqref{YG9} follows
from separately considering the cases:
 (a)~$N \sim N_1 \ges N_2$, 
(b)~$N \sim N_2 \ges N_1$, 
and 
(c)~$N_1 \sim N_2 \ges N$
(also depending on the sign of the exponent of $N_{\min}$).
This proves~\eqref{YG1}.
\end{proof}

\begin{remark}\label{REM:BO}
\rm

Let us consider the stochastic modulated BO \eqref{sbo1} with a Young noise, 
where the dispersion symbol is given by $\o_\BO(n) = |n|n$.
Let $\YY^\BO$ denote 
the associated Young driver
defined by
\begin{align*}
\YY^\BO_{t, r}(f)
&  = \P_{\ne 0} \int_r^t
\uw_\BO(t')^{-1} \big[ (\uw_\BO(t') f)
\phi dW^\be (t') \big]\\
&
 = \sum_{n\in \Z_*} e_n 
 \int_r^t \sum_{n_1, n_2  \in \Z} \ind_{\substack{n = n_{12}\\n_1 \ne 0}}
\cdot  e^{- i w(t') (|n|n - |n_2|n_2)}\ft f(n_2) \phi_{n_1}    dB_{n_1}(t'), 
%\label{sto1}
\end{align*}

\noi
where 
$\uw_\BO (t)=e^{   w(t)\H\dx^2}  $
denotes the modulated linear propagator for \eqref{BO}.

Let 
$\rho \ge\frac 12$,
$\frac 12 < \g, \be < 1$, and $s, \s \in \R$
such that 
\begin{align}
s + \s > -  \frac 23 \rho (2\be -1 ), 
\label{BOX3}
\end{align}

\noi
which is the first condition in \eqref{BOZ}.
Suppose that 
 $s_0 \in \R$ satisfies
\begin{align}
\begin{split}
s_0 & < \min \bigg(s + \frac 13 \rho(2\be -1)
+ \Big(\s +  \frac 13 \rho(2\be -1)\Big) \wedge 0, \\
& \hphantom{iiXXXX}
s\wedge0 + \s + \frac 23  \rho (2\be -1 )\bigg).
\end{split}
\label{BOX4}
\end{align}

\noi
In this case, instead of \eqref{YG3d}, we have
\begin{align}
\big| |n| n - |n_2| n_2\big|
\sim  \begin{cases}
|n_1| n_{\max}, & \text{if } nn_2 > 0,\\
 n_{\max}^2, & \text{if } nn_2 < 0
\end{cases}
\label{BOX1}
\end{align}

\noi
under $n = n_1 + n_2$.
Then, by repeating the proof of Proposition \ref{PROP:drive2}, 
we obtain
\begin{align}
\begin{split}
  \big\|  &  \| \YY^\BO_{t,r} \|_{\LOP(H^s;H^{s_0})} \big\|_{L^p(\Om)}\\
& \les_T 
p^\frac 12 
\|\phi\|_{\HS(L^2; H^\s)}
\jb{\|\Phi^w\|_{  \W^{\rho,\g}_T}}  |t- r|^{\frac 12 + \frac 16 (2\be - 1)(2\g-1) - \frac 12  (2\g-1)\eps}\\
%\sum_{\substack{N, N_1, N_2 \ge1 \\\text{dyadic}}}  
& \quad \times \sum_{\substack{N, N_1, N_2 \ge1 \\\text{dyadic}\\
N_{\max}\sim N_{\med}}}  
N_{\max}^{\frac 12 \ta
- \frac 13\rho (2 \be - 1 -3\eps)}
\frac{N^{s_0}}{N_1^{\s+ \frac 13\rho (2 \be - 1-3\eps)} N_2^{s}}\\
& \les_T 
p^\frac 12 
\|\phi\|_{\HS(L^2; H^\s)}
\jb{\|\Phi^w\|_{  \W^{\rho,\g}_T}}  |t- r|^{\g_0}, 
\end{split}
\label{BOX2}
\end{align}

\noi
provided that 
\eqref{YT1}, 
\eqref{BOX3}, and \eqref{BOX4}
hold
and that $\ta, \eps > 0$ are sufficiently small.

Next, consider  the stochastic modulated ILW \eqref{SILW1}
with a Young noise.
The dispersion symbol  $\o_\dl(n)$ for ILW \eqref{ILW1}
(= the multiplier for $-i \Gdl \dx^2$ with a fixed depth parameter $\dl > 0$) 
is   given by 
\begin{align}
\o_\dl(n)=
\ind_{n\ne 0}\cdot \bigg(n^2 \coth(\dl n) -\frac{n}{\dl} \bigg).
\label{ILX1}
\end{align}

\noi
See \eqref{GG1}.
By applying 
\cite[(2.7)]{CLOP}
and 
the fact that  $x^{\al +1} \le C_\al( e^{2x} - 1)$ for any $x \ge  0$
and $\al \ge 0$, 
we have 
\begin{align}
n^2 \coth (\dl n) - |n|n = \sgn(n) \cdot \frac {2n^2} {e^{2|\dl n|} - 1}
= O( \dl^{-2}), 
\label{ILX2}
\end{align}

\noi
uniformly in  $n \in \Z_*$.
Hence, under $n = n_1 + n_2$, 
from \eqref{ILX1}, \eqref{ILX2}, \eqref{BOX1}, we have
\begin{align}
\begin{split}
|\o_\dl(n) - \o_\dl(n_2) |
& = |\o_\BO(n) - \o_\BO(n_2) |
+ O(|n_1|\dl^{-1})
+ O(\dl^{-2})\\
& \ges_\dl   \begin{cases}
|n_1| n_{\max}, & \text{if } nn_2 > 0,\\
 n_{\max}^2, & \text{if } nn_2 < 0, 
\end{cases}
\end{split}
\label{ILX3}
\end{align}

\noi
uniformly in $n, n_1, n_2 \in \Z_*$, 
provided that $n_{\max} \gg \dl^{-2}$.
Hence, the bound \eqref{BOX2}
also holds 
for the stochastic modulated ILW \eqref{SILW1}
with a Young noise
(with an implicit constant depending on $\dl > 0$).

\end{remark}

\subsection{Rough driver  for the noise}
\label{SUBSEC:drive3}

In this subsection, 
we study regularity properties
of the drivers $\wt \YY$ and $\wt \YYb$ in 
\eqref{sto2x}
and \eqref{sto3x}, respectively, 
associated with 
SmKdV \eqref{skdv3} forced by a white-in-time noise.
From \eqref{sto3x}
with \eqref{W1} and \eqref{phi1}, we have 
 \begin{align}
\begin{split}
\wt \YYb_{t, r}(f)
&
 = \sum_{n\in \Z_*} e_n 
 \int_r^t
  \int_r^{t_1}
  \sum_{n_1, n_2, n_3  \in \Z} 
\ind_{\substack{n = n_{123}\\ n_{23} \ne 0}}\cdot 
\frac 1 {\jb{n}^{\eps_0}  \jb{n_{23}}^{\eps_0}}\\
& \hphantom{XXXXXXX}
\times  e^{- i w(t_1) (n^3 - n_{23}^3)}
  e^{- i w(t_2) (n_{23}^3 - n_{3}^3)}
 \ft f(n_3) \\
& \hphantom{XXXXXXX} 
\times \phi_{n_1} \phi_{n_2}    dB_{n_2}(t_2) dB_{n_1}(t_1), 
\end{split}
\label{stoconv3}
\end{align}

\noi
where the iterated stochastic integral 
is interpreted as an iterated Wiener-Ito integral.

\begin{proposition}
\label{PROP:drive3}

Given $\rho \ge\frac 12$, $ \frac 12 <  \g < 1$,   $s, \s \in \R$, 
 $\eps_0>0$, and $T > 0$, 
such that 
\begin{align}
s + \s > 0, 
\label{RG0}
\end{align}

\noi
let $w$ be 
 a $(\rho,\g)$-irregular function on $[0, T]$ in the sense of Definition~\ref{DEF:ir}
 and 
 $\phi \in \HS(L^2(\T); H^\s(\T))$, 
  satisfying \eqref{phi1}.

\smallskip

\noi
\textup{(i)}
Suppose that  $s_0 \in \R$ satisfies
\begin{align}
s_0 < \min \Big( s\wedge 0 + \s + \eps_0, \, 
s+ \s \wedge 0 + \eps_0
\Big).
\label{RG0a}
\end{align}

\noi
Let  $\wt \YY = \wt \YY^{w, \phi}$ be 
the random operator 
 defined in \eqref{sto2x}.
 Then, given any finite $p \ge 1$, we have 
\begin{align}
\big\| \| \wt  \YY_{t,r}  \|_{\LOP(H^s; H^{s_0})} \big\|_{L^p(\Om)} 
 & \les  p^\frac 12 \| \phi\|_{\HS(L^2;H^\s)} 
  |t-r|^\frac 12 
\label{RG1}
\end{align}

\noi
for any   $(t,r) \in \Dl_{2,T}$.
Consequently, we have 
\begin{align*}
\big\| \| \wt \YY\|_{C^{\g_0}_{2,T} \LOP(H^s;H^{s_0})} \big\|_{L^p(\Om)} \les 
p^\frac 12 \| \phi\|_{\HS(L^2;H^\s)} 
\end{align*}

\noi
for 
any finite $p \ge 1$ and $0 < \g_0 < \frac 12$.
In particular, 
there exists a version of $\wt \YY$
such that 
$\wt \YY \in C^{\g_0}_{2,T} \LOP(H^s(\T);H_0^{s_0}(\T))$ and 
\begin{align*}
\wt \YY \in  \cX^{s, s_0, \g_0}_{1, 0} ([0, T]\times \T) ,
%\label{RG2x}
\end{align*}

\noi
almost surely.
See Subsection \ref{SUBSEC:2.2}
for the definitions of these function spaces.

\smallskip

\noi
\textup{(ii)}
In addition to \eqref{RG0}, suppose that 
\begin{align}
\begin{split}
s + 2\s + \eps_0 & > 0,\\
s\wedge \s + \s + \eps_0 & > 0.
\end{split}
\label{RG2a}
\end{align}

\noi
Suppose that  $s_0 \in \R$ satisfies
\begin{align}
\begin{split}
s_0 < \min \Big(  %s + 2\s + \eps_0, \, 
s\wedge 0 + \s + \s \wedge 0 + \eps_0, 
 s\wedge \s  + \s \wedge 0 + 2\eps_0
\Big).
\end{split}
\label{RG2b}
\end{align}

\noi
Let  $\wt \YYb = \wt \YYb^{w, \phi}$ be 
the random operator 
 defined in \eqref{stoconv3}.
 Then, given any finite $p \ge 1$, we have 
\begin{align}
\big\| \|  \wt \YYb_{t,r}  \|_{\LOP(H^{s}; H^{s_0})} \big\|_{L^p(\Om)} & \les  p  \| \phi \|_{\HS(L^2;H^\s)}^2  
 |t-r|
\label{RG3}
\end{align}

\noi
for any   $(t,r) \in \Dl_{2,T}$.
Consequently, we have 
\begin{align}
\big\| \| \wt \YYb \|_{C^{\g_1}_{2,T} \LOP(H^s;H^{s_0})} \big\|_{L^p(\Om)} \les p  \| \phi \|_{{\HS} (L^2; H^\s)}^2.
\label{RG4}
\end{align}

\noi
for 
any finite $p \ge 1$ and $0 < \g_1 < 1$.
In particular, 
there exists a version of $\wt \YYb$
such that 
$\wt \YYb \in C^{\g_1}_{2,T} \LOP(H^s(\T);H^{s_0}_0(\T))$
and
\begin{align*}
\wt \YYb \in  \cX^{s, s_0, \g_1}_{1, 0} ([0, T]\times \T) ,
%\label{RG2y}
\end{align*}

\noi
almost surely.

\smallskip

\noi
\textup{(iii)} 
The pair $(\wt \YY, \wt \YYb)$
satisfies
Chen's relation \eqref{RQ2}{\rm :}
\begin{align}
(\updl \wt \YYb)_{t_1,t_2,t_3} & = \wt \YY_{t_1,t_2} \circ \wt \YY_{t_2,t_3}
\label{RG4a}
\end{align}

\noi
for any $(t_1, t_2, t_3) \in \Dl_{3, T}$, almost surely.

\end{proposition}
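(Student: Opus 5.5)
The plan is to follow the proof of Proposition~\ref{PROP:drive2}, now with $\be = \frac12$. In this case $\Hs^{\frac12} = L^2$, so no oscillatory gain in time is needed and the modulation enters only through unimodular phases. By Kolmogorov's continuity criterion (Lemma~\ref{LEM:kolm}, with $\ta_1 = \frac12$, $\ta_2 = 1$, and the exponent $\frac12$ in place of $\be$), the H\"older bounds for $(\wt\YY,\wt\YYb)$ follow from the fixed-time moment bounds \eqref{RG1} and \eqref{RG3}, together with $\updl \wt \YY = 0$ and Chen's relation \eqref{RG4a}. So the work is in proving \eqref{RG1}, \eqref{RG3}, and (iii).

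For (i), I would write $\F_x(\jb{\nb}^{s_0}\wt\YY_{t,r}\jb{\nb}^{-s}f)(n) = \sum_{n_2}\ft f(n_2) I_1[\hf_{nn_2}(n_1)\ff_{nn_2}]$. Here $\hf_{nn_2}(n_1)$ is as in \eqref{YG3} but multiplied by $\jb{n}^{-\eps_0}$, and $\ff_{nn_2} = \ind_{[r,t]} e^{-iw(t')(n^3-n_2^3)}$, so that $\|\ff_{nn_2}\|_{L^2_{t'}} = |t-r|^{\frac12}$. After a dyadic decomposition $|n|\sim N$, $|n_j|\sim N_j$, Lemma~\ref{LEM:RT} with \eqref{YG3x} and \eqref{YG3y} gives the summand
\[
p^{\frac12}|t-r|^{\frac12} N_{\max}^{\ta'} N^{s_0-\eps_0} N_1^{-\s} N_2^{-s}\|\phi\|_{\HS(L^2;H^\s)}.
\]
I would then sum over the cases $N\sim N_1\ges N_2$, $N\sim N_2\ges N_1$, and $N_1\sim N_2\ges N$. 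The first requires $s_0<\s+\eps_0+s\wedge0$; the second requires $s_0<s+\eps_0+\s\wedge 0$; the third requires $s+\s>0$, which is \eqref{RG0}, together with $s_0-\eps_0<s+\s$. This is exactly \eqref{RG0a}. The zero mode $n_1 = 0$ is harmless here, since no gain is claimed.

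For (ii), I would write \eqref{stoconv3} as a sum of a second-chaos part and a contraction part. Using the It\^o product formula (equivalently, expanding the iterated Wiener--It\^o integral), we get $\wt\YYb_{t,r}f = I_2[\cdots] + (\text{trace term})$. The trace term comes from the pairing $n_2 = -n_1$, where $B_{-n}=\cj{B_n}$. For iterated It\^o integrals, however, the inner integral is over $[r,t_1)$, and the diagonal contributes nothing. So $\wt\YYb_{t,r}$ is purely in $\HH_2$, with kernel
\[
\ind_{t_2<t_1}\,\jb{n}^{-\eps_0}\jb{n_{23}}^{-\eps_0}\phi_{n_1}\phi_{n_2}\times(\text{phases}),
\]
whose $L^2(\Dl_2\cap[r,t]^2)$ norm is $\les |t-r|$.

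I would then apply Lemma~\ref{LEM:RT} with $A=\{1,2\}$, $b = n_3$, $c = n$. The norms $\|h\|_{n_3 n_B\to n n_C}$ over the four partitions are bounded by Cauchy--Schwarz, with dyadic localization in $n$, $n_1$, $n_2$, $n_3$ and in $n_{23}$. The main obstacle is this bookkeeping: when the output or intermediate frequency $n_{23}$ is much smaller than $n_1$ or $n_2$, the weights $\phi_{n_j}$ give the $\s$-gains, while the two factors $\jb{\cdot}^{-\eps_0}$ give the $\eps_0$ and $2\eps_0$ terms. I would split into cases according to which of $n_3$, $n_2$, $n_1$ is largest and the relative size of $n_{23}$. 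Treating the structure as two successive applications of the first-order case, with intermediate regularity $s_1 = n_{23}$-level, I expect the conditions in \eqref{RG2a} and \eqref{RG2b} to appear. Specifically, the first step maps $H^s\to H^{s_1}$ under \eqref{RG0a}, and the second maps $H^{s_1}\to H^{s_0}$, which needs $s_1+\s>0$. Optimizing $s_1<\min(s\wedge0+\s+\eps_0,\,s+\s\wedge0+\eps_0)$ then yields the stated constraints; the operator norms of the fused tensor should be dominated by such compositions up to $N_{\max}^{\ta'}$ losses. The $p^1$ factor comes from $k=2$ in Lemma~\ref{LEM:RT}.

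For (iii), Chen's relation follows directly from the definition \eqref{sto3x}. Splitting $\int_{t_3}^{t_1}\int_{t_3}^{t'} = \int_{t_2}^{t_1}\int_{t_2}^{t'} + \int_{t_3}^{t_2}\int_{t_3}^{t'} + \int_{t_2}^{t_1}\int_{t_3}^{t_2}$ gives the identity for smooth, truncated frequencies almost surely. I would then pass to the limit using the bounds of (i) and (ii) and the continuity of the versions.
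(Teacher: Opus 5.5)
Your proposal is correct. For (i), (iii), the Kolmogorov step and the chaos/$p$ bookkeeping it coincides with the paper's argument.

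The one genuine difference is how you organize the dyadic summation in (ii).

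The paper treats the kernel $\widetilde{\mathfrak h}^{\bf N}_{nn_3}(n_A)$ as a single object. It bounds all four partition norms by the Cauchy--Schwarz bound \eqref{RGG6}, then sums the five-fold dyadic series directly through the cases (a.i)--(c.ii).

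You instead factor the kernel through the intermediate frequency $m=n_{23}$ with an auxiliary weight $\langle m\rangle^{\mp s_1}$. That is, $h=\sum_m a_{nm}(n_1)\,b_{mn_3}(n_2)$, where $a$ and $b$ are first-order kernels of the type in (i). This reduces (ii) to part (i) applied twice.

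Your ``should be dominated'' needs one precise step, and there are two ways to supply it:
\begin{itemize}
\item Use the merging inequalities, e.g. $\|h\|_{n_3n_1\to nn_2}\le\|a\|_{mn_1\to n}\|b\|_{n_3\to mn_2}$, and analogously for the other three partitions. Each bounds a norm of $h$ by one of the two first-order norms of $a$ times one of $b$.
\item Alternatively, note that the bound in \eqref{RGG6} factors exactly as $\frac{N^{s_0-\varepsilon_0}}{N_1^{\sigma}N_{23}^{s_1}}\cdot\frac{N_{23}^{s_1-\varepsilon_0}}{N_2^{\sigma}N_3^{s}}$. The constraints \eqref{EN3} split into the two first-order ones, and the $N_{\max}^{\theta'}$ loss factorizes likewise. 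Hence the five-fold sum is bounded by the product of the two three-fold sums already controlled in (i).
\end{itemize}

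This factorization has to be done on the deterministic kernel inside Lemma~\ref{LEM:RT}. $\widetilde{\mathbb Y}_{t,r}$ is a single time-ordered second-chaos object, not $\widetilde{\mathbf Y}_{t,r}\circ\widetilde{\mathbf Y}_{t,r}$, so you cannot simply multiply the almost sure bounds obtained in (i).

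Optimizing over $s_1\in(-\sigma,\,s\wedge\sigma+\varepsilon_0)$ does not reproduce \eqref{RG2a}--\eqref{RG2b} exactly. It gives conditions implied by them, which is all you need, and in fact slightly weaker ones:
\begin{itemize}
\item the first line of \eqref{RG2a} already follows from its second line together with \eqref{RG0};
\item the first entry of \eqref{RG2b} can be replaced by the never-smaller quantity $(s\wedge\sigma+\varepsilon_0)\wedge 0+\sigma+\varepsilon_0$. The derivation of \eqref{RGY1} discards the factor $N_{23}^{-\varepsilon_0}$ when $N_{23}\sim\max(N_2,N_3)$.
\end{itemize}

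So your route buys a case-free argument that reuses (i) and gives marginally better conditions, at the cost of a merging step. The paper's route is more pedestrian but fully explicit.
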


As mentioned in Subsection \ref{SUBSEC:main2}, 
Theorem \ref{THM:4}
on the construction of the stochastic term~$\wt \PPsi(\uu)$
 in \eqref{psi3}
 as the rough integral $\I^{\wt \YY, \wt \YYb}(\uu)$
follows from 
Lemma~\ref{LEM:int2}
and 
Proposition~\ref{PROP:drive3}, 
while 
Theorems \ref{THM:5} 
on local well-posedness of 
for  SmKdV \eqref{skdv3}
with a multiplicative rough noise
 follows from 
Proposition \ref{PROP:main2}
with 
Lemma \ref{LEM:kdv1} and 
Proposition \ref{PROP:drive3}.
We omit details.

\begin{proof} [Proof of Proposition \ref{PROP:drive3}]
(i) 
Arguing as in the proof of 
Proposition \ref{PROP:drive2}, 
it suffices to prove the bound \eqref{RG1}.

Fix  $0 \le r <  t \le T$.
Given any $\ta > 0$, 
proceeding as in \eqref{YG3b}
with 
\eqref{sto2x}, 
\eqref{stoconv1}, 
and 
 the random tensor estimate (Lemma~\ref{LEM:RT}), 
we have 
\begin{align}
\begin{split}
&   \big\|    \| \wt \YY_{t,r} \|_{\LOP(H^s;H^{s_0})} \big\|_{L^p(\Om)}
 =   \big\|   \| \jb{\nb}^{s_0}\wt \YY_{t,r}\jb{\nb}^{-s} \|_{\LOP(L^2;L^2)} \big\|_{L^p(\Om)}\\
& \quad \les 
p^\frac 12 
\sum_{\substack{N, N_1, N_2 \ge1 \\\text{dyadic}}}  
N^{-\eps_0} 
\frac{N_{\max}^{\frac 12 \ta} N^{\ta s_0}}{N_1^{\ta \s}N_2^{\ta s}}
 \|\phi\|_{\HS(L^2; H^\s)}^\ta \|  \ff^{\bf N}_{nn_2} (t')\|_{\l^\infty_{nn_2} L^2_{t'}}
\\
& \hphantom{XXXX}\times 
 \max\Big( \|  \hf^{\bf N}_{nn_2}(n_1)\|_{n_1n_2 \to n} , 
 \|   \hf^{\bf N}_{nn_2}(n_1) \|_{n_2 \to n n_1} \Big)^{1-\ta}
\end{split}
\label{RG5}
\end{align}

\noi
for any finite $p \ge 1$, 
where 
$\hf^{\bf N}$ 
and $\ff^{\bf N}$
are as in \eqref{YG3a} (see also \eqref{YG3}).
From \eqref{YG3}, we have 
\begin{align}
\begin{split}
\|\ff^{\bf N}_{nn_2}\|_{L^2_{t'}}
& \sim |t- r|^\frac 12, 
\end{split}
\label{RG7}
\end{align}

\noi
uniformly in $n, n_2 \in \Z$ and dyadic $N, N_1, N_2 \ge1$.
Hence, from \eqref{RG5}, 
 \eqref{YG3x}, \eqref{YG3y}, 
and \eqref{RG7} with \eqref{ord1}, 
we obtain
\begin{align}
\begin{split}
 \big\|   \| \wt \YY_{t,r} \|_{\LOP(H^s;H^{s_0})} \big\|_{L^p(\Om)}
& \les 
p^\frac 12 
\sum_{\substack{N, N_1, N_2 \ge1 \\\text{dyadic}\\
N_{\max}\sim N_{\med}}}  
 N_{\max}^{\frac 12 \ta} 
\frac{N^{s_0-\eps_0}}{N_1^{\s} N_2^{s}}
\|\phi\|_{\HS(L^2; H^\s)}|t- r|^\frac 12\\
& \les 
p^\frac 12 
\|\phi\|_{\HS(L^2; H^\s)}|t- r|^\frac 12, 
\end{split}
\label{RG8}
\end{align}

\noi
provided that 
\eqref{RG0} and \eqref{RG0a}
hold
and that $\ta > 0$ is sufficiently small.
Here, the second inequality in \eqref{RG8} follows
from separately considering the cases:
 (a)~$N \sim N_1 \ges N_2$, 
(b)~$N \sim N_2 \ges N_1$, 
and 
(c)~$N_1 \sim N_2 \ges N$
(also depending on the sign of the exponent of $N_{\min}$).
This proves \eqref{RG1} as desired.

\medskip

\noi
(ii)
As in Part (i), 
it suffices to prove the bound \eqref{RG3}, 
since the bound \eqref{RG4} follows
from~\eqref{RG3} and Lemma \ref{LEM:kolm}.
\\
\indent
Fix  $0 \le r <  t \le T$.
From \eqref{stoconv3}, we have 
\begin{align}
\Ft_x\big( \jb{\nabla}^{s_0} \wt \YYb_{t,r} \jb{\nabla}^{-s} f\big) (n) 
= \sum_{n_3 \in \Z} \ft f(n_3) I_2 [ \wt{\hf}_{nn_3}(n_A) \wt{\ff}_{nn_3}(t_A,n_A)  ], 
\label{RGG1}
\end{align}

\noi
where
$I_2$ denotes the iterated Wiener-Ito integral (see Subsection \ref{SUBSEC:FBM}).
Here,  
with  $A = \{1, 2\}$, 
$\wt \hf_{nn_3}(n_A) $ and $\wt \ff_{nn_3}(t_A, n_A)$ are defined by 
\begin{align}
\begin{split}
\wt \hf_{n n_3}(n_A) & = 
 \ind_{n=n_{123}}
\cdot \ind_{nn_{23} \ne 0}
\cdot 
\frac {\jb{n}^{s_0 - \eps_0}} {\jb{n_3}^s  \jb{n_{23}}^{\eps_0}}
\phi_{n_1}
\phi_{n_2}, \\
\wt \ff_{nn_3} (t_A, n_A)  &= 
\wt \ff_{nn_3}^{t, r} (t_A, n_A)= 
\ind_{r \le t_2 \le t_1\le t}
\cdot 
 e^{- i w(t_1) (n^3 - n_{23}^3)}
  e^{- i w(t_2) (n_{23}^3 - n_{3}^3)}.
\end{split}
\label{RGG2}
\end{align}

\noi
Given dyadic $N, N_1, N_2, N_3, N_{23} \ge1$, we set 
\begin{align}
\begin{split}
\wt \hf^{\bf N}_{n n_3}(n_A) & = 
\wt\hf^{N, N_1, N_2, N_3, N_{23}}_{n n_3}(n_A)
= \ind_{\wt E_{\bf N}}
\cdot 
\wt \hf_{n n_3}(n_A), \\
\wt\ff^{\bf N}_{nn_3} (t_A, n_A)  &= 
\wt \ff^{N, N_1, N_2, N_3, N_{23}}_{nn_3} (t_A, n_A)  = 
\ind_{\wt E_{\bf N}}
\cdot 
\wt \ff_{nn_3} (t_A, n_A),  
\end{split}
\label{RGG3}
\end{align}

\noi
where $\wt E_{\bf N}$ is defined by 
\begin{align*}
\wt E_{\bf N} = \big\{(n, n_1, n_2, n_3) \in \Z^4:
\ & n = n_{123}, \, nn_{23} \ne 0, \, |n|\sim N, \\
&  |n_j|\sim N_j, \, j = 1, 2, 3, \, |n_{23}|\sim N_{23}\big\}.
\end{align*}

\noi
Under $n = n_1 + n_{23}$ 
and $n_{23} = n_2 + n_3$, we have 
\begin{align}
\begin{split}
\max(N, N_1, N_{23})
& \sim \med(N, N_1, N_{23}),\\
%\quad 
%\text{and}\quad 
\max(N_{23}, N_2, N_{3})
& \sim \med(N_{23}, N_2, N_{3}), 
\end{split}
\label{EN3}
\end{align}

\noi
where 
$\max(A, B, C)$ (and 
$\med(A, B, C)$) denotes
the largest element (and the second largest element, respectively)  in $A$, $B$, and $C$.
Then, given any $\ta > 0$, 
it follows from \eqref{RGG1}, \eqref{RGG2}, \eqref{RGG3}, 
and the random tensor estimate (Lemma~\ref{LEM:RT})
that 
\begin{align}
\begin{split}
 \big\| &  \| \wt \YYb_{t,r} \|_{\LOP(H^s;H^{s_0})} \big\|_{L^p(\Om)}
 = \big\| \| \jb{\nb}^{s_0}\wt \YYb_{t,r}\jb{\nb}^{-s} \|_{\LOP(L^2;L^2)} \big\|_{L^p(\Om)}\\
& = \big\| \| I_2[ \wt \hf_{nn_3}(n_A) \wt \ff_{nn_3}(t_A, n_A)  ] \|_{\l^2_{n_3} \to \l^2_n} \big\|_{L^p(\Om)}\\
%&  \big\|  \| I_1[ \hf_{nn_2}(n_1) \ff_{nn_2} (t')  ] \|_{\l^2_{n_2} \to \l^2_n} \big\|_{L^p(\Om)}  \\
& \le \sum_{\substack{N, N_1, N_2, N_3,  N_{23} \ge1 \\\text{dyadic}}}  
\big\| \| 
I_2[ \wt \hf^{\bf N}_{nn_3}(n_A) \wt \ff^{\bf N}_{nn_3}(t_A, n_A)] \|_{\l^2_{n_3} \to \l^2_n} \big\|_{L^p(\Om)}\\
& \les 
p
\sum_{\substack{N, N_1, N_2, N_3, N_{23} \ge1 \\\text{dyadic}\\\eqref{EN3}}}  
\frac{N_{\max}^{\frac 12 \ta} N^{\ta (s_0-\eps_0)}}
{N_1^{ \ta\s} N_2^{\ta \s} N_3^{\ta s} N_{23}^{\ta\eps_0}}
\|\phi\|_{\HS(L^2; H^\s)}^{2\ta}\\
& \hphantom{XXXXX}\times 
 \| \wt  \ff^{\bf N}_{nn_3} (t_A, n_A)\|_{\l^\infty_{nn_3n_A}L^2_{t_A} }
 \Big(\max_{(B, C)} \| \wt  \hf^{\bf N}_{nn_3}(n_A)\|_{n_3n_B \to nn_C} \Big)^{1-\ta}
\end{split}
\label{RGG4}
\end{align}

\noi
for any finite $p \ge 1$, 
where 
 $N_{\max} = \max(N, N_1, N_2, N_3)$ and 
$(B, C)$ is a partition of $A = \{1, 2\}$.
Namely, the maximum 
on the last factor in \eqref{RGG4}
is taken over 
\[n_1 n_2 n_3  \to n , \qquad  
n_1 n_3 \to n n_2, 
\qquad n_2 n_3 \to nn_1, 
\qquad \text{and}\qquad 
n_3 \to nn_1 n_2. \]

From \eqref{RGG3} with \eqref{RGG2}, we have
\begin{align}
 \| \wt  \ff^{\bf N}_{nn_3} (t_A, n_A)\|_{\l^\infty_{nn_3n_A}L^2_{t_A} }
 \les |t-r|, 
\label{RGG5}
\end{align}

\noi
uniformly in 
$n, n_3 \in \Z$ and 
 dyadic $N, N_1, N_2, N_3 \ge1$.

From \eqref{RGG2}, \eqref{RGG3}, and 
Cauchy-Schwarz's inequality, we have
\begin{align*}
& \|  \wt \hf^{\bf N}_{nn_3}(n_A)\|_{n_1n_2n_3 \to n} \\
&\quad \sim 
\frac{N^{s_0-\eps_0}}{N_3^s  N_{23}^{\eps_0}}
\sup_{\|f\|_{\l^2_{n_1 n_2 n_3}}
= \|g\|_{\l^2_{n}}  = 1}
\bigg|
\sum_{n, n_1, n_2, n_3 \in \Z}
\ind_{\wt E_{\bf N}}
\phi_{n_1}\phi_{n_2} f_{n_1 n_2 n_3} g_{n}\bigg|\\
&\quad \les
\frac{N^{s_0-\eps_0}}{N_1^\s N_2^\s N_3^s N_{23}^{\eps_0}}
\|\phi\|_{\HS(L^2; H^\s)}^2.
\end{align*}

\noi
A similar computation yields
\begin{align}
\max_{(B, C)} \| \wt  \hf^{\bf N}_{nn_3}(n_A)\|_{n_3n_B \to nn_C} 
&\les
\frac{N^{s_0-\eps_0}}{N_1^\s N_2^\s N_3^s N_{23}^{\eps_0}}
\|\phi\|_{\HS(L^2; H^\s)}^2.
\label{RGG6}
\end{align}

Hence,
from 
\eqref{RGG4}, \eqref{RGG5}, 
and \eqref{RGG6}, 
we obtain
\begin{align}
\begin{split}
 \big\|   \| \wt \YYb_{t,r} \|_{\LOP(H^s;H^{s_0})} \big\|_{L^p(\Om)}
& \les 
p
\sum_{\substack{N, N_1, N_2, N_3, N_{23} \ge1 \\\text{dyadic}\\\eqref{EN3}}}  
\frac{N_{\max}^{\frac 12 \ta} N^{s_0-\eps_0}}{N_1^\s N_2^\s N_3^{s} N_{23}^{\eps_0}}
\|\phi\|_{\HS(L^2; H^\s)}^2|t-r|\\
& \les 
p
\|\phi\|_{\HS(L^2; H^\s)}^2|t-r|, 
\end{split}
\label{RGG7}
\end{align}

\noi
provided that 
\eqref{RG0}, \eqref{RG2a}, and \eqref{RG2b} hold
and 
that $\ta > 0$ is sufficiently small.
Here, the second inequality in \eqref{RGG7} follows
from separately considering the cases
(recall  $N_{\max} = \max(N, N_1, N_2, N_3)$):

\smallskip
\begin{itemize}
\item[(a)] 
$N\sim N_1 \ges N_{23}$.

\smallskip

\begin{itemize}
\item[(a.i)] 
$N \sim N_1 \sim N_{\max}$. 
In this case, \eqref{RGG7} holds if 
\begin{align}
s_0 <  s\wedge 0 + \s +  \s\wedge 0 + \eps _0 .
\label{RGY1}
\end{align}

\smallskip

\item[(a.ii)]
 $N_2 \sim N_3\sim N_{\max} \gg  N \sim N_1$.
In this case, \eqref{RGG7} holds if 
\begin{align}
s+\s > 0
\qquad \text{and}\qquad
s_0 < s+ 2\s + \eps _0.
\label{RGY2}
\end{align}

\end{itemize}

\smallskip
\item[(b)] 
$N\sim N_{23} \ges N_1$.

\smallskip
\begin{itemize}
\item[(b.i)] 
$N\sim N_{23} \sim N_{\max}$. 
In this case, we have $N_2 \vee N_3 \sim N_{\max}$, 
and \eqref{RGG7} holds if 
\begin{align}
s_0 < s\wedge \s  + \s \wedge 0 + (s\vee \s) \wedge 0+ 2\eps_0
= s\wedge \s  + \s \wedge 0 +  2\eps_0.
\label{RGY3}
\end{align}

\noi
Here, we used the fact that 
\begin{align}
\min \big( s + \s \wedge 0, s\wedge 0 + \s\big)
=  s\wedge \s  + (s\vee \s) \wedge 0
\label{RGY4}
\end{align}

\noi
and
$(s\vee \s) \wedge 0 = 0$ under \eqref{RG0}.

\smallskip
\item[(b.ii)]
 $N_2 \sim N_3\sim N_{\max} \gg N\sim N_{23}$.
In this case, \eqref{RGG7} holds if 
\begin{align}
s+\s > 0
\qquad \text{and}\qquad 
s_0 < s + \s + \s\wedge 0 + 2\eps_0.
\label{RGY5}
\end{align}
\end{itemize}

\smallskip
\item[(c)] 
$N_1\sim N_{23} \ges N$.

\smallskip
\begin{itemize}
\item[(c.i)] 
$N_1\sim N_{23}  \sim N_{\max}$.
In this case, we have $N_2 \vee N_3 \sim N_{\max}$, 
and \eqref{RGG7} holds if 
\begin{align}
s\wedge \s + \s + (s\vee \s) \wedge 0+ \eps_0 > 0 \vee (s_0 - \eps_0).
\label{RGY6}
\end{align}
%$s\wedge \s + \s + \eps_0 >0$

\noi
Here,  we used the identity \eqref{RGY4} once again.

\smallskip
\item[(c.ii)]
 $N_2 \sim N_3\sim N_{\max} \gg N_1\sim N_{23}$.
In this case, 
\eqref{RGG7} holds if 
\begin{align}
s + \s > 0
\qquad \text{and}
\qquad  
s + 2\s + \eps_0 > 0\vee (s_0 - \eps_0). 
\label{RGY7}
\end{align}

\end{itemize}

\end{itemize}

\noi
Putting \eqref{RGY1}, \eqref{RGY2}, \eqref{RGY3}, 
\eqref{RGY5}, \eqref{RGY6}, and \eqref{RGY7}, 
we obtain the conditions \eqref{RG0}, \eqref{RG2a} and~\eqref{RG2b}.

\medskip

\noi
(iii)
The identity \eqref{RG4a}
follows from 
a direct computation, using \eqref{sto2x}, \eqref{stoconv1},  and \eqref{stoconv3}.
We omit details.
\end{proof}

\begin{remark}\label{REM:rough1}\rm
In the rough case, namely $\be= \frac 12$, 
the Hilbert space $\H^\frac 12 (\R_+^k)$
defined in~\eqref{BM0a}
reduces to $L^2(\R_+^k)$.
In this case, as seen in the proof of Proposition \ref{PROP:drive3}
presented above, 
the modulated dispersion plays no role
and thus there is no regularization effect from
the modulated dispersion.
Compare 
\eqref{RG7} in the rough case
with 
\eqref{YG8}
in  the Young case.

We also note that,  
while 
the condition
 \eqref{phi2}: $\phi_0 = 0$
was essential in establishing regularization by noise
in the Young case (see Remark \ref{REM:YG1}), 
it does not play any role in the rough case.

\end{remark}

\appendix 
\section{Stochastic modulated Schr\"odinger equation
with a Young noise}
\label{SEC:NLS}

In this appendix, 
we consider the following stochastic modulated Schr\"odinger equation 
on~$\T^d$ with a multiplicative Young noise ($\frac 12 < \be < 1$):
\begin{align}
\begin{cases}
i \dt u+  \Dl u \cdot \dt w =  (\jb{\nb}^\kk \cj u) \phi \zeta\\
u|_{t = 0} = u_0
\end{cases}
\label{NLS2x}
\end{align}

\noi
for $\kk \ge0$, 
where 
$\phi \in \HS(L^2(\T^d); H^\s(\T^d))$, satisfying
\eqref{phi1}. 
Here, the noise $\z$ is given by $\z = \dt W^\be$, 
where 
 $W^\be$ as in \eqref{W1} with $\frac 12 < \be < 1$
(without the condition \eqref{W2}
in the current complex-valued setting).
Our main goal in this appendix is to prove
pathwise global well-posedness of~\eqref{NLS2x}.

We say that $u$ is a solution to \eqref{NLS2x}
if $u$ satisfies the following
 Duhamel formulation:
\begin{align}
\begin{split}
u(t) 
& = \sw(t) u_0 + 
\sw(t) \int_0^t \sw(t')^{-1}
\big[(\jb{\nb}^\kk \cj{u (t')} )\phi dW^\be(t')\big]\\
& =:
 \sw(t) u_0 + 
\Psi(u)(t), 
\end{split}
\label{ZZ1}
\end{align}

\noi
where 
$\sw (t)=e^{i   w(t)\Dl}  $
denotes the modulated linear propagator for 
the modulated Schr\"odinger equation.
At the level of the modulated interaction representation $\uu(t) = \sw(t)^{-1} u(t)$, 
\eqref{ZZ1} reads as 
\begin{equation}
\uu = u_0 +   \PPsi(\uu), 
\label{ZZ2}
\end{equation}

\noi
where the stochastic term $\PPsi(\uu)$ is given by 
\begin{align}
\begin{split}
 \PPsi(\uu) (t) 
 &  =   \int_0^t
\sw(t')^{-1} \big[ (\jb{\nb}^\kk\cj{\sw(t')\uu(t')} )
\phi dW^\be(t') \big]\\
& = \sum_{n\in \Z^d} e_n 
 \int_0^t \sum_{n_1, n_2  \in \Z^d} \ind_{n = n_1 - n_2}\\
& \hphantom{XXXXX}
\times  e^{ i w(t') (|n|^2 + |n_2|^2)} \jb{n_2}^\kk \cj{\ft \uu(t', n_2)} \phi_{n_1}    dB_{n_1}(t').
\end{split}
\label{ZZ3}
\end{align}

As in the case of SmKdV, our first goal is to construct 
$\PPsi(\uu)$ in \eqref{ZZ3} as the Young integral $\I^\YY(\uu)$
with the driver 
 $\YY$ given by 
\begin{align}
\begin{split}
\YY_{t, r}(f)
&  =  \int_r^t
\sw(t')^{-1} \big[ (\cj{\jb{\nb}^\kk \sw(t') f})
\phi dW^\be(t') \big]\\
&
 = \sum_{n\in \Z^d} e_n 
 \int_r^t \sum_{n_1, n_2  \in \Z^d} \ind_{n = n_1 - n_2}\\
& \hphantom{XXXXX}
\times  e^{ i w(t') (|n|^2 + |n_2|^2)} \jb{n_2}^\kk \cj{\ft f(n_2)} \phi_{n_1}    dB_{n_1}(t').
\end{split}
\label{sto2}
\end{align}

\noi
Then, \eqref{ZZ2} reduces
to the following YDE:
\begin{equation}
\label{NLS3}
\uu = u_0 +   \I^\YY(\uu)
\end{equation}

\noi
whose local well-posedness follows
from Proposition \ref{PROP:main} (with $\XX = 0$).

We now state our main result on \eqref{NLS2x}.

\begin{theorem}
\label{THM:NLS}
Let 
$d\ge 1$ and $\kk \ge 0$.
Given 
$\rho > 0$, 
$\frac 12 < \g, \be < 1$, and $ \s \in \R$, 
let 
$w$ be 
 a $(\rho,\g)$-irregular function on $\R_+$ in the sense of Definition~\ref{DEF:ir}
 and 
$\phi \in \HS(L^2(\T^d); H^\s(\T^d))$, satisfying
\eqref{phi1}.

\medskip

\noi
{\rm (i)}
Suppose that 
$  \g_0 > 0 $ and $s, s_0 \in \R$ satisfy~\eqref{YT1}, 
\begin{align}
s + \s > \kk -  \frac 23  \rho (2\be -1 ), 
\label{SG00}
\end{align}

\noi
and
\begin{align}
\begin{split}
s_0 & < \min\Big( (s-\kk)\wedge 0 + \s  + \frac 23 \rho(2\be -1), \\
& \hphantom{XXXXi} 
(s-\kk)+ \s \wedge 0 + \frac 23 \rho(2\be -1)\Big).
\end{split}
\label{SG0a}
\end{align}

\noi
Let $T > 0$.
Then, given any $\uu \in \CC^\al([0, T]; H^s(\T^d))$, 
where
 $0 < \al < 1$ satisfies
$\g_0 + \al > 1$, 
the stochastic term $\PPsi (\uu)$ in \eqref{ZZ3} can be constructed as 
the Young integral
$\I^\YY(\uu)$ 
associated with the driver $\YY = \YY^{w, \phi}$ in \eqref{sto2}
such that 
\[\PPsi(\uu) = \I^\YY(\uu)
 \in \CC^{\g_0}([0, T]; H^{s_0}(\T^d)),\] 

\noi
almost surely.
 As a  consequence, with $u(t) = \sw(t)\uu(t)$, 
 the stochastic convolution $\Psi(u)(t) = \sw(t)\PPsi(\uu)(t)$
 in~\eqref{ZZ1}
belongs to 
$C([0, T]; H^{s_0}(\T^d))$, almost surely.

\medskip

\noi
{\rm (ii)
 (global well-posedness).}
Suppose that
the parameters satisfy
\eqref{SG00} and
\begin{align}
\max\Big( s\vee \kk - \s, \, \kk- \s \wedge 0 \Big)
< \frac 23 \rho(2\be - 1).
\label{SG0b}
\end{align}

\noi
Then, the stochastic modulated Schr\"odinger equation \eqref{NLS2x}
on  $\T^d$
with a multiplicative fractional-in-time noise 
with the Hurst parameter $\be \in \big(\frac 12 , 1\big)$
is pathwise globally well-posed in $H^s(\T^d)$.
Moreover, the modulated interaction representation~$\uu$
of the solution almost surely belongs
to $\CC^{ \g_0}(\R_+; H^s(\T^d))$
for any $ \g_0 \in \big(\frac 12 ,  \g\big] $ satisfying \eqref{YT1}.

%
%\medskip
%
%\noi
%\textup{(iii) (nonlinear smoothing).}
%In addition, 
%suppose that $s_0 > s$ satisfies \eqref{reg3}
%and \eqref{YT3}.
%Given $u_0 \in H^s(\T^d)$, 
%let $u
%\in C(\R_+; H^s(\T^d))$ be the  global-in-time solution 
%to the 
%stochastic modulated Schr\"odinger equation \eqref{NLS2x}
%on $\T^d$
%with $u|_{t = 0} = u_0$ constructed in Part (ii).
%Then, we have 
%\begin{align*}
% u - e^{iw(t) \Dl}u_0 \in C(\R_+; H^{s_0}(\T^d)).
%\end{align*}
%
%
%
%

\end{theorem}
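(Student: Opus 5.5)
The plan mirrors the proof of Proposition~\ref{PROP:drive2}: establish an almost sure mapping property of the driver $\YY$ in \eqref{sto2}, then invoke Lemma~\ref{LEM:int1} for Part (i) and Proposition~\ref{PROP:main} (with $\XX=0$) for local well-posedness in Part (ii). Fix $(t,r)\in\Dl_{2,T}$. The Fourier coefficient $\Ft_x(\jb{\nb}^{s_0}\YY_{t,r}\jb{\nb}^{-s}f)(n)$ equals $\sum_{n_2}\cj{\ft f(n_2)}\, I_1[\hf_{nn_2}(n_1)\ff_{nn_2}(t')]$, where
\[
\hf_{nn_2}(n_1)=\ind_{n=n_1-n_2}\,\jb{n}^{s_0}\jb{n_2}^{\kk-s}\phi_{n_1},\qquad \ff_{nn_2}(t')=\ind_{[r,t]}(t')\,e^{iw(t')(|n|^2+|n_2|^2)}.
\]
The complex conjugation on $f$ does not affect operator norms, since $\ell^2$ norms are conjugation invariant. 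Decompose dyadically in $|n|\sim N$, $|n_j|\sim N_j$ and apply the random tensor estimate (Lemma~\ref{LEM:RT}) in its complex-valued form. The deterministic tensor norms $\|\hf^{\bf N}\|_{n_1n_2\to n}$ and $\|\hf^{\bf N}\|_{n_2\to nn_1}$ are bounded by $N^{s_0}N_2^{\kk-s}N_1^{-\s}\|\phi\|_{\HS(L^2;H^\s)}$ by Cauchy--Schwarz, exactly as in \eqref{YG3x}--\eqref{YG3y}.

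The key gain comes from the phase. Since the Schr\"odinger phase is $|n|^2+|n_2|^2\sim\jb{n_{\max}}^2$, where $n_{\max}=\max(|n|,|n_1|,|n_2|)$, there is \emph{no} resonance at all; in particular the mean-zero condition \eqref{phi2} is not needed, and $n_1=0$ is allowed because $|n|^2+|n_2|^2\ge 2|n_2|^2$ already gives decay. Repeat the integration by parts \eqref{YG3c} and the interpolation \eqref{YG5}--\eqref{YG7} with $\ld=\frac13(2\be-1)-\eps$. This gives
\[
\|\ff^{\bf N}_{nn_2}\|_{\Hs^\be_{t'}}\les \jb{T}\,\jb{\|\Phi^w\|_{\W^{\rho,\g}_T}}\,|t-r|^{\frac12+\frac16(2\be-1)(2\g-1)-}\,\jb{n_{\max}}^{-\frac23\rho(2\be-1)+},
\]
valid for $n_{\max}\ge1$. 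The point $n=n_2=0$ is trivial, giving a bounded contribution.

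The dyadic sum is then $\sum N_{\max}^{\frac12\ta-\frac23\rho(2\be-1)+}N^{s_0}N_1^{-\s}N_2^{\kk-s}$ with $N_{\max}\sim N_{\med}$. This is the main bookkeeping step. Check the cases (a) $N\sim N_1\ges N_2$, (b) $N\sim N_2\ges N_1$, and (c) $N_1\sim N_2\ges N$, each depending on the sign of the low-frequency exponent. Case (c) requires \eqref{SG00}, namely $s+\s-\kk+\frac23\rho(2\be-1)>0$. Cases (a) and (b) produce the two terms of \eqref{SG0a}. This yields $\|\|\YY_{t,r}\|_{\L(H^s;H^{s_0})}\|_{L^p(\Om)}\les p^{1/2}|t-r|^{\g_0}$. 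Lemma~\ref{LEM:kolm} upgrades this to $\YY\in C^{\g_0}_{2,T}\L(H^s;H^{s_0})$ almost surely, and $\updl\YY=0$ holds by construction. Part (i) then follows from Lemma~\ref{LEM:int1}\,(iv), applied in the linear case $k=1$.

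For Part (ii), condition \eqref{SG0b} is exactly what allows the choice $s_0=s$ in \eqref{SG0a}: the first term requires $s\wedge\kk<\s+\frac23\rho(2\be-1)+\kk-s$ (so it is needed only when $s>\kk$), and the second requires $\kk-\s\wedge0<\frac23\rho(2\be-1)$. Hence $\YY\in\cX^{s,\g_0}_1([0,T]\times\T^d)$ for every $T$ almost surely, since $w$ is irregular on $\R_+$. Proposition~\ref{PROP:main} with $\XX=0$ then gives local well-posedness with the blowup alternative \eqref{BA1}. Globality is the remaining step, and it uses linearity. Because the equation $\uu=u_0+\I^\YY(\uu)$ is linear, the contraction estimate \eqref{YD5a}--\eqref{YD14} has no $K$-dependent factor from $\XX$. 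The existence time therefore depends only on $\|\YY\|_{\cX^{s,\g_0}_1(T)}$ and not on $\|u_0\|_{H^s}$; concretely, one takes $\tau$ with $C\tau^{\g_0-\al}\|\YY\|\le\frac12$ and obtains $\|\uu\|_{\CC^\al_\tau H^s}\le2\|u_0\|_{H^s}$. Iterating on $[j\tau,(j+1)\tau]$ with the same $\tau$ covers any $[0,T]$, which excludes blowup. The only subtlety I expect is handling the nonzero starting time: the driver restricted to $[j\tau,(j+1)\tau]$ has the same H\"older norm, so the argument applies on each subinterval without change.
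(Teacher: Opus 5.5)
Your proposal is correct and follows the paper's proof essentially step for step. The paper likewise bounds the driver in Proposition~\ref{PROP:drive4} via Lemma~\ref{LEM:RT}, the non-resonant phase $|n|^2+|n_2|^2$ and the same three-case dyadic summation, then applies Lemma~\ref{LEM:int1} for Part~(i), and for Part~(ii) uses Proposition~\ref{PROP:main} with $\XX=0$ iterated with the data-independent local time from \eqref{SG10}.

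One small algebra slip: the first term of \eqref{SG0a} with $s_0=s$ is equivalent to $s\vee\kk-\s<\frac23\rho(2\be-1)$, not the inequality you wrote. Your conclusion that this condition only matters when $s>\kk$ is still right, since for $s\le\kk$ it is implied by $\kk-\s\wedge0<\frac23\rho(2\be-1)$.
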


Fix $\s \in \R$.
It follows from 
\eqref{SG00}, 
\eqref{SG0a},  and 
\eqref{SG0b} that 
Theorem \ref{THM:NLS}\,(i)
and (ii) 
holds
for {\it any} $s, s_0 \in \R$ and $\kk \ge 0$, 
provided that $\rho  \gg1$
is sufficiently large, 
exhibiting regularization by noise
for the stochastic modulated Schr\"odinger equation \eqref{NLS2x}.

As in the SmKdV case, 
Theorem \ref{THM:NLS}\,(i) 
 follows 
from Lemma \ref{LEM:int1}, 
once we establish
the corresponding  regularity properties
of the driver $\YY$  in \eqref{sto2} (see Proposition~\ref{PROP:drive4}), 
Also, pathwise 
local well-posedness in $H^s(\T^d)$ 
of \eqref{NLS2x} 
%Theorem \ref{THM:NLS}\,(ii) 
% on a local existence time 
follows from 
Proposition \ref{PROP:main}
(with $\XX = 0$).
We note that the condition \eqref{SG0b}
comes from setting $s_0 = s$ in \eqref{SG0a}.
In Subsection~\ref{SUBSEC:NLS1}, 
we study the driver $\YY$ in \eqref{sto2}.
In Subsection 
\ref{SUBSEC:NLS2},
we prove pathwise global well-posedness of~\eqref{NLS2x}
(Theorem \ref{THM:NLS}\,(ii)).

\begin{remark}\rm

(i)
As pointed out in Remark \ref{REM:deriv}, 
given any $s \in \R$, 
a straightforward modification of the proof of Theorem~\ref{THM:NLS}
yields pathwise global well-posedness 
in $H^s(\T^d)$ of the following stochastic modulated Schr\"odinger equation
on $\T^d$
with a multiplicative Young noise ($\frac 12 < \be < 1$):
\begin{align*}
i \dt u+  \Dl  u \cdot \dt w =  \jb{\nb}^{\kk_1}\big[(\jb{\nb}^{\kk_2} \cj u) \phi \zeta\big].
%\label{NLS2a}
\end{align*}

\noi
for any $\kk_1, \kk_2 \in \R $, 
provided that $\rho = \rho(\be,  s, \s, \kk_1, \kk_2)\gg1$
is sufficiently large.
We omit details.

\smallskip

\noi
(ii) In \eqref{NLS2x}, 
it is crucial that we have $\cj u$ in the noise.
If we instead have $u$ in the noise, 
then there is no regularization by noise
as in Theorem \ref{THM:NLS}, 
since, in this case, 
 $|n|^2 + |n_2|^2$ 
in~\eqref{sto2}
would be replaced by 
$|n|^2 - |n_2|^2$ 
which 
can be small or vanish.
Compare this with \eqref{SG3c} and \eqref{SG4}.

\end{remark}

\subsection{Regularity of the driver}
\label{SUBSEC:NLS1}

\begin{proposition}
\label{PROP:drive4}

Let 
$d\ge 1$ and $\kk \ge 0$.
Given 
$\rho > 0$, 
$\frac 12 < \g, \be < 1$,  $ \s \in \R$, and $T > 0$, 
let 
$w$ be 
 a $(\rho,\g)$-irregular function on $[0, T]$ in the sense of Definition~\ref{DEF:ir}
 and 
$\phi \in \HS(L^2(\T^d); H^\s(\T^d))$, satisfying
\eqref{phi1}, and 
let  $\YY = \YY^{w, \phi}$ be 
the random operator 
 defined in~\eqref{sto2}.
Suppose that 
$0 <  \g_0 \le  \g $ and $s, s_0 \in \R$ satisfy~\eqref{YT1}, 
\eqref{SG00}, and \eqref{SG0a}.
 Then, given any finite $p \ge 1$, we have 
\begin{align}
\big\| \|  \YY_{t,r}  \|_{\LOP(H^s; H^{s_0})} \big\|_{L^p(\Om)} 
 & \les  p^\frac 12 
 \|\Phi^w\|_{  \W^{\rho,\g}_T}\| \phi\|_{\HS(L^2;H^\s)} 
  |t-r|^{\g_0}
\label{SG1}
\end{align}

\noi
for any   $(t,r) \in \Dl_{2,T}$.
Consequently, we have 
\begin{align*}
\big\| \| \YY\|_{C^{\al}_{2,T} \LOP(H^s;H^{s_0})} \big\|_{L^p(\Om)} \les 
p^\frac 12 
 \|\Phi^w\|_{  \W^{\rho,\g}_T} \| \phi\|_{\HS(L^2;H^\s)} .
%\label{SG1a}
\end{align*}

\noi
In particular, 
there exists a version of $\YY$
such that 
$\YY \in C^{\g_0}_{2,T} \LOP(H^s(\T^d);H^{s_0}(\T^d))$
and
\begin{align*}
\YY \in  \cX^{s, s_0, \g_0}_{1, 0} ([0, T]\times \T^d) ,
\end{align*}

\noi
almost surely.
See Subsection \ref{SUBSEC:2.2}
for the definitions of these function spaces.

\end{proposition}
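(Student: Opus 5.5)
We follow the proof of Proposition~\ref{PROP:drive2} line by line. Kolmogorov's continuity criterion (Lemma~\ref{LEM:kolm}) together with $\updl \YY = 0$, which is immediate from \eqref{sto2}, shows that everything reduces to the moment bound \eqref{SG1} for fixed $0\le r<t\le T$.

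\textbf{Step 1: reduction to a random tensor.} Since $\YY_{t,r}$ is $\mathbb{R}$-linear but conjugate-linear in $f$, we write $\jb{\nb}^{s_0}\YY_{t,r}\jb{\nb}^{-s} f$ in Fourier variables as
\[
\sum_{n_2} \cj{\ft f(n_2)}\, I_1\big[\hf_{nn_2}(n_1)\ff_{nn_2}(t')\big],
\]
where
\[
\hf_{nn_2}(n_1)=\ind_{n=n_1-n_2}\,\jb{n}^{s_0}\jb{n_2}^{\kk-s}\phi_{n_1},
\qquad
\ff_{nn_2}(t')=\ind_{[r,t]}(t')\, e^{iw(t')(|n|^2+|n_2|^2)}.
\]
The operator norm on $\l^2$ is unaffected by the conjugation $f\mapsto \cj f$. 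We are in the complex-valued setting, so the symmetry conditions \eqref{BM3}--\eqref{BM4} are dropped. We then decompose dyadically into $|n|\sim N$, $|n_j|\sim N_j$ and apply Lemma~\ref{LEM:RT} with $k=1$, exactly as in \eqref{YG3b}. The deterministic tensor norms are bounded by Cauchy--Schwarz as in \eqref{YG3x}--\eqref{YG3y}:
\[
\|\hf^{\bf N}\|_{n_1n_2\to n},\ \|\hf^{\bf N}\|_{n_2\to nn_1}\les N^{s_0}N_1^{-\s}N_2^{\kk-s}\|\phi\|_{\HS(L^2;H^\s)}.
\]

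\textbf{Step 2: the time factor.} The key difference from the KdV case is the phase. Here $|n|^2+|n_2|^2\ges \jb{n_{\max}}^2$ whenever $(n,n_2)\ne(0,0)$. This plays the role of \eqref{YG3e}, but with no $\jb{n_1}^\rho$ factor and no need for $\phi_0=0$. The case $n=n_2=0$ forces $n_1=0$; it is a single mode, handled by the trivial bound $\|\ff\|_{\Hs^\be}\les |t-r|^{\be}$. Integrating by parts as in \eqref{YG3c} and using \eqref{rho1} gives
\[
|\ft\ff_{nn_2}(\tau)|\les \|\Phi^w\|_{\W^{\rho,\g}_T}\,(1+|\tau|\,|t-r|)\,|t-r|^\g\jb{n_{\max}}^{-2\rho}.
\]
We split the $\Hs^\be$ norm \eqref{BM0} into $|\tau|\le1$ and $|\tau|>1$. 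On $|\tau|>1$ we interpolate with the $L^2$ bound $|t-r|^{1/2}$ using H\"older's inequality with $\ld=\frac13(2\be-1)-\eps$, as in \eqref{YG7}. This yields
\[
\|\ff^{\bf N}\|_{\Hs^\be}\les \jb{\|\Phi^w\|}\,|t-r|^{\frac12+\frac16(2\be-1)(2\g-1)-}\,N_{\max}^{-\frac23\rho(2\be-1)+}.
\]
The time exponent is at least $\g_0$ by \eqref{YT1}.

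\textbf{Step 3: dyadic summation (main obstacle).} It remains to sum
\[
\sum N_{\max}^{\frac12\ta-\frac23\rho(2\be-1)+}\,N^{s_0}N_1^{-\s}N_2^{\kk-s}
\]
over $N_{\max}\sim N_{\med}$. We split into three cases: (a) $N\sim N_1\ges N_2$, (b) $N\sim N_2\ges N_1$, and (c) $N_1\sim N_2\ges N$. In each case we track the sign of the exponent of $N_{\min}$.
\begin{itemize}
\item In case (a), summability requires $s_0-\s+(\kk-s)\vee0<\frac23\rho(2\be-1)$.
\item In case (b), it requires $s_0+\kk-s+(-\s)\vee0<\frac23\rho(2\be-1)$.
\item In case (c), it requires $\kk-s-\s+s_0\vee0<\frac23\rho(2\be-1)$.
\end{itemize}
These are precisely \eqref{SG0a} and \eqref{SG00}. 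Checking that the case analysis reproduces exactly these exponents, with $\ta,\eps$ taken small, is the only delicate point; everything else transfers verbatim from Proposition~\ref{PROP:drive2}.
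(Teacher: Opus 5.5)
Your proposal is correct and follows the paper's proof essentially step by step: the same conjugate-linear Fourier/tensor representation, the random tensor estimate on dyadic blocks, integration by parts and interpolation in $\Hs^\be$ with $\lambda=\frac13(2\beta-1)-\varepsilon$, and the three-case dyadic summation, which yields exactly \eqref{SG00} and \eqref{SG0a}. Two minor remarks. Your separate treatment of the mode $n=n_2=0$ is harmless but unnecessary, since the weight $\jb{a}^\rho$ in \eqref{rho1} already covers $a=0$. The loss $N_{\max}^{\frac12\ta}$ should read $N_{\max}^{\frac d2\ta}$ on $\T^d$, which is immaterial for small $\ta$.
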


\begin{proof}

%We closely follow the proof of Proposition \ref{PROP:drive2}
%and focus on proving \eqref{SG1}.

As in the proof of Proposition \ref{PROP:drive2}, 
it suffices to prove \eqref{SG1}.
We closely follow the argument 
in the proof of Proposition \ref{PROP:drive2}.

Fix  $0 \le r <  t \le T$.
From \eqref{sto2}, we have 
\begin{align}
\Ft_x\big(\jb{\nb}^{s_0}\YY_{t,r} \jb{\nb}^{-s}f \big)(n) 
=  \sum_{n_2\in \Z} \cj{\ft f (n_2)} I_1[ \hf_{nn_2}(n_1) \ff_{nn_2}(t') ] , 
\label{SG2}
\end{align}

\noi
where $\hf_{nn_2}(n_1) $ and $\ff_{nn_2}$ are defined by 
\begin{align}
\begin{split}
\hf_{n n_2}(n_1) & = 
 \ind_{n=n_1 - n_2}
\cdot
\frac{\jb{n}^{s_0}}{\jb{n_2}^{s-\kk}} \phi_{n_1} , \\
\ff_{nn_2} (t')  &= 
\ff_{nn_2}^{t, r} (t')= 
 \ind_{[r, t]}(t')\cdot 
 e^{ i w(t') (|n|^2 + |n_2|^2)}.
\end{split}
\label{SG3}
\end{align}

\noi
Given dyadic $N, N_1, N_2 \ge1$, we set 
\begin{align}
\begin{split}
\hf^{\bf N}_{n n_2}(n_1) & = 
\hf^{N, N_1, N_2}_{n n_2}(n_1)
= \ind_{F_{\bf N}}
\cdot 
 \hf_{n n_2}(n_1), \\
\ff^{\bf N}_{nn_2} (t')  &= 
\ff^{N, N_1, N_2}_{nn_2} (t')  = 
\ind_{F_{\bf N}}
\cdot 
\ff_{nn_2} (t'),  
\end{split}
\label{SG3a}
\end{align}

\noi
where $F_{\bf N}$ is defined by 
\begin{align}
F_{\bf N} = \big\{(n, n_1, n_2) \in (\Z^d)^3:
n = n_1 - n_2, \, |n|\sim N, \, |n_j|\sim N_j, \, j = 1, 2 \big\}.
\label{SEN1}
\end{align}

\noi
Then, 
proceeding as in \eqref{YG3b}, 
given any $\ta > 0$, 
it follows from \eqref{SG2}, \eqref{SG3}, \eqref{SG3a}, 
and the random tensor estimate (Lemma~\ref{LEM:RT})
that 
\begin{align}
\begin{split}
 \big\| &  \| \YY_{t,r} \|_{\LOP(H^s;H^{s_0})} \big\|_{L^p(\Om)}\\
& \le \sum_{\substack{N, N_1, N_2 \ge1 \\\text{dyadic}}}  
\big\| \| 
I_1[ \hf^{\bf N}_{nn_2}(n_1) \ff^{\bf N}_{nn_2}(t')] \|_{\l^2_{n_2} \to \l^2_n} \big\|_{L^p(\Om)}\\
& \les 
p^\frac 12 
\sum_{\substack{N, N_1, N_2 \ge1 \\\text{dyadic}}}  
\frac{N_{\max}^{\frac d2 \ta} N^{\ta s_0}}
{N_1^{ \ta\s} N_2^{\ta (s-\kk)}}
\|\phi\|_{\HS(L^2; H^\s)}^\ta
 \|  \ff^{\bf N}_{nn_2} (t')\|_{\l^\infty_{nn_2} \Hs_{t'}^\be}\\
& \hphantom{XXXXX}\times 
 \max\Big( \|  \hf^{\bf N}_{nn_2}(n_1)\|_{n_1n_2 \to n} , 
 \|   \hf^{\bf N}_{nn_2}(n_1) \|_{n_2 \to n n_1} \Big)^{1-\ta}
\end{split}
\label{SG3b}
\end{align}

\noi
for any finite $p \ge 1$, 
where 
 $N_{\max} = \max(N, N_1, N_2)$ and 
 $\Hs^\be(\R_+)$ is as in \eqref{BM0}.

Proceeding as in \eqref{YG3x}
with \eqref{SG3a}, 
\eqref{SG3}, and 
Cauchy-Schwarz's inequality with \eqref{phi1a}, we have
\begin{align}
\begin{split}
 \|  \hf^{\bf N}_{nn_2}(n_1)\|_{n_1n_2 \to n} + 
\|  \hf^{\bf N}_{nn_2}(n_1)\|_{n_2 \to n n_1} 
&\les
\frac{N^{s_0}}{N_1^\s N_2^{s-\kk}}
\|\phi\|_{\HS(L^2; H^\s)}.
\end{split}
\label{SG3y}
\end{align}

Next,  we  estimate the 
$\Hs^\be_{t'}$-norm of 
$ \ff_{nn_2}(t')$
appearing in \eqref{SG3b}.
For notational simplicity, 
we drop the superscript $\bf N$
in the following but it is understood that $n = n_1 - n_2$ holds;
see~\eqref{SEN1}.
By integration by parts with~\eqref{rho2}, we have 
\begin{align}
\begin{split}
\ft \ff_{nn_2}(\tau) 
& =\frac 1{\sqrt {2\pi}} \int_r^t e^{-  i t'\tau }  e^{ i w(t') (|n|^2 + |n_2|^2)}dt'\\
&  = \frac 1{\sqrt {2\pi}}\Phi^w_{t, r}(|n|^2 + |n_2|^2) e^{-i t \tau}
+ 
\frac {i \tau}{\sqrt {2\pi}}
 \int_r^t 
\Phi^w_{t', r}(|n|^2 + |n_2|^2)e^{-i t'\tau} dt'.
\end{split}
\label{SG3c}
\end{align}

\noi
Then,  from \eqref{SG3c} 
with  
\eqref{rho1}, we have
\begin{align}
\begin{split}
|\ft \ff_{nn_2}(\tau) |
& \les
\|\Phi^w\|_{  \W^{\rho,\g}_T} 
\frac{|t - r|^\g}{ \jb{n_{\max}}^{2\rho}}
+ \|\Phi^w\|_{  \W^{\rho,\g}_T} 
|\tau |
\frac{|t - r|^{1+\g}}{ \jb{n_{\max}}^{2\rho}}.
\end{split}
\label{SG4}
\end{align}

\noi
under $n = n_1 - n_2$, 
where $n_{\max} = \max(|n|, |n_1|, |n_2|)$.

Let $G_1$ and $G_2$ be as in \eqref{YG5}.
From \eqref{SG4}, we have 
\begin{align}
G_1
\les 
\jb{T}
\|\Phi^w\|_{  \W^{\rho,\g}_T}
\frac{|t - r|^{\g}}{\jb{n_{\max}}^{2\rho}},
\label{SG6}
\end{align}

\noi
since $\be < 1$.
Proceeding as in \eqref{YG7}
with 
$\ld = \frac 13 (2\be -1) -\eps \in (0, 1)$ 
for some small $\eps > 0$
and 
$b = \ld^{-1}(1- 2\be)$
such that $b < -3$, 
and applying \eqref{SG3} and \eqref{SG4}, we have 
\begin{align}
\begin{split}
G_2 
& \le \bigg(\int_\R  |\ft \ff_{nn_2}(\tau) |^2d\tau\bigg)^{\frac{1-\ld} {2}}
\bigg(\int_{|\tau|>  1} |\tau|^b |\ft \ff_{nn_2}(\tau) |^2d\tau \bigg)^\frac \ld 2\\
& \les \jb{T}^\ld \|\Phi^w\|_{  \W^{\rho,\g}_T}^\ld 
|t- r|^{\frac{1-\ld}{2}}
\bigg(\frac{|t - r|^{\g}}{\jb{n_{\max}}^{2\rho}}\bigg)^{\ld}\\
& \les \jb{T} \jb{ \|\Phi^w\|_{  \W^{\rho,\g}_T}}
\frac {|t- r|^{\frac 12  + \frac 16 (2\be - 1)(2\g-1) -  \frac 12 (2\g-1)\eps}}
{ \jb{n_{\max}}^{\frac 23\rho (2 \be - 1 -3\eps)}}.
\end{split}
\label{SG7}
\end{align}

\noi
Hence, from \eqref{SG3a}, \eqref{YG5}, \eqref{SG6}, and \eqref{SG7}
with $\be < 1$,
we obtain
\begin{align}
\begin{split}
\|\ff^{\bf N}_{nn_2}\|_{\Hs^\be_{t'}}
& \le \ind_{F_N} \cdot \|\ff_{nn_2}\|_{\Hs^\be_{t'}}\\
& \les \ind_{F_N} \cdot \jb{T}\jb{ \|\Phi^w\|_{  \W^{\rho,\g}_T}}
\frac {|t- r|^{\frac 12  + \frac 16 (2\be - 1)(2\g-1) -  \frac 12 (2\g-1)\eps}}
{\jb{n_{\max}}^{\frac 23\rho (2 \be - 1 -3\eps)}}, 
\end{split}
\label{SG8}
\end{align}

\noi
uniformly in $n, n_2 \in \Z^d$ and dyadic $N, N_1, N_2 \ge 1$.

Therefore, putting 
\eqref{SG3b}, \eqref{SG3y}, 
and \eqref{SG8} together with \eqref{ord1}, we obtain
\begin{align}
\begin{split}
  \big\|  &  \| \YY_{t,r} \|_{\LOP(H^s;H^{s_0})} \big\|_{L^p(\Om)}\\
& \les 
p^\frac 12 
\jb{T}
\|\phi\|_{\HS(L^2; H^\s)}
\jb{\|\Phi^w\|_{  \W^{\rho,\g}_T}  }
|t- r|^{\frac 12 + \frac 16 (2\be - 1)(2\g-1) - \frac 12  (2\g-1)\eps}\\
%\sum_{\substack{N, N_1, N_2 \ge1 \\\text{dyadic}}}  
& \quad \times \sum_{\substack{N, N_1, N_2 \ge1 \\\text{dyadic}\\
N_{\max}\sim N_{\med}}}  
N_{\max}^{\frac d2 \ta
- \frac 23\rho (2 \be - 1 -3\eps)}
\frac{N^{s_0}}{N_1^{\s} N_2^{s-\kk}}\\
& \les 
p^\frac 12
\jb{T} 
\|\phi\|_{\HS(L^2; H^\s)}
\jb{\|\Phi^w\|_{  \W^{\rho,\g}_T}}  |t- r|^{\g_0}, 
\end{split}
\label{SG9}
\end{align}

\noi
provided that 
$\g_0 \le \g$, 
\eqref{YT1},
\eqref{SG00}, 
  and \eqref{SG0a} 
hold
and that $\ta, \eps > 0$ are sufficiently small.
As in \eqref{YG9},   the second inequality in \eqref{SG9} follows
from separately considering the cases:
 (a)~$N \sim N_1 \ges N_2$, 
(b)~$N \sim N_2 \ges N_1$, 
and 
(c)~$N_1 \sim N_2 \ges N$
(also depending on the sign of the exponent of $N_{\min}$).
This proves~\eqref{SG1}.
\end{proof}

\subsection{Global well-posedness}
\label{SUBSEC:NLS2}

In this subsection, we prove global well-posedness of \eqref{NLS2}
(Theorem \ref{THM:NLS}\,(ii)).

Fix a target time $T\gg 1$.
Our goal is to extend the local-in-time solution $\uu$
to the YDE~\eqref{NLS3}, 
constructed by Proposition \ref{PROP:main} with Proposition \ref{PROP:drive4},  
onto the entire interval $[0, T]$.
For simplicity, we assume that $\|u_0\|_{H^s} \ge 1$.
For $ \g_0 \in \big(\frac 12 ,  \g\big]$, 
let $0 < \al < \g_0$ such that $\al + \g_0 >1$.
Then, 
from~\eqref{YD5a} and~\eqref{YD14} in the proof of Proposition~\ref{PROP:main}
(with $\XX = 0$), we have 
\begin{align}
\begin{split}
\|\G(\uu)\|_{\CC^\al_\tau H^s_x}
& \le \|u_0\|_{H^s} 
+  C\tau^{\g_0 - \al}
\|\YY\|_{\cX^{s, \g_0}_1(T)}
\|\uu\|_{\CC^\al_{\tau } H^s_x}, \\
 \|\G(\uu^1)- \G(\uu^2)\|_{\cC^\al _{\tau} H^s_x}
&  \les 
\tau^{\g_0 - \al}
\|\YY\|_{\cX^{s, \g_0}_1(T)}\|\uu^1- \uu^2 \|_{\CC^\al _\tau H^s_x}.
\end{split}
\label{SG10}
\end{align}

\noi
for $0 < \al < \g_0$.
In carrying out a contraction argument in 
$\cC^\al([0, \tau]; H^s(\T^d))$, 
we see from~\eqref{SG10}
that the local existence time $\tau = \tau \big(\|\YY\|_{\cX^{s, \g_0}_1(T)}\big) >0$
can be chosen to be  independent  of initial data $u_0$.
%Moreover, it follows from the local theory that on each time interval $[(j-1)\tau, j\tau]$ of local existence, 
%the size of the solution at most doubles.
Hence, by  iteratively applying the contraction argument 
on each time interval $[j\tau, (j+1)\tau]\cap [0, T]$, $j = 0,1,  \dots, \big[\frac T\tau\big] $, 
where $[x]$ denotes the integer part of $x \in \R$, 
we can extend the solution $\uu$ to \eqref{NLS3}
onto the entire interval $[0, T]$.
The solution~$\uu$ thus constructed belongs to 
$\cC^\al([0, T]; H^s(\T^d))$.
By applying the second bound of \eqref{Ja3}
in Lemma \ref{LEM:int1} to \eqref{NLS3}, 
we then conclude that 
$\uu \in \cC^{\g_0}([0, T]; H^s(\T^d))$.
Since the choice of $T\gg1$ was arbitrary, 
this proves global well-posedness of \eqref{NLS3}
(and hence of~\eqref{NLS2x}).
This concludes the proof of  Theorem \ref{THM:NLS}.

\begin{ackno}\rm
A.C.~received support from CNRS-INSMI through a grant ``PEPS Jeunes chercheurs et jeunes chercheuses 2025''.
M.G.~was supported by 
the UKRI Frontier Research Grant 
(grant no.~EP/Z534328/1 ``Stochastic analysis of quantum fields"). 
G.L. was supported by the NSFC (grant no.~12501181).
T.O.~was supported by the European Research Council (grant no.~864138 ``SingStochDispDyn")
and  by the EPSRC 
Mathematical Sciences
Small Grant  (grant no.~EP/Y033507/1).
T.O. also 
acknowledges support from  
the NSFC (grant no.~W2531005).

\end{ackno}

%\medskip
%
%\noi
%{\bf Conflict of  interest.}
%The authors have no conflict of interest  to declare that are relevant to the content of this article.
%
%
%\medskip
%
%\noi
%{\bf Data availability statement.}
%This manuscript has no associated data.

\end{document}